\documentclass[10pt,letterpaper]{book}

\newif\ifUseBIBLATEX
\UseBIBLATEXtrue
\newif\ifbigMargin
\bigMargintrue

\ifbigMargin
    \usepackage[text={5in, 8.25in},marginparwidth=1.5in,centering]{geometry}
\else
    \usepackage[text={6.5in, 9.25in},centering]{geometry}
\fi

\usepackage{fancyhdr}
\fancyhf{}



\fancypagestyle{revfrontmatter}{
    \pagenumbering{roman}
    \cfoot{\thepage}
}

\fancypagestyle{revstyle}{
    \pagenumbering{arabic}
    \fancyfoot[LE,RO]{Page \thepage}
    \fancyfoot[RE,LO]{arXiv Version 2}
    \renewcommand{\chaptermark}[1]{\markboth{ {\slshape{##1}}}{}}
    \fancyhead[RE,LO]{\text{\nouppercase{\rightmark}}}
    \fancyhead[LE,RO]{\textsl{\nouppercase{\leftmark}}}
    \cfoot{}
}

\usepackage[usenames,dvipsnames]{xcolor}
\usepackage{svg}

\usepackage{amssymb}
\usepackage{amsfonts}
\usepackage{amsmath}
\usepackage{amsthm}
\usepackage{thmtools}
\usepackage{thm-restate}
\usepackage{mathrsfs}
\usepackage{nicefrac}
\usepackage{xfrac}
\usepackage{scalerel} 

\usepackage{datetime}

\usepackage[
    colorlinks=true,
    linkcolor=blue,
    citecolor=black
]{hyperref}  
\usepackage[
    capitalize
]{cleveref}  
\newtheorem{theorem}{Theorem}[section]
\newtheorem{lemma}[theorem]{Lemma}

\newtheorem{proposition}[theorem]{Proposition}
\theoremstyle{remark}
\newtheorem{remark}[theorem]{Remark}
\crefname{lemma}{Lemma}{Lemmas}
\crefname{theorem}{Theorem}{Theorems}
\crefname{proposition}{Proposition}{Propositions}
\crefname{figure}{Figure}{Figures}
\crefname{chapter}{Section}{Sections}
\Crefname{chapter}{section}{sections}
\crefname{section}{Section}{Sections}
\Crefname{section}{section}{sections}
\crefname{subsection}{Section}{Sections}
\Crefname{subsection}{section}{sections}
\crefname{appendix}{Appendix}{Appendices}
\Crefname{appendix}{appendix}{appendices}

\usepackage{lmodern}

\usepackage{titlesec}
\titleformat{\chapter}[display]
  {\LARGE\bfseries\sffamily}{\chaptertitlename\ \thechapter}{20pt}{\Huge}
\titleformat*{\section}{\Large\bfseries\sffamily}
\titleformat*{\subsection}{\large\bfseries\sffamily}
\titleformat*{\subsubsection}{\bfseries\sffamily}
\titleformat*{\paragraph}{\itshape\sffamily}

\usepackage[normalem]{ulem}
\usepackage{lineno}

\newenvironment{rcases}
  {\left.\begin{aligned}}
  {\end{aligned}\right\rbrace}

\usepackage{graphicx}
\usepackage{placeins}
\usepackage[skip=6pt]{caption}
\usepackage{epsfig}
\usepackage{multicol,multirow}
\usepackage{changepage}
\usepackage{placeins}
\usepackage{setspace}
\usepackage{appendix}

\usepackage{url}

\ifUseBIBLATEX
    \usepackage[
    backend=biber,
    style=ext-alphabetic,
    giveninits=true,
    maxbibnames=99,
    minalphanames=3,
    maxalphanames=3,
    url=false
    ]{biblatex}
    \DeclareFieldFormat[article,inbook,incollection,inproceedings,patent,thesis,unpublished]{titlecase:title}{\MakeSentenceCase*{#1}}
    \addbibresource{references/refs.bib}
    \AtBeginBibliography{\small}
    \AtEveryBibitem{%
      \clearfield{issn} 
      \clearfield{doi} 
      \ifentrytype{online}{}{
        \clearfield{url}
      }
    }
\else
    \newcommand{\cites}[1]{\cite{#1}}
\fi

\usepackage{algorithm}
\usepackage{algorithmicx}   
\usepackage[noend]{algpseudocode}  
\algdef{SE}[SUBALG]{Indent}{EndIndent}{}{\algorithmicend\ }
\algtext*{Indent}
\algtext*{EndIndent}
\algdef{SE}[DOWHILE]{Do}{doWhile}{\algorithmicdo}[1]{\algorithmicwhile\ #1}%
\newcommand{\codecomment}[1]{\textcolor{gray}{\# #1}}

\usepackage{bm} 
\providecommand{\mathbold}[1]{\bm{\mathsf{#1}}}
\newcommand{\R}{\mathbb{R}}

\newcommand{\E}{\mathbb{E}}
\newcommand{\Herm}{\mathbb{H}}
\newcommand{\vct}[1]{\bm{#1}}
\newcommand{\Ao}{\mtx{A}}
\newcommand{\Aa}{\mtx{\hat{A}}}
\newcommand{\Alift}{\mtx{A}_{\mu}}

\newcommand{\mtx}[1]{\mathbold{#1}}

\usepackage{stmaryrd}
\newcommand{\llb}{\llbracket}
\newcommand{\rrb}{\rrbracket}
\newcommand{\idxs}[1]{{\llb #1 \rrb}}
\newcommand{\trans}{*}
\newcommand{\fslice}{{:}}  
\newcommand{\lslice}[1]{{:}{#1}}  
\newcommand{\tslice}[1]{{#1}{:}}  

\DeclareMathOperator{\range}{range}
\DeclareMathOperator{\cond}{cond}
\DeclareMathOperator{\rank}{rank}
\DeclareMathOperator{\diag}{diag}
\DeclareMathOperator{\trace}{tr}


\newcommand{\e}{\epsilon}
\makeatletter
\newcommand{\doublehat}[1]{%
\begingroup%
  \let\macc@kerna\z@%
  \let\macc@kernb\z@%
  \let\macc@nucleus\@empty%
  \hat{\raisebox{.25ex}{\vphantom{\ensuremath{#1}}}\smash{\hat{#1}}}%
\endgroup%
}
\makeatother
\usepackage{float}
\floatstyle{ruled}
\newfloat{method}{htbp}{lok}
\floatname{method}{Method}
\crefname{method}{Method}{Methods}
\Crefname{method}{Method}{Methods}

\DeclareMathOperator{\vectorize}{vec}
\DeclareMathOperator{\FFT}{DFT}
\DeclareMathOperator*{\startimes}{\scalerel*{\circledast}{\sum}}
\DeclareMathOperator*{\argmin}{arg\,min}
\DeclareMathOperator*{\argmax}{arg\,max}

\DeclareMathOperator{\Span}{span}

\newcommand{\LDLt}{LDL}


\newcommand{\RandNLA}{RandNLA}
\newcommand{\Nystrom}{Nystr\"{o}m}
\newcommand{\RandLAPACK}{\textsf{RandLAPACK}}
\newcommand{\RBLAS}{\textsf{RandBLAS}}
\newcommand{\RandBLAS}{\RBLAS{}}

\newcommand{\BLAS}{\textsf{BLAS}}
\newcommand{\BLASlev}[1]{\textsf{BLAS #1}}
\newcommand{\LINPACK}{\textsf{LINPACK}}
\newcommand{\PLASMA}{\textsf{PLASMA}}
\newcommand{\LAPACK}{\textsf{LAPACK}}
\newcommand{\LAPACKpp}{\textsf{LAPACK++}}
\newcommand{\Ristretto}{\textsf{Ristretto}}
\newcommand{\SciPy}{\textsf{SciPy}}

\newcommand{\code}[1]{\texttt{#1}}
\newcommand{\MAGMA}{\textsf{MAGMA}}
\newcommand{\SLATE}{\textsf{SLATE}}
\newcommand{\ScaLAPACK}{\textsf{ScaLAPACK}}

\newcommand{\RSVDPACK}{\textsf{RSVDPACK}}
\newcommand{\IDLib}{\textsf{ID}}
\newcommand{\RandomOneTwo}{\textsf{Random123}}
\newcommand{\LibSkylark}{\textsf{LibSkylark}}
\newcommand{\LowRankApprox}{\textsf{LowRankApprox.jl}}
\newcommand{\Elemental}{\textsf{Elemental}}
\newcommand{\Tensorlab}{\textsf{Tensorlab}}
\newcommand{\cuSOLVE}{\textsf{cuSOLVE}}
\newcommand{\SciKitCUDA}{\textsf{SciKit-CUDA}}

\usepackage{overpic}
\usepackage{subcaption}
\let\emph\relax 
\DeclareTextFontCommand{\emph}{\itshape}

\usepackage{etoc}
\usepackage{enumitem}

\renewcommand*\etoctoclineleaders
    {\hbox{\normalfont\normalsize\hbox to .75ex {\hss~.~\hss}}}
\newcommand*{\EndEtocSectionEntry}
    {\leaders\etoctoclineleaders\hfill
     \marginpar{\hspace{-1.75cm}\makebox[0.75cm][r]{\bfseries\normalsize\etocpage}}%
     \par }
\newcommand*{\EndEtocSubsectionEntry}
    {\leaders\etoctoclineleaders\hfill
     \marginpar{\hspace{-1.75cm}\makebox[0.75cm][r]{\mdseries\normalsize\etocpage}}%
     \par }

\newcommand{\setminitocstyle}{

    \etocsettocstyle
    {\noindent\rule{\linewidth}{0.5pt}
    \begin{adjustwidth}{0.75cm}{1.5cm}}
    {\vspace{-0.5em}\end{adjustwidth}\noindent\rule{\linewidth}{0.5pt}}
    
    \etocsetstyle{section}
    {
    	
    	\begin{itemize}[itemsep=0.5ex, parsep=0ex, topsep=0ex]
    }
    {\normalsize\bfseries\rmfamily\item[\etocnumber{}]}
    {\etocname{} \EndEtocSectionEntry{}}
    {\end{itemize}}
    
    \etocsetstyle{subsection}
    {
    	\begin{itemize}[itemsep=0.25ex, topsep=0ex, parsep=0ex]
    }
    {\normalfont\item[\etocnumber{}]}
    {\etocname{} \EndEtocSubsectionEntry{}}
    {\end{itemize}}

}

\newcommand{\minitoc}{
    \setminitocstyle{}
    \localtableofcontents{}
}
\newcommand{\dominitoc}{{}}


\newcommand{\jwdnote}[1]{$$$}
\newcommand{\rjmnote}[1]{$$$}
\newcommand{\lgnote}[1]{$$$}
\newcommand{\michael}[1]{$$$}
\newcommand{\omnote}[1]{$$$}
\newcommand{\hlnote}[1]{$$$}
\newcommand{\michal}[1]{$$$}
\newcommand{\piotr}[1]{$$$}

\title{Randomized Numerical Linear Algebra \\ {\Large{A Perspective on the Field With an Eye to Software}} }
    
\date{April 12, 2023} 

\begin{document}

\pagenumbering{gobble}

\maketitle

\chapter*{Preface}

Randomized numerical linear algebra -- \RandNLA{}, for short -- concerns the use of randomization as a resource to develop improved algorithms for large-scale linear algebra computations.
The origins of contemporary \RandNLA{} lay in theoretical computer science, where it blossomed from a simple idea: randomization provides an avenue for computing \textit{approximate} solutions to linear algebra problems more efficiently than deterministic algorithms.
This idea proved fruitful in and was largely driven by the development of scalable algorithms for machine learning and statistical data analysis applications.
However, the true potential of \RandNLA{} only came into focus once it began to integrate with the fields of numerical analysis and ``classical'' numerical linear algebra.
Through the efforts of many individuals, randomized algorithms have been developed that provide full control over the accuracy of their solutions and that can be every bit as reliable as algorithms that might be found in libraries such as \LAPACK{}. 

The spectrum of possibilities offered by \RandNLA{} has created a virtuous cycle of contributions by numerical analysts, statisticians, theoretical computer scientists, and the machine learning community.
Recent years have even seen the incorporation of certain \RandNLA{} methods into MATLAB, the NAG Library, NVIDIA's cuSOLVER, and SciKit-Learn.
In view of these developments, we believe the time is right to accelerate the adoption of \RandNLA{} in the scientific community.
In particular, we believe the community stands to benefit significantly from a suitably defined  ``\RandBLAS{}'' and ``\RandLAPACK{},'' to serve as standard libraries for \RandNLA{}, in much the same way that \BLAS{} and \LAPACK{} serve as standards for deterministic linear algebra.

This monograph surveys the field of \RandNLA{} as a step toward building meaningful \RandBLAS{} and \RandLAPACK{} libraries.
\cref{sec1:intro} primes the reader for a dive into the field and summarizes this monograph's content at multiple levels of detail.
\cref{sec2:rblas} focuses on \RandBLAS{}, which is to be responsible for \textit{sketching}.
Details of functionality suitable for \RandLAPACK{} are covered in the five \nameCrefs{sec2:rblas} that follow.
Specifically, \cref{sec3:LS_and_optim,sec4:lowrank,sec5:more_drivers} cover least squares and optimization, low-rank approximation, and other select problems that are well-understood in how they benefit from randomized algorithms.
The remaining \nameCrefs{sec7:lev_scores} -- on statistical leverage scores (\cref{sec7:lev_scores}) and tensor computations (\cref{sec8:tensors}) -- read more like traditional surveys.
The different flavor of these latter \nameCrefs{sec7:lev_scores} reflects how, in our assessment, the literature on these topics is still maturing.

We provide a substantial amount of pseudo-code and supplementary material over the course of five appendices.
Much of the pseudo-code has been tested via publicly available Matlab and Python implementations.

\clearpage

\pagestyle{revfrontmatter}

\subsection*{Authors}

Riley Murray, ICSI, LBNL, and University of California, Berkeley\\
rjmurray@berkeley.edu \\[0.70em]
James Demmel, University of California, Berkeley\\
demmel@berkeley.edu\\[0.70em]
Michael W. Mahoney, ICSI, LBNL, and University of California, Berkeley\\
mmahoney@stat.berkeley.edu \\[0.70em]
N. Benjamin Erichson, ICSI and Lawrence Berkeley National Laboratory\\
erichson@icsi.berkeley.edu\\[0.70em]
Maksim Melnichenko, University of Tennessee, Knoxville\\
mmelnic1@vols.utk.edu\\[0.70em]
Osman Asif Malik, Lawrence Berkeley National Laboratory\\
oamalik@lbl.gov \\[0.70em]
Laura Grigori, INRIA Paris and J.L. Lions Laboratory, Sorbonne University\\
laura.grigori@inria.fr\\[0.70em]
Piotr Luszczek,  University of Tennessee, Knoxville\\
luszczek@icl.utk.edu\\[0.70em]
Micha{\l} Derezi\'nski, University of Michigan\\
derezin@umich.edu\\[0.70em]
Miles E. Lopes, University of California, Davis\\
melopes@ucdavis.edu\\[0.70em]
Tianyu Liang, University of California, Berkeley\\
tianyul@berkeley.edu\\[0.70em]
Hengrui Luo, Lawrence Berkeley National Laboratory\\
hrluo@lbl.gov \\[0.70em]
Jack Dongarra, University of Tennessee, Knoxville\\
dongarra@icl.utk.edu

\clearpage
\subsection*{Acknowledgements}

Many individuals from the community gave detailed feedback on earlier versions of this monograph that were not circulated publicly.
These individuals include Mark Tygert, Cameron Musco, Joel Tropp, Per-Gunnar Martinsson, Alex Townsend, Daniel Kressner, Alice Cortinovis, Ilse Ipsen, Sergey Voronin, Vivak Patel, Daniel Maldonado, Tammy Kolda, Florian Schaefer, Ramki Kannan, and Piyush Sao -- each of them has our sincere gratitude for their assistance.

In addition, we thank the following people for providing input on the earliest stages of this project:  Vivek Bharadwaj, Younghyun Cho, Jelani Nelson, Mark Gates, Weslley da Silva Pereira, Julie Langou, and Julien Langou.

This work was partially funded by an NSF Collaborative Research Framework: Basic ALgebra LIbraries for Sustainable Technology with Interdisciplinary Collaboration (BALLISTIC), a project of the International Computer Science Institute, the University of Tennessee's ICL, the University of California at Berkeley, and the University of Colorado at Denver (NSF Grant Nos.\ 2004235, 2004541, 2004763, 2004850, respectively) \cite{BALLISTIC}.
Any opinions, findings, and conclusions or recommendations expressed in this material are those of the author(s) and do not necessarily reflect the views of the National Science Foundation.
MWM would also like to thank the Office of Naval Research, which provided partial funding via a Basic Research Challenge on Randomized Numerical Linear~Algebra.

\subsection*{Release History}

\begin{itemize}
    \item 11/13/2022. The first publicly-available draft of this monograph was circulated as a technical report at SC22.
    \item 02/22/2023: arXiv V1.
    Improvements were made to Section 3.2.2 based on comments from Joel Tropp.
    Helpful feedback from Robert Webber led to improvements throughout Sections 4 and 5.2.
    Clarifications and greater detail were added to Appendix B.2 following comments from Ilse Ipsen.
    \item 04/12/2023: arXiv V2. Section 5.1 was slightly revised based on valuable comments from Oleg Balabanov.
    Section 5.3 was rewritten and expanded following helpful discussions with Tyler Chen.
    Sections 4.1.1 and 4.5 now mention an important piece of software that Mark Tygert brought to our attention.
    Revisions were made to Section 5.2.2 to more clearly and accurately characterize methods from the literature.
\end{itemize}

\setcounter{tocdepth}{1}
\dominitoc
\tableofcontents
\setcounter{tocdepth}{2}

\clearpage
\pagestyle{revstyle}
\chapter{Introduction}
\label{sec1:intro}

\minitoc

\label{page:sec1_toc}

This introductory section has three principal goals: 
to motivate our subject and clarify common misconceptions that surround it (\S \ref{subsec:sec1:our_world}); 
to explain this monograph's scope and overarching structure (\S \ref{subsec:astro_view}); 
and to help direct the reader's attention through section-by-section summaries (\S \ref{subsec:sec1:birds_eye}).
Many readers may benefit from our ``survey of surveys'' (\S \ref{subsec:recommended_reading}), and all should at least briefly consult the section on notation and definitions (\S \ref{subsec:notation}).

\section{Our world}\label{subsec:sec1:our_world}
%
%
Numerical linear algebra (NLA) concerns algorithms for computations on matrices with numerical entries.
Originally driven by applications in the physical sciences, it now provides the foundation for vast swaths of applied and computational mathematics. 
The cultural norms in this field developed many years ago, in large part from recurring themes in problem formulations and algorithmically-useful structures in matrices.
However, more recently, NLA has also been motivated by developments in machine learning and data science.
Applications in these fields also have their own themes of problem formulations and structures in data, often of a very different nature than those in more classical applications.

\paragraph{A dire situation.}
While communities that rely on NLA now vary widely, they share one essential property: a ravenous appetite for solving larger and larger problems.
For decades, this hunger was satiated by complementary innovations in hardware and software.
However, this progress should not be taken for granted.
In particular, there are two factors that increasingly present obstacles to scaling linear algebra computations to the next~level. 
\begin{itemize}
    \item \textit{Space and power constraints in hardware.}
    Chips today have billions of transistors, and these transistors are packed into a very small amount of space.
    It takes power to run these transistors at gigahertz frequencies, and power generates heat.
    It is hard for one hot thing to dissipate heat when surrounded by millions of other hot things.
    Too much heat can fry a chip.

    These constraints are known to industry and research community alike,
    and often referred to as the breakdown of the Dennard's
    Law~\cite{Dennard:scaling,Bohr:30yearDennard}
    and the sunsetting of Moore's Law~\cite{Moore:1965:cramming}.
    The former represents the infamous \emph{power wall} due to the
    inability of dissipating the heat produced by the processors leading to
    flattened curve of clock frequency increases.
    The latter introduced the post-Moore era of heterogeneous computing~\cite{vetter:2018:hetero} leading to plethora specialized hardware targeting individual application spaces.
    
    The end result of all this?
    A situation where ``more powerful processors'' are just scaled-out
    versions of ``less powerful processors,'' for both commodity and
    server-tier hardware.
    Any algorithm that does not parallelize well is fundamentally limited in its ability to leverage these advances.
    If one’s pockets are deep enough, then one can try to get around this with purpose-built accelerators.
    But even then, there remains the matter of programming those accelerators, and high-performance implementations of classical NLA algorithms are anything but~simple.
    
    \item \textit{NLA's maturity as a field.}
    Software can only improve so much without algorithmic innovations.
    At the same time, linear algebra is a very well-studied topic, and most algorithmic breakthroughs in recent years have required carefully exploiting structures present in specific problems.
    Identifying new and useful problem structures has been increasingly difficult, often requiring deep knowledge of NLA alongside substantial domain expertise.
\end{itemize}
If we are to continue scaling our capabilities in matrix computations, then it is essential that we leverage all technologies that are on the table.

\paragraph{An underutilized technology.}
This monograph concerns \textit{randomized numerical linear algebra}, or \textit{\RandNLA{}}, for short.
Algorithms in this realm offer compelling advantages in a wide variety of settings.
Some provide an unrivaled combination of efficiency and reliability in computing approximate solutions for massive problems.
Others provide fine-grained control when balancing accuracy and computational cost, as is essential for practitioners who are operating at the limits of what their machines can handle.
In many cases, the practicality of these algorithms can be seen even with elementary MATLAB or Python implementations, which increases their suitability for adapting to new hardware by leveraging similarly powerful abstraction layers.
Finally, although truly high-performance implementations are more complicated, they remain relatively easy to implement when given the right building blocks.

But we are getting ahead of ourselves.
What do we mean by ``randomized algorithms,'' as the term is used within \RandNLA{}?
First and foremost, these are algorithms that are probabilistic in nature.
They use randomness as part of their internal logic to make decisions or compute estimates, which they can go on to use in any number of ways.
These algorithms do not presume a distribution over possible inputs, nor do they assume the inputs somehow possess intrinsic uncertainty.
Rather, they use randomness as a tool, to find and exploit structures in problem data that would seem ``hidden'' from the perspective of classical NLA.

\paragraph{What's this about ``finding hidden structures?''}\label{page:hidden_structures}
Consider the problem of highly overdetermined least squares, i.e., the problem of solving 
\begin{equation}\label{eq:sec1:ols}
    \min_{\vct{x} \in \R^n} \|\mtx{A}\vct{x} - \vct{b}\|_2^2,
\end{equation}
where $\mtx{A}$ has $m \gg n$ rows.
It is well-known that if the columns of $\mtx{A}$ are orthonormal then \eqref{eq:sec1:ols} can be solved in $O(mn)$ time by setting $\vct{x} = \mtx{A}^{\trans}\vct{b}$, where $\mtx{A}^{\trans}$ is the transpose of $\mtx{A}$.
The trouble, of course, is that the columns of $\mtx{A}$ are very unlikely to be orthogonal in any interesting application, 
and the standard algorithms for solving this problem take $O(mn^2)$ time.

However, what if, by some miracle, we could easily find
an $n \times n$ matrix $\mtx{C}$ for which $\mtx{A}\mtx{C}^{-1}$ was column-orthonormal?
In this case, we could compute the exact solution $\vct{x} = (\mtx{C}^{-1})(\mtx{C}^{-1})^{\trans}\mtx{A}^{\trans}\vct{b}$ in time
\begin{equation*}
     O\left(mn + n^3\right)
\end{equation*}
by suitably factoring $\mtx{C}$.
Now, randomized algorithms do not work miracles, but at times they can \textit{approach} the miraculous.
In the specific case of \eqref{eq:sec1:ols}, randomization can be used to quickly identify a basis in which $\mtx{A}$ is \textit{nearly} column-orthonormal.
Any such basis can be incorporated into a standard iterative method from classical NLA.
With such an approach, one can reliably solve \eqref{eq:sec1:ols} to $\epsilon$-error in time
\begin{equation*}
    O\left(mn \log\left(\tfrac{1}{\epsilon}\right) + n^3\right) 
\end{equation*}
(where we ask forgiveness for being vague about the meaning of ``$\epsilon$'').

%
%


\paragraph{Back to the big picture.}
The approach to least squares described above has been known for well over ten years now.
Since then, an entire suite of compelling results on \RandNLA{} has been established.
What's more, the literature also documents the existence of high-performance proof-of-concept implementations that testify to the practicality of these methods.
Indeed, as we explain below, randomized algorithms have been developed to address the same basic challenges as classical methods.
Randomized algorithms are also very well-suited to address many upcoming challenges with which classical algorithms struggle. 
\RandNLA{} as a field is slowly achieving a certain level of maturity.

Despite this, substantial interdisciplinary gaps have impeded technology transfer from those doing research in \RandNLA{} to those who might benefit from it.
This stems partly from the absence of work that organizes the \RandNLA{} literature in a way that supports the development of high-quality software.

This monograph is our attempt at addressing that absence.
With it, we aim to provide a principled and practical foundation for developing high-quality software to address future needs of large-scale linear algebra computations, for scientific computing, large-scale data analysis and machine learning, and other related applications.
Our particular approach is informed by plans to develop such high-quality libraries -- a ``\RandBLAS{}'' and ``\RandLAPACK{},'' if you will.
Towards this end, we have implemented and tested many of the algorithms described herein in both MATLAB\footnote{\url{https://github.com/BallisticLA/MARLA}} and Python.\footnote{\url{https://github.com/BallisticLA/PARLA}}
We provide more context on our approach and scope in \cref{subsec:astro_view}.
But first, we elaborate on the value propositions of \RandNLA{} and the role of randomness in these algorithms.

\subsection{Four value propositions of randomization}\label{subsec:value_prop_randnla}

Our goal here is to introduce (and only introduce!) some value propositions for \RandNLA{}.
We do this for as broad an audience as possible and we have attempted to keep our introductions short.
While these descriptions are unlikely to convince a skeptic, they should at least set the agenda for a debate.

\paragraph{Background: time complexity and FLOP counts.}
In the sequential RAM model of computing, 
an algorithm's \textit{time complexity} is its worst-case total number of reads, writes, and elementary arithmetic operations, as a function of input size.
Precise expressions for time complexity can be hard to come by and difficult to parse.
Therefore it is standard to describe complexity asymptotically, with big-$O$ notation.

In NLA, we also care about how the size of an algorithm's input affects the number of arithmetic operations that it requires.
Arithmetic operations are presumed to be floating point operations (``flops'') by default, and it is common to refer to an algorithm's \textit{flop count} as a function of input size.
Flop counts almost always agree asymptotically with time complexity.
But, in contrast with time complexity, flop counts are often given with explicit constant~factors.
In determining the constant factors accurately one must consider subtleties such as whether a fused multiply-add instruction counts as one or two ``flops.''
We do not consider such subtleties in this monograph.
%
%

\paragraph{Fighting the scourge of superlinear complexity.}

The dimensions of matrices arising in applications typically have semantic meanings.
They might represent the number of points in a dataset, or they might be affected by the ``fidelity'' of a linear model for some nonlinear phenomenon.
A scientist who relies on matrix computations will inevitably want to increase the size of their dataset or the fidelity of their model.
This is often difficult because the complexity of classical algorithms for high-level linear algebra problems rarely scale linearly with the semantic notion of problem size.
This brings us to the first value proposition of \RandNLA{}.

\begin{quote}
    \emph{For many important linear algebra problems, randomization offers entirely new avenues of computing approximate solutions with linear or nearly-linear complexity.}
\end{quote}

To get a sense of why this matters, suppose that one needs to compute a Cholesky decomposition of a dense matrix of order $n$.
The standard algorithm for this takes $n^3 / 3$ flops.
At time of writing, a higher-end laptop can do this calculation for $n = 10{,}000$ in about one second.
However, if $n$ is the semantic notion of problem size, and if one wants to solve a problem ten times as large, then the calculation with $n = 100{,}000$ takes over 15 minutes.

There are two lessons in that simple example.
The first is that superlinear complexity can be crippling when it comes to solving larger linear algebra problems.
The second is that an informed user does well to think of their problem size in a more realistic way.
In the case of this example, one should go into the problem thinking in terms of the number of free parameters in an $n \times n$ positive definite matrix: around 50 million when $n = 10{,}000$ and around 5 billion when $n = 100{,}000$.

\begin{remark}
    One of \RandNLA{}'s success stories is a fast algorithm for computing sparse approximate Cholesky decompositions of so-called \textit{graph Laplacians}.
    In order to keep the length of this monograph under control, we have opted \textit{not} to include algorithms that only apply to sparse matrices.
    However, we do provide algorithms for computing approximate eigendecompositions of regularized positive semidefinite matrices, and these algorithms can be used to solve linear systems faster than Cholesky in certain applications.
\end{remark}

\paragraph{Resisting the siren call of galactic algorithms.}
The problem of multiplying two $n \times n$ matrices is one of the most fundamental in all of NLA.
If we only consider asymptotics, then the fastest algorithms for this
task run in less than $O(n^{2.38})$ time.
%
%
%
However, the fastest method that is practical (Strassen's algorithm), runs in time $O(n^{\log_{2}7})$.

The trouble with these ``fast'' algorithms is that they have massive constants hidden in their big-$O$ complexity.
Such algorithms are called \textit{galactic}, owing to common comparisons between the size of their hidden constants and the number of stars or atoms in the galaxy.
And with this, we arrive at the second value proposition of \RandNLA{}.
\begin{quote}
    \emph{For a handful of important linear algebra problems, the asymptotically fastest (non-galactic) algorithms for computing accurate solutions are, in fact, randomized.}
\end{quote}
\noindent Highly overdetermined least squares (see page \pageref{page:hidden_structures}) is one such problem.

\paragraph{Striking the Achilles' heel of the RAM model.}
The RAM model of computing, although useful, is not high-fidelity.
Indeed, even in the setting of a shared-memory multi-core machine, it fails to account for the fact that moving data from main memory, through different levels of cache, and onward to processor registers is \textit{much} more expensive than elementary arithmetic on the same data.
This fact has been appreciated even in the earliest days of \LAPACK{}'s development, over 30 years ago.
Its principal consequence is that even if the time complexities of two algorithms match up to \textit{and including} constant factors, their performance by wallclock time can differ by orders of magnitude.
This is the third value proposition of \RandNLA{}.

\begin{quote}
    \emph{Randomization creates a wealth of opportunities to reduce and redirect data movement.
    Randomized algorithms based on this principle are significantly faster than the best-available deterministic methods by wallclock time.}
\end{quote}
\noindent Randomized algorithms for computing full QR decompositions with column pivoting fit this description.

\paragraph{Finite-precision arithmetic: once a curse, now a blessing.}
Finite-precision arithmetic and exact arithmetic are different beasts, and this has real consequences for NLA.
For one thing, this limitation introduces many technicalities in understanding accuracy guarantees, even for seemingly straightforward problems like LU decomposition.
It is tempting to view it as a curse.
However, if we accept it as given, then it can be used to our advantage.
Certain computations can be performed with lower precision without compromising the accuracy of a final result.

This perspective brings us to our final value proposition, stated in terms of the concept of sketching, defined momentarily.

\begin{quote}
    \emph{In \RandNLA{}, it is natural to perform computations on sketches of matrices in lower-precision arithmetic.
    Depending on how the sketch is constructed, one can be (nearly) certain of avoiding degenerate situations that are known to cause common deterministic algorithms to fail.}
\end{quote}

\subsection{What is, and isn't, subject to randomness}\label{subsec:sec1:what_is_and_isnt_random}


\paragraph{Sampling sketching operators from sketching distributions.}
We are concerned with algorithms that use random linear dimension reduction maps called \textit{sketching operators}.
The sketching operators used in \RandNLA{} come in a wide variety of forms.
They can be as simple as operators for selecting rows or columns from a matrix, and they can be even more complicated than algorithms for computing Fast Fourier Transforms.
We refer to a distribution over sketching operators as a \textit{sketching distribution}.
Given this terminology, we can highlight the following essential fact.
\begin{quote}
    \emph{For the vast majority of \RandNLA{} algorithms, randomization is only used when sampling from the sketching distribution.}
\end{quote}
From an implementation standpoint, one should know that while sketching distributions can be quite complicated, the sampling process always builds on some kind of basic random number generator.
Upon specifying a seed for the random number generator involved in sampling, \RandNLA{} algorithms become every bit as deterministic as classical algorithms.

\paragraph{Forming and processing sketches.}
When a sketching operator is applied to a large data matrix, it produces a smaller matrix called a \textit{sketch}.
A wealth of different outcomes can be achieved through different methods for processing a sketch and using the processed representation downstream.
\begin{quote}
    \emph{Some processing schemes inevitably yield rough approximations to the solution of a given problem.
    Other processing schemes can lead to high-accuracy approximations, if not exact solutions, under mild assumptions.}
\end{quote}
Across these regimes, one of the most popular trends in algorithm analysis is to employ a two-part approach.
In the first part, the task is to characterize algorithm output in terms of some simple property of the sketch.
In the second part, one can employ results from random matrix theory to bound the probability that the sketch will possess the desired property.

\paragraph{Confidently managing uncertainty.}
The performance of a numerical algorithm is characterized by the accuracy of its solutions and the cost it incurs to produce those solutions.
Naturally, one can expect some variation in algorithm performance when using randomized methods.
Luckily, we have the following.
\begin{quote}
    \emph{Most randomized algorithms ``gamble'' with only one of the two performance metrics, accuracy or cost.
    Through optional algorithm parameters, users retain fine-grained control over one of these two metrics.
    }
\end{quote}
Furthermore, when cost is controllable, the algorithm parameters can be adjusted to influence accuracy; when accuracy is controllable, they can be adjusted to influence cost.
The effects of these influences can sometimes be masked by variability in run-to-run performance.
However, there is a general trend in \RandNLA{} algorithms of becoming more predictable as they are applied to larger problems.
At large enough scales, many randomized algorithms are nearly as predictable as deterministic ones.


\section{This monograph, from an astronaut's-eye view
}
\label{subsec:astro_view}

This monograph started as a development plan for two C\texttt{++} libraries for \RandNLA{}, primarily working within a shared-memory dense-matrix data model.
We prepared a preliminary plan for these libraries in short order by leveraging existing surveys.
However, after pausing our writing for some number of months to receive feedback from community members, we found ourselves with many unanswered questions that would affect our implementations.
Before long we found ourselves in a cycle of diving ever-deeper into the \RandNLA{} literature with an eye to implementation, each time coming up with more answers and more questions.

This monograph does not answer every question we came across in the foregoing months.
Rather, it represents what we know at a time when the best way to answer our remaining questions is to focus on developing the libraries themselves.
Therefore we provide the reader with this --- a monograph that aggregates material from over 300 references on classical and randomized NLA --- which functions partly as a survey and partly as original research.
In it, we present new (unifying) taxonomies, candidate application areas, and even a handful of novel theoretical results and~algorithms.

Although its scope has greatly increased, the original purpose of this monograph informs its structure.
It also contains a number of clear statements about plans for our C\texttt{++} libraries.
Therefore, while we do not want to give the impression that this monograph's value depends on its connections to specific pieces of software, we provide the following remarks on our planned libraries up-front.

\begin{quote}
The first library, \RandBLAS{}, concerns basic sketching and is the subject of \cref{sec2:rblas}.
Our hope is that \RandBLAS{} will grow to become a community standard for \RandNLA{}, in the sense that its API would see wider adoption than any single implementation thereof.
In order to achieve this goal we think it is important to keep its scope narrowly focused.

The second library, \RandLAPACK{}, concerns algorithms for traditional linear algebra problems (\cref{sec3:LS_and_optim,sec4:lowrank,sec5:more_drivers}, on least-squares and optimization, low-rank approximation, and additional possibilities, respectively) and advanced sketching functionality (\cref{sec7:lev_scores,sec8:tensors}).
The design spaces of algorithms for these tasks are \textit{large}, and
we believe that powerful abstractions are needed for a library to leverage this fact.
Consistent with this, we are developing \RandLAPACK{} in an object-oriented programming style wherein \textit{algorithms are objects}.
Such a style is naturally instantiated with functors when working in C\texttt{++}.
\end{quote}

We have written this monograph to be modular and accessible, without sacrificing depth.
The modularity manifests in how there are almost no technical dependencies across \cref{sec3:LS_and_optim,sec4:lowrank,sec5:more_drivers,sec7:lev_scores,sec8:tensors}.
For the sake of accessibility, each \nameCref{sec3:LS_and_optim} gives background on its core subject.
We use two strategies to provide accessibility without sacrificing depth.
First, we make liberal use of appendices.
In them, the reader can find proofs, background on special topics, low-level algorithm implementation notes, and high-level algorithm pseudocode.
Second, our citations regularly indicate precisely where a given concept can be found in a manuscript. 
Therefore, if we give too brief a treatment on a topic of interest, the reader will know exactly where to look to learn more.

\subsubsection{A word on ``drivers'' and ``computational routines''}
We designate most algorithms as either \textit{drivers} or \textit{computational routines}.
These terms are borrowed from \LAPACK{}'s API.
In general, drivers solve higher-level problems than computational routines, and their implementations tend to use a small number of computational routines.
In our context,
\begin{quote}
    \textit{drivers} are only for traditional linear algebra problems,
\end{quote}
while
\begin{quote}
    \textit{computational routines} address a mix of traditional linear algebra problems and specialized problems that are only of interest in \RandNLA{}.
\end{quote}
\cref{sec3:LS_and_optim,sec4:lowrank} cover drivers \textit{and} the computational routines behind them; they are the most comprehensive \nameCrefs{sec1:intro} in this monograph.
\cref{sec5:more_drivers} also covers drivers, but at less depth than the two that precede it.
In particular, it does not identify algorithmic building blocks that would be considered computational routines.
Meanwhile, the advanced sketching functionality in \cref{sec7:lev_scores,sec8:tensors} would \textit{only} be considered for computational routines.


One reason why we use the ``driver'' and ``computational routine'' taxonomy is to push much of the \RandNLA{} design space into computational routines.
This is essential to keeping drivers simple and few in number.
However, it has a side effect: since choices made in the computational routines decisively affect the drivers, it is hard to state theoretical guarantees for the drivers without being prescriptive on the choice of computational routine.
This is compounded by two factors.
First, we prefer to \textit{not} be prescriptive on choices of computational routines within drivers, since there is always a possibility that some problems benefit more from some approaches than others.
Second, even if we recommended specific implementations, it would be very complicated to characterize their performance with consideration to the full range of possibilities for their tuning parameters.

As a result of all this, we make relatively few statements about performance guarantees or computational complexity of driver-level algorithms.
While this is a limitation of our approach, we believe it is not severe.
One can supplement this monograph with a variety of resources discussed in \cref{subsec:recommended_reading}.

\section{This monograph, from a bird's-eye view}
\label{subsec:sec1:birds_eye}

Section-by-section summaries are provided below to help direct the reader's attention.
While space limitations prevent them from being comprehensive, they are effective for what they are.
They assume familiarity with standard linear algebra concepts, including least squares models, singular value decomposition, Hermitian matrices, eigendecomposition, and positive (semi)definiteness.
We define all of these concepts in \cref{subsec:notation} for completeness.
Finally, as one disclaimer, some problem formulations below have slight differences from those used in the sections themselves.

\paragraph{Essential notation and conventions.}
The adjoint of a linear operator $\mtx{A}$ is denoted by $\mtx{A}^{\trans}$. 
When $\mtx{A}$ is a real matrix, the adjoint is simply the transpose.
Vectors have column orientations by default, so the standard inner product of two vectors $\vct{u},\vct{v}$ is $\vct{u}^{\trans}\vct{v}$.

We sometimes call a vector of length $n$ an \textit{$n$-vector}.
If we refer to an $m \times n$ matrix as ``tall'' then the reader can be certain that $m \geq n$ and reasonably expect that $m > n$.
If $m$ is much larger than $n$ and we want to emphasize this fact, then we write $m \gg n$ and would call an $m \times n$ matrix ``very tall.''
We use analogous conventions for ``wide'' and ``very wide'' matrices.

\phantomsection
\subsection*{Basic Sketching (\cref{sec2:rblas}) }
\label{subsec:sec2_summary}

This section documents our work toward developing a \RandBLAS{} standard.
It begins with remarks on the Basic Linear Algebra Subprograms (\BLAS{}), which are to classical NLA as we hope the \RandBLAS{} will be to \RandNLA{}.

\cref{sec2:rblas:high-level-plan} addresses high-level design questions for a \RandBLAS{} standard.
By starting with a simple premise, we arrive at the conclusion that it should provide functionality for \textit{data-oblivious sketching} (that is, sketching without consideration to the numerical properties of the data).
We then offer our thoughts on how such a library should be organized and how it should handle random number generation.

\cref{subsec:sketching_prelims} summarizes a variety of concepts in sketching.
In it, we answer questions such as the following.
\begin{itemize}
    \item What are the geometric interpretations of sketching?
    \item How does one measure the quality of a sketch?
    \item What are the ``standard'' properties for the first and second moments of sketching operator distributions?
    When and how are these properties important in \RandNLA{} algorithms?
\end{itemize}
Detail-oriented readers should consider \cref{subsec:sketching_prelims} alongside \cref{subapp:effective_distortion}, which presents a novel concept called \textit{effective distortion} that is useful in characterizing the behavior of randomized algorithms for least squares and related problems.

\cref{subsec:dense_skops,subsec:sparse_skops,subsec:srfts} review the three types of sketching operator distributions that the \RandBLAS{} might support.
These types of distributions consist of dense sketching operators (e.g., Gaussian matrices), sparse sketching operators, and sketching operators based on subsampled fast trigonometric transforms (such as discrete Fourier, discrete cosine, and Walsh-Hadamard transforms).
As we explain in \cref{subsec:sparse_skops}, we consider row-sampling and column-sampling as particular types of sparse sketching.
The interested reader is referred to \cref{subapp:SJLTs} for details on a class of sparse sketching operators that is distinct from row or column sampling.
These details include notes on high-performance implementations that have not appeared in earlier literature.

Our chapter on basic sketching concludes with \cref{subsec:multi_sketch}, which presents a handful of elementary sketching operations that are not naturally represented by a linear transformation that acts only on the columns or only on the rows of a matrix.
These operations arise in the fastest randomized algorithms for low-rank approximation.

\phantomsection
\subsection*{Least Squares and Optimization (\cref{sec3:LS_and_optim})}
This is one of three \nameCrefs{sec3:LS_and_optim} that cover driver-level functionality, and it is one of two that discuss drivers \textit{and} computational routines.
It is narrower in scope but greater in depth than the other \nameCrefs{sec3:LS_and_optim} that address drivers.

\paragraph{Problem classes.} In \cref{subsec:optim_problem_classes} we consider a variety of least squares problems within a common framework. 
The framework describes all problems in terms of an $m \times n$ data matrix $\mtx{A}$ where $m \geq n$.
Given $\mtx{A}$, any pair of vectors $(\vct{b},\vct{c})$ of respective lengths $(m, n)$ can be considered along with a parameter $\mu \geq 0$ to define ``primal'' and ``dual'' \textit{saddle point problems}.
The primal problem is always
\begin{equation}
    \min_{\vct{x}\in\R^n}\left\{\|\mtx{A}\vct{x} - \vct{b}\|_2^2 + \mu\|\vct{x}\|_2^2 + 2\vct{c}^{\trans}\vct{x}\right\}.\label{eq:sec0_saddle_opt_x} \tag{$P_{\mu}$}
\end{equation}
The dual problem takes one of two forms, depending on the value of $\mu$:
\begin{equation}\label{eq:sec0_dual_saddle}
    \begin{rcases}
     & \min_{\vct{y} \in \R^m} \left\{\|\mtx{A}^{\trans}\vct{y} - \vct{c}\|_2^2 + \mu\|\vct{y} - \vct{b}\|_2^2\right\} & \quad\text{if } \mu > 0 \quad \\
    & \min_{\vct{y} \in \R^m}\left\{ \|\vct{y} - \vct{b} \|_2^2 \,:\, \mtx{A}^{\trans}\vct{y} = \vct{c} \right\} & \quad\text{if } \mu = 0 \quad
    \end{rcases}.
    \tag{$D_{\mu}$}
\end{equation}
Special cases of these problems include overdetermined and underdetermined least squares, as well as ridge regression with tall or wide matrices.
\cref{subapp:error_metrics} gives background on accuracy metrics, sensitivity analysis, and error estimation methods that apply to the most prominent problems under this umbrella.

\cref{subsec:optim_problem_classes} considers one type of problem that does not fit nicely into the above framework.
Specifically, for a positive semidefinite linear operator $\mtx{G}$ and a positive parameter $\mu$, it also considers the \textit{regularized quadratic} problem
\begin{equation}\label{eq:sec0_unconstr_quadratic_opt}
    \min_{\vct{w}} \vct{w}^{\trans}(\mtx{G} + \mu\mtx{I})\vct{w} - 2\vct{h}^{\trans}\vct{w}.\tag{$R_{\mu}$}
\end{equation}
We note that \eqref{eq:sec0_saddle_opt_x} and \eqref{eq:sec0_dual_saddle} can be cast to this form when $\mu$ is positive.
However, to make this reformulation would be to obfuscate the structure in a saddle point problem, rather than reveal it.

\paragraph{Drivers.}
We start in \cref{subsubsec:sketch_and_solve} by covering a low-accuracy method for overdetermined least squares known as \textit{sketch-and-solve}.
This method is remarkable for the simplicity of its description and its analysis.
It is also the first place where our newly-proposed concept of effective distortion provides improved insight into algorithm behavior.

\cref{subsubsec:sketch_and_precond,subsubsec:nys_pcg} concern methods for solving problems \eqref{eq:sec0_saddle_opt_x}, \eqref{eq:sec0_dual_saddle}, and \eqref{eq:sec0_unconstr_quadratic_opt} to high accuracy.
These methods use randomization to find a \textit{preconditioner}.
The preconditioner is used to implicitly change the coordinate system that describes the optimization problem, in such a way that the preconditioned problem can easily be solved by iterative methods from classical NLA.
These methods are intended for use with certain problem structures (e.g. $m \gg n$) that we clearly identify.

The broader idea of sketch-and-solve algorithms has been successfully used for kernel ridge regression (KRR -- see \cref{subapp:primer_krr} for a primer).
In \cref{subsubsec:krr_sketch_and_solve}, we reinterpret two algorithms for approximate KRR as sketch-and-solve algorithms for \eqref{eq:sec0_unconstr_quadratic_opt}.
We further identify how the \textit{sketched problems} amount to saddle point problems with $m \gg n$.
\cref{subapp:KRR_AM15_SASAP} details how the saddle point framework is useful in the more complicated of these two settings.

\paragraph{Computational routines.}
The computational routines that we cover in \cref{subsec:optim_comp_routines} only pertain to drivers based on random preconditioning.
We kick off our discussion in \cref{subsubsec:tech_background_saddle} with background on saddle point problems.
Then, \cref{subsec:precond_gen} addresses preconditioner generation for saddle point problems when $m \gg n$.
It opens with a theoretical result (\cref{prop:left_sketch_precond}) characterizing the spectrum of the preconditioned data matrix $\mtx{A}$ (see also \cref{subsubapp:we_sketch_subspaces}) before providing a comprehensive overview of implementation considerations.
Special attention is paid to how one can generate the preconditioner when $\mu > 0$ at no added cost compared to when $\mu = 0$.
In \cref{subsec:saddle_nys_precond}, we extend recently proposed methods from the literature to define novel low-memory preconditioners for regularized saddle point problems.
Finally, \cref{subsubsec:det_saddle_solve} reviews a suite of deterministic iterative algorithms from classical NLA that are needed for randomized preconditioning algorithms.

\phantomsection
\subsection*{Low-rank Approximation (\cref{sec4:lowrank}) }
\label{subsec:sec4_summary}

Low-rank approximation problems take the following form.
\begin{quote}
    Given as input an $m \times n$ target matrix $\Ao$, compute suitably structured factor matrices $\mtx{E}$, $\mtx{F}$, and $\mtx{G}$ where
    \begin{equation*}\label{eq:sec0:LRA}
    \begin{array}{
    cccccc}
    \Aa & := & \mtx{E} & \mtx{F} & \mtx{G} \\
    m\times n & & m\times k & k \times k & k\times n
    \end{array} 
    \end{equation*}
    approximates $\Ao$.
    The accuracy of the approximation $\Aa \approx \Ao$ may vary from one application to another, but we require that $k \ll \min\{m, n\}$.
\end{quote}
This \nameCref{sec4:lowrank} summarizes the massive design spaces of randomized algorithms for such problems, as documented in the existing literature.
One of its core contributions is to clarify what parts of this design space are relevant in what situations.

\paragraph{Problem classes.}
\cref{subsec:supported_lowrank} starts by explaining the significance of the SVD and eigendecomposition in relation to principal component analysis.
From there, it introduces the reader to a handful of \textit{submatrix-oriented decompositions} -- CUR, one-sided interpolative decompositions (one-sided ID), and two-sided interpolative decompositions (two-sided ID) -- along with their applications.
\cref{subsec:supported_lowrank} concludes with guidance on how one should and should-not quantify approximation error in low-rank approximation problems.
We note that this background is \textit{much} more detailed than that \cref{subsec:optim_problem_classes} provided on least squares and optimization problems.
This extra background will be important for many readers.

\paragraph{Drivers.}
\cref{subsec:lowrank_drivers} gives concise yet comprehensive overviews for \RandNLA{} algorithms for SVD and Hermitian eigendecomposition (\S \ref{subsubsec:svd_algs} and \ref{subsubsec:herm_eig_algs}) as well as CUR and two-sided interpolative decomposition (\S \ref{subsubsec:TSID_CUR_driver}).
In the process, we take care to prevent misunderstandings in what we mean by a \textit{\Nystrom{} approximation} of a positive semidefinite matrix.
Pseudocode is provided for at least one algorithm for each of these problems.

\paragraph{Computational routines.}
As is typical for surveys on this topic, we identify \textit{QB decomposition} (\S \ref{subsubsec:qb_alg}) and \textit{column subset selection (CSS) / one-sided ID} (\S \ref{subsec:CSS_CX_computational}) as the basic building blocks for most drivers.
We also isolate power iteration (\S \ref{subsubsec:data_aware}) and partial column-pivoted matrix decompositions (\S \ref{subsec:column_pivoted}) as subproblems with nontrivial design spaces that are important to low-rank approximation.

Some of the building blocks covered cumulatively from \cref{subsubsec:data_aware,subsubsec:qb_alg,subsec:column_pivoted,subsec:CSS_CX_computational} can be used to compute low-rank approximations iteratively.
If one seeks an approximation that is accurate to within some given tolerance, then these iterative algorithms require methods for estimating norms of linear operators; we cover such norm estimation methods briefly in \cref{subsubsec:norm_rank_est}.
\cref{app:lowrank:pseudocode} contains pseudocode for seven computational routines and details their dependency structure.

\phantomsection
\subsection*{Further Possibilities for Drivers (\cref{sec5:more_drivers})}
\label{subsec:sec5_summary}

This \nameCref{sec5:more_drivers} covers a handful of independent topics.

\cref{subsec:multipurpose_decomp} covers \textit{multi-purpose matrix decompositions}.
\cref{subsubsec:chol_qr} explains a simple algorithm for computing an unpivoted QR decomposition of a tall-and-thin matrix of full column rank; the algorithm uses randomization to precondition Cholesky QR for numerical stability.
\cref{subsec:fullrank_decomp:qrcp} first describes an existing algorithm from the literature for Householder QRCP of matrices with any aspect ratio, and then presents an extension of preconditioned Cholesky QR that incorporates pivoting and allows for rank-deficient matrices.
\cref{subsec:UTV_URV_QLP} summarizes methods for computing decompositions known by various names (UTV, URV, QLP) that all aim to serve as cheaper surrogates for the SVD.

\cref{subsec:general_linear_systems} addresses randomized algorithms for the solution of unstructured linear systems.
This includes direct methods based on accelerating (or safely bypassing) pivoting in matrix decompositions (\S \ref{subsec:factor_linsys}) as well as iterative methods (\S \ref{subsec:linsys_iterative}).
Some of these iterative methods were developed fairly recently and are a subject of considerable practical interest.

\cref{sec:trace_estimation} considers the problem of estimating the trace of a linear operator.
This problem is unique in the context of this monograph, since it makes no sense to consider in the shared-memory dense-matrix data model.
We have opted to cover it anyway since randomized methods are \textit{extremely} effective for it.
\cref{subsec:Girard_Hutchinson} introduces the elementary Girard--Hutchinson estimator developed in the late 1980s. \cref{subsec:trace_est_by_lowrank_approx} covers methods that benefit from contemporary developments on randomized algorithms for low-rank approximation.
Finally, \cref{subsec:trace_est_integral_quadrature} covers methods for computing the trace of $f(\mtx{B})$ where $\mtx{B}$ is a Hermitian matrix and $f$ is a matrix function.
We give a significant amount of background material to help the newcomer understand the methods in this last category.

\phantomsection
\subsection*{Advanced Sketching: Leverage Score Sampling (\cref{sec7:lev_scores}) }
\label{subsec:sec6_summary}

Leverage scores constitute measures of importance for the rows or columns of a matrix.
They can be used to define data-aware sketching operators that implement row or column sampling.

\cref{subsec:lev_scores} introduces three types of leverage scores: standard leverage scores, subspace leverage scores, and ridge leverage scores.
We explain how each type is suitable for sketching with different downstream tasks in mind.
For example, a proposition in \cref{subsec:lev_scores_standard} bounds the probability that a row-sampling operator satisfies a subspace embedding property for the range of a matrix $\mtx{A}$.
The bound shows that if rows are sampled according to a distribution $\vct{q}$, then it becomes more likely that the subspace embedding property holds as $\vct{q}$ approaches $\mtx{A}$'s standard leverage score distribution.

\cref{subsec:Estimation-of-leverage-scores} covers randomized algorithms for approximating leverage scores.
Such approximation methods are important since leverage scores are expensive to compute except when working with highly structured problem data.
The structure of these algorithms bears similarities to those seen in earlier \nameCrefs{sec7:lev_scores}.
For example, \cref{subsec:approx_subspace_leverage_scores} explains how a longstanding algorithm for approximating subspace leverage scores can be extended with QB approaches from \cref{subsubsec:qb_alg}. 

\phantomsection
\subsection*{Advanced Sketching: Tensor Product Structures (\cref{sec8:tensors})}
\label{subsec:sec7_summary}


Tensor computations are the domain of \textit{multilinear algebra}.
As such, it is reasonable to exclude them from the scope of a standard library from \RandNLA{}.
However, at the same time, it is reasonable for a \RandNLA{} library to support the core subproblems in tensor computations that are linear algebraic in nature.
Sketching implicit matrices with tensor product structure fits this description.

This \nameCref{sec8:tensors} reviews efficient methods for sketching matrices with Kronecker product or Khatri--Rao product structures (see \S \ref{subsubsec:def-Kronecker-Khatri--Rao-matrices} for definitions). 
The material in \ref{subsubsec:Row-structured-sketches}--\ref{subsubsec:Recursive_sketch} concerns data-oblivious sketching distributions that are similar to those from \cref{sec2:rblas} but modified for the tensor product setting.
\cref{subsubsec:tensor_product_lev_scores}, by contrast, concerns data-aware sketching methods based on leverage score sampling.
Notably, there are methods to efficiently sample from the \textit{exact} leverage score distributions of tall matrices with Kronecker and Khatri--Rao product structures without explicitly forming those matrices.

For completeness, \cref{subsec:partial-sketch-reuse} discusses motivating applications (specifically, tensor decomposition algorithms) that entail sketching matrices with these structures.


\section{Recommended reading}\label{subsec:recommended_reading}

This monograph is heavily influenced by a recent and sweeping survey by Martinsson and Tropp \cite{MT:2020}.
We draw detailed comparisons to that work in \cref{subsec:MT_survey_deltas}. But first, we give remarks on other resources of note for learning about \RandNLA{}.

\subsection{Tutorials, light on prerequisites}

\paragraph{RandNLA: randomized numerical linear algebra, \textup{by Drineas and Mahoney \cite{DM16_CACM}}.} \hfill
\vspace{0.25em}

\noindent Depending on one's background (and schedule!)\ this article can be read in one sitting.
It requires no knowledge of NLA or probability.
In fact, it does not even presume that the reader already cares about matrix computations. 
It starts with basic ideas of matrix approximation by subsampling, explains the effect of sampling in different data-aware ways, and frames general data-oblivious sketching as ``preprocessing followed by uniform subsampling.''
It summarizes, at a very high level, significant results of \RandNLA{} in least squares, low-rank approximation, and the solution of structured linear systems known as \textit{Laplacian systems}.

\paragraph{Lectures on randomized numerical linear algebra, \textup{by Drineas and Mahoney \cite{RandNLA_PCMIchapter_chapter}}.}\hfill\vspace{0.25em}

\noindent This book chapter is useful for those who want to see representative banner results in RandNLA \textit{with proofs}.
It covers algorithms for least squares and low-rank approximation.
Its proofs emphasize decoupling deterministic and probabilistic aspects of analysis.
Among resources that engage with the theory of \RandNLA{}, it is notable for its brevity and its self-contained introductions to linear algebra and probability.

\subsection{Broad and proof-heavy resources}

\paragraph{Sketching as a tool for numerical linear algebra, \textup{by Woodruff \cite{Woodruff:2014}}.}\hfill\vspace{0.25em}

\noindent This monograph proceeds one problem at a time, starting with $\ell_2$ regression, then on to $\ell_1$ regression, then low-rank approximation, and finally graph sparsification.
It develops the technical machinery needed for each of these settings, at various levels of detail.
Among resources that address \RandNLA{} theory, it is notable for its treatment of lower bounds (i.e., limitations of randomized algorithms).

\paragraph{An introduction to matrix concentration inequalities, \textup{by Tropp \cite{Tropp:2015:matrixConcentrationBook}}.}\hfill\vspace{0.25em}

\noindent This monograph gives an introduction to the theory of matrix concentration and its applications.
It is not about \RandNLA{} \textit{per se}, but several of its applications do focus on \RandNLA{}.
The course notes \cite{Tropp:2019:LecNotes} build on this monograph, exploring theory and applications of matrix concentration developed after \cite{Tropp:2015:matrixConcentrationBook} was written.

\paragraph{Lecture notes on randomized linear algebra, \textup{by Mahoney \cite{Mah16_RLA_TR}}.}\hfill\vspace{0.25em}

\noindent These notes are fairly comprehensive in their coverage of results in \RandNLA{} up to 2013.
They address matrix concentration, approximate matrix multiplication, subspace embedding properties of sketching distributions, as well as various algorithms for least squares and low-rank approximation.
These notes are distinct from \cite{Woodruff:2014} in that they address theory and practice. (Of course, being course notes, they are not suitable as a formal reference.)

\subsection{Perspectives on theory, light on proofs}


\paragraph{Randomized algorithms for matrices and data, \textup{by Mahoney \cite{Mah-mat-rev_BOOK}}.}\hfill\vspace{0.25em}

\noindent This monograph heavily emphasizes concepts, interpretations, and qualitative proof strategies.
It is a good resource for those who want to know what \RandNLA{} can offer in terms of theory for the least squares and low-rank approximation.
It is notable for the effort it expends to connect \RandNLA{} theory to theoretical developments in other disciplines.

\paragraph{Determinantal point processes in randomized numerical linear algebra, \textup{by Derezi\'{n}ski and Mahoney \cite{DM21_NoticesAMS}}.}\hfill\vspace{0.25em}

\noindent This article gives an overview of \RandNLA{} theory from the perspective of determinantal point processes and statistical data analysis.
Among the many resources for learning about \RandNLA{}, it is notable for offering a distinctly \textit{prospective} (rather than \textit{retrospective}) viewpoint.

\subsection{Deep investigations of specific topics}

\paragraph{Finding structure with randomness: probabilistic algorithms for constructing approximate matrix decompositions, \textup{by Halko, Martinsson, and Tropp \cite{HMT:2011}}.}\hfill\vspace{0.25em}

\noindent As of late 2022, this article is the single most influential resource on \RandNLA{}.
Its introduction includes a history of how randomized algorithms have been used in numerical computing, as well as a brief summary of (then) active areas of research in \RandNLA{}.
Following the introduction, it focuses exclusively on low-rank approximation.
It is extremely thorough in its treatment of both theory and practice.

This article is now somewhat out of date and is partially subsumed by \cite{MT:2020}.
However, it is still of distinct value for the fact that it proves all of its main results (and in certain cases, by novel methods).
It also includes some algorithms that are not found in \cite{MT:2020}.

\paragraph{Randomized algorithms in numerical linear algebra, \textup{by Kannan and Vempala \cite{KV:2017:survey}}.}\hfill\vspace{-1em}

\noindent This survey provides a detailed theory of row and column sampling methods.
It also includes methods for tensor computations.

\paragraph{Randomized methods for matrix computations, \textup{by Martinsson \cite{Martinsson:2018_ish}}.}\hfill\vspace{0.25em}

\noindent This book chapter focuses on practical aspects of randomized algorithms for low-rank approximation.
In this regard, it is important to note that while \cite{HMT:2011} provided thorough coverage of this topic \textit{at the time}, the more recent \cite{Martinsson:2018_ish} reviews important practical advances developed after 2011.
Among resources that provide an in-depth investigation into low-rank approximation, is notable for how it also includes algorithms for full-rank matrix decomposition.

\subsection[\textit{Randomized numerical linear algebra: Foundations and Algorithms}, by Martisson and Tropp]{Randomized numerical linear algebra: \\ Foundations and Algorithms}\label{subsec:MT_survey_deltas}
Martinsson and Tropp's recent \textit{Acta Numerica} survey, \cite{MT:2020}, covers a wide range of topics, each with substantial technical and historical depth.
We have benefited from it tremendously in developing our plans for \RandBLAS{} and \RandLAPACK{}.
Because we have found this resource so useful -- and, at the same time, because we have gone through the trouble of writing a distinct monograph that is just as long -- we think there is value in highlighting how it differs from our work.

\paragraph{Basic sketching.}
By comparison to \cite{MT:2020}, we focus more on implementation than on theory.
The outcome of this is the broadest-yet review of the literature relevant to the implementation of sketching methods.
In the appendices, we provide novel technical contributions to sketching theory and practice.
\begin{quote}
    See \cref{sec2:rblas,app:sketching}, \cite[\S 7 -- \S 9]{MT:2020}.
\end{quote}
    
\paragraph{Least squares and optimization.} 
Our coverage of these concepts is comprehensive, insofar as optimization can be reduced to linear algebra.
It also includes a number of novel technical contributions and a review of relevant software.
By comparison, \cite{MT:2020} provides very limited coverage of this area, as acknowledged in \cite[\S 1.6]{MT:2020}.
\begin{quote}
    See \cref{sec3:LS_and_optim,app:lstsq_details}, \cite[\S 10]{MT:2020}.
\end{quote}

\paragraph{Low-rank approximation.}
Our approach here is very different than that of \cite{MT:2020}.
It provides effective scaffolding for a reader to get a handle on the vast literature on low-rank approximation.
However, it comes at the price of creating fewer opportunities for mathematical explanations.
Separately, our coverage here is distinguished by providing an overview of software that implements randomized algorithms for low-rank approximation.
\begin{quote}
    See \cref{sec4:lowrank,app:lowrank}, \cite[\S 11 -- \S 15]{MT:2020}.
\end{quote}

\paragraph{Full-rank matrix decompositions.}
By comparison to \cite{MT:2020}, we emphasize a broader range of matrix decompositions and more algorithms for computing them.
One of the algorithms we cover is novel and is accompanied by proofs that characterize its behavior.
For the algorithms covered here \textit{and} in \cite{MT:2020}, the latter provides more mathematical detail.
\begin{quote}
    See \cref{subsec:multipurpose_decomp,subsec:factor_linsys}, \cref{app:cholqrcp}, \cite[\S 16]{MT:2020}.
\end{quote}

\paragraph{Kernel methods.}
Randomized methods have proven very effective in processing machine learning models based on \textit{positive definite kernels}.
They are also effective in approximating matrices from scientific computing induced by \textit{indefinite kernels}.
Both of these topics are addressed in \cite{MT:2020}.
We only address the former topic, and we do so in a way that emphasizes the resulting linear algebra problems.
\begin{quote}
    See Sections \ref{subsubsec:sketch_and_precond}, \ref{subsubsec:krr_sketch_and_solve}, and \ref{subsec:ridge_leverage_scores}, \cref{subapp:primer_krr}, \cite[\S 19, \S 20]{MT:2020}.
\end{quote}

\paragraph{Linear system solvers.}
We cover slightly more material for solving unstructured linear systems than \cite{MT:2020}.
However, we do not cover methods that are specific to sparse problems.
As a result, we do not cover a prominent method for approximate Cholesky decompositions of sparse graph Laplacians.
\begin{quote}
   See Sections \ref{subsubsec:nys_pcg} and \ref{subsec:general_linear_systems}, \cite[\S 17, \S 18]{MT:2020}.
\end{quote}

\paragraph{Trace estimation.}
We have the luxury of being able to cover recently-developed algorithms that were not available when \cite{MT:2020} was written.
This includes two methods that provide the first major advances in trace estimation since the late 1980s.
We provide less depth than \cite{MT:2020} on average, with the notable exception of stochastic Lanczos quadrature.
\begin{quote}
    See \cref{sec:trace_estimation}, \cite[\S 4, \S 6]{MT:2020}.
\end{quote}

\paragraph{Advanced sketching.}
Both this monograph and \cite{MT:2020} cover leverage score sampling and sketching operators with tensor product structures.
We cover these topics in substantially more detail, spending a full ten pages on each of them.
We do this partly because these topics complement one another: implicit matrices with tensor product structures are among the best candidates for practical leverage score sampling, nearly on par with kernel matrices from machine learning.
\begin{quote}
    See \cref{sec7:lev_scores,sec8:tensors}, \cite[\S 7.4, \S 9.4, \S 9.6, \S 19.2.3]{MT:2020}.
\end{quote}

\section{Notation and terminology}\label{subsec:notation}

Our notation is summarized in \cref{table:notation}; we also define some of this notation below as we explain basic concepts.

\subsubsection{Matrices and vectors}

Let $\mtx{A}$ be an $m \times n$ matrix or linear operator.
We use $\mtx{A}^*$ to denote its adjoint (transpose, in the real case) and $\mtx{A}^\dagger$ to denote its pseudo-inverse.
It is called \textit{Hermitian} if $\mtx{A}^{\trans} = \mtx{A}$ and \textit{positive semidefinite} if it is Hermitian and all of its eigenvalues are nonnegative.
We often abbreviate ``positive semidefinite'' with ``psd.''

We sometimes find it convenient to write $\mtx{A} \in \R^{m \times n}$.
However, it should be understood that the methods in this monograph generally apply to both real and complex matrices.
We therefore tend to define a matrix by phrases like ``$\mtx{A}$ is $m$-by-$n$'' or ``an $m$-by-$n$ matrix $\mtx{A}$.''
We often call a vector of length $n$ an \textit{$n$-vector}; vectors are oriented as columns by default.

For $m \geq n$, a \textit{QR decomposition} of $\mtx{A}$ consists of an $m \times n$ column-orthonormal matrix $\mtx{Q}$ and an upper-triangular matrix $\mtx{R}$ for which $\mtx{A} = \mtx{Q}\mtx{R}$.
Those familiar with the NLA literature will note that this is typically called the \textit{economic} QR decomposition.
If $\mtx{A}$ has rank $k < \min(m, n)$, then we also consider it valid for $\mtx{Q}$ to be $m \times k$ and for $\mtx{R}$ to be $k \times n$.
We also consider \textit{QR decomposition with column pivoting} (QRCP).
To describe QRCP, we say that if $J = (j_1,\ldots,j_n)$ is a permutation of $\idxs{n}$, then
\[
    \mtx{A}[:,J] = [\vct{a}_{j_1},\vct{a}_{j_2},\ldots,\vct{a}_{j_n}]
\]
where $\vct{a}_i$ is the $i^{\text{th}}$ column of $\mtx{A}$.
In this notation, QRCP produces an index vector $J$ and factors $(\mtx{Q},\mtx{R})$ that provide a QR decomposition of $\mtx{A}[:,J]$.

Now let $\mtx{A}$ have rank $k$.
Its \textit{singular value decomposition} (SVD) takes the form $\mtx{A} = \mtx{U}\mtx{\Sigma}\mtx{V}^*$, where the matrices $(\mtx{U},\mtx{V})$ have $k$ orthonormal columns and $\mtx{\Sigma} = \diag(\sigma_1,\ldots,\sigma_k)$ is a square matrix with sorted entries $\sigma_1 \geq \cdots \geq \sigma_k > 0$.
The SVD can also be written as a sum of rank-one matrices: $\mtx{A} = \textstyle\sum_{i=1}^{k} \sigma_i \vct{u}_i\vct{v}_i^*$,
where $(\vct{u}_i,\vct{v}_i)$ are the $i^{\text{th}}$ columns of $(\mtx{U},\mtx{V})$ respectively.
Those familiar with the NLA literature will note that this is typically called the \textit{compact SVD}.

\subsubsection{Probability and our usage of the term ``random.''}

A \textit{Rademacher} random variable uniformly takes values in $\{+1,-1\}$.
The initialism ``iid'' expands to \textit{independent and identically distributed}.

We often abuse terminology and say that a matrix ``randomly'' performs some operation.
In reality, matrices only perform deterministic calculations, and randomness only comes into play when the matrix is first constructed.
This convention extends to ``matrices'' that are abstract linear operators, in which case randomness is only involved in constructing the data that defines the operator.

Unqualified use of the term ``random'' before performing an action with a finite set of outcomes (such as sampling components from a vector, applying a permutation, etc...) means the randomness is uniform over the space of possible actions.

%
%
%
\begin{table}[htb]
\caption{Notation}\label{table:notation}
{\normalsize \begin{tabular}{ll}
\hline
\multicolumn{2}{l}{Arrays and indexing}
\\ \hline
$A_{ij}$ ~or~ $\mtx{A}[i, j]$                     & $(i,j)^{\mathrm{th}}$ entry of a matrix $\mtx{A}$             \\
 $\vct{a}_i$ ~~or~ $\mtx{A}[:,i]$                   & $i^{\mathrm{th}}$ column of $\mtx{A}$                       \\
$v_i$ ~~\,or~ $\vct{v}[i]$                           & $i^{\mathrm{th}}$ component of a vector $\vct{v}$          \\
$\idxs{m}$                                      & index set of integers from 1 to $m$                             \\
$I$ or $J$                                      & partial permutation vector for indexing into an array           \\
$|I|$                                           & length of an index vector        \\
$\mtx{A}[I,\fslice{}]$                          & submatrix consisting of (permuted) rows of $\mtx{A}$                       \\
$\mtx{A}[\fslice{},J]$                          & submatrix consisting of (permuted) columns of $\mtx{A}$                    \\
$\lslice{k}$                                    & index into the leading $k$ elements of an array,                \\
                                                & along an axis of length at least $k$                            \\
$\tslice{k}$                                    & index into the trailing $n-k+1$ elements of an array,             \\
                                                & along an axis of length $n \geq k$                              \\ \hline 
\multicolumn{2}{l}{Reserved symbols}                                                                              \\ \hline
$\mtx{S}$                                       & sketching operator                                              \\
$\mtx{I}_k$                                     & identity matrix of size $k \times k$                            \\
$\vct{\delta}_i$                                & $i^{\mathrm{th}}$ standard basis vector of implied dimension    \\
$\vct{0}_n$                                     & zero vector of length $n$                                       \\
$\mtx{0}_{m \times n}$                          & zero matrix of size $m \times n$                                \\ \hline
\multicolumn{2}{l}{Linear algebra}                                                                                \\ \hline
$\|\vct{x}\|_2$ ~or~ $\|\vct{x}\|$              & Euclidean norm of a vector $\vct{x}$                            \\
$\|\mtx{A}\|_2$                                 & spectral norm of $\mtx{A}$                                      \\
$\|\mtx{A}\|_{\mathrm{F}}$                      & Frobenius norm of $\mtx{A}$                                     \\
$\cond(\mtx{A})$                                & Euclidean condition number of $\mtx{A}$                         \\
$\lambda_i(\mtx{A})$                            & $i^{\mathrm{th}}$ largest eigenvalue of $\mtx{A}$               \\
$\sigma_i(\mtx{A})$                             & $i^{\mathrm{th}}$ largest singular value of $\mtx{A}$           \\
$\mtx{A}^{\trans}$                              & adjoint (transpose, in the real case) of $\mtx{A}$              \\
$\mtx{A}^{\dagger}$                             & Moore--Penrose pseudoinverse of $\mtx{A}$                       \\
$\mtx{A}^{1/2}$                                 & Hermitian matrix square root                                    \\ 
$\mtx{A} \preceq \mtx{B}$                       & the matrix $\mtx{B} - \mtx{A}$ is positive semidefinite         \\ \hline
\multicolumn{2}{l}{Matrix decomposition conventions}                                                              \\ \hline
$\mtx{A} = \mtx{Q}\mtx{R}$                      & QR decomposition (economic, by default)                                       \\
$(\mtx{Q},\mtx{R},J) = \code{qrcp}(\mtx{A})$    & QR with column-pivoting; $\mtx{A}[:,J] = \mtx{Q}\mtx{R}$. 
  \\
$\mtx{A} = \mtx{U}\mtx{\Sigma}\mtx{V}^{\trans}$ & singular value decomposition (compact, by default)                           \\
$\mtx{R} = \code{chol}(\mtx{G})$                & upper triangular Cholesky factor of $\mtx{G} = \mtx{R}^{\trans}\mtx{R}$ \\ \hline
\multicolumn{2}{l}{Probability}                                                                                   \\ \hline
$X \sim \mathcal{D}$                            & $X$ is a random variable following a distribution $\mathcal{D}$ \\
$\E[\mtx{X}]$                                   & expected value of a random matrix $\mtx{X}$                     \\
$\operatorname{var}(X)$                                       & variance of a random variable $X$                               \\
$\Pr\{E\}$                                      & probability of the event $E$                                   
\end{tabular}
}
\end{table}

\chapter{Basic Sketching}
\chaptermark{Basic Sketching}
\label{sec2:rblas}

\minitoc
\bigskip

The \BLAS{} (Basic Linear Algebra Subprograms) were originally a collection of Fortran routines for computations including vector scaling, vector addition, and applying Givens rotations \cite{LHKK:1979:BLAS1}.
They were later extended to operations such as matrix-vector multiplication and triangular solves \cite{DDcHH:1988:BLAS2} as well as matrix-matrix multiplication, block triangular solves, and symmetric rank-$k$ updates \cite{DCHD:1990:BLAS3}.
These routines have subsequently been organized into three \textit{levels} called \BLASlev{1}, \BLASlev{2}, and \BLASlev{3}.

Over the years the \BLAS{} have evolved into a \textit{community standard}, with implementations targeting different machine architectures in many programming languages.
This standardization has been instrumental in the development of linear algebra libraries -- from the early days of \LINPACK{}, through to \LAPACK{}, and on to modern libraries such as \PLASMA{} and \SLATE{} \cites{LINPACK:1979,LAPACK:1987:Lawn01,LAPACK:1999,PLASMA:2009,PLASMA:2019,SLATE:2017:design,SLATE:2017:roadmap}.
It has also reduced the coupling between hardware and software design for NLA.
Indeed, the spirit of the \BLAS{} has been adapted to accommodate dramatic changes in prevailing architectures, such as those faced by \ScaLAPACK{} and \MAGMA{} \cites{PBLAS:1995,ScaLAPACK:1996,MAGMA:2010:general,MAGMA:2010:BLAS}.

This \nameCref{sec2:rblas} summarizes our progress on the design of a ``\RandBLAS{}'' library, which is to be to RandNLA as \BLAS{} is to classical NLA.
\cref{sec2:rblas:high-level-plan} begins by speaking to high-level scope and design considerations.
From there, \cref{subsec:sketching_prelims} summarizes sketching concepts that remain important throughout this monograph; we encourage the reader to not dwell too long on this \nameCref{subsec:sketching_prelims} and instead return to it as-needed later on.
Sections \ref{subsec:dense_skops} through \ref{subsec:multi_sketch} present our plans for sketching dense data matrices.
In brief: our near-term plans are for the \RandBLAS{} to support sketching operators which could naturally be represented by dense arrays or by sparse matrices with certain structures; we consider row sampling and column sampling as particular types of sparse sketching.

\section{A high-level plan}
\label{sec2:rblas:high-level-plan}

We begin with a simple premise.
\begin{quote}
    The \RandBLAS{}' defining purpose should be to facilitate implementation of high-level \RandNLA{} algorithms.
\end{quote}
This premise works to reduce the \RandBLAS{}' scope, as there are ``basic'' operations in \RandNLA{} which do not support this purpose.\footnote{For example, the problem of accepting two matrices and using randomization to approximate their product is certainly basic, and it is of conceptual value \cite{dkm_matrix1}.
However, it is rarely used as an explicit building block in higher-level \RandNLA{} algorithms.}
Another way that we reduce the scope of the \RandBLAS{} is to only consider sketching dense data matrices.
It may be reasonable to lift this restriction in the future, and consider methods for producing dense sketches of sparse data matrices.

Our premise for the \RandBLAS{} suggests that it should be concerned with \textit{data-oblivious sketching} -- that is, sketching without consideration to the numerical properties of a dataset.
We identify three categories of operations on this topic:
\begin{itemize}
    \item sampling a random sketching operator from a prescribed distribution,  
    \item applying a sampled sketching operator to a data matrix, and
    \item sketching that is not naturally expressed as applying a single a linear operator to a data matrix.
\end{itemize}
These categories are somewhat analogous to \BLASlev{1}, \BLASlev{2}, and \BLASlev{3}, insofar as their implementations admit more and more opportunities for machine-specific performance optimizations.
At this time, however, we do not advocate for any formalization of ``\RandBLAS{} levels.''

We note that data-oblivious sketching is not the only kind of sketching of value in \RandNLA{}.
Indeed, \textit{data-aware} sketching operators such as those derived from power iteration are extremely important for low-rank approximation (see \cref{subsubsec:data_aware}).
Methods for row or column sampling based on leverage scores  are also useful for kernel ridge regression and certain tensor computations; see \cref{sec7:lev_scores,sec8:tensors}. 
Although important, most of the functionality for producing or applying these sketching operators should be addressed in higher-level libraries.

In the material under the next two headings, we address the questions of how to handle random number generation and reproducibility in the \RandBLAS{}.

\subsection{Random number generation}
\label{subsec:randblas_rngs}

For reproducibility's sake it is important that the \RandBLAS{} include a specification for \emph{random number generators} (RNGs).

We believe the \RandBLAS{} should use \textit{counter-based random number generators} (CBRNGs), which were first proposed in \cite{SMDS:2011:Random123}.
A CBRNG returns a random number upon being called with two integer parameters: the \textit{counter} and the \textit{key}.
The time required for the CBRNG to return does not depend on either of these parameters.
A serial application can set the key at the outset of the program and never change it.
Parallel applications (particularly parallel simulations) can use different keys across different threads.
Sequential calls to the CBRNG with a fixed key should use different values for the counter.
For a fixed key, a CBRNG with a $p$-bit integer counter defines a stream of random numbers with period length $2^p$.

In our context, CBRNGs are preferable to traditional state-based RNGs such as the Mersenne Twister.
A key reason for this is that CBRNGs maximize flexibility in the order in which a sketching operator is generated.
For example, given a user-provided counter offset $c$ which acts as a random seed, the $(i,j)^{\text{th}}$ entry of a dense $d \times m$ sketching operator can be generated with counter $c + (i + d j)$.
The fact that these computations are embarrassingly parallel will be important for vendors developing optimized \RandBLAS{} implementations.
We note that this flexibility also provides an advantage over widely-used linear congruential RNGs, which have separate shortcomings of performing very poorly on statistical tests \cite[\S 2.2.1]{SMDS:2011:Random123}. 

Particular examples of CBRNGs include Philox, ARS, and Threefish, each of which was defined in \cite{SMDS:2011:Random123} and implemented in the \RandomOneTwo{} library.
These CBRNGs have periods of $2^{128}$, can support $2^{64}$ different keys, and pass standard statistical tests for random number generators.
\RandomOneTwo{} provides the core of the sketching layer of the \LibSkylark{} \RandNLA{} library \cite{libskylark}.
Implementations of Philox and ARS can also be found in MKL Vector Statistics \cite[\S 6.5]{IntelVSL}.

\subsubsection{Shift-register RNGs}

We have observed that the CBRNGs in \RandomOneTwo{} are significantly more expensive than the state-based shift-register RNGs developed by Blackman and Vigna \cite{BV:2021:xoroshiro}.
In fact, Blackman and Vigna's generators are so fast that we have been able to implement a method for applying a Gaussian sketching operator to a sparse matrix that beats Intel MKL's sparse-times-dense matrix multiplication methods.
However, in the application where we observed that performance, processing the sketch downstream was more expensive than computing the sketch in the first place.
Therefore, while CBRNGs were substantially more expensive in that application, their longer runtimes were inconsequential in that case.
This longer runtime can be viewed as a price we pay for prioritizing reproducibility of sketching across compute environments with different levels of parallelism.

The overall situation is this:
\begin{quote}
    State-based RNGs may be preferable to CBRNGs \textit{if} sketching is the bottleneck in a \RandNLA{} algorithm \textit{and} where the cost of random number generation decisively affects the cost of sketching.
    At this time we have no evidence that high-performance implementations of \RandNLA{} algorithms run into such bottlenecks.
    Such evidence may arise in the future and warrant reconsideration to fast state-based RNGs for the \RandBLAS{}, particularly if major advances are made in hardware-accelerated sketching algorithms.
\end{quote}

\subsection{Portability, reproducibility and exception handling}
\label{subsec:randblas_reproducibility}

We believe it is important that the \RandBLAS{} lends itself to portability across programming languages.
Therefore we plan for the \RandBLAS{} to have a procedural API and make use of no special data structures beyond elementary structs.
Higher-level libraries should take responsibility for exposing \RandBLAS{} functionality with sophisticated abstractions.
In particular, we plan for \RandLAPACK{} to expose \RandBLAS{} functionality through a suitable object-oriented linear operator interface.
A key goal of this interface will be to make it possible to implement high-level \RandNLA{} algorithms with minimal assumptions on the sketching operator's distribution.
Such an interface will also reduce the coupling between determining \RandBLAS{}'s procedural API and prototyping \RandLAPACK{}.

Debugging high-performance numerical code is notoriously difficult.
Care must be taken in the design of the \RandBLAS{} so as to not contribute to this difficulty.
Indeed, it is essential that the \RBLAS{} be reproducible to the greatest extent possible.
The actual extent of the reproducibility will depend on factors outside of our control.
For example -- the \RBLAS{} cannot offer bitwise reproducibility guarantees unless the \BLAS{} does the same (see \cref{rem:bitwise_reproducibility}).
Therefore the main challenge for reproducibility for \RandBLAS{} is in random number generation; this challenge can be resolved comprehensively through the aforementioned CBRNGs.

A key source of exceptions in NLA is the presence of \code{NaN}s or \code{Inf}s in problem data.
Extremely sparse sketching matrices (such as those from \cref{subsubsec:randblas:LongAxSparse}) might not even read every entry of a data matrix, and so they might miss a \code{NaN} or \code{Inf}.
Those routines will be clearly marked as carrying this risk.
The majority of routines in the \RBLAS{} and \RandLAPACK{} will \textit{not} carry this risk: they will propagate \code{NaN}s and \code{Inf}s.
(See \cite{ExceptionHandling:2022} for a more detailed
discussion of how the \BLAS{} and \LAPACK{} (should) deal
with exceptions.)
For any such routine the exact behavior will depend on how the random sketching operator interacts with the problem data.
For example, if a data matrix containing multiple \code{Inf}s is sketched twice using different random seeds, then it is possible that an entry of the first sketch is an \code{Inf} while the corresponding entry of the second sketch is a \code{NaN}. 

\begin{remark}\label{rem:bitwise_reproducibility}
    Making the BLAS bitwise reproducible is challenging because 
    floating-point addition is not associative, and the order of
    summation can vary depending on the use of parallelism, vectorization, and other matters \cite{ReproBLAS}.
    Summation algorithms that guarantee bitwise reproducibility do exist
    \cite{ReproSum}.
    These algorithms may become practical on hardware that implements the latest IEEE 754 floating point
    standard, which includes a recommended instruction for bitwise-reproducible summation \cite{IEEE754-2019}.
    However, we leave these matters to future work.
\end{remark}

\section{Helpful things to know about sketching}
\label{subsec:sketching_prelims}

The purpose of sketching is to enact 
dimension-reduction so that computations of interest can be performed on a smaller matrix called a \textit{sketch}.
While precise computations performed on the sketch can vary dramatically, the simple statement of sketching's purpose lets us deduce the following facts.
\begin{itemize}
    \item 
    Sketching operators applied to the \emph{left} of a data matrix must \textit{must be wide}  (i.e., they must have more columns than rows).
    \item
    Sketching operators applied to the \emph{right} of a data matrix \textit{must be tall} (i.e., they must have more rows than columns).
\end{itemize}
This is to say, in left-sketching we require that $\mtx{S}\mtx{A}$ has fewer rows than $\mtx{A}$, and in right-sketching we require that $\mtx{A}\mtx{S}$ has fewer columns than $\mtx{A}$.
These facts are true regardless of the aspect ratio of the data matrix; see \cref{fig:sketch_left_right} for an illustration.
The facts are important because sketching operators in the literature are often defined under the assumption of left-sketching.

Before we proceed further, we reiterate some important advice.
\begin{quote}
    \textit{We encourage the reader to not dwell too long on this section (\mbox{\cref{subsec:sketching_prelims}}) and instead return to it as needed later on.}
\end{quote}
With that, \cref{subsec:geometric_interp_sketching} explains geometric interpretations of sketching from the left and right.
It also introduces the concepts of ``sketching in the embedding regime'' and ``sketching in the sampling regime.''
\cref{subsec:sketch_quality} covers concepts of subspace embedding distortion and the oblivious subspace embedding property -- these are \textit{central} to \RandNLA{} theory, but they play a modest role in this monograph.
\cref{subsec:inessential_properties_of_skops} states properties of sketching distributions that should hold as part of a `sanity check' for whether a proposed distribution is reasonable.

\begin{figure}[!htb]
	\centering
	\begin{overpic}[width=0.85\textwidth]{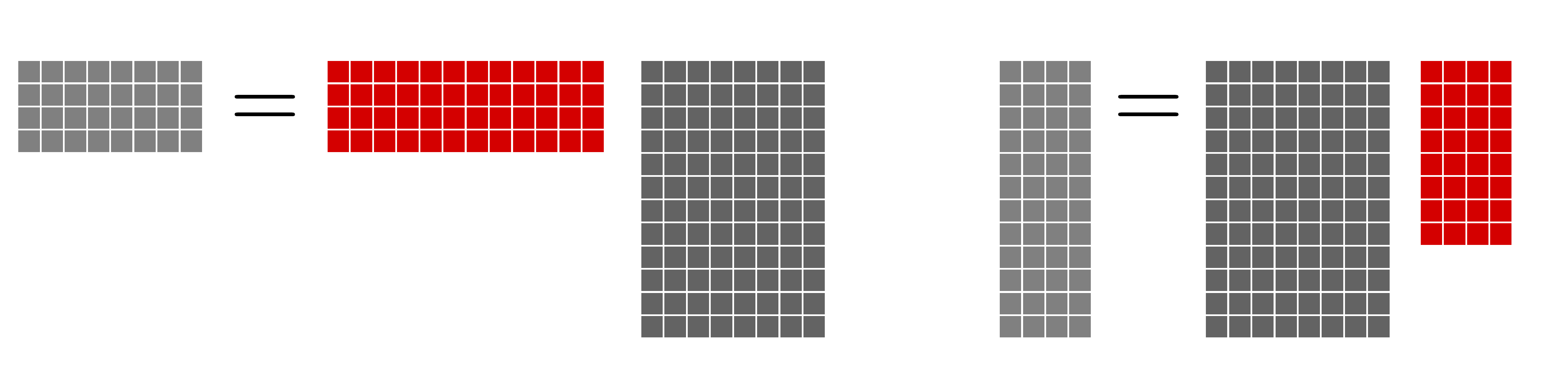} 
			\put(4,21){\color{black}{\large $\mtx{SA}$}}   
			\put(28,21){\color{black}{\large $\mtx{S}$}}   
			\put(45,21){\color{black}{\large $\mtx{A}$}}   

			\put(63.5,21){\color{black}{\large $\mtx{AS}$}}   
			\put(81,21){\color{black}{\large $\bf \mtx{A}$}}   
			\put(93.0,21){\color{black}{\large $\bf \mtx{S}$}}   	
			
			\put(-3,22){\color{black}{(a)}}   
			\put(58,22){\color{black}{(b)}}   
			
		\end{overpic}
	\caption{The left plot (a) shows that a sketching operator $\mtx{S}$ applied to the left of a matrix $\mtx{A}$ is wide, whereas the right plot (b) shows that a sketching operator $\mtx{S}$ applied to the right of a matrix $\mtx{A}$ is tall. 
	These stated properties hold universally; there are no exceptions for any kind of sketching.
	Separately, we note that both cases in the figure illustrate \textit{sketching in the sampling regime} in the sense of \cref{subsec:geometric_interp_sketching}.}\label{fig:sketch_left_right} 
\end{figure}

\subsection{Geometric interpretations of sketching}\label{subsec:geometric_interp_sketching}

\subsubsection{Prototypical left-sketching and right-sketching}

Sketching $\mtx{A}$ from the left preserves its number of columns.
Therefore, it is suitable for things such as estimating \textit{right} singular vectors.
We often interpret a left-sketch $\mtx{S}\mtx{A}$ as a compression of the range of $\mtx{A}$ to a space of lower ambient dimension.
In the special case when $\rank(\mtx{S}\mtx{A}) = \rank(\mtx{A})$, there is a sense in which the quality of the compression is completely independent from the spectrum of $\mtx{A}$.

Sketching $\mtx{A}$ from the right preserves its number of rows, and is suitable for things such as estimating \textit{left} singular vectors.
Conceptually, the right-sketch $\mtx{A}\mtx{S}$ can be interpreted as a sample from the range of $\mtx{A}$ using a test matrix $\mtx{S}$.
In the special case when $\rank(\mtx{A}\mtx{S}) \ll \rank(\mtx{A})$ then it is appropriate to think of this as a ``lossy sample'' from a much larger ``population,'' and it is natural to want this sample to capture as much information as possible for some sub-population of interest.

\subsubsection{Equivalence of left-sketching and right-sketching}

Left-sketching and right-sketching can be reduced to one another by replacing $\mtx{A}$ and $\mtx{S}$ by their adjoints.
For example, a left-sketch $\mtx{S}\mtx{A}$ can be viewed as a sample from the row space of $\mtx{A}$, equivalent to the right-sketch $\mtx{A}^{\trans}\mtx{S}^{\trans}$.
Conversely, a right-sketch $\mtx{A}\mtx{S}$ can be viewed as a compression of the row space of $\mtx{A}$, equivalent to the left-sketch $\mtx{S}^{\trans}\mtx{A}^{\trans}$.
Therefore it is artificial to strongly distinguish sketching operators by whether they are first defined for left-sketching or right-sketching.

This leads us to an important point.
\begin{quote}
    If $\mathcal{D}_{d, m}$ is a distribution over wide $d \times m$ sketching operators, it is \textit{canonically extended} to a distribution over tall $n \times d$ sketching operators by sampling $\mtx{T}$ from $\mathcal{D}_{d, n}$ and then returning the adjoint $\mtx{S} = \mtx{T}^{\trans}$.
\end{quote}
The notation in the statement above is carefully chosen: since our ``data matrices'' are typically $m \times n$, a typical left-sketching operator requires $m$ columns, and a typical right-sketching operator requires $n$ rows.

\subsubsection{The embedding and sampling regimes}
While it is artificial to associate a sketching distribution only with left-sketching or only with right-sketching, there are indeed families of sketching operators that are suited to qualitatively different situations.
The following terms help with our discussion of such families.
\begin{quote}
    Sketching in the \textit{embedding regime} is the use of a sketching operator that is \textit{larger} than the data to be sketched.
    Sketching in the \textit{sampling regime} is the use of a sketching operator that is \textit{far smaller} than the data to be sketched.
\end{quote}
In the above definitions one quantifies the size of an operator (or matrix) by the product of its number of rows and number of columns.

In \cref{sec3:LS_and_optim}, we will see that sketching in the embedding regime is nearly universal in randomized algorithms for least squares and related problems.
In \cref{sec4:lowrank}, we will see that sketching in the sampling regime is the foundation of randomized algorithms for low-rank approximation.
Over these \nameCrefs{sec3:LS_and_optim} we tend to see sketching in the embedding regime happen from the left, and sketching in the sampling regime happen from the right.
We stress that these tendencies are consequences of \textit{exposition}; they do not always hold when developing or using \RandNLA{} software.

\subsection{Sketch quality}\label{subsec:sketch_quality}

Let $L$ be a subset of some high-dimensional Euclidean space $\R^m$, $\mtx{S}$ be a sketching operator defined on $\R^m$, and consider the sketch $\mtx{S}L$.
Intuitively, $\mtx{S}L$ should be useful if its geometry is somehow ``effectively the same'' as that of $L$.
Here we discuss the preferred ways to quantify changes to geometry in \RandNLA{}.
We focus on methods suitable for when $L$ is a linear subspace, but we also consider when $L$ is a finite point set.

We acknowledge up-front that it only does so much good to measure the quality of an individual sketch.
Indeed, in order to make predictive statements about the behavior of algorithms, it is necessary to understand how the distribution of a sketching operator $\mtx{S} \sim \mathcal{D}$ induces a distribution over measures of sketch quality in a given application.
It is further necessary to analyze \textit{families of distributions} $\mathcal{D}_{d, m}$ parameterized by an embedding dimension $d$, since the size of a sketch is often a key parameter that a user can control.

\subsubsection{Subspace embeddings}

Let $\mtx{S}$ be a $d \times m$ sketching operator and $L$ be a linear subspace of $\R^m$.
We say that $\mtx{S}$ \textit{embeds $L$ into $\R^d$} and that it does so with \textit{distortion $\delta \in [0, 1]$} if $\vct{x} \in L$ implies
\begin{equation}\label{eq:subspace_embedding_condition}
    (1-\delta)\|\vct{x}\|_2 \leq \|\mtx{S}\vct{x}\|_2 \leq (1+\delta)\|\vct{x}\|_2.
\end{equation}
We often call such an operator an \textit{$\delta$-embedding}.

The concept of a subspace embedding was first used implicitly in RandNLA by \cite{DMM06};
the sketching operators used in \cite{DMM06} were based on a type of \textit{data-aware sketching} called leverage score sampling (discussed in \cref{sec7:lev_scores}).
The first explicit definition of subspace embeddings was given by \cite{Sarlos:2006}, who focused on \textit{data-oblivious} sketching.
We address data-oblivious subspace embeddings in detail momentarily.

The most transparent use of subspace embedding distortion arises when $L$ is the range of a matrix $\mtx{A}$.
In this context, $\mtx{S}$ is a $\delta$-embedding for $L$ if and only if the following two-sided linear matrix inequality holds:
\begin{equation}\label{eq:subspsace_embedding_as_lmi}
    (1-\delta)^2 \mtx{A}^{\trans}\mtx{A} \preceq (\mtx{S}\mtx{A})^{\trans}(\mtx{S}\mtx{A}) \preceq (1+\delta)^2\mtx{A}^{\trans}\mtx{A}.
\end{equation}
In other words, the distortion of $\mtx{S}$ as an embedding for $\range(\mtx{A})$ is a measurement of how well the Gram matrix of $\mtx{S}\mtx{A}$ approximates that of $\mtx{A}$.

Note that in order for $\mtx{S}$ to be a subspace embedding for $L$ it is necessary that $d \geq \dim(L)$.
Therefore if $L$ is the range of an $m \times n$ matrix of full-column-rank, the requirement that $d \geq \dim(L)$ means that subspace embeddings can only be achieved when ``sketching in the embedding regime,'' in the sense of \cref{subsec:geometric_interp_sketching}.
Furthermore, substantial dimension reduction can only be achieved in this framework when~$m \gg n$. 

\subsubsection{Effective distortion}
Subspace embedding distortion is the most common measure of sketch quality, 
but it is not without its limitations.
Its greatest limitation is that it is not invariant under scaling of $\mtx{S}$ (i.e., it is not invariant under replacing $\mtx{S} \leftarrow t\mtx{S}$ for $t \neq 0$).
This is a significant limitation since many \RandNLA{} algorithms \textit{are} invariant under scaling of $\mtx{S}$; existing theoretical analyses of \RandNLA{} algorithms simply do not take this into account.  

In \cref{subapp:effective_distortion} we explore a novel concept of \textit{effective distortion} that resolves the scale-sensitivity problem.
Formally, the effective distortion of a sketching operator $\mtx{S}$ for a subspace $L$ is
\begin{align}
    \mathscr{D}_e(\mtx{S};L) = \inf\{\, \delta  \,:~
        & 0 \leq \delta \leq 1, ~ 0 < t \label{eq:sec2:eff_dist}  \\
        & t\mtx{S} \text{ is a } \delta\text{-embedding for } L \}. \nonumber
\end{align}
In words, this is the minimum distortion that \textit{any} sketching operator $t\mtx{S}$ can achieve for $L$, optimizing over $t > 0$.
We briefly reference this concept in our discussion of algorithms for least squares and optimization (\S \ref{subsec:optim_drivers}).
\cref{subsubapp:we_sketch_subspaces} makes deeper connections between effective distortion and randomized preconditioning methods for least squares.
%
%
%
%

\subsubsection{Oblivious subspace embeddings}

Data-oblivious subspace embedding (OSEs) were first used in \RandNLA{} in \cite{Sarlos:2006} and were largely popularized by \cite{Woodruff:2014}.
There is a clean way to describe the ``reliability'' of a sketching distribution in this setting.
\begin{quote}
    Consider a distribution $\mathcal{D}$ over wide $d \times m$ matrices.
    We say that $\mathcal{D}$ has \textit{the OSE property with parameters $(\delta, n, p)$}  if, for every $n$-dimensional subspace $L \subset \R^m$, we have
    \[
        \Pr\{ \mtx{S} \sim \mathcal{D} \text{ is a }\delta\text{-embedding for } L \} \geq 1-p.
    \]
\end{quote}
Theoretical analyses of sketching distributions often concern bounding $d$ as a function of $(\delta,n,p)$ to ensure that $\mathcal{D}$ satisfies the OSE property.
Naturally, all else equal, we would like to achieve the OSE property for smaller values of $d$.

Theoretical results can be used to select $d$ in practice for very well-behaved distributions, particularly the Gaussian distribution.
Results for the more sophisticated distributions (such as those of sparse sketching operators) tend to be pessimistic compared to what is observed in practice.
Some of this pessimism stems from the existence of esoteric constructions which indeed call for large embedding dimensions.
Setting these constructions aside, we have reason to be optimistic since distortion is actually not the ideal measure of sketch quality in many settings.
Indeed, \textit{effective distortion} is far more relevant for least squares and optimization, and it will always be no larger than the standard notion of distortion.

All in all, there is something of an art to choosing the best sketching distribution for a particular \RandNLA{} task.
Luckily, for most \RandNLA{} algorithms it is far from necessary to choose the ``best'' sketching distribution; good results can be obtained even when setting distribution parameters by simple rules of thumb.

\subsubsection{Johnson--Lindenstrauss embeddings}

Let $\mtx{S}$ be a $d \times m$ sketching operator and $L$ be a finite point set in $\R^m$.
We say that $\mtx{S}$ is a \textit{Johnson--Lindenstrauss embedding} (or ``JL embedding'') for $L$ with distortion $\delta$ if, for all distinct $\vct{x},\vct{y}$ in $L$, we have
\[
    1 - \delta \leq \frac{\|\mtx{S}(\vct{x} - \vct{y})\|_2^2}{\|\vct{x} - \vct{y}\|_2^2} \leq 1 + \delta.
\]
This property is named for a seminal result by William Johnson and Joram Lindenstrauss, who used randomization to prove the existence of operators satisfying this property where $d$ is logarithmic in $|L|$ and linear in $1/\delta^2$ \cite{JL84}.
%
%
%

The JL Lemma (as the result is now known) is remarkable for two reasons.
First, the requisite value for $d$ did not depend on the ambient dimension $m$ and was only logarithmic in $|L|$.
Second, the construction of the transformation $\mtx{S}$ was \textit{data-oblivious} -- a scaled orthogonal projection.
This latter fact led to questions about how one might define alternative distributions over sketching operators, with the aim of 
\begin{enumerate}
    \item being simpler to implement than a scaled orthogonal projection, and
    \item attaining similar ``data-oblivious JL properties.''
\end{enumerate}
It so happened that \textit{many} constructions could achieve these goals.
For example, \cite{IM98} and~\cite{DG02} relaxed the condition of being a (scaled) orthogonal projector to $\mtx{S}$ having iid Gaussian entries, which still results in a rotationally-invariant distribution.
As another example, \cite{Ach03_JRNL} relaxed the rotational invariance by choosing the entries of $\mtx{S}$ to be scaled Rademacher random variables.

\subsection{(In)essential properties of sketching distributions}\label{subsec:inessential_properties_of_skops}

Distributions $\mathcal{D}$ over wide sketching operators are typically designed so that, for $\mtx{S} \sim \mathcal{D}$, the mean and covariance matrices are
\[
    \E[\mtx{S}] = \mtx{0} \qquad\text{and}\qquad \E[\mtx{S}^{\trans}\mtx{S}] = \mtx{I}.
\]
The property that $\E[\mtx{S}] = \mtx{0}$ is important -- if not ubiquitous -- in \RandNLA{}.
However, there is some flexibility in the latter property, as in most situations it suffices for the covariance matrix to be a scalar multiple of the identity.

To understand why we have flexibility in the scale of the covariance matrix, consider how $\E[\mtx{S}^{\trans}\mtx{S}] = \mtx{I}$ is equivalent to $\mtx{S}$ preserving squared Euclidean norms in expectation.
As it happens, the vast majority of algorithms mentioned in this monograph do not need sketching operators to preserve norms.
Rather, they rely on sketching preserving \textit{relative norms}, in the sense that $\|\mtx{S}\vct{u}\|_2 / \|\mtx{S}\vct{v}\|_2$ should be close to $\|\vct{u}\|_2 / \|\vct{v}\|_2$ for all vectors $\vct{u},\vct{v}$ in a set of interest.
Such a property is clearly unaffected if every entry of $\mtx{S}$ scaled by a fixed nonzero constant (i.e., if $\mtx{S}$ is replaced by $t\mtx{S}$ for some $t \neq 0$).

This \nameCref{sec2:rblas} uses scale-agnosticism to help describe sketching distributions with reduced emphasis on whether the operator is wide or tall.
For example, if the entries of $\mtx{S}$ are iid mean-zero random variables of finite variance, then both $\E[\mtx{S}^{\trans}\mtx{S}]$ and $\E[\mtx{S}\mtx{S}^{\trans}]$ are scalar multiples of the identity matrix.
Speaking loosely, the former property justifies using $\mtx{S}$ to sketch from the left and the latter property justifies using $\mtx{S}^{\trans}$ to sketch from the right.

With this observation in mind, this \nameCref{sec2:rblas} ignores most matters of scaling that is applied equally to all entries of a sketching operator.
This manifests in how we regularly describe sketching operators as having entries in $[-1, 1]$ even though it is more common to have entries in $[-v, v]$ for some positive $v$ (which is set to achieve an identity covariance matrix).
Note that this \textit{does not} confer freedom to scale individual rows, columns, or entries of a sketching operator separately from one another.

The main places where scaling matters are in algorithms for norm estimation (see \cref{subsubsec:norm_rank_est}) and algorithms which only sketch a portion of the data in a larger problem.
The subtleties in this latter situation warrant a detailed explanation.

\subsubsection{Scale sensitivity: partial sketching}

Let $\mtx{G}$ be an $n \times n$ psd matrix and $\mtx{A}$ be a very tall $m \times n$ matrix.
Suppose that we approximate
\[
        \mtx{H} = \mtx{A}^{\trans}\mtx{A} + \mtx{G}
\]
by a \textit{partial sketch}
\[
    \mtx{H}_{\mathrm{sk}} = (\mtx{S}_o\mtx{A})^{\trans}(\mtx{S}_o\mtx{A}) + \mtx{G}
\]
where $\mtx{S}_o$ is a $d \times m$ sketching operator.
How should we understand the statistical properties of $\mtx{H}_{\mathrm{sk}}$ as an estimator for $\mtx{H}$?

At the simplest level we can turn to the idea of subspace embedding distortion.
Using the characterization of distortion in \eqref{eq:subspsace_embedding_as_lmi}, we could study the distribution of the minimum $\delta \in (0, 1)$ for which 
\[
    (1-\delta)^2\mtx{H} \preceq \mtx{H}_{\mathrm{sk}} \preceq (1+\delta)^2 \mtx{H}.
\]
One can go beyond distortion by lifting to a higher-dimensional space.
Letting $\sqrt{\mtx{G}}$ denote the Hermitian square root of $\mtx{G}$, we define the augmented sketching operator and augmented data matrix
\[
    \mtx{S} = \begin{bmatrix}\mtx{S}_o & \mtx{0} \\ \mtx{0} & \mtx{I}\end{bmatrix} \qquad\text{and}\qquad \mtx{A}_{\mtx{G}} = \begin{bmatrix}\mtx{A} \\ \sqrt{\mtx{G}} \end{bmatrix}
\]
This lets us express $\mtx{H} =  \mtx{A}_{\mtx{G}}^{\trans}\mtx{A}_{\mtx{G}}^{}$ and $\mtx{H}_{\text{sk}} = \left(\mtx{S}\mtx{A}_{\mtx{G}}\right)^{\trans}\left(\mtx{S}\mtx{A}_{\mtx{G}}^{}\right)$.
Therefore the statistical properties of $\mtx{H}_{\text{sk}}$ as an approximation to $\mtx{H}$ can be understood in terms of how $\mtx{S}$ preserves (or distorts) the range of $\mtx{A}_{\mtx{G}}$.

\subsubsection{Scale sensitivity: row sampling from block matrices}

The concept of partial sketching can arise when sketching block matrices, which are indeed encountered in many applications.
For example, it is widely appreciated that a ridge regression problem with tall $m \times n$ data matrix $\mtx{A}$ and regularization parameter $\mu$ can be lifted to an ordinary least squares problem with data matrix $\mtx{A}_{\mu} := [\mtx{A};\sqrt{\mu}\mtx{I}]$.

Suppose we want to sketch $\mtx{A}_{\mu}$ by a row sampling operator $\mtx{S}$.
It is natural to treat the lower $n$ rows of $\mtx{A}_{\mu}$ differently than its upper $m$ rows.
In particular, it is natural for $\mtx{S}$ to be an operator that produces $\mtx{S}\mtx{A} = [\mtx{S}_o\mtx{A};\sqrt{\mu}\mtx{I}]$ with some other $d \times m$ row sampling operator $\mtx{S}_o$.
Here, even if $\mtx{S}_o$ sampled rows from $\mtx{A}$ uniformly at random, the map $\mtx{A}_{\mu} \mapsto \mtx{S}\mtx{A}_{\mu}$ would \textit{not} sample uniformly at random from $\mtx{A}_{\mu}$.
Therefore there is a sense in which partial sketching is a way of incorporating non-uniform row sampling into other sketching distributions; see \cite{dkm_matrix1} for the origins of this interpretation.
In the context of this specific example, the nonuniformity would necessitate that $\mtx{S}_o$ be scaled to have entries in $\{0,\pm 1/\sqrt{d}\}$.
We refer the reader to \cref{subsubsec:randblas:LongAxSparse} and \cref{subsec:lev_scores_standard} for more discussion on sketching operators that implement row sampling.

\section{Dense sketching operators}
\label{subsec:dense_skops}

The \RandBLAS{} should provide methods for sampling sketching operators with iid entries drawn from distinguished distributions.
Across this broad category, we believe the following types of operators stand out:
\begin{itemize}
    \item 
    \textit{Rademacher sketching operators}: entries are $\pm 1$ with equal probability;
    \item 
    \textit{uniform sketching operators}: entries are uniform over $[-1, 1]$;
    \item 
    \textit{Gaussian sketching operators}: entries follow the standard normal distribution.
\end{itemize}
We believe the \RandBLAS{} should also support sampling row-orthonormal or column-orthonormal matrices uniformly at random from the set of all such matrices.
%
%

The theoretical results for Gaussian operators are especially strong.
However, there is little practical difference in the performance of \RandNLA{} algorithms between any of the three entrywise iid operators given above.
This is reflected in implementations such as \cite{LLSKT:2017} that only use uniform sketching operators.
The practical equivalence between these types of sketching operators also has theoretical support through \textit{universality principles} in high-dimensional probability \cite{Vershynin:2018}, \cite{OT:2017}, \cite[\S 8.8]{MT:2020}, \cite{derezinski2020precise}.
In what follows we speak to implementation details and the intended use cases for these operators.

\subsubsection{Sampling iid-dense sketching operators}

Sampling from the Rademacher or uniform distributions is the most basic operation of random number generators.
Methods for sampling from the Gaussian distribution involve transforming random variables sampled uniformly from $[0, 1]$.
There are two transformations of interest for the \RandBLAS{}: 
Box-Muller \cite{BM:1958:sampling}; 
and the Ziggurat transform \cite{MT:2000:Ziggurat}.
The former should be included in the \RandBLAS{} because it is easy to implement and parallelizes well.
The latter method is far more efficient on a single thread, and it has been used within \RandNLA{} (see \cite{MSM:2014:LSRN}), but it does not parallelize well~\cite[\S 37.2.3]{Nguyen:2007:GPUgems3}.
We postpone any recommendation for whether it should be an option in the \RandBLAS{}.

\RandNLA{} algorithms tend to be very robust to the quality of the random number generator.
As a result, it is not necessary for us to sample from the Gaussian distribution with high statistical accuracy.
This is due in part to the aforementioned universality principles, and it can be seen through the success of \textit{sub-Gaussian distributions} as an analysis framework in high-dimensional probability \cite[\S 2]{Vershynin:2018}.
From an implementation standpoint, there is likely no
need to sample from the Gaussian distribution beyond
single precision \cite{gunnar_single_precision}.
It is worth exploring if even lower precisions (e.g., half-precision) would suffice for practical purposes.

\subsubsection{Applying iid-dense sketching operators}

If a dense sketching operator is realized explicitly in memory then it can (and should) be applied by an appropriate \BLAS{} function, most likely \code{gemm}.
Many \RandNLA{} algorithms provide good practical performance even with such simple implementations, although there is potential for reduced memory or communication requirements if a sketching operator is applied without ever fully allocating it in-memory.
There is a large design space for such algorithms with iid-dense sketching operators when using counter-based random number generators (see \cref{subsec:randblas_rngs}).
Such functionality could appear in an initial version of a \RandBLAS{} standard.
The reference implementations of such functions could start as mere wrappers around routines to generate a sketching operator and then apply that operator via~\code{gemm}.

\subsubsection{Sampling and applying Haar operators}

If we suppose left-sketching, then the Haar distribution is the uniform distribution over row-orthonormal matrices.
If we instead suppose right-sketching, then it is the uniform distribution over column-orthonormal matrices.
We call these operators ``dense'' because if one is sampled and then formed explicitly, it will be dense with probability one.

There are two qualitative approaches to sampling from this distribution.
The naive approach essentially requires sampling from a Gaussian distribution and performing a QR factorization, at a total cost of $O(d^2 m)$; see \cite[\S 1 - \S 4]{Li:1992} and more general methods in \cite{Mezzadri:2007}.
A more efficient approach -- which costs only $O(dm)$ time -- involves constructing the operator as a composition of suitable Householder reflectors \cite{Stewart:1980:sampleHaar}.
This approach has the secondary benefit of not needing to form the sketching operator explicitly.

Haar operators are of interest not just for sketching in \RandNLA{} algorithms but also for generating test data for evaluating other sketching operators.
As such, we believe they are natural to include in a first version of a \RandBLAS{} standard.

\subsubsection{Intended use-cases}

Using terminology from \cref{subsec:geometric_interp_sketching}, dense sketching operators are commonly used for ``sketching in the sampling regime.''
In particular, they are the workhorses of randomized algorithms for low-rank approximation.
They also have applications in certain randomized algorithms for ridge regression and some full-rank matrix decomposition problems.

These distributions are much less useful for sketching dense matrices ``in the embedding regime'' (again in the sense of \cref{subsec:geometric_interp_sketching}).
This is because they are more expensive to apply to dense matrices than many other types of sketching operators.
These types of sketching operators \textit{might} be of interest in the embedding regime if applied to sparse or otherwise structured data matrices. 

\section{Sparse sketching operators}
\label{subsec:sparse_skops}

The \RandNLA{} literature describes many types of sparse sketching operators, almost always under the convention of sketching from the left.
We think it is important to define sketching distributions in a way that is agnostic to sketching from the left or right.
Indeed, while we often focus on left-sketching for ease of exposition, asserting that this is ``without loss of generality'' ignores the plight of the user tasked with right-sketching.

In order to achieve our desired agnosticism, we use a taxonomy for sparse sketching operators which has not appeared in prior literature.
To describe it, we use the term \textit{short-axis vector} in reference to the columns of a wide matrix or rows of a tall matrix.
The term \textit{long-axis vector} is defined analogously, as the rows of a wide matrix or columns of a tall matrix.
In these terms, we have the following families of sparse sketching operators.
\begin{itemize}
    \item 
    \textit{Short-axis-sparse} sketching operators. 
    The short-axis vectors of these operators are independent of one another.
    Each short-axis vector has a fixed (and very small) number of nonzeros.
    Typically, the indices of the nonzeros in each short-axis vector are sampled uniformly without replacement.
    \item 
    \textit{Long-axis-sparse} sketching operators. 
    The long-axis vectors of these operators are independent of one another.
    For a given long-axis vector, the indices for its nonzeros are sampled with replacement according to a prescribed probability distribution (which can be uniform).
    The value of a given nonzero is affected by the number of times its index appears in the sample for that vector.
    \item 
    \textit{Iid-sparse} sketching operators.
    Mathematically, these can be described as starting with an iid-dense sketching operator and ``zeroing-out'' entries in an iid-manner with some high probability. (From an implementation standpoint this would work the other way around, randomly choosing a few entries to make nonzero.)
\end{itemize}
When abbreviations are necessary, we suggest that short-axis-sparse sketching operators be called \textit{SASOs} and that long-axis-sparse sketching operators be called \textit{LASOs}.
Most of our use of such abbreviations appears here and in \cref{subapp:SJLTs}.
A visualization of these types of sketching operators is given in \cref{fig:sparse_sketch_ops}.

Before proceeding further we should say that we are \textit{not} in favor of including iid-sparse sketching operators in the \RandBLAS{}.
Our first reason for this is that their theoretical guarantees are not as strong as either SASOs (see the discussion at the end of \cite[\S 7.4]{Tropp:2021:LecNotes} and remarks in \cite[\S 2.4]{Liberty:2009}) or LASOs~\cite{DLDM20_sparse_TR,DLPM21_newtonless_TR}.
Our second reason is that their lack of predictable structure makes it harder to implement efficient parallel algorithms for applying these operators.
Therefore in what follows we only give details on SASOs and LASOs.

\begin{figure}[!htb]
	\centering
	\begin{overpic}[width=0.85\textwidth]{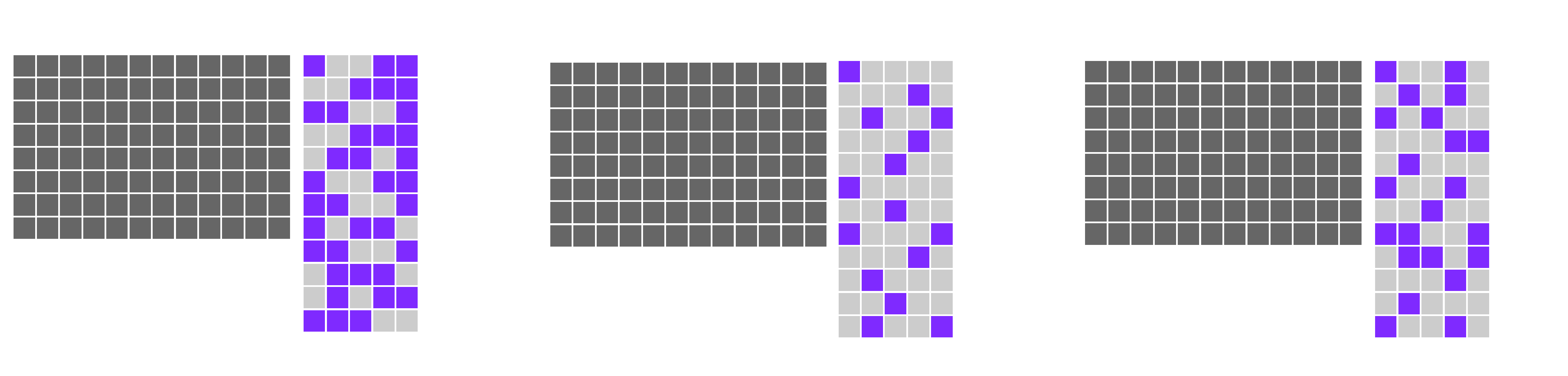} 
			
			\put(8,21){\color{black}{\large $\bf A$}}   
			\put(21,21){\color{black}{\large $\bf S$}}
			
			\put(43,21){\color{black}{\large $\bf A$}}   
			\put(55,21){\color{black}{\large $\bf S$}}  
			
			\put(77,21){\color{black}{\large $\bf A$}}   
			\put(90,21){\color{black}{\large $\bf S$}}   	
			
			\put(-3,22){\color{black}{(a)}}   
			\put(32,22){\color{black}{(b)}}   
			\put(66,22){\color{black}{(c)}}   
			
		\end{overpic}
	\caption{Illustration of a SASO (a) with 3 non-zero entries per row, LASO (b) with 3 non-zero entries per column, and an iid-sparse sketching operator (c) with iid non-zero entries.}\label{fig:sparse_sketch_ops}
\end{figure}

\subsection{Short-axis-sparse sketching operators}
\label{subsubsec:randblas:ShortAxSparse}

SASOs include sketching operators known as {sparse Johnson--Lindenstrauss transforms}, the {Clarkson--Woodruff transform}, {CountSketch}, and {OSNAPs} \cites{KN:2012:SJLTs,CW:2013:CWTransform,MM13_STOC:CountSketch,NN:2013:OSNAPs}.
These constructions are all described assuming we sketch from the left, and as such, they are all stated for \textit{wide} sketching operators.
They are described as having a fixed number of nonzeros within each column.
The more general notion (for sketching from the left or right) is to say there is a fixed number of nonzeros per short-axis vector.

The short-axis vectors of a SASO should be independent of one another.
One can select the locations of nonzero elements in different ways;
we are interested in two methods from \cite{KN:2012:SJLTs}.
For a wide $d \times m$ operator, we can
\begin{enumerate}
    \item sample $k$ indices uniformly from $\idxs{d}$ without replacement, once for each column, or 
    \item divide $\idxs{d}$ into $k$ contiguous subsets of equal size, and then for each column we select one index from each of the $k$ index sets.
\end{enumerate}
These definitions are extended from wide sketching operators to tall sketching operators in the natural~way.

For either method, the nonzero values in a SASO's short-axis vector are canonically independent Rademachers.
Alternatively, they can be drawn from other sub-Gaussian distributions.
For example, in the wide case, drawing the nonzeros independently and uniformly from a union of disjoint intervals, such as $[-2,-1]\cup[1,2]$, can protect against the possibility of a given row of $\mtx{S}$ being orthogonal to a column of a matrix to be sketched~\cite{tygert_disjoint_distribution}.

Details SASOs are provided in \cref{subapp:SJLTs}.
This includes implementation notes, a short historical summary of relevant theory, and remarks on setting the sparsity parameter $k$.
On the topic of theory, we note here that the state-of-the-art results for SASOs are due to Cohen \cite{Cohen:2016:SJLTs}.
More information can be found in the lecture notes \cite{Mah-mat-rev_BOOK,Mah16_RLA_TR,RandNLA_PCMIchapter_chapter}, \cite[\S 7.4]{Tropp:2021:LecNotes} and the surveys \cite{Woodruff:2014}, \cite[\S 9.2]{MT:2020}.

\begin{remark}[Naming conventions]
    The concept of what we would call a ``wide SASO'' is referred to in the literature as an \textit{OSNAP}.
    We have a slight preference for ``SASO'' over ``OSNAP'' for two reasons.
    First, it pairs naturally with the abbreviation \textit{LASO} for long-axis-sparse sketching operators, which is valuable for taxonomizing sparse sketching operators.
    Second, the literature consistently describes OSNAPs as having a fixed number of nonzeros per column.
    While this description is appropriate for left sketching, it is not appropriate for right sketching.
\end{remark}

\subsection{Long-axis-sparse sketching operators}
\label{subsubsec:randblas:LongAxSparse}

This category includes row and column sampling, LESS embeddings \cite{DLDM20_sparse_TR}, and LESS-uniform operators \cite{DLPM21_newtonless_TR}.

In the wide case, a LASO has independent rows and a fixed upper bound on the number of nonzeros per row.
All rows are sampled with reference to a distribution $\vct{p}$ over $\idxs{m}$ (which can be uniform) and a positive integer $k$.
Construction begins by sampling $t_1,\ldots,t_k$ from $\idxs{m}$ with replacement according to $\vct{p}$.
Then we initialize
\begin{equation}\label{eq:init_row_of_LASO}
  \mtx{S}[i,\fslice{}] =\frac{1}{\sqrt{d k}}\bigg(\sqrt{\frac{b_1}{p_1}},
  \ldots,\sqrt{\frac{b_m}{p_m}}\ \bigg),
\end{equation}
where $b_j$ is the number of times the index $j$ appeared in the sample $(t_1,\ldots,t_k)$.
We finish constructing the row by multiplying each nonzero entry by an iid copy of a mean-zero random variable of unit variance (e.g., a standard Gaussian random variable).
Such a LASO will have at most $k$ nonzeros per row and hence at most $dk$ nonzeros in total.
Note that this is much smaller than $mk$ nonzeros required by a SASO with the same parameters.

The quality of sketches produced by LASOs when $\vct{p}$ is uniform depends on the properties of the matrix to be sketched.
Specifically, it will depend on the \textit{leverage scores} of the matrix.
The leverage score concept, introduced in \cref{subsec:lev_scores}, is important for constructing data-aware sketching operators that implement row or column sampling.
If $\vct{p}$ is the leverage score distribution of some matrix then the sketching operator is known as a Leverage Score Sparsified (LESS) embedding for that matrix \cites{DLDM20_sparse_TR}.
The term \textit{LESS-uniform} has been used for long-axis-sparse operators that use the uniform distribution for $\vct{p}$ \cite{DLPM21_newtonless_TR}.

\begin{remark}[Scale]\label{rem:scaling_row_sampling_operators}
    The scaling factor $1/\sqrt{dk}$ appearing in the initialization \eqref{eq:init_row_of_LASO} is the same for all rows of $\mtx{S}$ (in the wide case, i.e., for each long-axis vector).
    This factor is necessary so that once the nonzeros in $\mtx{S}$ are multiplied by mean-zero unit-variance random variables, we have $\E[\mtx{S}^{\trans}\mtx{S}] = \mtx{I}_m$.
    This scaling matters when one cares about subspace embedding distortion or when one is only sketching a portion of the problem data (see \cref{subsec:inessential_properties_of_skops}).
    This scaling has no effect on effective distortion if $\vct{p}$ is uniform.
\end{remark}

\section{Subsampled fast trigonometric transforms}
\label{subsec:srfts}

\textit{Fast trigonometric transforms} (or \textit{fast trig transforms}) are orthogonal or unitary operators that take $m$-vectors to $m$-vectors in $O(m \log m)$ time or better.
The most important examples in this class are the Discrete Fourier Transform (for complex-valued inputs) and the Discrete Cosine Transform (for real-valued inputs).
The Walsh-Hadamard Transform is also notable; although it only exists when $m$ is a power of two, it is equivalent to a Kronecker product of $\log_2 m$ Discrete Fourier Transforms of size $2 \times 2$, and the standard algorithm for applying it involves no multiplications and entails no branching.

Traditionally, trig transforms are valued for their ability to map dense input vectors with a periodic structure into sparse output vectors.
Within \RandNLA{}, we are interested in them for the opposite reason: the fact that they map inputs that lack periodic structure to \textit{dense} outputs.
This behavior is useful because if we preprocess an input to destroy any periodic structure with high probability, the resulting output should be easier to approximate by random coordinate subsampling.
This leads to the idea of a \textit{subsampled randomized fast trig transforms} or \textit{SRFTs}.


\subsubsection{Traditional SRFTs}
Formally, a $d \times m$ SRFT takes the form
\[
\mtx{S} = \sqrt{m/d} \,\, \mtx{R}\mtx{F}\mtx{D} ,
\]
where $\mtx{D}$ is a diagonal matrix of independent Rademachers, $\mtx{F}$ is a fast trig transform that maps $m$-vectors to $m$-vectors, and $\mtx{R}$ randomly samples $d$ components from an $m$-vector~\cites{AC:2006,AC:2009}.
For added robustness one can define SRFTs slightly differently, replacing $\mtx{S}$ by $\mtx{S}\mtx{\Pi}$ for a permutation matrix $\mtx{\Pi}$ \cites{MT:2020}.

SRFTs are appealing for their efficiency and theoretical guarantees.
Speaking to the former aspect, a $d \times m$ SRFT can be applied to an $m \times n$ matrix in as little as $O(mn \log d)$ time by using methods for subsampled fast trig transforms \cite[\S 3.3]{WLRT:2008}, \cite[\S 3.3]{Liberty:2009}.
Theoretical guarantees for SRFTs are usually established assuming $\mtx{F}$ is the Walsh-Hadamard transform \cites{DMMS07_FastL2_NM10, Tropp:2011, BG:2013}.
These guarantees are especially appealing since they do not rely on tuning parameters such as sparsity parameters required by sketching operators from \cref{subsec:sparse_skops}.

The trouble with SRFTs is that they are notoriously difficult to implement efficiently.
Even their best-case $O(mn \log d)$ complexity is higher than the $O(mnk)$ complexity of a SASO that is wide with $k \ll \log d$ nonzeros per column.
However, SRFTs have an advantage when it comes to memory: if one overwrites $\mtx{A}$ by the $m \times n$ matrix $\mtx{B} :=\sqrt{m/d}\,\mtx{F}\mtx{D}\mtx{A}$ in $O(mn \log m)$ time, then $\mtx{S}\mtx{A}$ can be accessed as a submatrix of rows of $\mtx{B}$ without losing access to $\mtx{A}$ or $\mtx{A}^{\trans}$ as linear operators.
Further investigation is needed to determine the true value of this in-place nondestructive implementation.
For the time being, we do not believe that traditional SRFTs are essential for a preliminary \RandBLAS{} standard.

\subsubsection{Block SRFTs}
Let $p$ be a positive integer, $r = m/p$ be greater than $d$, and $\mtx{R}$ be a matrix that randomly samples $d$ components from an $r$-vector.
For each index $i \in \idxs{p}$, we introduce a $d \times r$ sketching operator
\[
    \mtx{S}_i =\sqrt{r/d}\, \mtx{D}_i^{\mathrm{post}}\mtx{R} \mtx{F} \mtx{D}_i^{\mathrm{pre}},
\]
where $\mtx{D}_i^{\mathrm{post}}$ and $\mtx{D}_i^{\mathrm{pre}}$ are diagonal matrices filled with independent Rademachers.
The \textit{block SRFT} \cite{grigori_et_al:2022:block_srft} is defined columnwise as $\mtx{S} =  [ \begin{matrix}  \mtx{S}_1 & \mtx{S}_2 & \hdots & \mtx{S}_p \end{matrix}]$.

Block SRFTs can effectively leverage parallel hardware using \textit{serial} implementations of the fast trig transform.
For concreteness, suppose $\mtx{A}$ is $m \times n$ and distributed block row-wise among $p$ processors.
We apply the block SRFT $\mtx{S}$ by the formula 
\begin{equation*} 
    \mtx{S} \mtx{A} =   \sum_{i \in \idxs{p}} \mtx{S}_i \mtx{A}_i,
\end{equation*}
where $\mtx{A}_i$ is the block of rows of $\mtx{A}$ stored on processor $i$.
The multiplication $\mtx{S}_i \mtx{A}_i$, computed locally on each processor, is followed by a reduction operation among processors to sum the local contributions.

We are undecided as to whether block SRFTs are appropriate for a preliminary \RandBLAS{} standard.
Their comparative ease of implementation is favorable.
However, they are problematic in that the definition of the distribution changes as we vary $p$, which complicates reproducibility across platforms.

\subsubsection{Historical remarks and further reading}


The development of SRFTs began with \textit{fast Johnson--Lindenstrauss transforms} (FJLTs) \cite{AC:2006}, which replace the matrix ``$\mtx{R}$'' in the SRFT construction by a particular type of sparse matrix.
FJLTs were first used in \RandNLA{} for least squares and low-rank approximation by \cite{Sarlos:2006}.
The jump from FJLTs to SRFTs was made independently in \cite{DMMS07_FastL2_NM10} and \cite{WLRT:2008} for usage in least squares and low-rank approximation, respectively.

For more background on this topic we refer the reader to the book \cite{Mah-mat-rev_BOOK}, the lecture notes \cite{Mah16_RLA_TR}, \cite{RandNLA_PCMIchapter_chapter}, \cite[\S 7.5]{Tropp:2021:LecNotes}, and the survey \cite[\S 9.3]{MT:2020}.
We also note that SRFTs are sometimes called \textit{randomized orthornomal systems} \cites{PW:2016:HessSketch, PW:2017:NewtSketch, OPA:2019} or (with slight abuse of terminology) FJLTs.
Finally, we point out that a type of  ``SRFTs without subsampling'' has been successfully used to approximate Gaussian matrices needed in random features approaches to kernel ridge regression \cite{LSS:2013:fastfood}.

\begin{remark}[Navigating the literature]
    The reader should be aware that \cite{AC:2009} is the journal version of \cite{AC:2006}.
    Additionally, while \cite{LWMRT:2007} also describes SRFTs, it was actually written after \cite{WLRT:2008}.
\end{remark}

\section{Multi-sketch and quadratic-sketch routines}
\label{subsec:multi_sketch}

For many years, the performance bottleneck in NLA algorithms has been data movement, rather than FLOPs performed on the data.
For example, a general matrix-matrix multiply with $n \times n$ matrices would do $O(n^3)$ data movement if implemented naively with three nested loops, but can be up to a factor of $\sqrt{M}$ smaller, where $M$ is the cache size, if appropriately implemented using loop tiling.

In our context of \RandNLA{}, the fastest randomized algorithms for low-rank matrix approximation involve computing multiple sketches of a data matrix.
Such \textit{multi-sketching} presents new challenges and opportunities in the development of optimized implementations with minimal data movement.

We believe the \RBLAS{} should include functionality for at least three types of multi-sketching, listed below.
The end of \cref{subsubsec:svd_algs} points to algorithms that use these primitives.
In all cases of which we are aware, these primitives are only used for sketching in the sampling regime.
\begin{enumerate}
    \item Generate $\mtx{S}$ and compute $\mtx{Y}_1 = \mtx{A}\mtx{S}$ and $\mtx{Y}_2 = \mtx{A}^{\trans}\mtx{A}\mtx{S}$.
    We illustrate the use of this primitive explicitly in \cref{alg:qb3}.
    \item 
    Generate independent $\mtx{S}_1,\mtx{S}_2$, and compute $\mtx{Y}_1 = \mtx{A}\mtx{S}_1$ and $\mtx{Y}_2 = \mtx{S}_2\mtx{A}$.
    Algorithms which use this primitive typically need to retain $\mtx{S}_1$ or $\mtx{S}_2$ for later use \cite[pg.~251]{HMT:2011}, \cite[Algorithm 2]{YGLLL:2017}, \cite[\S~1.4]{TYUC:2017:singlepass}. 
    \item 
    Generate independent $\mtx{S}_1,\mtx{S}_2,\mtx{S}_3,\mtx{S}_4$, and compute $\mtx{Y}_1 = \mtx{A}\mtx{S}_1$, $\mtx{Y}_2 = \mtx{S}_2\mtx{A}$, and $\mtx{Y}_3 = \mtx{S}_3\mtx{A}\mtx{S}_4$.
\end{enumerate}

\noindent 
Having identified these operations as basic building blocks, we arrive at the following question.
\begin{quote}
    What combination of sketching distributions should be supported in multi-sketching of types 2 and 3?
\end{quote}
For the former type (i.e., type 2), it is important to support at least the case that both $\mtx{S}_1$ and $\mtx{S}_2$ are dense sketching operators, and it may be useful to support when both are fast operators.
At this point, we do not see an advantage for one of $(\mtx{S}_1, \mtx{S}_2)$ to be a fast operator and for the other to be dense.
For the latter type (i.e., type 3), \cite[\S 7.3.2]{TYUC:2017:singlepass} suggests that $(\mtx{S}_1,\mtx{S}_2)$ be Gaussian and that $(\mtx{S}_3,\mtx{S}_4)$ be SRFTs.
We believe that it would be reasonable to use SASOs in place of SRFTs in this context.

The \RandBLAS{} should provide methods to compute sketches that are quadratic in the data matrix.
By ``quadratic sketch,'' we mean a linear sketch of $\mtx{A}^{\trans}\mtx{A}$ or $\mtx{A}\mtx{A}^{\trans}$.
This operation is ubiquitous in algorithms for low-rank approximation.
As with multi-sketching, all uses of quadratic sketching (of which we are aware) entail sketching in the sampling regime.
It is not possible to fundamentally accelerate this kind of sketching by using fast sketching operators.\footnote{This point has also been made in a related setting \cite[\S 11.6.1]{MT:2020}.}
Therefore it would be reasonable for \RandBLAS{}'s quadratic sketching methods to only support dense sketching operators.
(The preceding comments also apply to type 1 multi-sketching.)
In essence, this asks for a high-performance implementation of the composition of the \BLASlev{3} functions \code{syrk} and \code{gemm}: $(\mtx{A},\mtx{S}) \mapsto \mtx{A}\mtx{A}^{\trans}\mtx{S}$.
There is a substantial amount of structure in quadratic sketching that could be leveraged for reduced data movement, which suggests that the \RandBLAS{} would benefit significantly from having optimized routines for this functionality.

\chapter{Least Squares and Optimization}
\label{sec3:LS_and_optim}

\minitoc
\bigskip

Numerical linear algebra is the backbone of the most widely-used algorithms for continuous optimization.
Continuous optimization, in turn, is a workhorse for many scientific computing, machine learning, and data science applications. 

The connections between optimization and linear algebra are often introduced with \emph{least squares problems}.
Such problems have been used as a tool for curve fitting since the days of Gauss and Legendre over 200 years ago --- several decades before Cayley even defined linear algebraic concepts such as the matrix-inverse!
These problems are also remarkable because algorithms for solving them easily generalize to more complicated settings.
Indeed, one of this \nameCref{sec3:LS_and_optim}'s key messages is that, by adopting a suitable perspective, one can use randomization in essentially the same way to solve a wealth of different quadratic optimization problems.

Our perspective entails describing all least squares problems in terms of an $m \times n$ data matrix $\mtx{A}$ with at least as many rows as columns.
Specifically, we express the overdetermined problem as 
\begin{equation*}
\min_{\vct{x} \in \R^n}\|\mtx{A}\vct{x} - \vct{b}\|_2^2
\label{eqn:ls-overdet}
\end{equation*}
for a vector $\vct{b}$ in $\R^m$, 
while we express the underdetermined problem as
\begin{equation*}
\min_{\vct{y} \in \R^m}\{ \|\vct{y}\|_2^2 ~|~ \mtx{A}^{\trans}\vct{y} = \vct{c} \}
\label{eqn:ls-underdet}
\end{equation*}
for a vector $\vct{c}$ in $\R^n$.
Of course, both of these models could be expressed in the corresponding ``argmin'' formulation.
We generally prefer the ``min'' formulation for the optimization problem itself and use ``argmin'' only for the set of optimal solutions.

\cref{subsec:optim_problem_classes} introduces the problems we consider: minimization of regularized quadratics and various generalizations of least squares problems.
For each problem, it provides high-level comments on structures and desired outcomes that can make randomized algorithms preferable to classical ones.

\cref{subsec:optim_drivers} covers the drivers for these problems based on \RandNLA{}.
It details the problem structures that stand to benefit from a particular driver, and it highlights other linear algebra problems that largely reduce to solving problems amenable to these drivers.
\cref{subsec:optim_comp_routines} details some essential computational routines that would power the drivers.

The rest of the \cref{sec3:LS_and_optim} is largely supplemental.
\cref{subsec:other_opt_algs} reviews randomized optimization algorithms that we find notable but out-of-scope, as well as one type of deterministic computational routine that is potentially useful for (but not required by) the drivers.
We conclude by describing existing \RandNLA{} libraries for least squares and optimization in \cref{subsec:opt_libraries}.

\section{Problem classes}
\label{subsec:optim_problem_classes}

This \nameCref{sec3:LS_and_optim} covers drivers for two related classes of optimization problems: 
minimizing regularized positive definite quadratics (\S \ref{subsubsec:optim:min_quadratic_prob_class})
and certain generalizations of overdetermined and underdetermined least squares which we refer to as \textit{saddle point problems} (\S \ref{subsubsec:optim:saddle_prob_class}).
Problems in both classes can naturally be transformed to equivalent linear algebra problems.\footnote{As we explain later, the saddle point problems are equivalent to so-called \textit{saddle point systems}, which are well-studied in the NLA literature; see \cites{BGL:2005:saddle_sys,OA:2017:quasidef_book}} 
Functionality for solving these problems can easily provide the foundation for managing the core linear algebra kernels of larger optimization algorithms.

\subsubsection{How can we measure the accuracy of an approximate solution?}
The problem of quantifying the error of an approximate solution to a least squares or saddle point problem is very important.
While we would like to address this topic up-front, a proper discussion requires technical background that we only cover in \cref{subsec:optim_comp_routines}.
Furthermore, even given this background, there are various subtleties and special cases that would be laborious to describe here.
Therefore we defer the important topic of error metrics for least squares and related problems to \cref{subapp:error_metrics}.

\subsection{Minimizing regularized quadratics}
\label{subsubsec:optim:min_quadratic_prob_class}

Let $\mtx{G}$ be a positive semidefinite (psd) linear operator, and let $\mu$ be a positive regularization parameter.
One of the main topics of this \nameCref{sec3:LS_and_optim} is algorithms for computing approximate solutions to problems of the form
\begin{equation}\label{eq:unconstr_quadratic_opt}
    \min_{\vct{x}} \vct{x}^{\trans}\left(\mtx{G} + \mu\mtx{I}\right)\vct{x} - 2\vct{h}^{\trans}\vct{x}.
\end{equation}
Note that solving \eqref{eq:unconstr_quadratic_opt} is equivalent to solving $\left(\mtx{G} + \mu \mtx{I}\right)\vct{x} = \vct{h}$.
We refer to such problems in different contexts throughout this section.
In some contexts, we say that $\mtx{G}$ is $n \times n$, and in others, we say it is $m \times m$.

This \nameCref{sec3:LS_and_optim} covers algorithms for solving these problems to varying degrees of accuracy.
\begin{itemize}
    \item 
    Methods for solving to higher accuracy will access $\mtx{G}$ repeatedly by matrix-matrix and matrix-vector multiplication.
    \item
    Methods for solving to lower  accuracy may vary in how they access $\mtx{G}$: 
    they may only entail selecting a subset of its columns; or 
    they may perform a single matrix-matrix multiplication $\mtx{G}\mtx{S}$ with a tall and thin matrix $\mtx{S}$.
\end{itemize}
Note that the low accuracy methods may be useful in machine learning contexts such as kernel ridge regression (KRR), where an inaccurate solution to \eqref{eq:unconstr_quadratic_opt} can still be useful for downstream computational tasks.

\begin{remark}
    If the linear operator $\mtx{G}$ implements the action of an implicit Gram matrix $\mtx{A}^{\trans}\mtx{A}$ (with $\mtx{A}$ known) then it would be preferable to reformulate \eqref{eq:unconstr_quadratic_opt} as \eqref{eq:saddle_opt_x}, below, with $\vct{b} = \vct{0}$ and $\vct{c} = \vct{h}$.
\end{remark}

\subsubsection{Amenable problem structures}

The suitability of methods we describe for problem \eqref{eq:unconstr_quadratic_opt} will depend on how many eigenvalues of $\mtx{G}$ are larger than $\mu$.
Supposing $\mtx{G}$ is $n \times n$, it is desirable that the number of such eigenvalues is much less than $n$.
The data $(\mtx{G},\mu)$ that arise in practical KRR problems usually have this property.

In an ideal setting, the user would have an estimate for the number of eigenvalues of $\mtx{G}$ that are larger than $\mu$.
%
%
%
%
This is not a strong requirement when $\mtx{G}$ is accessible by repeated matrix-vector multiplication, in which case the accuracy of the estimate is unimportant.
The standard \RandNLA{} algorithm in this situation can easily be modified to recycle the work in solving \eqref{eq:unconstr_quadratic_opt} for one value of $\mu$ towards solving \eqref{eq:unconstr_quadratic_opt} for another value of $\mu$.

\subsection{Solving least squares and basic saddle point problems}
\label{subsubsec:optim:saddle_prob_class}

We are interested in certain generalizations of overdetermined and underdetermined least squares problems.
The generalizations facilitate natural specification of linear terms in composite quadratic objectives, which is a common primitive in many second-order optimization algorithms.

We frame these problems as complementary formulations of a common \textit{saddle point problem}.
The defining data for such a problem consists of a tall $m \times n$ matrix $\mtx{A}$, an $m$-vector $\vct{b}$, an $n$-vector $\vct{c}$, and a scalar $\mu \geq 0$.
For simplicity, our descriptions in this paragraph assume $\mtx{A}$ is full-rank.
The \textit{primal} saddle point problem is
\begin{equation}\label{eq:saddle_opt_x}
            \min_{\vct{x}\in\R^n}\|\mtx{A}\vct{x} - \vct{b}\|_2^2 + \mu\|\vct{x}\|_2^2 + 2\vct{c}^{\trans}\vct{x}.
\end{equation}
When $\mu$ is positive, the \textit{dual} saddle point problem is
\begin{equation}\label{eq:underdet_ridge}
            \min_{\vct{y} \in \R^m} \|\mtx{A}^{\trans}\vct{y} - \vct{c}\|_2^2 + \mu\|\vct{y} - \vct{b}\|_2^2.
\end{equation}
In the limit as $\mu$ tends to zero, \cref{eq:underdet_ridge} canonically becomes
\begin{equation}\label{eq:saddle_opt_y}
    \min_{\vct{y} \in \R^m}\{ \|\vct{y} - \vct{b} \|_2^2 \,:\, \mtx{A}^{\trans}\vct{y} = \vct{c} \}. 
\end{equation}
Note that the primal problem reduces to ridge regression when $\vct{c}$ is zero, and it reduces to overdetermined least squares when both $\vct{c}$ and $\mu$ are zero.
When $\vct{b}$ is zero, and depending on the value of $\mu$, the dual problem amounts to ridge regression with a wide data matrix or to basic underdetermined least squares.

\subsubsection{Pros and cons of this viewpoint}

Adopting this more general optimization-based viewpoint on least squares problems has two major benefits.
\begin{itemize}
    \item 
    It extends least squares problems to include linear terms in the objective. 
    The linear term in the primal problem is obvious.
    The linear terms in \eqref{eq:underdet_ridge} and \eqref{eq:saddle_opt_y} are obtained by expanding $ \|\vct{y} - \vct{b} \|_2^2 = \|\vct{y}\|_2^2 - 2 \vct{b}^{\trans}\vct{y} + \|\vct{b}\|_2^2$ and ignoring the constant term $\|\vct{b}\|_2^2$.
    \item 
    It renders the primal and dual problems equivalent for most algorithmic purposes.
    The equivalence is based on formulating the optimality conditions for these problems in a so-called \textit{saddle point system} over the variables $(\vct{x},\vct{y})$.
    \cref{subsubsec:tech_background_saddle} details this equivalence.
\end{itemize}
It must be noted that the saddle point problems we consider can be ill-posed when $\mu$ is zero and $\mtx{A}$ is rank-deficient.
Specifically, when $\mu = 0$ and $\vct{c}$ is not orthogonal to the kernel of $\mtx{A}$, the primal problem \eqref{eq:saddle_opt_x} has no optimal solution and the dual problem \eqref{eq:saddle_opt_y} has no feasible solution.
In this setting, we assign \textit{canonical solutions} by considering the limit as $\mu$ tends to zero.
\cref{app:lstsq_details:inconsistent} addresses the existence and form of these limiting solutions.
The outcome of the limiting analysis is that when $\mu = 0$, we obtain canonical solutions
\begin{equation}\label{eq:saddle_limiting_solutions}
\vct{x} = (\mtx{A}^{\trans}\mtx{A})^\dagger(\mtx{A}^{\trans}\vct{b} - \vct{c})\quad\text{ and }\quad\vct{y} =  (\mtx{A}^{\trans})^{\dagger}\vct{c} + (\mtx{I} - \mtx{A}\mtx{A}^\dagger)\vct{b},
\end{equation}
which are related through the identity $\vct{y} = \vct{b} - \mtx{A}\vct{x}$.

\subsubsection{Amenable problem structures}

This \nameCref{sec3:LS_and_optim} focuses on methods for solving these optimization problems to high accuracy.
Indeed, later in this \nameCref{sec3:LS_and_optim} we make the novel observation that methods for solving problems \eqref{eq:saddle_opt_x}--\eqref{eq:saddle_opt_y} to high accuracy can be used as the core subroutine in solving \eqref{eq:unconstr_quadratic_opt} to low accuracy at extremely large scales.
If $m \gg n$, then these methods are efficient regardless of numerical aspects of the problem data $(\mtx{A},\vct{b},\vct{c},\mu)$; problems such as poor numerical conditioning will be relevant insofar as they contribute to floating-point rounding errors in these efficient algorithms.
If $m$ is only slightly larger than $n$, then the methods we describe will only be effective when $\mtx{G} := \mtx{A}^{\trans}\mtx{A}$ and $\mu$ have the properties alluded to in \cref{subsubsec:optim:min_quadratic_prob_class}.
These properties are detailed later in this section.

\section{Drivers}
\label{subsec:optim_drivers}

Here we present four families of drivers for the problems described in \cref{subsec:optim_problem_classes}.
Two of the driver families belong to a paradigm in the RandNLA literature known as \textit{sketch-and-precondition}.
Algorithms in these families are capable of computing accurate approximations of a problem's true solution.
The other two driver families belong to a paradigm known as \textit{sketch-and-solve}.
They are less expensive than sketch-and-precondition methods (to varying degrees) but they are only suitable for producing rough approximations of a problem's true solution.
The sketch-and-solve drivers described in \cref{subsubsec:krr_sketch_and_solve} are novel in that they rely on separate sketch-and-precondition methods for their core subroutines.

\subsection{Sketch-and-solve for overdetermined least squares }\label{subsubsec:sketch_and_solve}

Sketch-and-solve is a broad paradigm within \RandNLA{}, and algorithms based on it have been central to early developments in the area~\cites{Mah-mat-rev_BOOK, Woodruff:2014, DM16_CACM, DM21_NoticesAMS}.
Its most notable manifestations have been for overdetermined least squares \cites{DMM06,Sarlos:2006, DMMS07_FastL2_NM10, CW:2013:CWTransform}, overdetermined $\ell_1$ and general $\ell_p$ regression \cites{DDHKM09_lp_SICOMP,YMM16_PIEEE}, and ridge regression \cites{avron2017sharper, wang2017sketched}.

We focus here on least squares for concreteness. 
In this case, one samples a sketching operator $\mtx{S}$, and returns
\begin{equation}\label{eqn:l2-sketch-and-solve}
(\mtx{SA})^{\dagger}(\mtx{S}\vct{b}) \in \argmin_{\vct{x}}\|\mtx{S}(\mtx{A}\vct{x} - \vct{b})\|_2^2
\end{equation}
as a proxy for the solution to 
$
\min_{\vct{x}}\|\mtx{A}\vct{x} - \vct{b}\|_2^2
$.
The quality of this solution can be bounded with the concept of subspace embeddings from \cref{subsec:sketch_quality}.
In particular, if $\mtx{S}$ is a subspace embedding for $V = \range([\mtx{A},\vct{b}])$ with distortion $\delta$, then
\begin{equation}\label{eq:sketch_and_solve_bound}
    \|\mtx{A}(\mtx{S}\mtx{A})^{\dagger}(\mtx{S}\vct{b}) - \vct{b}\|_2 \leq \left(\frac{1+\delta}{1-\delta}\right) \|\mtx{A}\mtx{A}^{\dagger}\vct{b} - \vct{b}\|_2.
\end{equation}
Note that \eqref{eqn:l2-sketch-and-solve} is invariant under scaling of $\mtx{S}$.
This implies that \eqref{eq:sketch_and_solve_bound} also holds when $\delta$ is the effective distortion of $\mtx{S}$ for $V$; see \eqref{eq:sec2:eff_dist} and \cref{subapp:effective_distortion}.

Implementation considerations and a viable application are given below.

\subsubsection{Methods for the sketched subproblem}

Direct methods for \eqref{eqn:l2-sketch-and-solve} require computing an orthogonal decomposition of $\mtx{S}\mtx{A}$, such as a QR decomposition or an SVD, in $O(dn^2)$ time.
In this context, sketch-and-solve can be used as a preprocessing step for sketch-and-precondition methods at essentially no added cost.
Indeed, this preprocessing step was used in \cite{RT:2008:SAP}.
Therefore if a direct method is being considered for sketch-and-solve, then sketch-and-precondition methods should also be viable when $m \in O(dn)$.

One can in principle apply an iterative solver to the problem defined by $(\mtx{S}\mtx{A}, \mtx{S}\vct{b})$. 
This strategy avoids the cost of factoring $\mtx{S}\mtx{A}$, and it reduces the per iteration cost relative to running the iterative solver on the original problem.
This is typically implemented without preconditioning (but see~\cite{YCRM18_JRNL}), in which case it leaves the dependence on the condition number of the original problem, and so it can only be recommended for problems where the condition number is known to be~small.

\subsubsection{Error estimation}

Since sketch-and-solve algorithms for overdetermined least squares are most suitable for computing rough approximations to a problem's true solution, it is important to have methods for estimating this error.
Such estimates can either be used to inform downstream processing of the approximate solution or to determine if a more accurate solution (computed by a more expensive algorithm) might be needed.
It is especially important that these methods work well in regimes where sketch-and-solve has a compelling computational profile, such as when $m \gg dn$.
\cref{subapp:bootstrap:least_squares} provides one such estimator based on the principle of \textit{bootstrapping} from statistics.

\subsubsection{Application to tensor decomposition}

The benefits of sketch-and-solve for least squares manifest most prominently when the following conditions are satisfied simultaneously: 
(1) $m$ is extremely large, so $\mtx{A}$ is not stored explicitly, and
(2) $\mtx{A}$ supports relatively cheap access to individual rows $\mtx{A}[i,:]$.
Among other places, this situation arises in alternating least squares approaches to tensor decomposition.
We touch upon that topic in \cref{subsubsec:fancy_sketch:cp_decomp_ols}, particularly in the remarks after \eqref{eq:normal-equation-update}.

\subsection{Sketch-and-precondition for least squares and saddle point problems}\label{subsubsec:sketch_and_precond}

The \textit{sketch-and-precondition} approach to overdetermined least squares was introduced by Rokhlin and Tygert \cite{RT:2008:SAP}.
When the $m \times n$ matrix $\mtx{A}$ is very tall, the method is capable of producing accurate solutions with less expense than direct methods. 
It starts by computing a $d \times n$ sketch $\mtx{A}^{\mathrm{sk}} = \mtx{S}\mtx{A}$ in the embedding regime (i.e., $d \gtrsim n$).
The sketch is decomposed by QR with column pivoting $\mtx{A}^{\mathrm{sk}}\mtx{\Pi} = \mtx{Q}\mtx{R}$, which defines a preconditioner $\mtx{M} = \mtx{\Pi}\mtx{R}^{-1}$.
If the parameters for the sketching operator distribution were chosen appropriately, then $\mtx{A}\mtx{M}$ will be nearly-orthogonal with high probability.\footnote{
    The condition number of $\mtx{A}\mtx{M}$ and the effective distortion of $\mtx{S}$ for $\range(\mtx{A})$ completely characterize one another; see \cref{subapp:effective_distortion,subsubapp:we_sketch_subspaces}.
}
The near-orthogonality of $\mtx{A}\mtx{M}$ ensures rapid convergence of an iterative method for the least squares problem's preconditioned normal equations.
If $T_{\mathrm{sk}}$ denotes the time complexity of computing $\mtx{S}\mtx{A}$, then the typical asymptotic FLOP count to solve to $\epsilon$-error is
\begin{equation}\label{eq:sap_runtime}
    O(T_{\mathrm{sk}} + dn^2 + mn \log(1/\epsilon))
\end{equation}
Importantly, this complexity has no dependence on the condition number of $\mtx{A}$.
\label{page:runtime_of_sap}

This approach was extended with stronger theoretical guarantees, support for more general least squares problems, and high-performance implementations   through \textit{Blendenpik} \cite{AMT:2010:Blendenpik} and \textit{LSRN} \cite{MSM:2014:LSRN}.
It has also recently been used to solve positive definite systems arising in linear programming algorithms \cite{CLAD:2020:SAP_linprog}.
All of these methods produce preconditioners $\mtx{M}$ where $\mtx{A}\mtx{M}$ is nearly-orthogonal and are intended for the regime where $\mtx{A}$ is very tall.

What constitutes ``very tall'' depends on the algorithm's implementation and the hardware that runs it.
It is easy to implement these algorithms in Matlab or Python so that, on a personal laptop, they are competitive with \LAPACK{}'s direct methods when $m \geq 50 n \geq 10^5$; see also \cref{subsec:opt_libraries}.


\subsubsection{Your attention, please!}

If a saddle point problem features regularization (i.e., if $\mu > 0$) and if $\mtx{A}$ has rapid spectral decay, then randomized methods can be used to find a good preconditioner in far less than $O(n^3)$ time, no matter the specific value of $m \geq n$. 
This is possible by borrowing ideas from \textit{\Nystrom{} preconditioning} \cite{FTU:2021:NystromRidge}, which we introduce in \cref{subsubsec:nys_pcg} for the related problem of minimizing regularized quadratics.
As a novel contribution, \cref{subsec:saddle_nys_precond} explains how \Nystrom{} preconditioning can naturally be adapted to saddle point problems.
Therefore while the material here (in \cref{subsubsec:sketch_and_precond}) focuses on the case $m \gg n$, one should be aware that this requirement can be relaxed.

\subsubsection{Algorithms}

Sketch-and-precondition algorithms can take different approaches to sketching, preconditioner generation, and choice of the eventual iterative solver.
\begin{itemize}
    \item 
    Blendenpik used SRFT sketching operators, obtained its preconditioner by unpivoted QR of $\mtx{A}^{\mathrm{sk}}$, and used LSQR \cite{PS:1982} as its underlying iterative method.
    
    \item
    LSRN used Gaussian sketching operators, obtained its preconditioner through an SVD of $\mtx{A}^{\mathrm{sk}}$, and defaulted to the Chebyshev semi-iterative method \cite{GV:1961} for its iterative solver.
\end{itemize}
These two examples hint at the huge range of possibilities for the implementation of sketch-and-precondition algorithms.
Indeed, we discuss preconditioners in detail over \cref{subsec:precond_gen,subsec:saddle_nys_precond}, and we review a suite of possible deterministic iterative methods in \cref{subsubsec:det_saddle_solve}.
For now, we give \cref{alg:ols_orth_sap,alg:saddle_to_ols_sap} (below) as footholds for understanding the various design considerations.

For simplicity's sake, both of these algorithms use a black-box function 
\[
\vct{z} = \code{iterative\_ls\_solver}(\mtx{F},\vct{g},\epsilon,L,\vct{z}_o)
\]
which computes an approximate solution to $\min_{\vct{z}}\|\mtx{F}\vct{z} - \vct{g}\|_2^2$.
The exact semantics of this function are unimportant for our present purpose.
Its general semantics are that the solver initializes an iterative procedure at $\vct{z}_o$ and that it runs until either an implementation-dependent error tolerance $\epsilon$ is met or an iteration limit $L$ is reached. 
Typical implementations would measure error with a suitably normalized version of the normal equation residual $\|\mtx{F}^{\trans}\left(\mtx{F}\vct{z} - \vct{g}\right)\|_2$.
If $\kappa$ denotes the condition number of $\mtx{F}$ then typical convergence rates are such that error $\|\mtx{F}(\vct{z} - \mtx{F}^{\dagger}\vct{g})\|_2$ decays multiplicatively by a factor of $(\kappa - 1)/(\kappa + 1)$ with each iteration.

\clearpage

Besides the use of a common iterative solver, both algorithms below initialize the iterative solver at the solution from a sketch-and-solve approach in the vein of \cref{subsubsec:sketch_and_solve}.
The time needed to perform this presolve step is negligible, but it should save several iterations when solving to a prescribed accuracy.
It also plays an important role in handling overdetermined least squares problems when $\vct{b}$ is in the range of $\mtx{A}$.
In such contexts, the sketch-and-solve result actually solves the least squares problem \textit{exactly} provided that $\rank(\mtx{S}\mtx{A}) = \rank(\mtx{A})$; this stands in contrast to using a preconditioned iterative method initialized at the origin, which would not be able to achieve relative error guarantees for $\|\mtx{A}\vct{x} - \vct{b}\|$ against $\|(\mtx{I} - \mtx{A}\mtx{A}^{\dagger})\vct{b}\| = 0$.

\begin{algorithm}[htb]
    \setstretch{1.0}
    \caption{\code{SPO1}: a Blendenpik-like approach to overdetermined least squares }\label{alg:ols_orth_sap}
    \begin{algorithmic}[1]
        \State \textbf{function} $\code{SPO1}(\mtx{A},\vct{b},\epsilon, L)$ \vspace{0.5pt} 
        \Indent
            \Statex \quad Inputs:
            \Statex \begin{quote}
                $\mtx{A}$ is $m \times n$ and $\vct{b}$ is an $m$-vector. We require $m \geq n$ and expect $m \gg n$.
                 The iterative solver's termination criteria are governed by $\epsilon$ and $L$: it stops if the solution reaches error $\epsilon \geq 0$ according to the solver's error metric, or if the solver completes $L \geq 1$ iterations.
            \end{quote}
            \Statex \quad Output:
            \Statex \qquad An approximate solution to \eqref{eq:saddle_opt_x}, with $\vct{c} = \vct{0}$ and $\mu = 0$.\vspace{2pt}
            \Statex  \quad Abstract subroutines and tuning parameters:
            \Statex \qquad $\code{SketchOpGen}$ generates an oblivious sketching operator.
            \Statex \qquad $\code{sampling{\_}factor} \geq 1$ is the size of the embedding dimension relative to~$n$.\vspace{4pt} 
            \setstretch{1.1}
            \State $d = \min\{\lceil n \cdot \code{sampling{\_}factor}\rceil,\, m \}$ 
            \State $\mtx{S} = \code{SketchOpGen}(d, m)$
            \State $[\mtx{A}^{\mathrm{sk}},\, \vct{b}_{\mathrm{sk}}] = \mtx{S}[\mtx{A},\, \vct{b}]$
            \State $\mtx{Q}, \mtx{R} = \code{qr\_econ}(\mtx{A}^{\mathrm{sk}})$
            \State $\vct{z}_o = \mtx{Q}^{\trans}\vct{b}_{\mathrm{sk}}$
            \quad~~~\, \codecomment{$\mtx{R}^{-1}\vct{z}_o$ solves $\min_{\vct{x}}\{\|\mtx{S}(\mtx{A}\vct{x} - \vct{b})\|_2^2\}$}
            \State $\mtx{A}_{\text{precond}} = \mtx{A}\mtx{R}^{-1}$ \codecomment{as a linear operator}
            \State $\vct{z} = \code{iterative\_ls\_solver}(\mtx{A}_{\text{precond}}, \vct{b}, \epsilon, L, \vct{{z}}_o)$ 
            \State \textbf{return} $\mtx{R}^{-1}\vct{z}$
        \EndIndent
    \end{algorithmic}
\end{algorithm}


While \cref{alg:ols_orth_sap} is standard, \cref{alg:saddle_to_ols_sap} is somewhat novel.
Using the same data that might be computed during a standard sketch-and-precondition algorithm for simple overdetermined least squares, it transforms any saddle point problem --- primal or dual --- into an equivalent primal saddle point problem with $\vct{c} = \vct{0}$.
To our knowledge, no such conversion routines have been described in the literature.
The conversion is advantageous because it opens the possibility of using iterative solvers with excellent numerical properties that are specific to least squares problems.
The validity of the algorithm's transformation is explained towards the end of \cref{subsubsec:tech_background_saddle}.

\begin{algorithm}[!htb]
    \setstretch{1.0}
    \caption{\code{SPS2} : sketch, transform a saddle point problem to least squares, and precondition. A more efficient version of this algorithm can be obtained using our observations on SVD-based preconditioning in \cref{subsec:precond_gen}.}\label{alg:saddle_to_ols_sap}
    \begin{algorithmic}[1]
        \State \textbf{function} $\code{SPS2}(\mtx{A},\vct{b},\vct{c},\mu,\epsilon, L)$ \vspace{0.5pt} 
        \Indent
            \Statex \quad Inputs:
            \Statex \begin{quote}
                $\mtx{A}$ is $m \times n$, $\vct{b}$ is an $m$-vector, $\vct{c}$ is an $n$-vector, and $\mu$ is a nonnegative
                regularization parameter. We require $m \geq n$ and expect $m \gg n$.
                 The iterative solver's termination criteria are governed by $\epsilon$ and $L$: it stops if the solution reaches error $\epsilon \geq 0$ according to its internal error metric, or if it completes $L \geq 1$ iterations.
            \end{quote}\vspace{4pt}
            \Statex \quad Output:
            \Statex \qquad Approximate solutions to \eqref{eq:saddle_opt_x} \textit{and} its dual problem.\vspace{4pt}
            \Statex  \quad Abstract subroutines and tuning parameters:
            \Statex \qquad $\code{SketchOpGen}$ generates an oblivious sketching operator.
            \Statex \qquad $\code{sampling{\_}factor} \geq 1$ is the size of the embedding dimension relative to~$n$.\vspace{4pt}
            \setstretch{1.1}
            \State $d = \min\{\lceil n \cdot \code{sampling{\_}factor}\rceil,\, m \}$ 
            \State $\mtx{S} = \code{SketchOpGen}(d, m)$
            \If{$\mu > 0$} 
                \State  \[
                \mtx{S} = \begin{bmatrix}\mtx{S} & \mtx{0} \\ \mtx{0} & \mtx{I}_{n}\end{bmatrix},
                \qquad \mtx{A} = \begin{bmatrix} \mtx{A} \\ \sqrt{\mu}\mtx{I}_{n} \end{bmatrix},
                \qquad \vct{b} = \begin{bmatrix} \vct{b} \\ \vct{0} \end{bmatrix}
                \]
            \EndIf
            \State $\mtx{A}^{\mathrm{sk}} = \mtx{S}\mtx{A}$
            \State $\mtx{U},\mtx{\Sigma},\mtx{V}^{\trans} = \code{svd}(\mtx{A}^{\mathrm{sk}})$
            \State $\mtx{M} = \mtx{V}\mtx{\Sigma}^{\dagger}$
            \State $\vct{b}_{\text{mod}} = \vct{b}$ 
            \If{ $\vct{c} \neq \vct{0}$} 
                \State $\vct{\hat{v}} = \mtx{U}\mtx{\Sigma}^{\dagger}\mtx{V}^{\trans}\vct{c}$
                ~\codecomment{$\vct{\hat{v}}$ solves $\min_{\vct{v}}\{\|\vct{v}\|_2^2 \,:\, \mtx{A}^{\trans}\mtx{S}^{\trans}\vct{v} = \vct{c}\}$}
                \State $\vct{b}_{\text{shift}} = \mtx{S}^{\trans}\vct{\hat{v}}$
                ~\,\,\codecomment{$\mtx{A}^{\trans}\vct{b}_{\text{shift}} = \vct{c}$}
                \State $\vct{b}_{\text{mod}} = \vct{b}_{\text{mod}} - \vct{b}_{\text{shift}}$
            \EndIf
            \State $\vct{z}_o = \mtx{U}^{\trans} \mtx{S}\vct{b}_{\text{mod}}$
                ~~~\,\codecomment{ $\mtx{M}\vct{z}_o$ solves $\min\{\|\mtx{S}\left(\mtx{A}\vct{x} - \vct{b}_{\text{mod}}\right)\|_2^2\}$ }
            \State $\mtx{A}_{\text{precond}} = \mtx{A}\mtx{M}$
                ~~\,\codecomment{ define implicitly, as a linear operator}
            \State $\vct{z} =  \code{iterative\_ls\_solver}(\mtx{A}_{\text{precond}}, \vct{b}_{\text{mod}}, \epsilon, L, \vct{z}_o)$
            \State $\vct{x} = \mtx{M}\vct{z}$
            \State $\vct{y} = \vct{b}[:m] - \mtx{A}[:m, :]\vct{x}$
            \State \textbf{return} $\vct{x},~ \vct{y}$
        \EndIndent
    \end{algorithmic}
\end{algorithm}

\clearpage

We wrap up our introduction to sketch-and-precondition algorithms by speaking to their tradeoffs with sketch-and-solve.
It is easy to see that if $m \ll n^2$ and we perform sketch-and-solve using a direct method for \cref{eqn:l2-sketch-and-solve}, then performing an additional constant number of steps of sketch-and-precondition's iterative phase does not increase the FLOP count by even so much as a constant factor. 
However, if we are in the regime where $m \geq n^2$, then even a single step of an iterative method in sketch-and-precondition can cost as much as an entire sketch-and-solve algorithm.
Therefore when an accurate solution is not required and $m \geq n^2$, it may be preferable to use sketch-and-solve rather than sketch-and-precondition.

\FloatBarrier

\subsubsection{Applications}


One application of these algorithms is to carry out the core subroutine in iterative methods for solving linear systems by block projection.
We explain the nature of this connection later on, in \cref{subsec:linsys_iterative}.

To explain the next application, we need some context.
Classical linear algebra techniques to solve a KRR problem with $m$ datapoints require $O(m^2)$ storage and $O(m^3)$ time.
Rahimi and Recht's \textit{random feature maps} provide a framework for replacing such a KRR problem with a more tractable ridge regression problem \cite{RR:2007:KRR}.
A data matrix in a \textit{random features ridge regression problem} is $m \times n$ (for a tuning parameter $n < m$) and is characterized by the KRR datapoints and functions $f_1,\ldots,f_n$ drawn from a suitable random distribution.
The $i^{\text{th}}$ row in this matrix is obtained by evaluating $f_1,\ldots,f_n$ on the $i^{\text{th}}$ KRR datapoint.

The randomness in random features ridge regression is not ``sketching'' in the sense meant by this monograph.
Still, this approach is notable in our context because it provides a source of models that are amenable to the methodology described above.
The \Nystrom{} preconditioning methodology (see \cref{subsubsec:nys_pcg,subsec:saddle_nys_precond}) has been reported to be especially effective for such problems when $n \lesssim m$ \cite{FTU:2021:NystromRidge}.

\subsubsection{When is sketch-and-precondition asymptotically faster than QR?}

Here we detail the runtime of sketch-and-precondition algorithms under the assumption of sketching with SRFTs.
These sketching operators were used in the original sketch-and-precondition paper \cite{RT:2008:SAP} and subsequently by \cite{AMT:2010:Blendenpik}.
We focus on them here because they have no tuning parameters besides their embedding dimension.
Minimizing the number of tuning parameters helps us make comparisons to direct solvers based on QR decomposition that run in time $O(mn^2)$.

Recall from \cref{subsec:srfts} that it takes $O(mn \log d)$ time to apply a $d \times m$ SRFT to an $m \times n$ matrix.
We can plug $T_{\text{sk}} = mn \log d$ into \eqref{eq:sap_runtime} to see that the ``typical'' runtime for sketch-and-precondition with an SRFT is
\begin{equation}\label{eq:sap_runtime_srft}
    O(mn \log d + dn^2 + mn \log(1/\epsilon)).
\end{equation}
This runtime is only ``typical'' because it does not address subtleties stemming from randomness.
In the algorithm's true runtime, there is a random multiplicative factor $F$ on the $mn \log(1/\epsilon)$ term in \eqref{eq:sap_runtime_srft}.
The distribution of $F$ depends on $(d, n)$ in a complicated way.
In formal algorithm analysis, one describes how to choose $d$ to upper-bound the probability that $F$ exceeds some universal constant $C$.
Then one can say that \eqref{eq:sap_runtime_srft} \textit{does} describe the algorithm's true runtime with some probability.
The convention in the field is to describe how to choose $d$ so the probability that $F \leq C$ tends to one as problem size increases.

\cite{RT:2008:SAP} observed that taking $d = s n$ for small constants $s$ (e.g., $s = 4$) sufficed for \eqref{eq:sap_runtime_srft} to accurately describe algorithm runtime in practice.
However, the theoretical analysis in \cite{RT:2008:SAP} needed to take $d \in \Omega(n^2)$
to bound $F$ with high probability.
Therefore the best theoretical runtime guarantee for sketch-and-precondition was originally obtained by plugging $d = n^2$ into \eqref{eq:sap_runtime_srft}.
The theoretical guarantees improved following developments in the analysis of SRFTs.
Specifically, \cite{AMT:2010:Blendenpik} observed that a transparent application of a result by \cite{NDT:2009:fast-low-rank-approx} could be used to prove that $d \in \Omega(n \log n)$ sufficed to bound $F$ with high probability.
Therefore one can plug $d = n \log n$ into \eqref{eq:sap_runtime_srft} to obtain a bound for algorithm runtime in terms of $(m,n,\epsilon)$ that holds with high probability.
This is the appropriate bound to use when comparing the theoretical asymptotic runtime of SRFT-based sketch-and-precondition to other algorithms.
However, in practice, it is still preferred to use $d = s n$ for some small $s > 1$, since the resulting preconditioned matrices tend to be extremely well-conditioned. 

\FloatBarrier

\subsection{\Nystrom{} PCG for minimizing regularized quadratics}
\label{subsubsec:nys_pcg}


\Nystrom{} preconditioned conjugate gradient (\Nystrom{} PCG) is a recently-proposed method for solving problems of the form \eqref{eq:unconstr_quadratic_opt} to fairly high accuracy \cite{FTU:2021:NystromRidge}. 
We describe it as a method to compute approximate solutions to linear systems $(\mtx{G} + \mu\mtx{I})\vct{x} = \vct{h}$ where $\mtx{G}$ is $n \times n$ and psd.

The randomness in \Nystrom{} PCG is encapsulated in an initial phase where it computes a low-rank approximation of $\mtx{G}$ by a so-called ``\Nystrom{} approximation.''
We defer discussion on such approximations (including the potentially-confusing naming convention) to \cref{subsubsec:herm_eig_algs}.
For our purposes, what matters is that a rank-$\ell$ \Nystrom{} approximation leads to a preconditioner $\mtx{P}$
which can be stored in $O(\ell n)$ space and applied in $O(\ell n)$ time.

Now let $\kappa$ denote the condition number of $\mtx{G}_p := \mtx{P}^{-1/2}(\mtx{G} + \mu\mtx{I})\mtx{P}^{-1/2}$.
It is well-known that each iteration of PCG requires one matrix-vector multiply with $\mtx{G}$, one matrix-vector multiply with $\mtx{P}^{-1}$, and reduces the error of the candidate solution to \eqref{eq:unconstr_quadratic_opt} by a multiplicative factor $(\sqrt{\kappa}-1)/(\sqrt{\kappa} + 1)$.
As we discuss below, one can expect that $\kappa$ will be $O(1)$ if the $\ell^{\text{th}}$-largest eigenvalue of $\mtx{G}$ is smaller than $\mu$.
Indeed, \Nystrom{} PCG is most effective for problems when this threshold is crossed at some $\ell \ll n$.
As a practical matter, users will not need to select the approximation rank parameter $\ell$ manually in order to use \Nystrom{} PCG; \cite[Algorithm E.2]{FTU:2021:NystromRidge} is a specialized adaptive method for \Nystrom{} approximation that can determine an appropriate value for $\ell$ given $(\mtx{G},\mu)$.

\subsubsection{Details on the preconditioner}

We presume access to a low-rank approximation
\begin{equation}
    \mtx{\hat{G}} = \mtx{V}\diag(\vct{\lambda})\mtx{V}^{\trans}
\end{equation}
where $\mtx{V}$ is a column-orthonormal $n \times \ell$ matrix that approximates the dominant $\ell$ eigenvectors of $\mtx{G}$ and $\lambda_1 \geq \cdots \geq \lambda_{\ell} > 0$ are the approximated eigenvalues.
The data $(\mtx{V},\vct{\lambda},\mu)$ is then used to define a preconditioner
\begin{equation}\label{eq:nys_precond}
\mtx{P}^{-1} = \mtx{V}\diag(\vct{\lambda} + \mu)^{-1}\mtx{V}^{\trans} + (\mu + \lambda_{\ell})^{-1}(\mtx{I}_{n} - \mtx{V}\mtx{V}^{\trans}).
\end{equation}
Alternatively, following \cite{FTU:2021:NystromRidge} to the letter, $\mtx{P}^{-1}$ can be the result of multiplying the expression above by $(\mu + \lambda_{\ell})$.
Under this latter convention, $\mtx{P}^{-1}$ acts as the identity on $\range(\mtx{V})^{\perp}$.

While the form of this preconditioner may appear mysterious, its appropriateness can be seen by considering a simple idealized setting.
To make a precise statement on this topic we adopt notation where $\lambda_i(\mtx{G})$ is the $i^{\text{th}}$-largest eigenvalue of $\mtx{G}$.
Assuming that $(\mtx{V},\vct{\lambda})$ are very good estimates for the top $\ell$ eigenpairs of $\mtx{G}$ \textit{and} that $\lambda_{\ell}(\mtx{G}) \approx \lambda_{\ell+1}(\mtx{G})$, the condition number of $\mtx{G}_p$ should be near
\[
\kappa_{\ell}(\mtx{G},\mu) := (\lambda_{\ell}(\mtx{G}) + \mu) / (\lambda_n(\mtx{G}) + \mu).
\]
Taking this for granted, the preconditioner \eqref{eq:nys_precond} can only be effective if $\ell \ll n$ is large enough so that
$\kappa_{\ell}(\mtx{G},\mu)$ is bounded by a small constant.
Using the fact that $\kappa_{\ell}(\mtx{G},u) \leq 1 + \lambda_{\ell}(\mtx{G})/\mu$, we can simplify the criteria and say that a good preconditioner is possible when $\lambda_{\ell}(\mtx{G}) / \mu$ is $O(1)$.

\begin{remark}
    The argument above can be made more rigorous by assuming that $\mtx{V}$ is an $n \times (\ell - 1)$ matrix that contains the \textit{exact} leading $\ell - 1$ eigenvectors of $\mtx{G}$, and that $\lambda_1,\ldots,\lambda_{\ell}$ are the \textit{exact} leading $\ell$ eigenvalues of $\mtx{G}$.
    In this case, the condition number of $\mtx{G}_p$ will be equal to $\kappa_{\ell}(\mtx{G},\mu)$, which will be at most $1 + \lambda_{\ell} / \mu$.
\end{remark}

\subsection{Sketch-and-solve for minimizing regularized quadratics}\label{subsubsec:krr_sketch_and_solve}

Randomization offers several avenues for solving problems of the form \eqref{eq:unconstr_quadratic_opt} to modest accuracy.
We describe two possible methods here through novel interpretations of existing work on KRR.
Our descriptions of the methods keep the focus on linear algebra, and we refer the reader to \cref{subapp:primer_krr} for information on the KRR formalism.
We note that our formulations of these methods are novel in how they apply sketch-and-precondition as the core subroutine in what is otherwise a sketch-and-solve style driver.
Such ``nested randomization'' is a relatively under-explored and potentially powerful algorithm design paradigm.

For notation, we shall say that $\mtx{G}$ is $m \times m$, that $\mu = m \lambda$ for some $\lambda > 0$, and that the optimization variable in \eqref{eq:unconstr_quadratic_opt} is denoted by ``$\vct{\alpha}$'' rather than ``$\vct{x}$.''

\subsubsection{A one-shot fallback on \Nystrom{} approximations}
Rather than solving \eqref{eq:unconstr_quadratic_opt} directly, it has been suggested that one solve 
\[
\left(\mtx{A}\mtx{A}^{\trans} + m \lambda\mtx{I}\right)\vct{\hat{\alpha}} = \vct{h},
\] 
where $\mtx{A}\mtx{A}^{\trans}$ is a \Nystrom{} approximation of $\mtx{G}$ \cite{AM:2015:KRR}.
The computation of $\mtx{A}$ only requires access to $\mtx{G}$ by a single sketch $\mtx{G}\mtx{S}$ for a tall $m \times n$ sketching operator $\mtx{S}$.
In the KRR context, it is especially popular for $\mtx{S}$ to be a column sampling operator \cites{WS:2000:KRR, KMT09c, GM15_NYSTROM_JRNL}.
\cref{subsec:ridge_leverage_scores} discusses how such column-selection sketches $\mtx{G}\mtx{S}$ can be computed adaptively using the concept of \textit{ridge leverage scores}.
Regardless of how the approximation is obtained, there is an equivalence between computing $\vct{\hat{\alpha}}$ and solving a dual saddle point problem with matrix $\mtx{A}$ and other data $(\vct{b},\vct{c},\mu) = (\vct{h},\vct{0},m\lambda)$.
That dual saddle point problem can naturally be approached by sketch-and-precondition methods from \cref{subsubsec:sketch_and_precond}.
The preconditioner generation steps in this context are subtle and addressed in 
\cref{subapp:KRR_AM15_SASAP}.

\subsubsection{Applying a random subspace constraint}
By taking the gradient of the objective function in \eqref{eq:unconstr_quadratic_opt} and multiplying the gradient by the positive definite matrix $\mtx{G}$, we can recast \eqref{eq:unconstr_quadratic_opt} as minimizing
\[
Q(\vct{\alpha}) = \vct{\alpha^{\trans}}(\mtx{G}^2 + m \lambda \mtx{G})\vct{\alpha} - 2\vct{h}^{\trans}\mtx{G}\vct{\alpha}.
\]
In \cite{yang2017randomized}, a sketch-and-solve approach to the problem of minimizing this loss function is proposed.
Specifically, one minimizes $Q(\vct{\alpha})$ subject to a constraint that $\vct{\alpha}$ is in the range of a very tall $m \times n$ sketching operator $\mtx{S}$.
The constrained minimization problem is equivalent to minimizing $\vct{z} \mapsto Q(\mtx{S}\vct{z})$ over $n$-vectors $\vct{z}$.
This in turn is equivalent to solving a highly overdetermined least squares problem, with an $(m + n) \times n$ data matrix $\mtx{A} = [\mtx{G}\mtx{S}; \sqrt{m \lambda}\mtx{R}]$ where $\mtx{R}$ is any matrix for which $\mtx{R}^{\trans}\mtx{R} = \mtx{S}^{\trans}\mtx{G}\mtx{S}$.
This problem can clearly be handled by our methods from \cref{subsubsec:sketch_and_precond}.

\begin{remark}
    We note that \cite{yang2017randomized} presumes access to the sketches $\vct{h}^{\trans}\mtx{G}\mtx{S}$, $\mtx{S}^{\trans}\mtx{G}\mtx{S}$, and $\mtx{S}^{\trans}\mtx{G}^2\mtx{S}$, and advocates for solving the resulting $n$-dimensional minimization problem by a direct method in $O(n^3)$ time.
    However, no guidance is given on how to compute the sketch $\mtx{S}^{\trans}\mtx{G}^2\mtx{S}$.
    From what we can tell, the most efficient way of doing this would be to form the Gram matrix at cost $O(mn^2)$ assuming access to the sketch $\mtx{G}\mtx{S}$.
    (Our usage of $(m, n)$ is swapped relative to \cite{yang2017randomized}.)
\end{remark}

\section{Computational routines}
\label{subsec:optim_comp_routines}

To contextualize the computational routines that follow, we begin in \cref{subsubsec:tech_background_saddle} with a brief discussion of optimality conditions for saddle point problems.
From there, we present in \cref{subsec:precond_gen,subsec:saddle_nys_precond} two families of methods for generating preconditioners needed by saddle point drivers; our presentation of both families includes novel observations that lead to improved efficiency and numerical stability.
Then in \cref{subsubsec:det_saddle_solve} we discuss deterministic preconditioned iterative methods for positive definite systems and saddle point problems.
Such iterative methods are applicable to all drivers from the previous \nameCref{subsec:optim_drivers} (although less so for \cref{subsubsec:sketch_and_solve}).

\subsubsection{Routines not detailed here}

The driver from \cref{subsubsec:nys_pcg} requires methods to compute \Nystrom{} approximations, which are described in \cref{sec4:lowrank}.
In addition, the drivers from \cref{subsubsec:krr_sketch_and_solve} would benefit from specialized data-aware methods for sketching kernel matrices,
which are discussed in \cref{sec7:lev_scores}.
We also note that this \nameCref{subsec:optim_comp_routines} does not describe computational routines for sketch-and-solve type drivers.
This is because those drivers are extraordinarily simple to implement and there is no need to isolate their building blocks into separate computational routines.

\subsection{Technical background: optimality conditions for saddle point problems}
\label{subsubsec:tech_background_saddle}

Here, we give a handful of characterizations of optimal solutions for saddle point problems.
Let us begin by calling an $n$-vector $\vct{x}$ \textit{primal-optimal} if it solves \eqref{eq:saddle_opt_x}.
Analogously, an $m$-vector $\vct{y}$ shall be called \textit{dual-optimal} if it solves \eqref{eq:underdet_ridge} when $\mu$ is positive or \eqref{eq:saddle_opt_y} when $\mu$ is zero.

Primal-dual optimal solutions can be characterized with \textit{saddle point systems}.
These are a class of $2 \times 2$ block linear systems that arise broadly in computational mathematics and especially in optimization.
General introductions to these systems can be found in the survey \cite{BGL:2005:saddle_sys} and the book \cite{OA:2017:quasidef_book}.
We are interested in saddle point systems of the form
    \begin{equation}\label{eq:saddle_sys}
        \begin{bmatrix}
                \mtx{I}                & ~\mtx{A} \\
                \mtx{A}^{\trans}      & -\mu\mtx{I}
        \end{bmatrix}
        \begin{bmatrix}
            \vct{y} \\ \vct{x}
        \end{bmatrix}
        =
        \begin{bmatrix}
            \vct{b} \\ \vct{c}
        \end{bmatrix}.
    \end{equation}
A solution to such a system always exists when $\mu$ is positive or when the tall matrix $\mtx{A}$ is full-rank.
Given that assumption, it can be shown that a point $\vct{\tilde{x}}$ is primal-optimal if and only if there is a $\vct{\tilde{y}}$ for which $(\vct{\tilde{x}},\vct{\tilde{y}})$ solve \eqref{eq:saddle_sys}.
Similarly, a point $\vct{\tilde{y}}$ is dual-optimal if and only if there is an $\vct{\tilde{x}}$ for which $(\vct{\tilde{x}},\vct{\tilde{y}})$ solve \eqref{eq:saddle_sys}.

Saddle point systems are often reformulated into equivalent positive semidefinite systems.
The reformulation takes the system's upper block to \textit{define} $\vct{y} = \vct{b} - \mtx{A}\vct{x}$, and then substitutes that expression into the system's lower block.
This gives us the \textit{normal equations}
\begin{equation}\label{eq:saddle_normal_eqs}
    (\mtx{A}^{\trans}\mtx{A} + \mu\mtx{I})\vct{x} = \mtx{A}^{\trans}\vct{b} - \vct{c}.
\end{equation}
Therefore one can solve \eqref{eq:saddle_sys} by first solving \eqref{eq:saddle_normal_eqs} and then setting $\vct{y} = \vct{b} - \mtx{A}\vct{x}$.
Such an approach to underdetermined least squares is suggested by Bj\"{o}rck in his books \cites{Bjorck:1996,Bjorck:2015}.

Thinking in terms of the normal equations helps with the design of preconditioners.
When accurate solutions are desired, however, it is preferable to employ reformulations that reduce the need for matrix-vector products with the linear operator $\mtx{A}^{\trans}\mtx{A}$.
Such reformulations start by defining an augmented data matrix $\mtx{A}_{\mu} = [\mtx{A}; \sqrt{\mu}\mtx{I}_n]$.
For dual saddle point problems, one solves
\begin{equation}\label{eq:dual_saddle_as_uls}
    \min\{\, \|\Delta\vct{y}\|_2^2 ~:~ \Delta\vct{y} \in \R^{m + n}, ~ ({\mtx{A}_{\mu}})^{\trans}\Delta\vct{y} = \vct{c} - \mtx{A}^{\trans}\vct{b} \},
\end{equation}
and subsequently recovers the dual-optimal solution $\vct{y} = [b_1 + \Delta y_1;\ldots;b_m + \Delta y_m]$.
For primal saddle point problems, one computes \textit{some} $\vct{b}_{\text{shift}} \in \R^{m + n}$ satisfying $(\mtx{A}_{\mu})^{\trans}\vct{b}_{\text{shift}} = \vct{c}$ and then defines $\vct{b}_{\mu} = [\vct{b}; \vct{0}_n] - \vct{b}_{\text{shift}}$. 
Any solution to the resulting problem
\begin{equation}\label{eq:primal_saddle_as_ols}
    \operatornamewithlimits{min}_{\vct{x} \in \R^n}\left\{\|\mtx{A}_{\mu}\vct{x}-\vct{b}_{\mu}\|_2^2\right\}
\end{equation}
is primal-optimal.
Of course, this reformulation is only useful if we have a cheap way to compute $\vct{b}_{\text{shift}}$.
As it happens, however, randomized methods for preconditioner generation provide methods to compute a near-minimum-norm solution to $\mtx{A}^{\trans}\vct{u} = \vct{c}$ in $O(mn)$ extra time compared to when $\vct{c} = \vct{0}$.
We illustrated this process earlier with an SVD-based preconditioner in \cref{alg:saddle_to_ols_sap}.

\subsubsection{Inconsistent saddle point systems}
    Suppose that $\mu$ is zero, so as to allow for the possibility that \eqref{eq:saddle_sys} is consistent.
    Under this assumption, \eqref{eq:saddle_sys} is inconsistent if and only if $\vct{c}$ is not in the range of $\mtx{A}^{\trans}$.
    When framed in this way, we have that \eqref{eq:saddle_sys} is inconsistent if and only if \eqref{eq:saddle_opt_y} has no feasible solution.
    What's more, since $\vct{c} \not\in\range(\mtx{A}^{\trans})$ is equivalent to $\vct{c} \not\in \ker(\mtx{A})^{\perp}$, we see that inconsistency of \eqref{eq:saddle_sys} is equivalent to \eqref{eq:saddle_opt_x} having no optimal solution.
    Therefore a saddle point system is consistent if and only if its associated saddle point problems are well-posed;
    for ill-posed problems, recall that we canonically assign solutions per~\eqref{eq:saddle_limiting_solutions}.

\subsection{Preconditioning least squares and saddle point problems: tall data matrices}
\label{subsec:precond_gen}

There is a simple unifying framework for preconditioner generation of the kind used in  \cites{RT:2008:SAP,AMT:2010:Blendenpik,MSM:2014:LSRN}.
The framework is applicable to any least squares or saddle point problem \eqref{eq:saddle_opt_x}--\eqref{eq:saddle_opt_y} in the regime $m \gg n$.
We describe its general form below and then turn to its concrete instantiations.

\subsubsection{Sketch and orthogonalize}
To describe our framework, begin by defining a sketch $\mtx{A}^{\mathrm{sk}} = \mtx{S}\mtx{A}$ where the sketching operator $\mtx{S}$ has $d \gtrsim n$ rows.
We also define the augmented matrices
\[
    \mtx{A}_{\mu} = \begin{bmatrix} \mtx{A} \\ \sqrt{\mu} \mtx{I} \end{bmatrix}
    \quad\text{and}\quad
    \mtx{A}^{\mathrm{sk}}_{\mu} = \begin{bmatrix} \mtx{A}^{\mathrm{sk}} \\ \sqrt{\mu}\mtx{I} \end{bmatrix}.
\]
These augmented matrices are only used as a formalism.
They reflect the influence of the normal equations \eqref{eq:saddle_normal_eqs} on preconditioner design. 
We emphasize that we specifically allow for $\mu = 0$ and one need not form these augmented matrices explicitly in memory.

Next, we introduce two key terms.
\begin{quote}
    We say that a matrix $\mtx{M}$ \textit{orthogonalizes} $\mtx{A}^{\mathrm{sk}}_{\mu}$
    if the columns of $\mtx{A}^{\mathrm{sk}}_{\mu}\mtx{M}$ are an orthonormal basis for the range of $\mtx{A}^{\mathrm{sk}}_{\mu}$.
    Such a matrix is called a \textit{valid preconditioner} for $\mtx{A}_{\mu}$ if, in addition, $\rank(\mtx{A}^{\mathrm{sk}}_{\mu}) = \rank(\mtx{A}_{\mu})$.
\end{quote}
We note that the rank requirement of a valid preconditioner is nearly universal in practice.
For example, it holds with probability one for uniform and Gaussian operators (\S \ref{subsec:dense_skops}).
We conjecture that it holds with exponentially-high  probability for suitable SASOs (\S \ref{subsubsec:randblas:ShortAxSparse}) and for SRFTs (\S \ref{subsec:srfts}).

\paragraph{How good are these preconditioners?}
In our context, $\mtx{M}$ is a good preconditioner if the spectrum of $\mtx{A}_{\mu}\mtx{M}$ can be divided into a small number of tightly clustered groups.
Given the tools at our disposal in \RandNLA{}, we mostly aim for the spectrum of this matrix to be tightly clustered into a single group, i.e., for its condition number to be small.
In this regard, we can provide the following principle.
\begin{quote}
    \emph{If $\mtx{M}$ is a valid preconditioner for $\mtx{A}_{\mu}$, 
    then the condition number of $\mtx{A}_{\mu}\mtx{M}$ does not depend that of $\mtx{A}_{\mu}$.}
\end{quote}
This principle can be formalized with the following proposition, which we state without regularization for the sake of clarity.
\begin{restatable}{proposition}{leftsketchprecond}\label{prop:left_sketch_precond}
    Let $\mtx{U}$ be a matrix whose columns form an orthonormal basis for the range of $\mtx{A}$.
    If $\mtx{M}$ is a valid preconditioner for $\mtx{A}$,
    then the spectrum of $\mtx{A}\mtx{M}$ is equal to that of $(\mtx{S}\mtx{U})^{\dagger}$.
\end{restatable}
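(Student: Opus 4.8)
The plan is to reduce everything to a small $r \times r$ matrix, where $r = \rank(\mtx{A})$, by factoring $\mtx{A}$ through its range, and then to compare Gram matrices. At the outset I would clarify that ``spectrum'' here means the multiset of singular values, consistent with the surrounding discussion of condition numbers of rectangular matrices. First I would write $\mtx{A} = \mtx{U}\mtx{C}$ with $\mtx{C} = \mtx{U}^{\trans}\mtx{A}$; since $\mtx{U}$ has orthonormal columns and $\range(\mtx{A}) = \range(\mtx{U})$, this factorization is exact ($\mtx{U}\mtx{U}^{\trans}\mtx{A} = \mtx{A}$) and $\mtx{C}$ is an $r \times n$ matrix of full row rank $r$. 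Setting $\mtx{W} = \mtx{S}\mtx{U}$ (a $d \times r$ matrix), the sketch becomes $\mtx{A}^{\mathrm{sk}} = \mtx{S}\mtx{A} = \mtx{W}\mtx{C}$. The validity hypothesis $\rank(\mtx{A}^{\mathrm{sk}}) = \rank(\mtx{A}) = r$ together with the full row rank of $\mtx{C}$ forces $\rank(\mtx{W}) = r$, i.e.\ $\mtx{S}\mtx{U}$ has full column rank; in particular $(\mtx{S}\mtx{U})^{\dagger} = (\mtx{W}^{\trans}\mtx{W})^{-1}\mtx{W}^{\trans}$, so $\big((\mtx{S}\mtx{U})^{\dagger}\big)\big((\mtx{S}\mtx{U})^{\dagger}\big)^{\trans} = (\mtx{W}^{\trans}\mtx{W})^{-1}$.

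Next I would unpack the definition of a valid preconditioner. An orthonormal basis for $\range(\mtx{A}^{\mathrm{sk}})$ has exactly $r$ vectors, so $\mtx{M}$ is $n \times r$ and $\mtx{A}^{\mathrm{sk}}\mtx{M} = \mtx{W}(\mtx{C}\mtx{M})$ has orthonormal columns. Writing $\mtx{N} = \mtx{C}\mtx{M}$ (an $r \times r$ matrix), column-orthonormality of $\mtx{A}^{\mathrm{sk}}\mtx{M}$ reads $\mtx{N}^{\trans}(\mtx{W}^{\trans}\mtx{W})\mtx{N} = \mtx{I}_r$. Since $\mtx{W}\mtx{N} = \mtx{A}^{\mathrm{sk}}\mtx{M}$ has rank $r$ and $\rank(\mtx{W}) = r$, the matrix $\mtx{N}$ is invertible, so the identity rearranges to $\mtx{N}\mtx{N}^{\trans} = (\mtx{W}^{\trans}\mtx{W})^{-1}$.

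Finally I would compute the Gram matrix of the preconditioned data matrix: because $\mtx{U}$ has orthonormal columns, $(\mtx{A}\mtx{M})^{\trans}(\mtx{A}\mtx{M}) = \mtx{M}^{\trans}\mtx{C}^{\trans}\mtx{C}\mtx{M} = \mtx{N}^{\trans}\mtx{N}$. Since $\mtx{N}^{\trans}\mtx{N}$ and $\mtx{N}\mtx{N}^{\trans}$ have the same eigenvalues, the squared singular values of $\mtx{A}\mtx{M}$ are the eigenvalues of $(\mtx{W}^{\trans}\mtx{W})^{-1}$, which are precisely the squared singular values of $(\mtx{S}\mtx{U})^{\dagger}$ by the first paragraph. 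Hence $\mtx{A}\mtx{M}$ and $(\mtx{S}\mtx{U})^{\dagger}$ have the same singular values, i.e.\ the same spectrum; since both have rank $r$, there are no spurious zeros to reconcile.

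I expect the main obstacle to be purely bookkeeping around rank deficiency: pinning down that $\mtx{M}$ has exactly $r$ columns, that $\mtx{S}\mtx{U}$ and $\mtx{N}$ inherit full rank $r$ from the validity assumption, and that the passages $\mtx{N}^{\trans}\mtx{N} \leftrightarrow \mtx{N}\mtx{N}^{\trans}$ and $\mtx{W} \leftrightarrow \mtx{W}^{\dagger}$ are legitimate. None of this is deep, but it must be stated cleanly so that the full-rank and rank-deficient cases are handled uniformly; everything else is a two-line Gram-matrix computation.
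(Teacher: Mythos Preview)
Your proof is correct and is genuinely different from the paper's. The paper works with the compact SVD $\mtx{A} = \mtx{U}\mtx{\Sigma}\mtx{V}^{\trans}$ and the QR factorization $\mtx{S}\mtx{U} = \mtx{Q}\mtx{R}$, then invests most of its effort in a separate lemma that pins down $\mtx{M}$ explicitly (verifying the four Moore--Penrose conditions to show $\mtx{M} = (\mtx{P}^{\trans}\mtx{R}\mtx{\Sigma}\mtx{V}^{\trans})^{\dagger} = \mtx{V}\mtx{\Sigma}^{-1}\mtx{R}^{-1}\mtx{P}$ for a suitable orthogonal $\mtx{P}$), after which $\mtx{A}\mtx{M} = \mtx{U}\mtx{R}^{-1}\mtx{P}$ follows and the claim is read off from the singular values of $\mtx{R}^{-1}$. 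Your route bypasses all of this by comparing Gram matrices: the single identity $\mtx{N}^{\trans}(\mtx{W}^{\trans}\mtx{W})\mtx{N} = \mtx{I}_r$, inverted to $\mtx{N}\mtx{N}^{\trans} = (\mtx{W}^{\trans}\mtx{W})^{-1}$, and the fact that $\mtx{N}^{\trans}\mtx{N}$ and $\mtx{N}\mtx{N}^{\trans}$ share eigenvalues, does the whole job. Your argument is shorter and more elementary, and handles the rank-deficient case uniformly without any appeal to the SVD of $\mtx{A}$. What the paper's approach buys in exchange is an explicit structural formula $\mtx{A}\mtx{M} = \mtx{U}\mtx{R}^{-1}\mtx{P}$, which is not needed for the proposition itself but is informative in its own right.
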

\noindent
\cite[Theorem 1]{RT:2008:SAP} gives a very similar statement under the assumption that $\mtx{A}$ is full-rank.
\cite[Lemma 4.2]{MSM:2014:LSRN} improved upon \cite{RT:2008:SAP} by supporting the rank-deficient case, at the price of strong assumptions on the sketching operator and the form of the preconditioner.
In \cref{subsubapp:we_sketch_subspaces} we provide what to our knowledge is the first proof of \cref{prop:left_sketch_precond} in its general form; we also explain its application to regularized problems.

\paragraph{Up next.}
We now turn to how one can compute orthogonalizers.
To keep things at a reasonable length we only speak to QR-based and SVD-based methods, although others 
could also be used.
Our goal is to give a general overview that includes time and space complexity considerations.
As to the latter consideration, we must note that these preconditioners have insubstantial space requirements when $\mtx{A}$ is dense and $m \gg d \gtrsim n$.
Separately, we note that details of the preconditioner generation process can affect the sketch-and-solve preprocessing step in sketch-and-precondition algorithms.
For more information on theoretical properties of these preconditioners in a \RandNLA{} context, we refer the reader to \cite{CFS:2021:general_SAP_LS}.

\subsubsection{QR-based preconditioning in the full-rank case}

QR-based preconditioning when $\mu = 0$ is very simple; one need only run Householder QR on $\mtx{A}^{\mathrm{sk}}$ and return $\mtx{M} = \mtx{R}^{-1}$ as a linear operator.
We note that specialized methods for QR decomposition of very tall matrices would not be appropriate here, since the $d \times n$ matrix $\mtx{A}^{\mathrm{sk}}$ will have $d \gtrsim n$.
Householder-type representations of $\mtx{A}^{\mathrm{sk}}$'s QR decomposition are especially useful since they require a modest amount of added workspace on top of storing $\mtx{A}^{\mathrm{sk}}$.

The case with $\mu > 0$ is more complicated if we want to avoid forming $\mtx{A}^{\mathrm{sk}}_{\mu}$ explicitly.
To describe it, suppose we have an initial QR decomposition $\mtx{A}^{\mathrm{sk}} = \mtx{Q}_o\mtx{R}_o$.
It is easy to show that the factor $\mtx{R}$ from a QR decomposition of $\mtx{A}^{\mathrm{sk}}_{\mu}$ is the same as the triangular factor from a QR decomposition of $\mtx{\hat{R}} := [\mtx{R}_o; \sqrt{\mu}\mtx{I}]$.
This observation is useful because there are specialized algorithms for QR decomposition of matrices given by an implicit vertical concatenation of a triangular matrix and a diagonal matrix; these specialized algorithms only require $O(n)$ additional workspace.
The factor $\mtx{Q}$ from a QR decomposition of $\mtx{A}^{\mathrm{sk}}_{\mu}$ can also be recovered with this approach, although the representation would be somewhat complicated.

If $\mtx{A}$ is not too ill-conditioned then the same preconditioner can be obtained by Cholesky-decomposing the regularized Gram matrix
\[
(\mtx{A}^{\mathrm{sk}}_{\mu})^{\trans}(\mtx{A}^{\mathrm{sk}}_{\mu}) = (\mtx{A}^{\mathrm{sk}})^{\trans}(\mtx{A}^{\mathrm{sk}}) + \mu\mtx{I},
\]
since the upper-triangular Cholesky factor of that matrix is the same as the factor $\mtx{R}$ from the QR decomposition of $\mtx{A}^{\mathrm{sk}}_{\mu}$.
This approach is simple to implement, and its time and space requirements are unaffected by whether or not $\mu$ is zero.
A sophisticated implementation could even try to form the regularized Gram matrix without allocating $dn$ space for $\mtx{A}^{\mathrm{sk}}$ as an intermediate quantity.
Although, it is clear that unless such a sophisticated implementation is used, there is no material memory savings compared to the Q-less QR approach described above.
This approach also affects sketch-and-solve preprocessing by requiring that we solve the normal equations, which is not a numerically stable approach \cite{Bjorck:1996}.

\subsubsection{QR-based preconditioning in the rank-deficient case}

Suppose for ease of exposition that $\mu = 0$ and let $k = \rank(\mtx{A}^{\mathrm{sk}}) \lesssim n$.
One can use a variety of methods to compute preconditioners that are \textit{morally triangular} in the sense that they are of the form $\mtx{M} = \mtx{P}\mtx{R}^{-1}$ for an $n \times k$ partial-permutation matrix $\mtx{P}$ and a triangular matrix $\mtx{R}$.
As long as the preconditioner orthogonalizes $\mtx{A}^{\mathrm{sk}}$, we can postprocess $\vct{z}_{\text{sol}} = \operatorname{argmin}\|\mtx{A}\mtx{M}\vct{z} - \vct{b}\|_2^2$ to obtain $\vct{x}_{\text{sol}} = \mtx{M}\vct{z}$ which solves $\min\|\mtx{A}\vct{x} - \vct{b}\|_2^2$.

The subtlety here is that when $k < n$ there is a nontrivial affine subspace of optimal solutions to $\min\|\mtx{A}\vct{x} - \vct{b}\|_2^2$.
Our stated goal in the rank-deficient case is to find the minimum-norm solution to the least squares problem (see \cref{eq:saddle_limiting_solutions}).
Unfortunately, if we assume that $\vct{b}$ has no role in defining $\mtx{M}$, then it is clearly impossible to guarantee that the norm of the recovered solution is anywhere near the minimum possible among all minimizers of $\|\mtx{A}\vct{x} - \vct{b}\|_2^2$.

\subsubsection{SVD-based preconditioners}

Let us denote the SVD of $\mtx{A}^{\mathrm{sk}}$ by $\mtx{U}\diag(\vct{\sigma})\mtx{V}^{\trans}$.

First we consider preconditioner generation when $\mu = 0$.
In this case we must account for the fact that $\mtx{A}^{\mathrm{sk}}$ might be rank-deficient.
Letting $k$ denote the rank of $\mtx{A}^{\mathrm{sk}}$, the SVD-based preconditioner is the $n \times k$ matrix
\[
\mtx{M} = \left[\, \frac{\vct{v}_1}{\sigma_1},\,\ldots,\,\frac{\vct{v}_k}{\sigma_k} \,\right].
\]
This construction is important, because it can be shown that if $\vct{z}_\star$ solves
\begin{equation}\label{eq:precond_rankdef_prim_saddle}
    \min_{\vct{z}}\|\mtx{A}\mtx{M}\vct{z} - \vct{b}\|_2^2 + \vct{c}^{\trans}\mtx{M}\vct{z}
\end{equation}
then $\vct{x} = \mtx{M}\vct{z}_{\star}$ satisfies \eqref{eq:saddle_limiting_solutions}.
We note in particular that \eqref{eq:precond_rankdef_prim_saddle} has a unique optimal solution and so computing $\vct{z}_{\star}$ is a well-posed problem.

SVD-based preconditioning is conceptually simpler when $\mu$ is positive, since in that case it does not matter if $\mtx{A}^{\mathrm{sk}}$ is rank-deficient.
However, it is harder to efficiently implement compared to when $\mu = 0$.
Here we present an efficient construction based on the relationship between the SVD of a matrix and the eigendecomposition of its Gram matrix.
Specifically, recall that the right singular vectors of a matrix $\mtx{F}$ are the eigenvectors of $\mtx{F}^{\trans}\mtx{F}$, and that the singular values of $\mtx{F}$ are the square roots of the eigenvalues of $\mtx{F}^{\trans}\mtx{F}$.

When used in our context this fact implies that the right singular vectors of $\mtx{A}^{\mathrm{sk}}_{\mu}$ are equal to those of $\mtx{A}^{\mathrm{sk}}$, and that its singular values are
\[
    \hat{\sigma}_i = \sqrt{\sigma_i^2 + \mu}.
\]
These observations alone are sufficient to recover the preconditioner
\[
    \mtx{M} = \mtx{V}\diag\left(\frac{1}{\hat{\sigma}_1},\ldots,\frac{1}{\hat{\sigma}_n}\right)
\]
which orthogonalizes $\mtx{A}^{\mathrm{sk}}_{\mu}$.

As a final point we consider the problem of recovering the left singular vectors of $\mtx{A}^{\mathrm{sk}}_{\mu}$ given the SVD of $\mtx{A}^{\mathrm{sk}}$.
This is useful in settings such as \cref{alg:saddle_to_ols_sap} for presolve and problem transformation purposes.
Moreover, it can actually be done efficiently.
If we define
\[
\mtx{D}_1 = \diag\left(\frac{\sigma_1}{\hat{\sigma}_1},\ldots,\frac{\sigma_n}{\hat{\sigma}_n}\right)
\quad\text{and}\quad
\mtx{D}_2 = \diag\left(\frac{\sqrt{\mu}}{\hat{\sigma}_1},\ldots,\frac{\sqrt{\mu}}{\hat{\sigma}_n}\right)
\]
then by assumption on $\mtx{M}$ the left singular vectors of $\mtx{A}^{\mathrm{sk}}_{\mu}$ are given by
\[
\begin{bmatrix} \mtx{A}^{\mathrm{sk}} \\ \sqrt{\mu}\mtx{I}\end{bmatrix}\mtx{M} 
    = \begin{bmatrix}\mtx{A}^{\mathrm{sk}}\mtx{M} \\ \sqrt{\mu}\mtx{M}\end{bmatrix}
    = \begin{bmatrix}\mtx{U}\mtx{D}_1 \\ \mtx{V}\mtx{D}_2\end{bmatrix}.
\]
We note that the column-orthonormality of this matrix can easily be verified by its rightmost representation.

To recap, we have introduced three key benefits of SVD-based preconditioning for tall least squares and saddle point problems.
First, it can be used to find the minimum norm solutions in \eqref{eq:saddle_limiting_solutions} in the rank-deficient case.
Second, an SVD-based preconditioner can be computed in the presence of regularization given only the singular values and right singular vectors of $\mtx{A}^{\mathrm{sk}}$.
Third, the SVD of $\mtx{A}^{\mathrm{sk}}$ is sufficient to recover the SVD of $\mtx{A}^{\mathrm{sk}}_{\mu}$, which facilitates sketch-and-solve as a preprocessing step in sketch-and-precondition.


\begin{remark}[Computational complexity]
    The default algorithm for SVD is currently divide-and-conquer \cite{GE:1995}.
    Two somewhat-outdated algorithms for computing the SVD are described in \cite[\S 8.6]{GvL:2013:MatrixComputationsBook};
    \cite[Figure 8.6.1]{GvL:2013:MatrixComputationsBook} gives complexity estimates for these algorithms depending on whether the left singular vectors need to be computed.
\end{remark}

\subsection{Preconditioning least squares and saddle point problems: data matrices with fast spectral decay}
\label{subsec:saddle_nys_precond}

\paragraph{Interpreting the \Nystrom{} preconditioner.}
Recall from \cref{subsubsec:nys_pcg} that the \Nystrom{} preconditioning approach to solving $(\mtx{G} + \mu\mtx{I})\vct{x} = \vct{h}$ starts by constructing a low-rank approximation of $\mtx{G}$.
That approximation defines a preconditioner $\mtx{P}$ satisfying three properties:
\begin{enumerate}
    \item $\mtx{P}$ is positive definite.
    \item $(\mtx{G} + \mu \mtx{I})\mtx{P}^{-1}$ is well-conditioned on a subspace $L$ that contains $\mtx{G}$'s dominant eigenspaces.
    \item $\mtx{P}$ acts as the identity on $L^{\perp}$ (the orthogonal complement of $L$).
\end{enumerate}
Such a preconditioner will be effective when the action of $\mtx{G}$ on $L^{\perp}$ is ``not too pronounced'' compared to that of $\mu\mtx{I}$.
More formally, if we define the restricted spectral norm of $\mtx{G}$ on $L^{\perp}$
\[
    \|\mtx{G}\|_{L^{\perp}} = \max\{ \|\mtx{G}\vct{z}\|_2 \,:\, \vct{z} \in L^{\perp}, \|\vct{z}\|_2 = 1 \}
\]
then the preconditioner will be effective if $\|\mtx{G}\|_{L^{\perp}} / \mu$ is $O(1)$.

\paragraph{Adaptation to saddle point problems.}
\Nystrom{} preconditioners can naively be used for regularized saddle point problems by taking $\mtx{G} = \mtx{A}^{\trans}\mtx{A}$ and considering the normal equations \eqref{eq:saddle_normal_eqs}.
However, the numerical properties of iterative least squares solvers that only access $\mtx{A}^{\trans}\mtx{A}$ tend to be less robust than those of iterative solvers that access $\mtx{A}$ and $\mtx{A}^{\trans}$ separately (i.e., solvers such as LSQR).
This motivates having an extension of the \Nystrom{} preconditioner to be compatible with the latter type of solver.

Towards this end, let us express $\mtx{P}^{-1}$ with a (possibly non-symmetric) matrix square-root $\mtx{M}$, satisfying the relation $\mtx{P}^{-1} = \mtx{M}\mtx{M}^{\trans}$.
We appeal to the well-known fact that running PCG on $(\mtx{G} + \mu\mtx{I})\vct{z} = \vct{h}$ with preconditioner $\mtx{P}$ is equivalent to running the ``unpreconditioned'' CG algorithm on
\[
\mtx{M}^{\trans}(\mtx{G} + \mu \mtx{I})\mtx{M}\vct{z} = \mtx{M}^{\trans}\vct{h}.
\]
When framed in this way, we can ask how $\mtx{M}$ should relate to $\mtx{A}$ and $\mu$ so that it \textit{would} be a good preconditioner \textit{if} it were used on the normal equations.

To answer this question we work with the augmented matrix $\mtx{A}_{\mu} = [\mtx{A};\sqrt{\mu}\mtx{I}_n]$.
The basic criteria for a $\mtx{P}$ as a \Nystrom{} preconditioner for the normal equations \eqref{eq:saddle_normal_eqs} can be stated with $\mtx{M}$ as follows:
\begin{enumerate}
    \item $\mtx{A}_{\mu}\mtx{M}$ should be well-conditioned on a subspace $L$ that includes the dominant right singular vectors of $\mtx{A}_{\mu}$.
    \item we should have $\mtx{A}_{\mu}\mtx{M}\vct{x} = \mtx{A}_{\mu}\vct{x}$ for all $\vct{x} \in L^{\perp}$.
\end{enumerate}
Whether such a preconditioner will be effective can be stated with the ``restricted spectral norm'' as defined above.
Specifically, $\mtx{M}$ should be effective if the above conditions hold and $\|\mtx{A}\|_{L^{\perp}} / \sqrt{\mu}$ is $O(1)$.
We note that the requisite matrix $\mtx{M}$ can be constructed efficiently by similar principles as methods for low-rank SVD in \cref{sec4:lowrank}, and we leave the details to future work.

\subsection{Deterministic preconditioned iterative solvers}\label{subsubsec:det_saddle_solve}

Most of the drivers described in \cref{subsec:optim_drivers} amount to using randomization to obtain a preconditioner and then calling a traditional iterative solver that can make use of that randomized preconditioner.
Here we list some iterative solvers that could be of use for these drivers.
We note up front that many factors can affect the ideal choice of iterative method in a given setting.

\begin{itemize}
    \item \textbf{CG} \cite{HS:1952:CGLS} is the most broadly applicable solver in our context. It applies to the regularized positive definite system \eqref{eq:unconstr_quadratic_opt} and the normal equations of the primal saddle point problem \eqref{eq:saddle_normal_eqs}.
    \item \textbf{CGLS} \cite{HS:1952:CGLS} applies when $\vct{c}$ is zero. It is equivalent to CG on the normal equations in exact arithmetic, but is more stable than CG in finite-precision arithmetic. 
    \item \textbf{LSQR} \cite{PS:1982} applies when at least one of $\vct{c}$ or $\vct{b}$ is zero.
    When considered for overdetermined problems it is algebraically (but not numerically) equivalent to CGLS.
    It is more stable than CG \cite[\S~7.6.3]{Bjorck:1996}, \cite[\S~4.5.4]{Bjorck:2015} and CGLS \cite[\S~9]{PS:1982} for ill-conditioned problems. 
    \item \textbf{CS} (the Chebyshev semi-iterative method) \cite{GV:1961} applies to the same class of systems as CG. It has fewer synchronization points in each iteration and so can take better advantage of parallelism. It requires knowledge of an upper bound and a lower bound on the eigenvalues of the system matrix. We refer the reader to \cite[\S~7.2.5]{Bjorck:1996}, \cite[\S~4.1.7]{Bjorck:2015} for information on this method.
    \item \textbf{LSMR} \cite{FS:2011:LSMR} applies to the same problems as LSQR. For overdetermined least squares it is algebraically equivalent to MINRES on the normal equations. In that context, the residual of the normal equations will decrease with each iteration, which makes it safer to stop early compared to LSQR.
\end{itemize}

These algorithms vary in how they accommodate preconditioners.
Some require implicitly preconditioning the problem data, calling the ``unpreconditioned'' solver, then applying some (cheap) postprocessing to the returned solution.
We note that it is necessary to ``precondition'' any regularization term in the problem's objective when using such an algorithm.\footnote{That is, if we precondition a ridge regression problem, then it is necessary to precondition the \textit{augmented matrix} $[\mtx{A};\sqrt{\mu}\mtx{I}]$ in an unregularized version of the problem.}
For other algorithms, a preconditioner is supplied alongside the problem data, and the algorithm returns a solution that requires no postprocessing.
The difference between these situations is that different quantities end up being available for use in termination criteria (at least for off-the-shelf implementations).
We emphasize that appropriate choices of termination criteria can be crucial for iterative solvers to work effectively, and we refer the reader to  \cref{subapp:error_metrics} for discussion on this and other topics.

Any standard library implementing the drivers from \cref{sec3:LS_and_optim} should include computational methods for (preconditioned) CG and LSQR.
LSQR is most naturally applied to dual saddle point problems by reformulation to \eqref{eq:dual_saddle_as_uls} and to primal saddle point problems by reformulation to \eqref{eq:primal_saddle_as_ols}.
More full-featured \RandNLA{} libraries would do well to include implementations of CS or LSMR, and ``blocked'' versions of iterative solvers.
Such blocked methods apply to linear systems and least squares problems with multiple right-hand sides; they take better advantage of parallel hardware and have slightly faster convergence rates than their non-blocked counterparts.


\section{Other optimization functionality}\label{subsec:other_opt_algs}

Here, we briefly discuss other \RandNLA{} algorithms of note for least squares or optimization, often commenting on how they fit into our plans for \RandLAPACK{}.
Some of these algorithms are out-of-scope for a linear algebra library but can be directly facilitated by the drivers we described in \cref{sec3:LS_and_optim}.

\subsubsection{Facilitating second-order optimization algorithms}
\label{subsubsec:second_order_algs}

Many second-order optimization algorithms need to solve sequences of saddle point systems, where $\mtx{A},\vct{b},\vct{c}$ vary continuously from one iteration to the next.
\RandLAPACK{} will support such use-cases \textit{indirectly} through methods that help amortize the dominant computational cost of a single randomized algorithm across multiple saddle point solves.
See \cites{PW:2017:NewtSketch,fred_SSN_JRNL} for uses of \RandNLA{} for second-order optimization.

The most common way for $\mtx{A}$ to vary is by a reweighting: when $\mtx{A} = \mtx{W}\mtx{A}_o$ for a fixed matrix $\mtx{A}_o$ and an iteration-dependent matrix $\mtx{W}$.
The matrix $\mtx{W}$ is typically (but not universally) a matrix square root of the Hessian of some separable convex function.
The randomized algorithms described in this \nameCref{sec3:LS_and_optim} will only be useful for such problems when $\mtx{W}$ and its adjoint can be applied to $m$-vectors in $O(m)$ time.
This condition is satisfied in limited but important situations such as in algorithms for logistic regression, linear~programming, and iteratively-reweighted least squares.

\subsubsection{Stochastic Newton and subsampled Newton methods}

Newton Sketch is a prototype algorithm developed over two papers \cites{PW:2016:HessSketch, PW:2017:NewtSketch} which is closely related to subsampled Newton methods~\cites{XRM17_theory_TR, YXRM18_TR, fred_SSN_JRNL}.
Each is suited to optimization problems that feature non-quadratic objective functions or problems with constraints other than linear equations.
These methods entail sampling a new sketching operator (and applying it to a new data matrix) in each iteration, with the aim of approximating the Hessian of the objective at the given iterate.
The algorithms described in this \nameCref{sec3:LS_and_optim} can easily serve as the main subroutine in Newton Sketch and subsampled Newton methods.

Newton Sketch has a natural specialization for least squares which entails sampling and applying only one sketching operator.
This specialization can be viewed as sketch-and-precondition, where the iterative method for solving the saddle point system is based on preconditioned steepest-descent.
The asymptotic convergence of this approach can be established in various ways \cites{OPA:2019,LP:2019,Tropp:2021:LecNotes}.
It has been shown that ``traditional'' sketch-and-precondition methods (based on CG or the Chebyshev semi-iterative method) exhibit faster convergence \cite{LP:2019}. 
Therefore we do not expect to incorporate this method into \RandLAPACK{}.

There are two recently proposed extensions of Newton Sketch that may be suitable for solving the saddle point problems described in \cref{subsubsec:optim:saddle_prob_class}: Hessian averaging \cite{NDM:2022:HessianAvg} and stochastic variance reduced Newton (SVRN) \cite{Der22_Stochastic_TR}.
The performance profiles of these methods are better when $\mtx{A}$ is very tall.
When specialized to least squares, the former method amounts to preconditioned steepest-descent where the preconditioner is updated at each iteration.
By comparison (again in the least squares setting), SVRN amounts to steepest-descent with a fixed preconditioner that incorporates variance-reduced sketching methods (adapted from  \cite{JZ13_Accelerating}) to approximate the gradient at each iteration.

\subsubsection{Random features preconditioning for KRR}

A random-features approach for computing accurate solutions to problems of the form \eqref{eq:unconstr_quadratic_opt} in the context of KRR is proposed in \cite{ACW:2017:KRR}.
Specifically, \cite{ACW:2017:KRR} advocates for using random features to obtain a preconditioner for use in an iterative method such as PCG.
Since any such iterative solver requires access to $\mtx{G}$ by matrix-vector multiplication, \Nystrom{} PCG can be applied to the same problems as this \textit{random-features preconditioning}.
Empirical results strongly suggest that the \Nystrom{} approach has better performance than random-features preconditioning on shared-memory machines \cite{FTU:2021:NystromRidge}.
Thus, we do not plan for \RandLAPACK{} to support random-features preconditioning at this time.

\subsubsection{Utilities for iterative refinement}
\label{subsubsec:unti_iter_refine}

Iterative refinement can be used as a tool to compensate for rounding errors in otherwise reliable linear system solvers.
These methods typically work by computing residuals to higher precision than that used by the solver, running the solver with the updated residual, and then adding the new solution to the original solution \cite[\S 2.9.2]{Bjorck:1996}.\footnote{In some situations, it can suffice to recompute the residual with the same precision used by the underlying solver \cite[\S 2.9.3]{Bjorck:1996}.}
\LAPACK{} has some procedures of this kind.
See \cite{Higham:1997} and \cite{DHKLMR:2006} for theoretical and practical analyses.

The best way to use iterative refinement routines to support randomized algorithms in this \nameCref{sec3:LS_and_optim} is yet to be determined.
On the one hand, it may suffice to include methods similar to those in \LAPACK{}.
However, sketch-and-precondition algorithms might pose unique numerical problems that require different techniques.
See \cite[\S 5.7]{AMT:2010:Blendenpik} for some discussion of numerical issues in the sketch-and-precondition context.
It might also be natural for a \RandNLA{} library to have more iterative refinement methods than \LAPACK{} in order to better exploit low-precision arithmetic and accelerators.


\section{Existing libraries}\label{subsec:opt_libraries}

We know of four high-performance libraries with  sketch-and-precondition methods for least squares: Blendenpik \cite{AMT:2010:Blendenpik}, LSRN \cite{MSM:2014:LSRN}, \LibSkylark{} \cite{libskylark}, and \textsf{Ski-LLS} \cite{CFS:2021:general_SAP_LS}.\footnote{Note that ``Blendenpik'' and ``LSRN'' are names for algorithms \textit{and} libraries.}
To our knowledge,
\LibSkylark{} is the only \RandNLA{} library which supports least squares \textit{and} low-rank approximation (see \cref{subsec:lowrank_libraries}).
None of these libraries support saddle point problems of the kind we consider, and none of them make use of \Nystrom{} preconditioning.

\paragraph{Blendenpik.}
This library is written in C and callable from Matlab; it is currently available on the Matlab File Exchange.
It uses LSQR as the deterministic iterative solver, and obtains the preconditioner by running QR on a sketch $\mtx{A}^{\mathrm{sk}} = \mtx{S}\mtx{A}$, where $\mtx{S}$ is an SRFT.
Blendenpik also adaptively calls \LAPACK{} if a problem is deemed too poorly scaled or if the iterative method performs poorly.
It was shown to outperform an unspecified \LAPACK{} least squares solver on a machine with 8GB RAM and an AMD Opteron 242 processor \cite{AMT:2010:Blendenpik}.

\paragraph{LSRN.}
This comprises a C++ implementation callable from Matlab and a Python implementation.
The C++ implementation was shown to outperform \LAPACK{}'s \code{DGELSD} on large dense problems, and Matlab's backslash (\textsf{SuiteSparseQR}) on sparse problems.
The Python implementation has demonstrated that LSRN scales well on Amazon Elastic Compute Cloud clusters.
We note that the Python implementation relies on an auxiliary Python package with a custom C-extension for sampling from the Gaussian distribution via the ziggurat method.

\paragraph{\LibSkylark{}.}
This library is written in C++ and is available on GitHub.
Its support for least squares problems is very general and includes a few deterministic preconditioned iterative solvers.
Its sketch-and-precondition functionality includes implementations in the styles of Blendenpik and LSRN.
\LibSkylark{} has a Python interface, but only for Python 2.7.
Its linear algebra kernels are implemented partly in the \Elemental{} distributed linear algebra library \cite{Elemental}.
Unfortunately, \Elemental{} is no longer maintained.

\paragraph{\textsf{Ski-LLS}.}
This is a recently developed C++ library for solving dense and sparse highly overdetermined least squares problems.
It is distinguished by its flexibility in preconditioner generation.
In particular, it supports sketching by SRFTs, Gaussian operators, and SASOs.
It also supports factoring the sketch $\mtx{S}\mtx{A}$ by several methods, including a standard SVD algorithm, a randomized algorithm for full-rank column-pivoted QR (see \cref{subsec:fullrank_decomp:qrcp}), and a standard algorithm for sparse QR.
We record the following (adapted) quote from the GitHub repository that hosts this software:
\begin{quote}
    \textsf{Ski-LLS} is faster and more robust than Blendenpik and LAPACK on large over-determined data matrices, e.g., matrices having 40,000 rows and 4,000 columns. \textsf{Ski-LLS} is 10 times faster than Sparse QR and incomplete-Cholesky preconditioned LSQR on sparse data matrices that are ill-conditioned and sufficiently large, e.g., with 120,000 rows, 5,000 columns, and 1\% non-zeros.
\end{quote}

\paragraph{Falkon.}
Finally, we note the recently developed \textit{Falkon} library for sketch-and-solve approaches to KRR powered by multi-GPU machines \cites{MCRR:2020:KRRimplement,MCdVR:2022:KRRautotune}.
While this library works outside of our primary data model, it is of interest to anyone developing software for KRR based on \RandNLA{}.

\chapter{Low-rank Approximation}
\label{sec4:lowrank}

\minitoc
\bigskip

Modern scientific computing, machine learning, and data science applications generate massive matrices that need to be processed for reduced run time, reduced storage requirements, or improved interpretability.
\textit{Low-rank approximation} is a workhorse approach for achieving these goals.
Here, given a target matrix $\Ao$, the task is to produce a suitably factored representation of a low-rank matrix $\Aa$ of the same dimensions which approximates the matrix $\Ao$.

We can express the main aspects of a low-rank approximation as computing factor matrices $\mtx{E}$ and $\mtx{F}$ where
\begin{equation}
\begin{array}{cccccccc}
\Ao & \approx & \Aa & := & \mtx{E} & \mtx{F} \\
m\times n &   & m\times n & & m\times k & k\times n
\end{array} 
\end{equation}
for some $k \ll \min\{m, n\}$.
We note that it is very common to have a $k \times k$ ``inner factor'' that appears in between $\mtx{E}$ and $\mtx{F}$ above.

Such representations facilitate data interpretation by choosing the factors to have useful structure, such as having orthonormal columns or rows, or being submatrices of the target.
The extent of storage reduction from low-rank approximation depends on whether $\Ao$ is dense or sparse.
In the dense case, $\Aa$ is stored in $O(mk + nk)$ space.
In the sparse case, one representation consists of a dense $k \times k$ inner factor, a slice of $k$ rows of $\Ao$, and a slice of $k$ columns of $\Ao$.

The rank $k$ used in a low-rank approximation is a tuning-parameter that the user can control to trade-off between approximation accuracy and data compression.
The best choice of this parameter depends on context. 
For instance, one may want to choose $k$ small enough to graphically visualize coherent structure in the target.
In such a setting one would not expect that $\Aa$ is close to $\Ao$ in an absolute sense, but one can still ask that the distance is near the minimum among all approximations with the desired structure and rank.
Alternatively, one might know that $\Ao$ can be well-approximated by a low-rank matrix, and yet not know the rank necessary to achieve a good approximation.
Such matrices are called \textit{numerically low-rank} and arise in applications across the social, physical, biological, and ecological sciences. 
For example, they can arise as discretizations of differential operators, where the extent to which the matrix is numerically low-rank depends on the details of the operator and the discretization; and they can arise as noisy corruptions of general (hypothesized) data matrices with low exact rank.
When dealing with such matrices one can iteratively build $\Aa$ until a desired distance $\|\Ao - \Aa\|$ is small.
This \nameCref{sec4:lowrank} covers a variety of efficient and reliable low-rank approximation algorithms for both of these scenarios.

\section{Problem classes}
\label{subsec:supported_lowrank}

Low-rank approximation is naturally formalized as an optimization problem; one chooses $\Aa$ and its factors to minimize a loss function subject to some constraints.
The most common loss functions are distances $\Aa \mapsto \|\Ao - \Aa\|$ induced by the Frobenius or spectral norms.
Alternatively, one can use the discontinuous loss function $\Aa \mapsto \rank(\Aa)$ as a measure of the storage requirements for $\Aa$.
Constraints depend on the loss function in a complementary way.
When minimizing a norm-induced distance, one imposes rank constraints by limiting the dimensions of the factors.
When minimizing the rank of $\Aa$ (i.e., when seeking an approximation that admits the smallest-possible representation) one constrains the approximation error $\|\Ao - \Aa\|$ to be at most some specified value.
One can also impose \textit{structural} constraints on the factors of $\Aa$, such as being orthogonal, diagonal, or a submatrix of the target.

Our overview of randomized algorithms for low-rank matrix approximation is organized around such structural constraints.
Accordingly, we use the term \textit{problem class} for loose groups of low-rank approximation problems wherein the factors facilitate similar downstream tasks.
Currently, our two problem classes are the following.
\begin{itemize}
    \item 
    Spectral decompositions (\S \ref{subsubsec:class_spectral}): this consists of low-rank SVD and Hermitian eigendecomposition.
    \item
    Submatrix-oriented decompositions, i.e., decompositions with factors based on submatrices of the target matrix (\S \ref{subsubsec:class_submatrix}): this consists of so-called \textit{CUR} and \textit{interpolative decompositions}.
\end{itemize}
Optimal decompositions in the first class often serve as baselines in theoretical analyses of randomized algorithms for low-rank decomposition in both classes.
That is, such comparisons are made \textit{regardless} of whether the approximation is spectral or submatrix-oriented.
This fact can blur the distinction between the two problem classes, and the distinctions can blur even further when one considers methods for efficiently converting from one decomposition to another.
Still, keeping the problem classes separate is useful as an organizing principle for the most fundamental low-rank approximation problems in \RandNLA{}.

\begin{remark}
    Low-rank approximations that impose no requirements on $\Aa$'s representation are briefly addressed in  \cref{subsubsec:oblique_proj} in the context of computational routines.
    Methods for low-rank approximation with other representations (e.g., QR, UTV, LU, nonnegative factorization)
    are discussed in \cref{subsec:other_lowrank}.
\end{remark}

\subsection{Spectral decompositions}
\label{subsubsec:class_spectral}

In what follows we give an overview of the SVD and Hermitian eigendecomposition, with emphasis on the roles of these decompositions in low-rank approximation.
After covering these concepts we explain how they provide two perspectives on principal component analysis (PCA).
We advise the reader to at least skim this overview material even if they are already familiar with the relevant concepts; low-rank approximation is much more prominent in \RandNLA{} than it is in classical NLA.

\subsubsection{Singular value decomposition}

The SVD is widely used to compute low-rank approximations and as a workhorse algorithm for PCA. 
Given a $m \times n$ matrix $\Ao$, where $m \geq n$ (without loss of generality), its SVD is
\begin{equation}
\label{eqn:svd_xxx1}
\begin{array}{cccccccc}
\Ao & = & \mtx{U} & \mtx{\Sigma}  & \mtx{V}^{\trans} \\
m\times n &   & m\times n & n\times n & n\times n
\end{array},
\end{equation}
where $\mtx{U} = [ \vct{u}_1, \ldots, \vct{u}_n ]$ and $\mtx{V} = [ \vct{v}_1, \ldots , \vct{v}_n ]$ are column-orthonormal matrices that contain the left and right singular vectors of $\Ao$. 
The matrix $\mtx{\Sigma} = \diag(\sigma_1,\dots,\sigma_n)$ contains the corresponding singular values; we use the convention that they appear in decreasing order $\sigma_{1} \geq \ldots \geq  \sigma_{n} \geq 0$.
We can also think about the SVD as expressing $\Ao$ as the sum of $n$ rank-one matrices
\begin{equation}\label{eq:svd_sum}
	\Ao = \sum_{i=1}^{n} \sigma_i \vct{u}_i \vct{v}_i^{\trans}.
\end{equation}

In applications it is common to encounter data matrices with low-rank structure, i.e., matrices for which $r = \rank(\Ao)$ is smaller than the ambient dimensions $m$ and $n$ of $\Ao$.
In this case, the singular values $\{\sigma_{i}: i\geq r+1\}$ are zero, the corresponding singular vectors span the left and right null spaces, and it is natural to consider the \textit{compact SVD} where the sum in \eqref{eq:svd_sum} is truncated at $i = r$.
For a matrix $\Ao$ with \textit{approximate} low-rank structure, we can obtain approximations with low \textit{exact} rank by truncating this sum even earlier, at some $k < r$:
\begin{align}
	\Ao \approx \Aa_k :=& \sum_{i=1}^{k} \sigma_i \vct{u}_i \vct{v}_i^{\trans} \nonumber \\
        =& [ \vct{u}_1, \dots , \vct{u}_k ] 
	\diag(\sigma_1,\dots,\sigma_k) [ \vct{v}_1, \dots , \vct{v}_k ]^{\trans}= \mtx{U}_k \mtx{\Sigma}_k \mtx{V}_k^{\trans},
	\label{eqn:svd_trincated_xxx1}
\end{align}
Truncating trailing singular values yields optimal rank-constrained approximations, in the sense of solving
\begin{equation}\label{eq:svd_as_opt}
	\Aa_k \in \argmin_{\rank(\Aa') = k} \| \Ao - \Aa' \|.
\end{equation} 
This holds for every $k \in \idxs{r}$.
In other words, if $\Ao$ is approximated by a rank-$k$ matrix $\Aa_k$ given through its SVD, no further computation is needed to canonically obtain approximations of $\Ao$ with any rank $k \leq r$.

The optimality result of \eqref{eq:svd_as_opt} holds for any unitarily invariant matrix norm, and it is known as the \textit{Eckart-Young-Mirsky Theorem} when considered for the spectral norm or Frobenius norm.
The reconstruction errors according to these norms are
\begin{equation}\label{eq:eckartyoungbounds}
	\| \Ao - \Aa_k \|_2 = \sigma_{k+1}(\Ao) \quad \mbox{and} \quad \| \Ao - \Aa_k \|_{\text{F}} = \sqrt{\sum_{j > k} \sigma_j^2(\Ao)}.
\end{equation} 
These facts are important in applications, where it is common to see $\rank(\Ao) = \min\{m, n\}$ in exact arithmetic and yet many of the trailing singular values are so small that they can be presumed to be noise.
That is, the truncation introduced in \eqref{eqn:svd_trincated_xxx1} is often used as a denoising technique.
%

\subsubsection{Hermitian eigendecomposition}

A matrix is called \textit{Hermitian} if it is equal to its adjoint, i.e., if $\Ao = \Ao^{\trans}$.
For real matrices, being Hermitian is the same as being symmetric.
The eigendecomposition of a Hermitian matrix $\Ao$ is
\begin{equation}
\begin{array}{cccccccc}
\Ao & = & \mtx{V} & \mtx{\Lambda}  & \mtx{V}^{\trans} \\
n\times n &   & n\times n & n\times n & n\times n
\end{array},
\end{equation}
where $\mtx{V}$ is an orthogonal matrix of eigenvectors and $\mtx{\Lambda} = \diag(\lambda_1,\ldots,\lambda_n)$ is a real matrix containing the eigenvalues of $\Ao$.
A Hermitian matrix is further called \textit{positive semidefinite} (or ``psd'') if $\lambda_i \geq 0$ for all $i$.

We use the convention of sorting eigenvalues in decreasing order of absolute value: $|\lambda_1| \geq \cdots \geq |\lambda_n|$.
This allows for a more direct comparison to the SVD, since we obtain low-rank approximations
\begin{align}
	\Ao \approx \Aa_k :=& \sum_{i=1}^{k} \lambda_i \vct{v}_i \vct{v}_i^{\trans} \nonumber \\
        =& [ \vct{v}_1, \dots , \vct{v}_k ] 
	\diag(\lambda_1,\dots,\lambda_k) [ \vct{v}_1, \dots , \vct{v}_k ]^{\trans}= \mtx{V}_k \mtx{\Lambda}_k \mtx{V}_k^{\trans}
\end{align}
for which the spectral and Frobenius-norm distances to $\mtx{A}$ match those from \eqref{eq:eckartyoungbounds}.
Indeed, a (truncated) eigendecomposition can be converted to a (truncated) SVD by taking the columns of $\mtx{V}$ as the right singular vectors, setting the left singular vectors according to
\[
    \vct{u}_i = \begin{cases}
                    \vct{v}_i & \text{if } \lambda_i > 0 \\
                    -\vct{v}_i & \text{otherwise}
                \end{cases}
\]
and setting the singular values to $\vct{\sigma} = |\vct{\lambda}|$ (elementwise).

If a matrix is Hermitian then it is better to compute (and work with) its eigendecomposition, rather than its SVD.
The first reason for this is that a rank-$k$ eigendecomposition requires almost half the storage of a rank-$k$ SVD.
The second reason is that algorithms for computing low-rank eigendecompositions are able to leverage structure in the matrix for improved efficiency.
These efficiency improvements can be dramatic for psd matrices, where an eigendecomposition is technically also an SVD.

\subsubsection{Connections to principal component analysis}

PCA is a linear dimension reduction technique that is widely used in data science applications for extracting features, or for visualizing and summarizing complicated datasets.
The idea of PCA is to form $k$ new variables (components) $\mtx{Z} = [\vct{z}_1, \dots , \vct{z}_k]$ as linear combinations of the variables $\mtx{X} = [\vct{x}_1, \dots , \vct{x}_n] \in \R^{m \times n}$ (that are assumed to have been preprocessed to have column-wise zero empirical mean).
Specifically, given the data matrix $\mtx{X}$, one forms the variables as $\mtx{Z} = \mtx{X}\mtx{W}$ where the weights $\mtx{W} = [\vct{w}_1, \dots , \vct{w}_k] \in \R^{n \times k}$ 
are chosen so that the first component $\vct{z}_1$ accounts for most of the variability in the data, the second component $\vct{z}_2$ for most of the remaining variability, and so on.

Formally, we can formulate this problem as a variance maximization problem
\begin{equation}
    \vct{w}_1 := \argmax_{\|\vct{w}\|_2^2=1} \operatorname{Var}(\mtx{X}\vct{w})
\end{equation}
where we define the variance operator as $\operatorname{Var}(\mtx{X}\vct{w}):=\frac{1}{m-1}\|\mtx{X}\vct{w}\|^2_2$.
Defining the sample covariance matrix $\mtx{C}:=\frac{1}{m-1}\mtx{X}^{\trans} \mtx{X}$, this problem can be stated as
\begin{equation}
    \vct{w}_1 := \argmax_{\|\vct{w}\|_2^2=1} \vct{w}^{\trans}\mtx{C}\vct{w}.\label{eq:pca_first_component_covariance}
\end{equation}
We recognize \eqref{eq:pca_first_component_covariance} as the variational formulation of the dominant eigenvector of a Hermitian matrix.
That is, $\vct{w}_1$ satisfies
\begin{equation}
    \mtx{C}\vct{w}_1 = \lambda_1(\mtx{C}) \vct{w}_1. 
\end{equation}
More generally, PCA finds the weights $\vct{w}_1, \dots , \vct{w}_k$ by diagonalizing the empirical sample covariance matrix $\mtx{C}$ as $\mtx{C}=\mtx{W}\mtx{\Lambda}\mtx{W}^{\trans}$, and retaining only the top $k$ eigenvectors.

How one computes the PCA depends on how the data is presented and accessed.
There are two situations of interest.
In situations where the covariance matrix $\mtx{C}$ is given by the problem at hand---and thus where we access $\mtx{C}$ directly, but do \emph{not} directly access the data matrix $\mtx{X}$---one can directly employ a low-rank Hermitian eigendecomposition to compute the dominant $k$ eigenvectors. 
If instead we are presented with the variables in form of a mean-centered data matrix $\mtx{X}$, then the low-rank SVD becomes the preferable approach to computing the weights $\mtx{W}$.
This is because we can relate the eigenvalue decomposition of the inner product $\mtx{X}^{\trans}\mtx{X}$ to the SVD of $\mtx{X}=\mtx{U}\mtx{\Sigma}\mtx{V}^{\trans}$ by
\begin{equation}
    \mtx{X}^{\trans} \mtx{X} = (\mtx{V}\mtx{\Sigma}\mtx{U}^{\trans}) (\mtx{U}\mtx{\Sigma}\mtx{V}^{\trans}) = \mtx{V}\mtx{\Sigma}^2 \mtx{V}^{\trans}.
\end{equation}
Hence, we obtain the $k$ weights $\mtx{W} = [\vct{w}_1, \dots , \vct{w}_k]$ by computing the top $k$ right singular vectors $\mtx{V}_k = [\vct{v}_1, \dots , \vct{v}_k]$, and the eigenvalues of the sample covariance matrix $\mtx{C}$ are given by the diagonal elements $\frac{1}{m-1} \mtx{\Sigma}^2$.

Fast randomized algorithms for computing the SVD and eigenvalue decomposition enable scaling PCA to especially large problems.
However, one does not need a large problem to benefit from these randomized algorithms.
From 2016 until at least time of writing, the \code{pca} function \code{SciKit-Learn} defaults to a randomized algorithm when $d = \min\{m, n\} \geq 500$ and $k$ is less than $0.8 d$ \cite{SciKitLearn-PCA}.

\subsection{Submatrix-oriented decompositions}
\label{subsubsec:class_submatrix}

Here we describe four types of submatrix-oriented decompositions: a \textit{CUR decomposition} and three types of \textit{interpolative decompositions}.
Historically, these have been used far less often than spectral decompositions.
However, their value propositions have become much more compelling in recent years:
\begin{itemize}
    \item 
    They can offer reduced storage requirements compared to spectral decompositions, especially for sparse data matrices.
    This can be very valuable in processing massive data sets.
    \item 
    They provide for more transparent data interpretation.
    This is especially true when data is modeled as a matrix as a matter of convenience, rather than as a statement about the data defining a meaningful linear operator $\Ao \mapsto \Ao\vct{v}$.
\end{itemize}

\begin{remark}
     The following material is dense.
    We encourage the reader to return to it as needed while reading later parts of this~\nameCref{sec4:lowrank}.
\end{remark}

\subsubsection{CUR decomposition}

\paragraph{Definition.} A CUR decomposition is a low-rank approximation of the form
\begin{equation}\label{eq:cur}
\begin{array}{ccccc}
\Ao & \approx & \mtx{C} & \mtx{U} & \mtx{R}, \\
m\times n &   &  m\times k & k\times k & k\times n
\end{array} 
\end{equation}
where the factors $\mtx{C}$ and $\mtx{R}$ are formed by small subsets of actual columns and rows of $\Ao$, and the \textit{linking matrix} $\mtx{U}$ is chosen so that some norm of $\Ao - \mtx{C}\mtx{U}\mtx{R}$ is small.

\paragraph{Motivation.}

The literature on CUR decomposition traces back to work by Gore\u{\i}nov, Zamarashkin, and Tyrtyshnikov, who proved existential results for CUR decompositions with certain approximation error bounds \cites{GZT:1995:CUR,GTZ:1997:CUR}.
Gore\u{\i}nov et al.\ motivated their investigations by pointing out that CUR decompositions have far lower storage requirements than partial SVDs when $\Ao$ is sparse.
More specifically, they advocated for applying CUR decompositions for low-rank approximation of off-diagonal blocks in block matrices.

The usage of CUR as a data-analysis tool was popularized by Mahoney and Drineas \cite{mahoney2009cur}, following the development of efficient randomized algorithms for computing CUR decompositions with good approximation guarantees \cites{DMM:2008}.
The argument of Mahoney and Drineas was that experts often have a clear understanding of the actual meaning of certain columns and rows in a matrix, and this meaning is preserved by the CUR.
In contrast, SVD (or PCA) forms linear combinations of the columns or rows of the input matrix; these linear combinations can prove difficult to interpret and destroy structures such as sparsity or nonnegativity. 

\paragraph{Words of warning.}

Despite its simple definition, there are important subtleties in working with and understanding CUR decompositions.

First, CUR is unique among submatrix-oriented decompositions in that it involves taking products of submatrices of $\Ao$.
Therefore if $\Ao$ has low numerical rank and its CUR decomposition is computed to high accuracy, then all three factors ($\mtx{C}$, $\mtx{U}$, and $\mtx{R}$) will be ill-conditioned.
This can have detrimental effects on numerical behavior, particularly in the computation of $\mtx{U}$, which will behave qualitatively like inverting a matrix of low numerical rank.

Second, out of all the submatrix-oriented decompositions, CUR is of the least interest for providing exact decompositions of full-rank matrices.
For example, if $\Ao = \mtx{C}\mtx{U}\mtx{R}$ is a tall matrix of rank $n$, then we would necessarily have $\mtx{C} = \Ao\mtx{P}$ and $\mtx{U} = \mtx{P}^{\trans}\mtx{R}^{-1}$ for some permutation matrix $\mtx{P}$. 
Therefore the only real freedom in full-rank CUR of a tall matrix is in choosing the spanning set of rows that define $\mtx{R}$.
A similar conclusion applies when $\Ao$ is a wide matrix of rank $m$; an exact decomposition would necessarily have $\mtx{R} = \mtx{P}^{\trans}\Ao$ and $\mtx{U} = \mtx{C}^{-1}\mtx{P}$ for some permutation matrix $\mtx{P}$, and the only real freedom would be in choosing the spanning set of columns that define $\mtx{C}$.

The consequences of this second fact will be seen when we discuss randomized algorithms for computing CUR decompositions.
In particular, we will see that randomized algorithms for (low-rank) CUR generally \textit{do not} involve computing ``full-rank CUR decompositions'' on smaller matrices.
This will stand in contrast to randomized algorithms for (low-rank) SVD and interpolative decompositions, which usually \textit{do} involve computing the corresponding full-rank decomposition on smaller matrices.

\subsubsection{Interpolative decompositions}

Low-rank interpolative decompositions (IDs) 
come in three different flavors that share two common notes.
The first shared note of these flavors is that exactly one of the factors that define $\Aa$ is a submatrix of $\Ao$.
The second shared note concerns the factors of $\Aa$ that are not submatrices of $\Ao$.
These factors, called \textit{interpolation matrices}, are subject to certain regularity conditions.

Our crash course on ID comes in three parts.
First, we define versions of ID that only involve one interpolation matrix, so-called \textit{one-sided IDs}.
After that, we explain how accuracy guarantees of low-rank ID are affected by regularity conditions on interpolation matrices.
This explanation is important; we reference it repeatedly when we cover randomized algorithms for one-sided ID (\S \ref{subsec:CSS_CX_computational}).
We wrap up by introducing the \textit{two-sided ID}.

\paragraph{The one-sided IDs: column ID and row ID.}

In the low-rank case, a \textit{column ID} is an approximation of the form
\begin{equation}\label{eq:cid}
\begin{array}{ccccc}
\Ao & \approx & \mtx{C} & \mtx{X} \\
m\times n &   &  m\times k & k\times n
\end{array} 
\end{equation}
where $\mtx{C}$ is given by a small number of columns of $\Ao$ and $\mtx{X}$ is a wide interpolation matrix.
Full-rank column IDs can be of interest to us when $\Ao$ is very wide, in which case we have $k = m \ll n$ and $\mtx{X} = \mtx{C}^{-1}\Ao$.
Next, we consider \textit{row IDs}.
In the low-rank case, these are approximations of the form
\begin{equation}\label{eq:rid}
\begin{array}{ccccc}
\Ao & \approx & \mtx{Z} & \mtx{R}. \\
m\times n &   &  m\times k & k\times n
\end{array}
\end{equation}
where $\mtx{R}$ is given by a small number of rows of $\Ao$ and $\mtx{Z}$ is a tall interpolation matrix.
%
%
The submatrices $\mtx{C}$ and $\mtx{R}$ can be represented by ordered column and row index sets, which we denote by $J$ and $I$, that satisfy 
\[
    \mtx{C} = \mtx{A}[\fslice,J] \quad\text{and}\quad \mtx{R} = \mtx{A}[I,\fslice].
\]
These ordered index sets are called \textit{skeleton indices}.

Our definitions of row and column IDs are not complete until we specify the regularity conditions on $\mtx{X}$ and $\mtx{Z}$.
The skeleton indices $(J, I)$ play a central role here.
Most importantly, we require that the interpolation matrices satisfy
 \[
    \mtx{X}[\fslice, J] = \mtx{I}_k = \mtx{Z}[I,\fslice].
\]
These regularity conditions have two direct consequences, namely
\begin{align*}
    \mtx{A}[\fslice,J] &= \mtx{A}[\fslice,J]\mtx{X}[\fslice,J] \text{ , and} \\
    \mtx{A}[I,\fslice] &= \mtx{Z}[I,\fslice]\mtx{A}[I,\fslice],
\end{align*}
In addition to these conditions, the literature on ID typically requires that the entries of $\mtx{X}$ and/or $\mtx{Z}$ are bounded in modulus by a small constant $M$.
While we do not subscribe to this requirement in our definition of IDs, there is good motivation behind it.
We address this motivation next.


\paragraph{Quality-of-approximation in low-rank IDs.}

Suppose that $\Aa$ is a low-rank column ID of $\Ao$.
It is immediate from our definition of low-rank ID that there are at most ${n \choose k}$ possible values for the subspace $\range(\Aa)$.
In general, it can happen that none of these subspaces coincides with a dominant $k$-dimensional left singular subspace of $\Ao$.
When this happens, it will necessarily be the case that $\|\Ao - \Aa\|_2 > \sigma_{k+1}(\Ao)$, whereas a general rank-$k$ approximation could achieve an error equal to $\sigma_{k+1}(\Ao)$.
This raises a question.
\begin{quote}
    \textit{How much of a price do we pay by imposing that requirement that $\Aa$ be a low-rank ID?}
\end{quote}
The following proposition (which we prove in \cref{app:lowrank:ID} by standard techniques) gives a partial answer to this question.
The answer is notable in that it reveals the value of bounding the interpolation matrices.

\begin{proposition}\label{prop:regularity_ID_accuracy}
    Let $\mtx{\tilde{A}}$ be any rank-$k$ approximation of $\Ao$ that satisfies the spectral norm error bound $\|\Ao - \mtx{\tilde{A}}\|_2 \leq \epsilon$.
    If 
     $\mtx{\tilde{A}} = \mtx{\tilde{A}}[\fslice,J]\mtx{X}$ for some $k \times n$ matrix $\mtx{X}$ and an index vector $J$, then $\Aa = \Ao[\fslice,J]\mtx{X}$ is a low-rank column ID that satisfies
    \begin{equation}\label{eq:carryover_ID_bound}
        \|\Ao - \Aa \|_2 \leq (1 + \|\mtx{X}\|_2) \epsilon.
    \end{equation}
    Furthermore, if $|X_{ij}| \leq M$ for all $(i,j)$, then
    \begin{equation}\label{eq:interp_matrix_spectral_bound}
        \|\mtx{X}\|_2 \leq \sqrt{1 + M^2 k (n - k)}.
    \end{equation}
\end{proposition}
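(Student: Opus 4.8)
The plan is to prove the two displayed bounds in turn, preceded by a one-line structural observation that justifies calling $\Aa$ a column ID. First I would note that since $\mtx{\tilde{A}}$ has rank $k$ and $\mtx{\tilde{A}} = \mtx{\tilde{A}}[\fslice,J]\mtx{X}$ with $|J| = k$, the submatrix $\mtx{\tilde{A}}[\fslice,J]$ must itself have rank $k$, hence full column rank. Restricting the identity $\mtx{\tilde{A}} = \mtx{\tilde{A}}[\fslice,J]\mtx{X}$ to the columns indexed by $J$ gives $\mtx{\tilde{A}}[\fslice,J] = \mtx{\tilde{A}}[\fslice,J]\mtx{X}[\fslice,J]$, and left-multiplying by a left inverse of $\mtx{\tilde{A}}[\fslice,J]$ forces $\mtx{X}[\fslice,J] = \mtx{I}_k$. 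Thus $\Aa = \Ao[\fslice,J]\mtx{X}$ has the required interpolation-matrix property and is a genuine low-rank column ID with skeleton index set $J$.

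Next I would establish \eqref{eq:carryover_ID_bound}. Write $\Ao - \Aa = (\Ao - \mtx{\tilde{A}}) + (\mtx{\tilde{A}} - \Aa)$. Using $\mtx{\tilde{A}} = \mtx{\tilde{A}}[\fslice,J]\mtx{X}$ and $\Aa = \Ao[\fslice,J]\mtx{X}$, the second term is $\mtx{\tilde{A}} - \Aa = \bigl(\mtx{\tilde{A}} - \Ao\bigr)[\fslice,J]\,\mtx{X}$, a product of a column-submatrix of $\mtx{\tilde{A}} - \Ao$ with $\mtx{X}$. Since deleting columns cannot increase the spectral norm, $\bigl\|(\mtx{\tilde{A}} - \Ao)[\fslice,J]\bigr\|_2 \le \|\mtx{\tilde{A}} - \Ao\|_2 \le \epsilon$, so submultiplicativity and the triangle inequality give $\|\Ao - \Aa\|_2 \le \epsilon + \epsilon\|\mtx{X}\|_2 = (1 + \|\mtx{X}\|_2)\epsilon$, which is \eqref{eq:carryover_ID_bound}.

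Finally I would prove \eqref{eq:interp_matrix_spectral_bound}. After a column permutation we may write $\mtx{X} = [\,\mtx{I}_k \ \ \mtx{X}'\,]$ with $\mtx{X}'$ of size $k \times (n-k)$ and $|X'_{ij}| \le M$ for all $i,j$. Then $\mtx{X}\mtx{X}^{\trans} = \mtx{I}_k + \mtx{X}'(\mtx{X}')^{\trans}$, so $\|\mtx{X}\|_2^2 = \|\mtx{X}\mtx{X}^{\trans}\|_2 \le 1 + \|\mtx{X}'(\mtx{X}')^{\trans}\|_2 = 1 + \|\mtx{X}'\|_2^2 \le 1 + \|\mtx{X}'\|_{\mathrm{F}}^2 \le 1 + M^2 k(n-k)$, using that $\mtx{X}'$ has $k(n-k)$ entries each at most $M$ in modulus. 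Taking square roots yields \eqref{eq:interp_matrix_spectral_bound}.

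Every step here is a one-line estimate, so there is no genuine obstacle; the only points meriting a moment's care are the structural claim $\mtx{X}[\fslice,J] = \mtx{I}_k$ (which is what makes $\Aa$ an ID rather than an arbitrary column-subset approximation) and keeping the column-permutation bookkeeping clean in the last step. I would flag these explicitly rather than present any part of the argument as difficult.
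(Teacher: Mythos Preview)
Your proposal is correct and follows essentially the same approach as the paper: the same triangle-inequality decomposition for \eqref{eq:carryover_ID_bound}, the same pseudoinverse argument to extract $\mtx{X}[\fslice,J]=\mtx{I}_k$, and the same reduction of \eqref{eq:interp_matrix_spectral_bound} to bounding $\|[\mtx{I}_k,\mtx{X}']\|_2$. Your treatment of the last step is in fact slightly more self-contained than the paper's, which simply asserts the block inequality $\|[\mtx{U},\mtx{V}]\|_2\le\sqrt{\|\mtx{U}\|_2^2+\|\mtx{V}\|_{\mathrm{F}}^2}$ without proof, whereas you derive it via $\mtx{X}\mtx{X}^{\trans}=\mtx{I}_k+\mtx{X}'(\mtx{X}')^{\trans}$.
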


We note that the assumptions on the matrix $\mtx{\tilde{A}}$ from the proposition statement imply that $\mtx{\tilde{A}}[\fslice,J]$ is full column-rank.
This implies that $(\mtx{\tilde{A}}[\fslice,J])^{\dagger}(\mtx{\tilde{A}}[\fslice,J]) = \mtx{I}_k$ and hence that  $\mtx{X}$ is completely determined from the index vector $J$.
Since a rank-$k$ matrix will typically have multiple subsets of $k$ columns that span its range, we finally see that the matrix $\mtx{\tilde{A}}$ does not uniquely determine the interpolation matrix $\mtx{X}$ used in the proposition.
Indeed, the range of possibilities for the interpolation matrix is rather remarkable.
While it is known that there always exists an index set for which $\max_{ij}|X_{ij}| = 1$ \cite{Pan:2000:bound_ID_by_one}, it is NP-hard to compute this index set \cite{CM:2009:Opt-ID-is-NP-hard}.
At the same time, algorithms such as strong rank-revealing QR can be applied to $\mtx{\tilde{A}}$ with typical runtime $O(mnk)$, while ensuring $\max_{ij}|X_{ij}| \leq 2$ \cite{GE:1996}.

All-in-all, the main point of an interpolative decomposition is to provide a low-rank approximation that prominently features a submatrix of the target.
Therefore while an upper bound $M$ on the entries of an interpolation matrix gives the bound \eqref{eq:carryover_ID_bound} indirectly by way of \eqref{eq:interp_matrix_spectral_bound}, such a bound should not the end of the story.
The spectral norm $\|\mtx{X}\|_2$ is ultimately more informative for this purpose, even if it is harder to compute.

Of course, all of the points we have raised here for column IDs apply to row IDs with minor modifications.

\paragraph{Two-sided ID.}
The concept of a one-sided ID can be extended to a \textit{two-sided ID} by considering simultaneous row and column IDs.
In the low-rank case, a two-sided ID is an approximation of the form
\begin{equation}\label{eq:tsid}
\begin{array}{ccccc}
\Ao & \approx & \mtx{Z} & \mtx{A}[I,J] & \mtx{X}. \\
m\times n &   &  m\times k & k\times k & k \times n
\end{array}
\end{equation}
Methods for full-rank one-sided ID can be useful in computing low-rank two-sided IDs.
That is, if we first compute a low-rank column ID $\Aa = \mtx{C}\mtx{X}$ by some black-box method, then after obtaining a full-rank row ID of the tall matrix $\mtx{C} = \mtx{A}[:,J] = \mtx{Z}\mtx{A}[I,J]$ we have $\Aa = \mtx{Z}\mtx{A}[I,J]\mtx{X}$.

\subsubsection{On relationships between submatrix-oriented decompositions}

The landscape of methods for submatrix-oriented decompositions is very intertwined.
As we have already indicated, algorithms for one-sided ID are the main building blocks of algorithms for low-rank two-sided ID.
Later in this \nameCref{sec4:lowrank} we also mention several algorithms for CUR decomposition which depend on algorithms for one-sided ID.
In view of this, we emphasize the following point.

\begin{quote}
    For our purposes, it is helpful to introduce hierarchical relationships among submatrix-oriented decompositions.
    As such, we designate algorithms for one-sided ID as \textit{computational routines}, while methods for CUR and two-sided ID are designated as \textit{drivers}. 
\end{quote}

Next, let us point out a sense in which CUR and two-sided ID are ``dual'' to one another.
Both are submatrix-oriented decompositions that have three factors.
For CUR, the outer factors are submatrices of $\Ao$ and no requirements are placed on the inner factor (the linking matrix).
In particular, if we specify the outer factors by ordered index sets $J$ and $I$, then a CUR decomposition is expressed as
\begin{equation}\label{eq:cur_otherNotation_xxx1}
\begin{array}{ccccc}
\Ao & \approx & \mtx{A}[\fslice,J] & \mtx{U} & \mtx{A}[I,\fslice]. \\
m\times n &   &  m\times k & k\times k & k\times n
\end{array} 
\end{equation}
This can be contrasted with two-sided ID as defined in \eqref{eq:tsid}.
There, the inner factor is a submatrix of $\Ao$, and moderate requirements are placed on the outer factors (the interpolation matrices).

The properties of ``linking matrices'' and ``interpolation matrices'' are different enough to warrant their different names. 
The problem of computing a low-rank approximation via two-sided ID can be better numerically behaved, compared to low-rank approximation by CUR \cite[\S 13]{MT:2020}.
However, CUR offers far greater potential for storage reduction when dealing with sparse matrices.
The differences between two-sided ID and CUR can become less pronounced when one considers methods for losslessly converting one such representation to the other, as we mention in \cref{subsubsec:TSID_CUR_driver}.

\subsection{On accuracy metrics}

The problems from \cref{sec3:LS_and_optim} were mostly unconstrained minimization of convex quadratics.
Such problems are very nice, since the gradient of the quadratic loss function constitutes a canonical error metric that can be driven to zero.
Low-rank approximation problems can likewise be framed as optimization problems.
However, these formulations either involve constraints or a nonconvex objective function.
This distinction is important, since these structures rule out checking for a zero gradient as a cheap optimality condition.

The main error metrics in low-rank approximation are norm-induced distances.
For reasons that we give under the next two headings it is not appropriate to consider distances from a computed approximation $\Aa$ to some nominally ``optimal'' approximation.
Instead, one measures the distance from the approximation to the target, most often in the spectral or Frobenius norms.
%
%

\subsubsection{Distance to optimal approximations}

\paragraph{Non-unique solutions and sensitivity to perturbations.}
Recall from \cref{subsubsec:class_spectral} how truncating $\Ao$'s SVD at rank $k$ gives an optimal rank-$k$ approximation in any unitarily invariant norm.
Unfortunately, this truncation will be non-unique when $\Ao$ has more than one singular value equal to $\sigma_k$.
This is easiest to see when $\Ao = \mtx{I}$ is the identity matrix, in which case every diagonal $\{0,1\}$-matrix of rank $k$ is an optimal rank-$k$ approximation to $\Ao$.

More generally, if $\Ao$ has multiple singular values that are \textit{close to} $\sigma_k$, then extremely small perturbations to $\Ao$ can result in large changes to the singular vectors corresponding to these singular values; see \cite[\S 6 -- \S 8]{Bhatia:1997:MatrixAnalysis} for details.
This has a secondary complication: it is harder to estimate the dominant $k$ singular vectors of a matrix than it is to find a rank-$k$ approximation that is ``near optimal'' in the sense of \eqref{eq:svd_as_opt}.

\paragraph{Intractability of computing optimal approximations.} 
When working with submatrix-oriented decompositions, we do not even have the luxury of defining ``optimal'' approximations in the manner of truncated SVDs.
Indeed, the problem of finding an ``optimal'' ID necessitates specifying any regularity conditions such as the bound $M$ in a constraint $|X_{ij}| \leq M$.
As we mentioned before, even when $\Aa$ has exact rank $k$, a rank-$k$ column ID with $M=1$ always exists but is NP-hard to find \cite{CM:2009:Opt-ID-is-NP-hard}.

Going to another extreme, we could set aside the matter of $M$ and simply set $\mtx{X} = \mtx{C}^\dagger\Ao$ for a matrix $\mtx{C}$ containing $k$ columns of $\Ao$.
In this case it is not known if the columns can be chosen to minimize Frobenius- or spectral-norm error $\|\Aa - \Ao\|$ in time less than $O(n^k)$.
Still, there are theoretical guarantees for approximation quality by CUR relative to approximation quality achievable by SVD. 
We refer the reader to \cites{DMM:2008,BMD09_CSSP_SODA}, \cite[\S 1 - \S 2]{VM:2016:CUR}, and \cref{app:lowrank:ID} for more information about CUR and ID in our context.

\subsubsection{Distance relative to that of a reference approximation}

It is problematic to use a distance from $\Aa$ to $\Ao$ as an error metric for $\Aa$.
This is because there are situations when any such distance will be large even when $\Aa$ is close to an ``optimal'' approximation.
The simplest example of this is PCA, in which cases the approximation rank is $O(1)$, independent of the dimensions of the matrix or properties of its spectrum.
More generally, it can be hard to obtain a low-rank approximation that is very close to $\Ao$ when $\mtx{A}$ has \textit{slow spectral decay}, in the sense that the distribution of its singular values has a heavy tail.
Accurate approximations can also be hard to come by if the factors of $\Aa$ are highly constrained.

One handles this situation by considering the distance between $\Ao$ and $\Aa$ \textit{relative to} that between $\Ao$ and some reference matrix $\Ao_r$.
Formally, we concern ourselves with the smallest value of $\epsilon$ needed to achieve
\[
    \|\Ao - \Aa\| \leq (1 + \epsilon)\|\Ao - \Ao_r\|.
\]
The reference matrices $\Ao_r$ used in \RandNLA{} theory are not available to us when performing computations.
In fact, they are usually not optimal for the formal low-rank approximation problem at hand.
The most common source of non-optimality is that the reference is subject to a more stringent rank constraint: $\rank(\Ao_r) < \rank(\Aa) \leq \rank(\Ao)$.
Another source of non-optimality is that it may not be possible to decompose the reference into factors with the required~structure (e.g., the structure required by low-rank CUR).
For example, an approximation of $\Ao$ obtained by a rank-$k$ truncated SVD cannot (in general) be converted into a rank-$k$ CUR decomposition using submatrices of $\Ao$.

%

\section{Drivers}
\label{subsec:lowrank_drivers}


There exist many randomized algorithms for computing low-rank approximations of matrices.
This \nameCref{subsec:lowrank_drivers} focuses on low-rank approximation algorithms that take the two-stage approach popularized by \cite{HMT:2011}, because this approach has been demonstrated to be efficient and highly reliable over the years.
The high-level idea of the two-stage approach is the following: first one constructs a ``simple'' representation of $\Aa$ with the aid of randomization, and then one deterministically converts that representation of $\Aa$ into a more useful form.

In order to discuss these drivers for low-rank approximation, it is necessary to mention briefly the following two concepts (these are handled by computational routines, to be discussed in detail in \cref{subsec:lowrank_subroutines}):
%
%
%

\begin{itemize}
     \item 
     A \textit{QB decomposition} is a simple representation that is useful for SVD and eigendecomposition.
     The representation takes the form $\Aa = \mtx{Q}\mtx{B}$ for a tall matrix $\mtx{Q}$ with orthonormal columns and $\mtx{B} = \mtx{Q}^{\trans}\Ao$.
     The important point here is that the QB decomposition involves explicit construction of and access to both $\mtx{Q}$ and $\mtx{B}$.      
     We discuss QB algorithms in \cref{subsubsec:qb_alg}.
     \item 
     \textit{Column subset selection} (CSS) is the problem of selecting from a matrix a set of columns that is ``good'' in some sense. 
     CSS algorithms largely characterize algorithms for \emph{one-sided ID}.
     We discuss methods for these two problems in \cref{subsec:CSS_CX_computational}.
     They are important here because a one-sided ID can be used for the simple representation of $\Aa$ when working toward an SVD, eigendecomposition, two-sided ID, or CUR decomposition.
\end{itemize}

\subsection{Methods for SVD}\label{subsubsec:svd_algs}

The are several families of randomized algorithms for computing low-rank SVDs.
Here we describe a few families that give $\Aa = \mtx{U}\mtx{\Sigma}\mtx{V}^{\trans}$ through its compact SVD.

\subsubsection{A flexible method}

We begin with \cref{alg:svd}.
This algorithm uses randomization to compute a QB decomposition of $\Ao$, then deterministically computes $\mtx{Q}\mtx{B}$'s compact SVD, and finally truncates that SVD to a specified rank.

This algorithm assumes that the \code{QBDecomposer} is iterative in nature.
It assumes that each iteration adds some number of columns to $\mtx{Q}$ and rows to $\mtx{B}$, and that the algorithm can terminate once an implementation-dependent error metric for $\mtx{Q}\mtx{B} \approx \Ao$ falls below $\epsilon$ or once $\mtx{Q}\mtx{B}$ reaches a rank limit.
Here we have set the rank limit to $k + s$ where $s$ is a nonnegative ``oversampling parameter.''

\begin{algorithm}[H]
    \setstretch{1.0}
    \caption{\code{SVD1} : QB-backed low-rank SVD (see \cite{HMT:2011} and \cite{RST:2009:RandPCA}) }\label{alg:svd}
    \begin{algorithmic}[1]
        \State \textbf{function} $\code{SVD1}(\Ao, k, \epsilon, s)$\vspace{0.5pt} 
        \Indent
            \Statex \quad Inputs:
            \Statex \begin{quote}
                $\Ao$ is an $m \times n$ matrix. The returned approximation will have rank \textit{at most} $k$. The approximation produced by the randomized phase of the algorithm will attempt to $\Ao$ to within $\epsilon$ error, but will not produce an approximation of rank greater than $k + s$.
            \end{quote}\vspace{4pt}
            \Statex \quad Output:
            \Statex \qquad The compact SVD of a low-rank approximation of $\Ao$.\vspace{4pt}
            \Statex \quad Abstract subroutines:
            \Statex \begin{quote}
                $\code{QBDecomposer}$ generates a QB decomposition of a given matrix; it tries to reach a prescribed error tolerance but may stop early if it reaches a prescribed rank limit.
            \end{quote}\vspace{4pt}
            \setstretch{1.1}
            \State $\mtx{Q}, \mtx{B} = \code{QBDecomposer}(\Ao,k + s, \epsilon)$ \codecomment{$\mtx{Q}\mtx{B} \approx \Ao$}
            \State $r = \min\{k,  \text{ number of columns in } \mtx{Q}\}$
            \State $\mtx{U},\mtx{\Sigma},\mtx{V}^{\trans} = \code{svd}(\mtx{B})$
            \State $\mtx{U} = \mtx{U}[:\ ,\ :r]$
            \State $\mtx{V} = \mtx{V}[:\ ,\ :r]$
            \State $\mtx{\Sigma} = \mtx{\Sigma}[:r\ ,\ :r]$
            \State $\mtx{U} = \mtx{Q}\mtx{U}$
            \State \textbf{return} $\mtx{U},\mtx{\Sigma},\mtx{V}^{\trans}$
        \EndIndent 
    \end{algorithmic}
\end{algorithm}

The literature recommends setting $s$ to a small positive number (e.g., $s = 5$ or $s = 10$) to account for the fact that the trailing singular vectors of $\mtx{Q}\mtx{B}$ may not be good estimates for the corresponding singular vectors of $\Ao$.
However, using any positive oversampling parameter complicates the interpretation of the error tolerance $\epsilon$.
If a user deems this problematic then they can simply set $k \leftarrow k + s$ and $s \leftarrow 0$.
Such an approach can be reasonable if tuning parameters for the QB algorithm are chosen appropriately.
Specifically, if techniques such as power iteration are used (see \cref{subsubsec:data_aware}) then the trailing singular vectors of $\mtx{Q}\mtx{B}$ can be reasonably good approximations to the corresponding singular vectors of $\Ao$.

\subsubsection{Sacrificing accuracy for speed}

\paragraph{Converting from an ID.}
Setting our sights beyond \cref{alg:svd}, it is noteworthy that if $\Aa$ is given in \textit{any} compact representation then it can be losslessly converted into an SVD without ever accessing $\Ao$.
For example, given a column ID $\Aa = \mtx{C}\mtx{X}$, we would compute a QR decomposition $\mtx{C} = \mtx{Q}\mtx{R}$, set $\mtx{B} = \mtx{R}\mtx{X}$, and then proceed with $(\mtx{Q},\mtx{B})$ as in \cref{alg:svd}.
As another example, conversion from a row ID to an SVD is illustrated implicitly in \cite[Algorithm 5.2]{HMT:2011}.

Such approaches are potentially useful because one-sided ID can easily be implemented in a way that accesses $\Ao$ with a single matrix-matrix multiplication and then selects a row or column submatrix.
However, this comes at a cost of a much less accurate solution compared to typical QB methods.

\paragraph{Single-pass algorithms.}
For very large problems the main measure of an algorithm's complexity is the number of times it moves $\Ao$ through fast memory.
Besides the above ID-based method, there are three algorithms for low-rank SVD which move $\Ao$ through fast memory only once.
Each of them uses multi-sketching in the sense of \cref{subsec:multi_sketch}.
The first and second options simply use \cref{alg:svd}, but specifically with single-pass QB methods based on type 1 or type 2 multi-sketching.
Discussion of such QB algorithms is deferred to \cref{subsubsec:qb_alg}.
The third option is the algorithm described in \cite[\S 7.3.2]{TYUC:2017:singlepass}, which relies on type 3 multi-sketching.

\begin{remark}
     Algorithms designed to minimize the number of views of a matrix are usually analyzed in the \emph{pass efficient model} for algorithm complexity \cite{dkm_matrix1}.
     Early work on randomized pass-efficient and single-pass algorithms can be found in \cite{FKV04,dkm_matrix1,dkm_matrix2}.
\end{remark}

\paragraph{Error estimation in sketched one-sided SVD.}

Single-pass algorithms are unlikely to produce highly accurate approximations of singular vectors or singular values.
However, their results may be accurate enough to be useful in certain applications.
This motivates methods for estimating the errors of approximations returned by these algorithms.
\cref{subapp:bootstrap:svd} provides a bootstrap-based error estimator for a simple randomized algorithm that recovers approximate singular vectors from one side of the matrix.

\subsection{Methods for Hermitian eigendecomposition}
\label{subsubsec:herm_eig_algs}  

Each randomized algorithm for low-rank SVD has a corresponding version that is specialized to Hermitian matrices.
We recount those specialized algorithms here, and we mention an additional algorithm that is unique to the approximation of psd matrices.
In general, we shall say that $\Ao$ is $n \times n$ and that the algorithms represent $\Aa = \mtx{V}\diag(\vct{\lambda})\mtx{V}^{\trans}$, where $\mtx{V}$ is a tall column-orthonormal matrix and $\vct{\lambda}$ is a vector with entries sorted in decreasing order of absolute value.

\subsubsection{A flexible method for Hermitian indefinite matrices}

\cref{alg:evd1} is a variation of \cite[Algorithm 5.3]{HMT:2011}.
Its parameters $(k, \epsilon, s)$ have essentially the same interpretations as \cref{alg:svd}.
When $s = 0$, its output is simply is a compact eigendecomposition of $\mtx{Q}\mtx{C}\mtx{Q}^{\trans}$, where $\mtx{C} = \mtx{Q}^{\trans}\Ao\mtx{Q}$ and $\mtx{Q}$ is obtained from a black-box \code{QBDecomposer}.
The main difference between this method and \cref{alg:svd} is that $\epsilon$ is scaled down by a factor $1/2$ before being passed to \code{QBDecomposer}.
This change is needed to so that if $s = 0$ and $\|\mtx{Q}\mtx{B} - \Ao\| \leq \epsilon$ then the final approximation satisfies $\| \Aa - \Ao\| \leq \epsilon$; see \cite[\S 5.3]{HMT:2011}.
 
\begin{algorithm}[htb]
    \setstretch{1.0}
    \caption{\code{EVD1} : QB-backed low-rank eigendecomposition; see \cite{HMT:2011}}\label{alg:evd1}
    \begin{algorithmic}[1]
        \State \textbf{function} $\code{EVD1}(\Ao, k, \epsilon, s)$ \vspace{0.5pt}
        \Indent
            \Statex \quad Inputs:
            \Statex \begin{quote}
                $\Ao$ is an $n \times n$ Hermitian matrix.
                The returned approximation will have rank \textit{at most} $k$.
                The approximation produced by the randomized phase of the algorithm will attempt to $\Ao$ to within $\epsilon$ error, but will not produce an approximation of rank greater than $k + s$.
            \end{quote}\vspace{4pt}
            \Statex \quad Output:
            \Statex \qquad Approximations of the dominant eigenvectors and eigenvalues of $\Ao$.\vspace{4pt}
            \Statex  \quad Abstract subroutines:
            \Statex \begin{quote}
                $\code{QBDecomposer}$ generates a QB decomposition of a given matrix; it tries to reach a prescribed error tolerance but may stop early if it reaches a prescribed rank limit.
            \end{quote}\vspace{4pt}
            \setstretch{1.1}
            \State $\mtx{Q}, \mtx{B} = \code{QBDecomposer}(\Ao,\, k + s,\, \epsilon / 2)$ 
            \State $\mtx{C} = \mtx{B}\mtx{Q}$  ~\codecomment{since $\mtx{B} = \mtx{Q}^{\trans}\Ao$, we have $\mtx{C} = \mtx{Q}^{\trans}\Ao\mtx{Q}$}
            \State $\mtx{U},\vct{\lambda} = \code{eigh}(\mtx{C})$ \codecomment{full Hermitian eigendecomposition}
            \State $r = \min\{k, \text{ number of entries in } \vct{\lambda}\}$
            \State $P = \operatorname{argsort}(|\vct{\lambda}|)[:r]$ 
            \State $\mtx{U} = \mtx{U}[{:}\,,P]$
            \State $\vct{\lambda} = \vct{\lambda}[P]$
            \State $\mtx{V} = \mtx{Q}\mtx{U}$
            \State \textbf{return} $\mtx{V},\vct{\lambda}$
        \EndIndent 
    \end{algorithmic}
\end{algorithm}

\subsubsection{Sacrificing accuracy for speed with Hermitian indefinite matrices}

\paragraph{Converting from an ID.}
\cite[Algorithm 5.4]{HMT:2011} is a second approach to Hermitian eigendecomposition, based on postprocessing a low-rank row ID of $\Ao$.
We do not include pseudocode for this algorithm in this monograph.
However, the basic observation underlying the approach is that one can use the symmetry of $\Ao$ to canonically approximate an initial row ID $\mtx{\tilde{A}} = \mtx{Z}\Ao[I,:] \approx \Ao$ by the Hermitian matrix $\doublehat{\mtx{A}} = \mtx{Z}\Ao[I,I]\mtx{Z}^{\trans}$.
The compact representation of this Hermitian matrix makes it easy to compute its eigendecomposition by a lossless process.

When should one use \cite[Algorithm 5.4]{HMT:2011} over \cref{alg:evd1}?
Our answer is the same as for using \cite[Algorithm 5.2]{HMT:2011} over \cref{alg:svd}.
That is, the ID approach is only of interest when it moves $\Ao$ through fast memory \textit{once}, and it should be considered alongside other low-rank eigendecomposition algorithms with similar data movement patterns.

\paragraph{Single-pass algorithms.}
Just as with fast algorithms for low-rank SVD, one can obtain fast algorithms for low-rank Hermitian eigendecomposition by using \cref{alg:evd1} with the QB methods based on type 1 or type 2 multi-sketching.

Besides those approaches, we make note of \cite[Algorithm 5.6]{HMT:2011}, which accesses $\Ao$ \textit{exclusively} through a single sketch $\mtx{Y} = \Ao\mtx{S}$ and makes no assumptions on the representation of $\Ao$ in-memory.
This access pattern is possible because the algorithm solves a least squares problem involving $\mtx{Y}$, $\code{orth}(\mtx{Y})$, and $\mtx{S}$ to project a small $k \times k$ ``core matrix'' onto the set of Hermitian matrices.

\FloatBarrier

\subsubsection{\Nystrom{} approximations for positive semidefinite matrices}


Now we suppose that the $n \times n$ matrix $\Ao$ is psd, in which case we can define the \textit{Nystr\"{o}m approximation of $\Ao$ with respect to a matrix $\mtx{X}$} as
\begin{equation}\label{eq:nystrom}
    \Aa = (\Ao\mtx{X})\left(\mtx{X}^{\trans}\Ao\mtx{X}\right)^{\dagger}(\Ao\mtx{X})^{\trans}.
\end{equation}
When framed this way, the Nystr\"{o}m approximation is defined for any matrix $\mtx{X}$ with $k \leq n$ columns.
Indeed, it does not even presume that $\mtx{X}$ is random.
However, in \RandNLA{}, we ultimately set $\mtx{X}$ to a sketching operator and produce a compact spectral decomposition $\Aa = \mtx{V}\diag(\vct{\lambda})\mtx{V}^{\trans}$.
For any given type of sketching operator, low-rank approximation of psd matrices by \Nystrom{} approximations tend to be more accurate than approximations produced by comparable algorithms for general Hermitian eigendecomposition.

\paragraph{What's in a name? Disambiguating ``\Nystrom{} approximation.''}
The literature on randomized algorithms for \Nystrom{} approximation \textit{heavily} emphasizes using column selection operators \cites{WS:2000:KRR,dm_kernel_JRNL,Pla05_UAI,KMT09b,KMT09c,LKL10,Bac13,GM15_NYSTROM_JRNL,RCR15_TR,DKM20_CSSP_neurips}.
This stems from an analogy between sampling columns of kernel matrices in machine learning and the \Nystrom{} method from integral equation theory.
See \cite[\S 5]{dm_kernel_JRNL} for a detailed discussion.
In view of the prominence of column sampling in \Nystrom{} approximations, part of \cref{sec7:lev_scores} is dedicated to specialized methods for sampling columns from psd matrices.

When $\mtx{X}$ is a general sketching operator, the approximation \eqref{eq:nystrom} has been called a ``projection-based SPSD approximation'' \cite{GM15_NYSTROM_JRNL}.
The different terminology for general sketching operators $\mtx{X}$ is helpful for distinguishing the resulting approximations from those referred to as ``\Nystrom{} approximations'' in the machine learning literature.
However, it is not in line with our philosophy that \RandNLA{} concepts should be described with minimal assumptions on the nature of the sketching distribution.
This philosophy, first advocated for by Drineas and Mahoney~\cites{DMMS07_FastL2_NM10,MD16_chapter}, leads us to adopt the following convention.
\begin{quote}
    \textit{The term ``\Nystrom{} approximation'' shall be used for any approximation of the form \eqref{eq:nystrom}, even when $\mtx{X}$ is a general sketching operator.}
\end{quote}
We note that this also follows the convention used by El~Alaoui and Mahoney in their work on kernel ridge regression (see \cite[Theorem 1]{AM:2015:KRR}).

\paragraph{Algorithms.}
\cref{alg:evd2} is a practical approach to low-rank eigendecomposition by \Nystrom{} approximation.
It includes a function call $\code{TallSketchOpGen}(\mtx{A},k + s)$ which returns a sketching operator $\mtx{S}$ with $n$ rows and $k + s$ columns.
Our reason for specifying a sketching operator in this way is to provide flexibility in whether the sketching operator is data-oblivious or data-aware.
In this context, the main type of data-aware sketching operator would be based on so-called \textit{power iteration}; see \cref{subsubsec:data_aware} and \cite{GM15_NYSTROM_JRNL}.

\begin{algorithm}[htb]
\setstretch{1.0}
\caption{\code{EVD2} : for psd matrices only; adapts \cite[Algorithm 3]{TYUC:2017a}}\label{alg:evd2}
\begin{algorithmic}[1]
\State \textbf{function} $\code{EVD2}(\Ao,k,s)$ \vspace{0.5pt}
        \Indent
            \Statex \quad Inputs:
            \Statex \begin{quote}
                $\Ao$ is an $n \times n$ psd matrix.
                The returned approximation will have rank at most $k$, but the sketching operator used in the algorithm can have rank as high as $k + s$.
            \end{quote}\vspace{4pt}
            \Statex \quad Output:
            \Statex \qquad Approximations of the dominant eigenvectors and eigenvalues of $\Ao$.\vspace{4pt}
            \Statex  \quad Abstract subroutines:
            \Statex \begin{quote}
                $\code{TallSketchOpGen}$ generates a sketching operator with a prescribed number of columns, for use in sketching a given matrix from the right.
            \end{quote}\vspace{4pt}
            \setstretch{1.1}
    \State $\mtx{S} = \code{TallSketchOpGen}(\Ao, k+s)$
    \State $\mtx{Y} = \Ao\mtx{S}$
    \State $\nu=\sqrt{n}\cdot \epsilon_{\text{mach}} \cdot \|\mtx{Y}\|$  \codecomment{$\epsilon_{\text{mach}}$ is machine epsilon for current numeric type}
    \State $\mtx{Y} =\mtx{Y}+\nu\mtx{S}$ ~~~~\,\codecomment{ regularize for numerical stability}
    \State $\mtx{R} = \code{chol}(\mtx{S}^{\trans}\mtx{Y})$  \codecomment{ $\mtx{R}$ is upper-triangular and $\mtx{R}^{\trans}\mtx{R} = \mtx{S}^{\trans}\mtx{Y} = \mtx{S}^{\trans}(\Ao + \nu\mtx{I})\mtx{S}$}
    \State $\mtx{B} = \mtx{Y} (\mtx{R}^{\trans})^{-1}$ ~~\,\codecomment{ $\mtx{B}$ has $n$ rows and $k + s$ columns}
    \State $\mtx{V},\mtx{\Sigma},\mtx{W}^{\trans} = \code{svd}(\mtx{B})$ \codecomment{can discard $\mtx{W}$}
    \State $\vct{\lambda} = \diag(\mtx{\Sigma}^2)$ ~~\,\codecomment{extract the diagonal}
    \State $r = \min\{ k, \text{ number of entries in } \vct{\lambda} \text{ that are greater than } \nu \}$
    \State $\vct{\lambda} = \vct{\lambda}[\lslice{r}] - \nu$ ~~ \codecomment{undo regularization}
    \State $\mtx{V} = \mtx{V}[\fslice{} ,\lslice{r}]$.
    \State \textbf{return} $\mtx{V},\vct{\lambda}$
\EndIndent 
\end{algorithmic}
\end{algorithm}

The role of the parameter $s$ in \cref{alg:evd2} is analogous to that in \cref{alg:svd,alg:evd1} -- the algorithm effectively computes data needed for a rank $k + s$ eigendecomposition before truncating that approximation to rank $k$.
However, unlike \cref{alg:svd,alg:evd1}, \cref{alg:evd2} has no control of approximation error.
We refer the reader to \cite[Algorithm E.2]{FTU:2021:NystromRidge} for a more sophisticated version of this algorithm which can accept an error tolerance.

\subsection{Methods for CUR and two-sided ID}
\label{subsubsec:TSID_CUR_driver}

Here we describe two approaches to CUR and one approach to two-sided ID.
The descriptions are largely qualitative in that they are stated in terms of algorithms for low-rank column ID and CSS (which are detailed in \cref{subsec:CSS_CX_computational}).

\subsubsection{CUR by falling back on CSS}

Perhaps the simplest approach to CUR computes the row and column indices $(I, J)$ in one stage and then computes the linking matrix $\mtx{U}$ in a second stage.
The column indices $J$ are obtained by a randomized algorithm for CSS on $\Ao$, then the row indices $I$ are obtained by some CSS algorithm on $\mtx{C}^{\trans} = \Ao[:,J]^{\trans}$.\footnote{Of course, this process could be reversed to compute $I$ and then $J$.}
Because the matrix $\mtx{C}$ is so much smaller than $\Ao$, it is often practical to use a deterministic algorithm when performing CSS on $\mtx{C}^{\trans}$. 

There are two canonical choices for the linking matrix in this context: one obtained by projection
\[
    \mtx{U}_{\text{proj}} = \left(\Ao[:,J]\right)^\dagger \Ao \left(\Ao[I,:]\right)^\dagger
\]
and one obtained by submatrix inversion
\[
    \mtx{U}_{\text{sub}} = \Ao[I,J]^{\dagger}.
\]
It should be clear that the approximation error incurred by using the former matrix will never be larger than when using the latter.
Furthermore, the process of computing the former matrix is better conditioned than the process of computing the latter.
Therefore it is generally preferable to use $\mtx{U}_{\text{proj}}$ as the linking matrix when implementing CUR via randomized CSS.

\begin{remark}
    Randomized algorithms for CUR based on the pattern above were first proposed in \cites{DMM:2008,BMD09_CSSP_SODA}, particularly with linking matrices closer to the form $\mtx{U}_{\text{sub}}$.
    Deterministic analyses of CUR approximation quality with various linking matrices can be found in \cites{GZT:1995:CUR,GTZ:1997:CUR}.    
\end{remark}

\subsubsection{CUR by a combination of column ID and CSS}
Suppose we have access to data $(\mtx{X}, J)$ from a column ID of an initial low-rank approximation of $\Ao$.
Given this data, we can recover the row index set $I$ and $\mtx{U}$ for a CUR decomposition by running CSS on $\mtx{C}^* = \Ao[:,J]^{\trans}$ and setting $\mtx{U} = \mtx{X}\left(\Ao[I,:]\right)^\dagger$.

This approach only requires the application of one pseudo-inverse, which compares favorably to the two applications of pseudo-inverses needed to compute $\mtx{U}_{\text{proj}}$ in the first approach to CUR.
At the same time, if the randomized algorithm for computing $(\mtx{X},J)$ happens to return an interpolation matrix satisfying $\mtx{X} = \mtx{C}^\dagger\Ao$, then the resulting decomposition could have been obtained by the elementary CUR algorithm with linking matrix $\mtx{U}_{\text{proj}}$.
Therefore there is a sense in which this template algorithm generalizes the elementary approach to CUR.

We instantiate this template algorithm in \cref{alg:curd1}.
The CSS step of the algorithm calls a deterministic function for computing a QR decomposition with column pivoting, with the semantics indicated in \cref{table:notation}.
Whether the algorithm starts with a row ID or column ID depends on the aspect ratio of the data matrix; \cite[\S 13.3]{MT:2020} recommend starting with a row ID when $\mtx{A}$ is wide in the related context of computing two-sided IDs.

\begin{algorithm}[htb]
    \setstretch{1.0}
    \caption{\code{CURD1} : CUR by randomizing an initial ID \cites{VM:2016:CUR,DM:2021:CUR}}\label{alg:curd1}
    \begin{algorithmic}[1]
        \State \textbf{function} $\code{CURD1}(\mtx{A}, k, s)$ \vspace{0.5pt}
        \Indent
            \Statex \quad Inputs:
            \Statex \begin{quote}
                $\Ao$ is an $m \times n$ matrix.
                The returned approximation will have rank at most $k$. The \code{ColumnID} abstract subroutine can use sketching operators of rank up to $k + s$ in its internal calculations.
            \end{quote}\vspace{4pt}
            \Statex \quad Output:
            \Statex \qquad A low-rank CUR decomposition of $\Ao$.\vspace{4pt}
            \Statex  \quad Abstract subroutines:
            \Statex \begin{quote}
                $\code{ColumnID}$ produces a low-rank column ID of a given matrix, up to some specified rank.
            \end{quote}\vspace{4pt}
            \setstretch{1.1}
            \If{$m \geq n$}
                \State $\mtx{X}, J = \code{ColumnID}(\mtx{A},\, k,\, s)$\, \codecomment{$|J|=k$ and $\Ao[:,J]\mtx{X} \approx \Ao$}
                \State $\mtx{Q}, \mtx{T}, I = \code{qrcp}(\Ao[:, J]^{\trans})$ ~~\,\,\codecomment{only care about the indices $I$}
                \State $I = I[:k]$
                \State $\mtx{U} = \mtx{X}\left(\Ao[I,:]\right)^{\dagger}$
            \Else
            \State $\mtx{Z}^{\trans}, I = \code{ColumnID}(\Ao^{\trans},\, k, \,s)$ \,\codecomment{$|I| = k$ and $\mtx{Z}\Ao[I,:] \approx \Ao$.}
            \State $\mtx{Q},\mtx{T},J = \code{qrcp}(\Ao[I,:])$ \quad ~~~\,\codecomment{only care about the indices $J$}
            \State $J = J[:k]$
            \State $\mtx{U} = \left(\Ao[:, J]\right)^\dagger \mtx{Z}$
            \EndIf
        \State \textbf{return} $J, \mtx{U}, I$
        \EndIndent 
    \end{algorithmic}
\end{algorithm}

\subsubsection{Two-sided ID via one-sided ID}

Two-sided IDs are canonically computed by a simple reduction to one-sided ID: first obtain $(\mtx{X},J)$ by a column ID of $\Ao$ and then obtain $(I,\mtx{Z})$ by a row ID of $\Ao[:,J]$.
The initial column ID of $\Ao$ will be computed by a randomized algorithm and hence will always be low-rank.
However, it is not expensive to compute a \textit{full-rank} row ID $\Ao[:,J] = \mtx{Z}\Ao[I,J]$ by a deterministic method under the standard assumption that  $|J| \ll \min\{m, n\}$.
Such an approach is described in \cite[\S 2.4, \S 4]{VM:2016:CUR}.

Finally, we note that a two-sided ID can be naturally repurposed for CUR decomposition by either of the two qualitative approaches to CUR described above.
In the first case one only needs the index sets $(I, J)$ and computes the linking matrix by any desired method.
In the second case one needs the index sets \textit{and} one of the interpolation matrices (i.e., one of $\mtx{Z}$ or $\mtx{X}$).
The latter approach for converting from two-sided ID to CUR is used in \cite[\S 3 and \S 4]{VM:2016:CUR}.
General discussion on converting from two-sided ID to CUR can be found in \cite[\S 11.2]{Martinsson:2018_ish}, \cite[\S 13.2]{MT:2020}, and \cite[\S 4.1]{DM:2021:CUR}.

\FloatBarrier

\section{Computational routines}\label{subsec:lowrank_subroutines}

The last \nameCref{subsec:lowrank_drivers} explained how randomized algorithms for low-rank approximation exhibit a great deal of modularity. 
Here we summarize the design spaces for the constituent modules.

\cref{subsubsec:data_aware,subsubsec:qb_alg,subsec:CSS_CX_computational,subsec:column_pivoted} cumulatively cover QB, column ID, CSS, and building blocks for the same.
We acknowledge up-front that we treat column ID and CSS in far detail than we do QB.
This imbalance is not a statement about the importance of column ID or CSS over QB.
Rather, it stems from our desire to clarify broader concepts surrounding column ID that can be difficult to tease out from other literature.

From there, \cref{subsubsec:norm_rank_est} lists methods for norm estimation which are important for solving low-rank approximation problems to fixed accuracy.
Our last topic in the realm of computational routines for low-rank approximation is the notion of low-rank approximations from \textit{oblique projections} (\S \ref{subsubsec:oblique_proj}).
This framework motivates a type of low-rank approximation that is cheap to compute but that does not have meaningfully-structured factors.

We emphasize that this \nameCref{subsec:lowrank_subroutines} mentions \textit{many} algorithms for a wide variety of problems.
Due to practical constraints we only address a handful of these algorithms in detail.
Pseudocode for select algorithms can be found in \cref{app:lowrank}; these algorithms have been selected based on some combination of their conceptual significance and their practicality.

\subsection{Power iteration}\label{subsubsec:data_aware}

Given a matrix $\Ao$, suppose we sketch $\mtx{Y} = \Ao\mtx{S}$ using a very tall sketching operator $\mtx{S}$.
In a low-rank approximation context -- regardless of whether we work with spectral or submatrix-oriented decompositions -- it is generally preferable for $\range(\mtx{Y})$ to be well-aligned with the span of $\Ao$'s dominant left singular vectors.
This, in turn, is facilitated by having $\range(\mtx{S})$ be well-aligned with the span of $\Ao$'s dominant \textit{right} singular vectors.
To accomplish this, \RandNLA{} libraries should include methods for generating such sketching operators based on power iteration.

A basic approach to power iteration makes alternating applications of $\Ao$ and $\Ao^{\trans}$ to an initial data-oblivious sketching operator $\mtx{S}_o$, to obtain a data-aware sketching operators such as 
\begin{equation}\label{eq:power_iter_skop}
\mtx{S} = \left(\Ao^{\trans}\Ao\right)^{q}\mtx{S}_o 
\quad\mbox{or}\quad
\mtx{S} = \left(\Ao^{\trans}\Ao\right)^{q}\Ao^{\trans}\mtx{S}_o ,
\end{equation}
for some parameter $q \geq 0$.
Practical implementations need to incorporate some form of stabilization in between the successive applications of $\Ao$ and $\Ao^{\trans})$.
We give a general formulation of such a method in \cref{app:subsub:power_iter} with \cref{alg:TSOG1}. 
Notably, this general method allows an arbitrary number of passes over the data matrix (including zero passes, as an API convenience).

\begin{remark}
    The closest relative to \cref{alg:TSOG1} in the literature is probably \cite[Algorithm 3.3]{ZM:2020:LU}.
    However, the core idea behind this algorithm was explored earlier by Bjarkason \cite{Bjarkason:2019}.
\end{remark}

\subsection{Orthogonal projections: QB and rangefinders}
\label{subsubsec:qb_alg}

We begin with two definitions.

\begin{quote}
    Given a matrix $\Ao$, a \textit{QB decomposition} is given by a pair of matrices $(\mtx{Q},\mtx{B})$ where $\mtx{Q}$ is column-orthonormal and $\mtx{B} = \mtx{Q}^{\trans}\Ao$.
    It is intended that $\mtx{Q}\mtx{B}$ serve as an approximation of $\Ao$.
    An algorithm that computes only the factor $\mtx{Q}$ from a QB decomposition is called a \textit{rangefinder}.
\end{quote}

The value of QB decompositions stems from how they define approximations by orthogonal projection: $\Aa = \mtx{Q}\mtx{B} = \mtx{Q}\mtx{Q}^*\Ao$.
It is important to note that QB algorithms \textit{do not necessarily} first compute $\mtx{Q}$ in one phase and then set $\mtx{B} = \mtx{Q}^{\trans}\Ao$ in a second phase.
Indeed, the benefit of the rangefinder abstraction is that it considers an equivalent problem while setting aside the potentially-nuanced matter of computing $\mtx{B}$.

Before proceeding to algorithms, we note that 
these concepts are not useful in the ``full-rank'' setting.
Consider, for example, when $\Ao$ has full row-rank.
Here, \textit{any} orthogonal matrix $\mtx{Q}$ and accompanying $\mtx{B} = \mtx{Q}^{\trans}\Ao$ are valid outputs of rangefinders and QB decomposition algorithms.
Therefore full-rank QB decompositions can be \textit{entirely unstructured}.
Despite this caveat, QB decompositions are of fundamental importance in randomized algorithms for low-rank approximation.

\subsubsection{Rangefinder algorithms and basic QB} 
The simplest rangefinders are based on power iteration.
For example, one can prepare a data-aware sketching operator $\mtx{S}$ of the form \eqref{eq:power_iter_skop}, compute $\mtx{Y} = \Ao\mtx{S}$, and then return $\mtx{Q} = \code{orth}(\mtx{Y})$; this is formalized as \cref{alg:rf1} in \cref{app:subsub:qb}.
More advanced rangefinders use block Krylov subspace methods.
Specific examples of such rangefinders can be found in \cite{MM:2015:blockKrylov}, \cite[\S 7]{Bjarkason:2019}, and \cite[\S 11.7]{MT:2020}.

\cref{alg:qb1} in \cref{app:subsub:qb} is the simplest approach to QB.
It obtains $\mtx{Q}$ by calling an abstract rangefinder and obtains $\mtx{B}$ by explicitly computing $\mtx{B} = \mtx{Q}^{\trans}\Ao$.

\subsubsection{Iterative QB algorithms}

The most effective QB algorithms work by building $(\mtx{Q},\mtx{B})$ \textit{iteratively} \cite{MV:2016:QB}.
Generically, each iteration of such a QB method entails some number of matrix-matrix multiplications with $\Ao$ (as a rangefinder step), adds a specified number of columns to $\mtx{Q}$ and rows to $\mtx{B}$, and makes a suitable in-place update to $\Ao$.
Iterations terminate once some metric of approximation error $\Ao \approx \mtx{Q}\mtx{B}$ 
(e.g., $\| \Ao - \mtx{Q}\mtx{B} \|_2$ or an approximation thereof) falls below a certain level.

Iterative QB methods were improved by \cite{YGL:2018}.
In particular, Algorithm 2 of \cite{YGL:2018} does not modify $\Ao$, it uses a power-iteration-based rangefinder to compute new blocks for $(\mtx{Q},\mtx{B})$, and it efficiently updates the Frobenius error $\|\Ao - \mtx{Q}\mtx{B}\|_{\text{F}}$ as the iterations proceed.
This method is useful because it has complete control over the Frobenius norm error of the returned approximation.
\cref{alg:qb2} in \cref{app:subsub:qb} generalizes this method by allowing an abstract rangefinder in the iterative loop that updates $(\mtx{Q},\mtx{B})$.

 Meanwhile, Algorithm 4 of \cite{YGL:2018} performs power iteration \textit{before} entering its main iterative loop, it does not access $\Ao$ while in the iterative loop, and it can terminate early if a target accuracy is met before a pre-specified rank-limit.
This algorithm has the advantage of reducing the number of times $\Ao$ is moved through fast memory.
In fact, when power iteration is omitted, it can be implemented as a single-pass method based on Type 1 multi-sketching.
The downside is that this algorithm may waste a substantial amount of work if the rank limit is much higher than necessary.
This downside is compounded when power iteration is omitted.
We reproduce this method with slight modifications in \cref{app:subsub:qb} as \cref{alg:qb3}.

\subsubsection{Stopping criteria for iterative QB algorithms}

The Frobenius norm is easily computed for sparse matrices and dense matrices that are stored explicitly in memory.
However, it can be difficult to compute for abstract linear operators when the matrix is accessed only via matrix-vector multiplies, and this can pose problems in computing QB decompositions to specified accuracy.
One approach to address this situation is by careful application of a well-known randomized Frobenius norm estimator as part of the QB decomposition \cite[\S 3.4, \S 3.5, Eq. (3.26)]{GCGMRL:2018:stoppingCriteria}.
We also note that those looking for high-quality approximations often prefer that error be bounded in \textit{spectral norm} (and only use the Frobenius norm because it is usually very cheap to compute).
The problem of estimating spectral norms is well-studied in the NLA literature.
\cref{subsubsec:norm_rank_est} reviews randomized algorithms for estimating matrix norms. 

\subsubsection{Approximate single-pass QB via Type 2 multi-sketching}

Recall that a Type 2 multi-sketch of $\Ao$ is a sketch of the form $(\mtx{Y}_1,\mtx{Y}_2) = (\Ao\mtx{S}_1, \mtx{S}_2\Ao)$ for independent sketching operators $(\mtx{S}_1,\mtx{S}_2)$.
This sketch can be used to compute a QB decomposition that is \textit{approximate}, in the sense that $\mtx{Q}\mtx{B} \approx \Ao$ holds for column-orthonormal $\mtx{Q}$, but we drop the hard requirement that $\mtx{B} = \mtx{Q}^{\trans}\Ao$.

Put simply, the method is to compute $\mtx{Q} = \code{orth}(\mtx{Y}_1)$ and then $\mtx{B} = (\mtx{S}_2\mtx{Q})^\dagger \mtx{Y}_2$.
The intuition behind this approach is that if $\mtx{Q}\mtx{Q}^{\trans}\Ao$ is a good approximation for $\Ao$, then we would have $\mtx{Y}_2 \approx \mtx{S}_2\mtx{Q}\mtx{Q}^{\trans}\mtx{A}$, which would imply $\mtx{B} \approx \mtx{Q}^{\trans}\mtx{A}$.
We refer the reader to \cite[\S 4.2]{TYUC:2017:singlepass} for a proper explanation of this method.

\subsection{Column-pivoted matrix decompositions}
\label{subsec:column_pivoted}

Throughout this section we work with an $r \times c$ matrix $\mtx{G}$.
As before, we begin with some definitions.
\begin{quote}
    A \textit{column-pivoted decomposition} of $\mtx{G}$ is any decomposition of the form
    \begin{equation}\label{eq:generic_column_pivoted}
    \mtx{G}\mtx{P} = \mtx{F} \mtx{T}
    \end{equation}
    where $\mtx{P}$ is a permutation matrix and $\mtx{T}$ is upper-triangular.
    The permutation matrix is encoded by a vector of \textit{pivots}, $J$, so that $\mtx{G}[\fslice,J] = \mtx{G}\mtx{P}$.
\end{quote}
There are many ways of producing such decompositions.
We are only interested in the ways where rank-$k$ matrices obtained by truncation
\begin{equation}\label{eq:trunc_column_pivoted}
    \mtx{\hat{G}} := (\mtx{F}[\fslice,\lslice{k}])(\mtx{T}[\lslice{k},\fslice])\mtx{P}^{\trans}
\end{equation}
provide reasonably good rank-$k$ approximations of $\mtx{G}$.
The meaning of ``reasonably good'' is subjective.
It depends on the computational cost of the algorithm, and it depends on how well $\mtx{\hat{G}}$ approximates $\mtx{G}$ compared to the best approximation obtained by a truncated column-pivoted matrix decomposition.
Interestingly, some randomized algorithms for CSS (see \S \ref{subsec:CSS_CX_computational})
do not use the factors that appear in \eqref{eq:trunc_column_pivoted};
instead, these algorithms only care that the pivots $J$ \textit{could make} approximations form \eqref{eq:trunc_column_pivoted} reasonably accurate.

\paragraph{How much do we truncate?}
When a randomized algorithm uses this primitive for low-rank approximation, the matrix $\mtx{G}$ is usually a sketch of the target data matrix $\Ao$, and $k$ is close to $\min\{r, c\}$.
It helps to consider different situations when trying to build intuition for why these randomized algorithms work.
Specifically, it helps to consider when $\mtx{G}$ is equal to $\Ao$, or a low-rank approximation thereof.
In both such cases one should have $k \ll \min\{r, c\}$.
%
%

\subsubsection{Basics of column-pivoted decomposition algorithms}
There are two main families of algorithms for producing these decompositions: those based on QR with column pivoting (QRCP) and those based on LU with partial pivoting (LUPP).\footnote{
        The standard process of computing an LU decomposition with partial pivoting is called \textit{Gaussian elimination with partial pivoting} and is abbreviated as GEPP.
}
Algorithms in the former family define the decomposition \eqref{eq:generic_column_pivoted} in the natural way, where $\mtx{F}$ is column-orthonormal.
For algorithms in the latter family, one must consider how pivoted LU traditionally uses \textit{row pivoting}.
Therefore, to compute \eqref{eq:generic_column_pivoted} via LUPP, one must compute the transposed factors $(\mtx{P}^{\trans},\mtx{F}^{\trans},\mtx{T}^{\trans})$ in a row-pivoted decomposition,
\[
    \mtx{P}^{\trans}\mtx{G}^{\trans} = \mtx{T}^{\trans}\mtx{F}^{\trans},
\]
where $\mtx{T}^{\trans}$ is lower-triangular (with unit diagonal) and $\mtx{F}^{\trans}$ is upper-triangular.

Roughly speaking, QRCP-based algorithms prioritize accuracy, while LUPP-based algorithms prioritize speed.
The extent to which a specific algorithm does well on these performance metrics depends on the algorithm's pivoting rule.
The most widely used QRCP-based methods use the same pivoting rule as \LAPACK{}'s \code{GEQP3}.
Meanwhile, the most widely used LUPP-based methods use the pivoting rule from \LAPACK{}'s \code{GETRF}.
It is valid to rely on either of these functions for the column-pivoted decomposition steps that arise in randomized algorithms.
However, one should be aware of two potential sources of error when using \code{GETRF} for a column-pivoted decomposition rather than \code{GEQP3}.

\paragraph{What can we expect of \code{GETRF}'s pivots?}
When \code{GETRF} is applied to $\mtx{G}^{\trans}$, the process of computing the pivots up to and including the $\ell^{\text{th}}$ pivot makes decisions based only on the first $\ell$ rows of $\mtx{G}$.
Therefore it is unwise to use \code{GETRF} unless one has reason to believe that the information in $\mtx{G}$'s trailing $r - k$ rows would not drastically alter the columns chosen as pivots based on the first $k$ rows.
Similarly, it is unwise to use $\mtx{\hat{G}}$ as an approximation of $\mtx{G}$ when $k \ll \min\{r, c\}$, since this would suppose that $\mtx{G}$'s leading $k$ rows are significantly more important than all others.
One is most likely to find meaningful information in the column-pivoted LU decomposition when $\mtx{G}$ is very wide ($r \ll c$) and $k$ is close to $r$.

\paragraph{What isn't in the pivots?}

Suppose $\mtx{F}$ has $w = \min\{r, c\}$ columns.
It is easy to verify that for any nonsingular upper-triangular matrix $\mtx{U}$ of order $w$, the decomposition produced after a change-of-basis
\[
(\mtx{F},\mtx{T}) \leftarrow (\mtx{F}\mtx{U}^{-1}, \mtx{U}\mtx{T})
\]
will preserve \eqref{eq:generic_column_pivoted} for the same permutation matrix $\mtx{P}$.

It is informative to consider how such changes of basis affect $\mtx{\hat{G}}$.
For example, in simplest case, it is easy to see that $\mtx{\hat{G}}$ would not change if $\mtx{U}$ were diagonal.
This simple case shows that conditioning of the factors $(\mtx{F},\mtx{T})$ is unimportant in our formalism of column-pivoted decomposition.

To speak to a more interesting case, let us partition $\mtx{F}$ and $\mtx{T}$ into blocks $[\mtx{F}_1,\mtx{F}_2]$ and $[\mtx{T}_1;\mtx{T}_2]$
so that $\mtx{F}_1$ has $k$ columns and $\mtx{T}_1$ has $k$ rows.
Straightforward calculations show that 
\begin{equation}\label{eq:trunc_piv_error_1}
    \|\mtx{G} - \mtx{F}_1\mtx{T}_1\mtx{P}^{\trans}\| = \|\mtx{F}_2\mtx{T}_2\|
\end{equation}
holds in any unitarily-invariant norm.
Meanwhile, less straightforward calculations\footnote{See  \cref{prop:change_basis_column_piv} in \cref{app:theory_truncate_column_piv}.} show that there is always an upper-triangular nonsingular matrix $\mtx{U}$ for which
\begin{equation}\label{eq:trunc_piv_error_2}
    \|\mtx{G} - (\mtx{F}\mtx{U}^{-1})[\fslice,\lslice{k}] (\mtx{U}\mtx{T}[\lslice{k},\fslice])\| = \|(\mtx{I} - \mtx{F}_1^{}\mtx{F}_1^{\dagger})\mtx{F}_2\mtx{T}_2\| \leq \|\mtx{F}_2\mtx{T}_2\|. 
\end{equation}
Note that if there is substantial overlap between $\range(\mtx{F}_1)$ and $\range(\mtx{F}_2)$, then the inequality in \eqref{eq:trunc_piv_error_2} will be strict by a significant margin.
Therefore \textit{if} accuracy of the approximation \eqref{eq:trunc_column_pivoted} is important, then our decomposition should make sure that the columns of $\mtx{F}$ are orthogonal to one another.
This is ensured by QR-based methods, but it is not ensured by LU-based methods.

\subsubsection{Partial decompositions}

Standard algorithms for computing a column-pivoted decomposition of an $r \times c$ matrix require $\Theta(\min\{ r c^2, c r^2\})$ operations.
One can get away with spending less effort when only a \textit{partial} decomposition is needed.
Formally, in a ($k$-step) partial column-pivoted decomposition, we relax the requirement that $\mtx{T}$ be triangular.
Instead, we require that $\mtx{T}$ can be partitioned into a 2-by-2 block triangular matrix where the upper-left block is $k \times k$ and triangular in the proper sense.

The aforementioned standard algorithms for column-pivoted decomposition can be modified to compute $k$-step partial decompositions of $r \times c$ matrices in $\Theta(rck)$ operations.
There are plans for a version of \LAPACK{} subsequent to 3.10 to support this functionality as it pertains to QRCP.\footnote{See \url{https://github.com/Reference-LAPACK/lapack/issues/661}.}
At a practical level, it is certainly worth using this functionality when it is available.
However, this functionality is not critical, since randomized algorithms for low-rank approximation rarely need to compute $k$-step partial decompositions with $k \ll \min\{r, c\}$.

\subsubsection{More details on column-pivoted decomposition algorithms}

The pivots chosen by the strong rank-revealing QR (strong RRQR) algorithm from \cite{GE:1996} lead to the best theoretical guarantees for low-rank approximation by a partial column-pivoted QR decomposition.
In practice, it is more common to truncate the output of \LAPACK{}'s \code{GEQP3}, which is faster than strong RRQR.

Algorithms based on LUPP are typically faster than those based on pivoted QR.
While the LUPP approach comes with weaker guarantees (as explained above), these limitations are less significant in a randomized context where we seek nearly full-rank decompositions of \textit{wide sketches}.
Indeed, there is little practical difference in solution quality between LUPP-based and QRCP-based versions of some randomized algorithms for CSS and column ID \cite{gunnar_use_lupp,DM:2021:CUR}.

Other possibilities for column-pivoted matrix decomposition include LU or QR with tournament pivoting \cites{grigori10:_calu,demmel15:_commun_avoid_rank_reveal_qr_decom}.
Algorithms based on tournament pivoting exhibit reduced communication and hence can be more efficient without significant loss of accuracy.

Finally, we note that \cref{subsec:Rand_QRCP} includes a randomized algorithm for full-rank QRCP.
It is easy enough to modify that algorithm to support early termination.
Some variants of this algorithm specifically focus on low-rank approximation (e.g., the SRQR algorithm from \cite{XGL:2017:RandQRCP}).

\subsection{One-sided ID and CSS}
\label{subsec:CSS_CX_computational}

Column ID and CSS are nearly equivalent problems.
That is, a method for CSS can canonically be extended to a method for column ID by taking $\mtx{X} = (\Ao[:,J])^\dagger\Ao$.
Conversely, a method for column ID can be adapted to CSS by discarding any calculations that are only needed to form $\mtx{X}$.
This section covers deterministic and randomized algorithms for both of these problems.
Readers who are particularly interested in theoretical aspects of these algorithms should consult \cite{BMD09_CSSP_SODA}.\footnote{We frame all of our discussion of one-sided ID around column ID, rather than row ID.}
 

\subsubsection{Template deterministic algorithms}

Suppose we want to compute a rank-$k$ column ID of an $r \times c$ matrix $\mtx{G}$.
There is a template deterministic algorithm for handling this problem based on the notion of column-pivoted decompositions, as discussed in \cref{subsec:column_pivoted}.

The template algorithm works in two phases.
The first phase produces the decomposition $\mtx{G}\mtx{P} = \mtx{F}\mtx{T}$ where $\mtx{P}$ is a permutation matrix and $\mtx{T}$ is upper-triangular.
The second phase is a matter of simple postprocessing.
The postprocessing begins by partitioning $\mtx{T}$ into a 2-by-2 block triangular matrix
\[
    \mtx{T} = \begin{bmatrix} \mtx{T}_{11} & \mtx{T}_{12} \\ 
    \mtx{0} & \mtx{T}_{22} \end{bmatrix}.
\]
where $\mtx{T}_{11}$ is $k \times k$ and triangular in the usual sense.
From here, one sets the interpolation matrix to $\mtx{X} = [\mtx{I}_{k \times k}, \mtx{T}_{11}^{-1}\mtx{T}^{}_{12}]\mtx{P}^{\trans}$, 
and one sets the skeleton indices to the vector $J$ that provides $\mtx{X}[\fslice,J] = \mtx{I}_k$.

The importance of this algorithm stems from how its output can equivalently be analyzed as a truncated column-pivoted matrix decomposition.
That is, the column ID induced by $(J,\mtx{X})$ satisfies
\[
    \mtx{G}[\fslice,J]\mtx{X} = \mtx{F}[\fslice,\lslice{k}]\mtx{T}[\lslice{k},\fslice]\mtx{P}^{\trans}.
\]
We can therefore gain insights into the behavior of this column ID algorithm by appealing to results such as \cref{prop:change_basis_column_piv}, which we alluded to earlier in our discussion of LU and QR based pivoting methods.

This approach to column ID is illustrated more formally as
\cref{alg:osid_qrcp} in \cref{subapp:ID_and_CSS}.
It is easy to see that the CSS version of this algorithm does not need to compute $\mtx{X}$, since the definition of $\mtx{X}$ implies that $J$ can be determined from the first $k$ rows of $\mtx{P}$.



\subsubsection{Randomized algorithms}

We list five randomized algorithms for CSS and column ID below.
With some exception for the fifth algorithm, we do not comment on the theoretical guarantees of these methods.


\begin{enumerate}
    \item 
    For CSS, one can sample columns with probability proportional to their norms, where column norms are updated by projecting out selected columns as a QRCP-like factorization proceeds \cite{DV:2006:CSS}.
    Applying the standard post-processing scheme to the (partial) factorization yields the interpolation matrix $\mtx{X}$ needed for a column ID.
    \item 
    Also for CSS, one can sample columns according to a probability distribution related to so-called \textit{leverage scores} of the matrix under consideration.
    We discuss leverage score sampling in detail in \cref{sec7:lev_scores}.
    For now, we note that this approach especially useful for computing \Nystrom{} approximations.
    
    \item 
    The algorithm in \cite{BMD09_CSSP_SODA} approaches CSS with a combination of leverage score sampling and postprocessing by deterministic QRCP.
    The factorization produced by this postprocessing can be processed further to produce the interpolation matrix for a column ID.
    \item 
    \cite[\S V.D]{XGL:2017:RandQRCP} suggests solving CSS by taking the pivots from a randomized algorithm for QRCP.
    The output of the randomized algorithm for QRCP can, of course, be processed to recover the interpolation matrix for a column ID.
    \item 
    \cite[\S 5.1]{VM:2016:CUR} approaches low-rank column ID by computing a (nearly) full-rank column ID of a sketch  $\mtx{Y} = \mtx{S}\Ao$.
    The unmodified data $(\mtx{X},J)$ is used to define the low-rank column ID $\Ao[:,J]\mtx{X} \approx \Ao$.
\end{enumerate}

The last of these methods is simple and practical.
It appears with slight modifications in \cref{subapp:ID_and_CSS} as \cref{alg:osid1}, while the corresponding CSS version appears as \cref{alg:rocs1}.
For both the column ID and CSS versions, it is recommended that $\mtx{S}$ be a data-aware sketching operator based on power iteration.
To gain intuition for this method, one should first verify that if $(\mtx{X},J)$ defines a full-rank column ID of $\mtx{Y}$, then it also defines a full-rank column ID of $\mtx{\tilde{A}} = (\Ao\mtx{Y}^{\dagger})\mtx{Y}$.
With that given, we can apply \cref{prop:regularity_ID_accuracy} to see that the induced low-rank column ID satisfies an error bound
\[
    \|\Ao -  \Ao[\fslice,J]\mtx{X}\|_2 \leq (1 + \|\mtx{X}\|_2)\|\Ao - \mtx{\tilde{A}}\|_2.
\]
This bound is noteworthy for the following reason: using power iteration to prepare a data-aware sketching operator $\mtx{S}$ will drive $\mtx{\tilde{A}}$ closer to a rank-$k$ approximation of $\Ao$ obtained by a truncated SVD.
That, in turn, would give $\|\Ao - \mtx{\tilde{A}}\|_2 \approx \sigma_{k+1}(\Ao)$.

\begin{remark}
    The value of power iteration in the context of CSS / column ID \textit{and} in the context of rangefinders / QB is a key reason for considering power iteration as a basic primitive of \RandNLA{}.
\end{remark}

\subsubsection{On fixed-accuracy one-sided ID}

Standard implementations of deterministic QRCP-based algorithms for one-sided ID can compute approximations to specified accuracy.
\textit{Randomized} algorithms for low-rank one-sided ID do not possess this capability to the same extent.
In some respects, this is a principal disadvantage of low-rank approximation by ID compared to QB.
However, there are partial workarounds.
 
For example, suppose we approximate $\Ao$ via a QB decomposition, $\mtx{\tilde{A}} = \mtx{Q}\mtx{B}$.
If we computed $(\mtx{X},J)$ by a full-rank column ID of $\mtx{B}$, then we would also have a full-rank column ID of $\mtx{\tilde{A}}$.
If $\mtx{X}$ was obtained by the standard postprocessing of output from strong rank-revealing QR, then we would have $|X_{ij}| \leq 2$.
A straightforward application of \cref{prop:regularity_ID_accuracy} shows that if we had
\begin{equation}\label{eq:need_QB_for_fixed_acc_ID}
    \|\Ao - \mtx{Q}\mtx{B}\|_2 \leq \frac{\epsilon}{(1 + \sqrt{1 + 4k (n-k)})}
\end{equation}
for a rank-$k$ QB decomposition, then we could be certain that $ \|\Ao -  \Ao[\fslice,J]\mtx{X}\|_2$ was at most $\epsilon$.
Therefore, in principle, one could compute the QB decomposition iteratively, and only compute the ID of $\mtx{B}$ once \eqref{eq:need_QB_for_fixed_acc_ID} is satisfied.

The above approach is not without its shortcomings.
For one thing, reducing $\|\Ao - \mtx{Q}\mtx{B}\|_2$ entails increasing $k$, and so the termination criterion is a moving target.
As another issue, it needs to bound spectral norms of implicitly represented linear operators.
We address the problem of estimating matrix norms next.

\subsection{Estimating matrix norms}
\label{subsubsec:norm_rank_est}

Norm estimation plays an important role in stopping criteria for iterative low-rank approximation algorithms, particularly for QB and \Nystrom{} approximations.
Here we summarize methods that would be appropriate for expensive norms or norms of abstract linear operators that are only accessible by matrix-vector multiplications.

\begin{remark}
     The material presented here is covered in greater detail in \cite[\S 4.3 - \S 4.4]{HMT:2011} and \cite[\S 5 - \S 6, \S 12.0 - \S 12.4]{MT:2020}.
\end{remark}

\paragraph{A cheap spectral norm bound.}
Let the vectors $\vct{z}_1,\dots,\vct{z}_r\in\R^n$ be vectors with components drawn iid from the standard normal distribution and let $\beta>1$ be a tuning parameter.
Then, for any $\Ao$, it is known that the inequality
\begin{equation}\label{eqn:testop}
    \|\Ao\|_2 \ \leq \ \beta\sqrt{\frac{2}{\pi}}\max_{j\in\idxs{r}}\|\Ao\vct{z}_j\|_2
\end{equation}
holds with probability at least $1-\beta^{-r}$~\cites{HMT:2011, WLRT:2008}. 
Furthermore, this bound is easy to compute because the necessary vectors $\Ao\vct{z}_j$ can be formed with a single matrix-matrix product with $\Ao$.

\paragraph{A basic Frobenius norm estimator.} Let $\mtx{Z}\in\R^{n\times r}$ be the matrix whose columns are the random vectors $\vct{z}_1,\dots,\vct{z}_r$ mentioned above. Then, it turns out that the quantity $\frac{1}{r}\|\Ao\mtx{Z}\|_{\text{F}}^2$ is an unbiased estimate for the squared Frobenius norm, in the sense that
\begin{equation}\label{eqn:frobreln}
    \mathbb{E}\Big[\textstyle\frac{1}{r}\|\Ao\mtx{Z}\|_{\text{F}}^2\Big] \ = \ \|\Ao\|_{\text{F}}^2.
\end{equation}
In addition to being unbiased, the variance of the error estimate can also be controlled according to
\begin{equation}
    \text{var}\Big(\textstyle\frac{1}{r}\|\Ao\mtx{Z}\|_{\text{F}}^2\Big) \ \leq \ \textstyle\frac{2}{r}\|\Ao\|_2^2\|\Ao\|_{\text{F}}^2,
\end{equation}
as shown in~\cite{Girard:1989}.
Hence, as long as $r$ is sufficiently large, then the error estimate $\frac{1}{r}\|\Ao\mtx{Z}\|_{\text{F}}^2$ is likely to be close to $\|\Ao\|_{\text{F}}^2$.
From a computational standpoint, this error estimate is similar to the one described above for the spectral norm, insofar as it only requires $r$ matrix-vector products with $\Ao$.

\paragraph{A cheap Schatten $p$-norm estimator.}

Letting $\vct{\sigma}$ denote the vector of singular values of $\Ao$, the \textit{Schatten 2$p$-norm} of $\Ao$ is $\|\Ao\|_{(S,2p)} := \left(\sum_{i=1}^{\min\{m,n\}} \sigma_i^{2p}\right)^{1/2p}$.
Taking $p=1$ reduces to the Frobenius norm.
The spectral norm is obtained in the limit as $p \to \infty$.
In fact, deterministic bounds show that the spectral norm and Schatten $p$-norm more or less coincide when $p \gtrsim \log\min\{m,n\}$.

The Kong-Valiant estimator \cite{kong2017spectrum} can be used to cheaply estimate these norms.
It only accesses $\Ao$ by multiplication with an $n \times k$ data-oblivious sketching operator, where $k$ can be materially smaller than $\min\{m,n\}$.
See \cite[\S 5.4]{MT:2020} for a statement of the algorithm and remarks on its theoretical guarantees.

\paragraph{Accurate spectral norm estimators.} 

There is a large literature on deterministic and randomized algorithms for estimating spectral norms.
Much of this literature is based on methods designed for estimating the largest eigenvalue of a positive definite matrix (which can naively be applied since $\sqrt{\|\Ao^{\trans}\Ao\|_2} = \|\Ao\|_2$).
Most notably, Dixon was the first to study the randomized power method \cite{Dixon:1983:powerMethod}, and Kuczy\'{n}ski and Wo\'{z}niakowski were the first to study randomized Lanczos methods \cite{KW:1992:maxEig_powerLanczos}.
See \cite[Algorithm 5]{MT:2020} for a basic randomized Lanczos method and the subsequent remarks on block randomized Lanczos \cite[\S 6.5]{MT:2020}.


\subsection{Oblique projections}\label{subsubsec:oblique_proj}

Low-rank approximations can be expressed in a manner resembling the triple-sketch from \cref{subsec:multi_sketch}.
For sketching operators $\mtx{S}_1 \in \R^{n \times k}$ and $\mtx{S}_2 \in \R^{d \times m}$, we can define
\[
\Aa = \Ao\mtx{S}_1 (\mtx{S}_2\Ao\mtx{S}_1)^\dagger \mtx{S}_2\Ao  =  \mtx{Y}_1\mtx{Y}_3^\dagger\mtx{Y}_2,
\]
where
\[
    \mtx{Y}_1 = \Ao\mtx{S}_1,\quad \mtx{Y}_2 = \mtx{S}_2\Ao,\quad\text{and}\quad \mtx{Y}_3 = \mtx{S}_2\Ao\mtx{S}_1.
\]
This construction obtains each column of $\Aa$ by projecting the corresponding column of $\Ao$ onto the range of $\mtx{Y}_1$, where the projection is orthogonal with respect to the possibly degenerate inner product $(\vct{u},\vct{v}) \mapsto \langle\mtx{S}_2\vct{u},\mtx{S}_2\vct{v}\rangle$.
We call $\Aa$ an \textit{oblique projection of $\Ao$}.

The simplest oblique projections use column and row selection operators for $(\mtx{S}_1, \mtx{S}_2)$.
This provides a CUR decomposition where $\mtx{Y}_3^\dagger$ is the linking matrix $\mtx{U}$.
The connection to CUR foreshadows a more general fact: the sketching operators used in oblique projection are not necessarily independent of one another~\cite{DMM:2008}.
An example in this regard is that \Nystrom{} approximations amount to oblique projections that use $\mtx{S}_2 = \mtx{S}_1^{\trans}$.

It is natural to consider oblique projections where $\mtx{S}_1$ and $\mtx{S}_2$ are independent (e.g., independent Gaussian operators).
Such approximations can entail extremely ill-conditioned computations if one is not careful.
This ill-conditioning can be avoided  through the numerically stable approach described by Nakatsukasa \cite{Nakatsukasa:2020:genNystrom}.
These approximations employ oversampling for $\mtx{S}_2$ (relative to $\mtx{S}_1$) and split $\mtx{Y}_3$ (or a regularized variant thereof) into two factors.
The representation returned by this approach consists of four matrices.

\subsubsection{Historical notes}

Oblique projections for low-rank approximation are closely related to the rank reduction formula described in \cite{CFG:1995:RankReductionFormula}.
Drineas et al.\ first used oblique projections for low-rank approximation via CUR decomposition \cite{DMM:2008}, wherein $\mtx{S}_1,\mtx{S}_2$ are column and row selection matrices respectively.
Clarkson and Woodruff pioneered the use of general oblique projections in randomized algorithms for low-rank approximation \cite[Theorem 4.7]{CW:2009:streaming}.
Oblique projections have since been discussed in the context of a generalized LU factorization \cite{DGR:2019:GLU}.

\section{Other low-rank approximations}\label{subsec:other_lowrank}

Here we review a handful of other low-rank approximation problems and algorithms, particularly speaking to our development plans for \RandLAPACK{}.

\paragraph{Domain-specific representations.} Several low-rank approximation problems of interest involve specialized factorizations.
We plan for \RandLAPACK{} to eventually support nonnegative matrix factorization \cite{EMWK:2018}, dynamic mode decomposition (DMD) \cites{erichson2019randomized,erichson2019compressed}, and possibly sparse PCA \cite{erichson2020sparse}.
Among these methods, we expect that DMD will have highest priority, since full-rank DMD is slated for inclusion into \LAPACK{} in the near future \cite{Drmac:2022:DMD_LAWN}.
For a general introduction to DMD we refer the reader to \cite{TTLBK:2014:DMD}.

\paragraph{Low-rank Cholesky.}
As a separate topic, there is also a longstanding algorithm for ``low-rank Cholesky'' decompositions \cite{XG:2016:LowrankChol}.
We are unsure of its eventual role in \RandLAPACK{}, since a representation of the form $\Aa = \mtx{L}\mtx{L}^{\trans}$ for a very tall lower-triangular matrix $\mtx{L}$ offers almost no benefit over $\mtx{L}$ being dense.
Still, it will be considered in the near future alongside the recently proposed algorithm by \cite{CETW:2022:randPivCholesky} for randomly pivoted partial Cholesky decomposition.

\paragraph{Low-rank QR.}
Suppose $\Ao$ is a large full column-rank matrix with QR decomposition $\Ao = \mtx{Q}\mtx{R}$.
This decomposition has two especially prominent uses: (1) it facilitates application of a pseudoinverse $\Ao^\dagger\vct{v} = \mtx{R}^{-1}\mtx{Q}^{\trans}\vct{v}$ in $O(mn)$ time, and (2) it can be used as preprocessing for more complicated orthogonal decompositions such as SVD.
Unfortunately, low-rank QR decomposition, which is simply the economic QR decomposition of a rank-$k$ approximation of $\Ao$, does \emph{not} fully realize either of these use-cases.

The trouble with low-rank QR is that a $k \times n$ upper-triangular matrix with $k \ll n$ is effectively a full matrix.
That is, the mere \textit{representation} of a low-rank matrix by a QR decomposition is not much more useful than representation by QB decomposition.
Note also that unpivoted QR makes no effort to produce a rank-revealing representation, compared to pivoted QR.
Therefore \RandLAPACK{} will not offer methods for low-rank approximation by unpivoted QR.
 
 \paragraph{Low-rank UTV.}
A UTV decomposition $\Aa = \mtx{U}\mtx{T}\mtx{V}^{\trans}$ uses column-orthogonal matrices $\mtx{U},\mtx{V}$ and a triangular matrix $\mtx{T}$.
UTV (also called QLP) can be thought of as a cheaper alternative to SVD.
As we discuss in the next section, \RandLAPACK{} might include algorithms for UTV when $\Aa$ is full-rank \cites{GM:2018:URV,MQH:2019:URV,KCdL:2021:QLP_fullrank}.
Some of those algorithms (e.g., that in \cite{MQH:2019:URV}) proceed iteratively and can be terminated early.
If \RandLAPACK{} supports full-rank UTV by such an algorithm then it will expose the low-rank variant.

Several algorithms for producing low-rank approximations represented by UTV are given in \cites{DG:2017:QR,FXG:2019:FFSRQR_lowrank,WX:2020:QLP,RB:2020:QLP,KC:2021:QLP}.
We would need a better understanding of those methods, particularly how they compare to our planned methods for low-rank SVD, before making decisions on which of them to support.

\paragraph{Low-rank LU.}
LU is central to solving systems of linear equations in the full-rank case.
There is a small literature on low-rank LU within the field of \RandNLA{}: \cite{SSAA:2018:LU,DGR:2019:GLU,ZM:2020:LU}.
In \RandLAPACK{} we anticipate restricting our attention to algorithms that are related to a Gaussian elimination process (that is, where the error matrix can be expressed as a Schur complement of a block matrix), along the lines of \cite{DGR:2019:GLU}.
These algorithms are likely more useful for low-rank approximation with a fixed accuracy requirement rather than with a fixed rank requirement.
They are based on an oblique projection with $k=d$, that is $\mtx{S}_2\Ao\mtx{S}_1$ is square. 

RandLAPACK{} might include the LU algorithms from \cite{SSAA:2018:LU} if they can be proven to be significantly faster than high-quality implementations of QB algorithms.
If proven useful, we will consider in the future generalized LU-based low-rank approximation, as introduced in~\cite{DGR:2019:GLU}.
The algorithms for low-rank LU in \cite{ZM:2020:LU} are based on QB and so are unlikely to be included in \RandLAPACK{}.

\section{Existing libraries}
\label{subsec:lowrank_libraries}

Here we review established numerical libraries that support randomized low-rank approximation.
All of the libraries that focus on RandNLA (save for one) implement advanced sketching operators such as SRFTs.

\paragraph{NLA and data science packages that use RandNLA.}

There are a few packages for NLA that include a method for low-rank SVD based on QB decomposition:
\begin{itemize}
    \item \code{MLSVD{\_}RSI} in the \Tensorlab{} MATLAB toolbox
    \item \code{rsvd} in \SciKitCUDA{},
    \item \code{cusolverDnXgesvdr} in NVIDIA's \cuSOLVE{},
    \item and \code{randomized\_svd} in \textsf{SciKit-Learn}.
\end{itemize}
The last of these functions warrants special emphasis.
\textsf{SciKit-Learn}'s \code{pca} function actually \textit{defaults} to \code{randomized\_svd} for sufficiently large matrices \cite{SciKitLearn-PCA}.
In this way, one of the most important functions in the most widely-used Python package for data science already relies on \RandNLA{}.

\paragraph{\IDLib{}.}

\IDLib{} is a Fortran library for ID/CUR \cite{MRST:2014:IDsoftware}.
It is callable as part of the \SciPy{} Python library. 
\IDLib{} provides indirect support for SVD as part of its methods for converting one low-rank factorization into another.
It also includes routines for rank estimation and norm estimation.
\RandLAPACK{} will include many of these same utilities as \IDLib{} while expanding its scope of driver-level functions.

\paragraph{\RSVDPACK{}.}

\RSVDPACK{} is a C and MATLAB library for low-rank SVD and ID/CUR \cite{VM:2015:rsvdpack}.
It is callable after building from source code which is provided on GitHub.
Its SVD algorithms are based on a particular QB implementation \cite[\S 3.4]{VM:2015:rsvdpack} and its ID/CUR algorithms follow \cite{VM:2016:CUR}.
\RSVDPACK{} comes in different implementations which target different architectures.

By comparison, \RandLAPACK{} will take more general approaches to QB and ID/CUR, and it will include methods for other factorizations such as eigendecomposition via Nystr\"{o}m approximations.
\RandLAPACK{} will target different architectures by building on \LAPACKpp{} as a portability layer \cite{SLATE:2017:LAPACK++}.\footnote{This library is developed as part of \SLATE{} \cites{SLATE:2017:design,SLATE:2017:roadmap}.}

\paragraph{\Ristretto{}.}

\Ristretto{} is available on the Python Package Index. This library is based on the \textsf{rsvd} package implemented in R~\cite{EVBK:2019:Ristretto}. 
It supports low rank SVD, ID/CUR, LU, Nystr\"{o}m, PCA, Hermitian eigendecomposition, nonnegative matrix factorization~\cite{EMWK:2018}, dynamic mode decomposition~\cites{erichson2019randomized, erichson2019compressed,erichson2016randomized}, and sparse PCA~\cite{erichson2020sparse}.
One algorithm is provided for each distinct type of factorization. Many of these algorithms are based on QB \cite[\S 3.3]{EVBK:2019:Ristretto}, while its ID/CUR algorithms also follow \cite{VM:2016:CUR}. This library has also been demonstrated to be useful for finding patterns in large-scale climate data~\cite{velegar2019scalable}, and for providing routines for randomized tensor decompositions~\cite{erichson2020randomized}.

We plan for \RandLAPACK{} to eventually support the same range of factorizations as \Ristretto{} (with the exception of low-rank LU).
However, our priority is to focus on the factorizations in \cref{subsec:lowrank_drivers}, and to offer a range of algorithms for computing each of these decompositions.
Our longer-term plans include making \RandLAPACK{}'s C++ implementation callable from Python.

\paragraph{\LibSkylark{}.}

\LibSkylark{} \cite{libskylark} is written in C++ and callable after installing from source, which is available on GitHub.
To our knowledge, it is the only \RandNLA{} library that supports both least squares and low-rank approximation.
Its low-rank approximation functionality is restricted to SVD through a QB approach.
See \cref{subsec:opt_libraries} for its least squares functionality.

\paragraph{\LowRankApprox{}.}

\LowRankApprox{} is a Julia library for low-rank SVD, QR, ID, CUR, and Hermitian eigendecomposition.
It is callable after installation with the Julia package manager.
Most of its algorithms are based on first computing an ID, rather than a QB decomposition.
Note that this is quite different from the plans we have outlined for \RandLAPACK{} over \cref{subsec:lowrank_subroutines,subsec:lowrank_drivers}.

\paragraph{Other implementations.}

The many algorithms considered in \cite{Bjarkason:2019} are accompanied by Python implementations hosted on GitHub.
The \RandNLA{} tutorial \cite{Wang:2015:practical} covers a wide range of algorithms for low-rank approximation and hosts some MATLAB implementations on GitHub.
%
\chapter{Further Possibilities for Drivers}
\label{sec5:more_drivers}

\minitoc
\bigskip

This section covers multi-purpose matrix decompositions, the solution of unstructured linear systems, and trace estimation.
These are the last problems we cover that might be handled by ``drivers'' in a high-level \RandNLA{} library.
We emphasize that this monograph does not exhaust the set of prominent linear algebra problems that are amenable to randomization.
We make no effort to cover randomized algorithms for general eigenvalue problems, nor do we cover randomized algorithms for computing the action of matrices produced from matrix functions (i.e., computing $f(\mtx{A})\vct{b}$ for an analytic matrix function $f$), even though there are effective algorithms for both of these problems \cite{NT:2021:krylov,GS:2022:rand_Krylov_for_mat_funcs,CKN:2022:spectral_function}.

We have chosen the topics of this section because they require comparatively little background material to state, and we believe our summary of the relevant algorithms has some contribution
to the literature.
For example, the key contribution from \cref{subsec:multipurpose_decomp} is a novel algorithm for QR with column pivoting based on Cholesky QR.
The algorithm is notable for its ability to handle ill-conditioned or even outright rank-deficient matrices.
The contributions from \cref{subsec:general_linear_systems} include detailed introductions to recently-developed iterative methods for solving general linear systems.
Finally, our coverage of trace estimation in \cref{sec:trace_estimation}, includes state-of-the-art algorithms and implementations that were not available when earlier \RandNLA{} surveys were published.

\section{Multi-purpose matrix decompositions}\label{subsec:multipurpose_decomp}

Early in the year 2000, the IEEE publication \textit{Computing in Science \& Engineering} published a list of the top ten algorithms of the twentieth century.
Among this list was \textit{the decompositional approach to matrix computation}, on which G. W. Stewart gave the following remark.
\begin{quote}
    The underlying principle of the decompositional approach of matrix computation is that it is not the business of matrix algorithmists to solve particular problems but to construct computational platforms from which a variety of problems can be solved.
    This approach, which was in full swing by the mid-1960s, has revolutionized matrix computation.
\end{quote}

This section covers three decompositions that provide broad platforms for problem solving. 
They are addressed in an order where randomization offers increasing benefits over purely deterministic algorithms.
We note in advance that these randomized algorithms do not aim for an asymptotic speedup over deterministic methods.
Rather, the aim is to significantly reduce time-to-solution by taking better advantage of modern computing hardware.

\subsection{QR decomposition of tall matrices}\label{subsubsec:chol_qr}

Algorithms for computing unpivoted QR decompositions are true workhorses of numerical linear algebra.
They are the foundation for the preferred algorithms for solving least squares problems with full-rank data matrices.
They are also an important ingredient in preprocessing for more expensive algorithms.

For example, suppose we want to decompose a very tall $m \times n$ matrix $\mtx{A}$ via QR with column pivoting.
The instinctive thing to do here is to reach for the \LAPACK{} function \code{GEQP3}.
However, on modern machines, it is much faster to compute an unpivoted decomposition $\mtx{A} = \mtx{Q}\mtx{R}$, and then run \code{GEQP3} on $\mtx{R}$.
The final decomposition would be mathematically equivalent to calling \code{GEQP3} directly on $\mtx{A}$, just represented in a different format.
%
%


With this significance of unpivoted QR in mind, we briefly cover two types of randomized algorithms for computing such decompositions.

\subsubsection{Orthogonality in the standard inner product}
Cholesky QR is a method for computing unpivoted QR decompositions of matrices with linearly independent columns.
It is based on the following elementary observation: given a QR decomposition $\mtx{A} = \mtx{Q}\mtx{R}$ of a full-column-rank matrix $\mtx{A}$, the factor $\mtx{R}$ is simply the upper-triangular Cholesky factor of the Gram matrix $\mtx{A}^{\trans}\mtx{A}$.
Therefore in principle one can compute a QR decomposition as follows.
\begin{enumerate}
    \item Compute a Cholesky decomposition of the Gram matrix $\mtx{A}^{\trans}\mtx{A} = \mtx{R}^{\trans}\mtx{R}$.
    \item Perform a matrix-matrix triangular solve to obtain $\mtx{Q} = \mtx{A}\mtx{R}^{-1}$.
\end{enumerate}
Implementing Cholesky QR only requires three functions: \code{syrk} from \BLAS{}, \code{potrf} from \LAPACK{}, and \code{trsm} from \BLAS{}.
Standard implementations of these functions parallelize extremely well.
As a result, Cholesky QR can offer substantial speedups over Householder QR (and even Tall-and-Skinny QR \cite{demmel15:_commun_avoid_rank_reveal_qr_decom}) for very tall matrices on modern machines.

Despite the speed advantage of Cholesky QR, it is rarely used in practice, since it is unsuitable for even moderately ill-conditioned matrices.
Recently it has been shown that randomization can overcome this limitation by preconditioning Cholesky QR to ensure stability \cite{FGL:2021:CholeskyQR}.
For detailed analysis of this method we refer the reader to the results in \cite{Balabanov:2022:cholQR} on the algorithm called ``\code{RCholeskyQR2}.''

In \cref{subsec:cholesky_qrcp} we extend this methodology to rank-deficient matrices, and we connect it to an existing randomized algorithm for QRCP of general matrices.

\subsubsection{Orthogonality in a sketched inner product}

In \cite{BG:2021:GramSchmidt}, Balabanov and Grigori propose a randomized Gram--Schmidt (RGS) process that orthogonalizes $n$ vectors in $\R^m$ with respect to a sketched inner product
\begin{equation}\label{eq:sketched_inner_product}
    \langle \vct{u},\vct{v}\rangle_{\mtx{S}} = (\mtx{S}\vct{u})^{\trans}(\mtx{S}\vct{v}).
\end{equation}
We call such vectors \textit{$\mtx{S}$-orthogonal} or \textit{sketch-orthogonal}.
When \cite[Algorithm~2]{BG:2021:GramSchmidt} is run on the columns of a matrix $\mtx{A}$, values computed during sketched projections are assembled in an upper-triangular matrix $\mtx{R}$ so that $\mtx{A} = \mtx{Q}\mtx{R}$ and  $(\mtx{S}\mtx{Q})^{\trans}(\mtx{S}\mtx{Q}) = \mtx{I}_n$.
One can choose the distribution from which $\mtx{S}$ is drawn so that $\mtx{Q}$ will be nearly-orthonormal with respect to the standard inner product, with high probability.
Empirical and theoretical results show RGS is faster than classic Gram--Schmidt but as stable as modified Gram--Schmidt.

The idea of computing QR decompositions where $\mtx{Q}$ is sketch-orthogonal can be taken in several directions.
For example, a block version of RGS is proposed and analyzed in \cite{BG:2021:Randomized_Block_Gram_Schmidt}.
Taking this approach to the extreme where the block size is the number of columns in the matrix, one can compute the factor $\mtx{R}$ by Householder QR on $\mtx{S}\mtx{A}$ and then represent $\mtx{Q} = \mtx{A}\mtx{R}^{-1}$ as a linear operator.
Note that this procedure is the essence of sketch-and-precondition for least squares, as proposed in \cite{RT:2008:SAP}.
A detailed numerical analysis of this last method can be found in \cite{Balabanov:2022:cholQR}, where the algorithm is called \code{RCholeskyQR}.

\subsection{QR decomposition with column pivoting}\label{subsec:Rand_QRCP}

We recall the following reformulation of QR with column pivoting (QRCP) for the reader's convenience.
\begin{quote}
    Given a matrix $\mtx{A}$, produce a column-orthogonal matrix $\mtx{Q}$, an upper-triangular matrix $\mtx{R}$, and a permutation vector $J$ so that
    \[
    \mtx{A}[:, J] = \mtx{Q}\mtx{R}.
    \]
\end{quote} 
The diagonal entries of $\mtx{R}$ should approximate $\mtx{A}$'s singular values, and the columns of $\mtx{Q}$ should approximate $\mtx{A}$'s left singular vectors.
These stipulations reflect QRCP's main use cases: in low-rank approximation and in solving ill-conditioned least squares problems.
As usual, we say that our matrix $\mtx{A}$ is $m \times n$.

\paragraph{It's all in the pivots.}
We note that if $m \geq n$, then for any permutation vector $J$, the economic QR decomposition of $\mtx{A}[:,J]$ is unique.\footnote{Technically, it is only unique up to sign flips on the columns of $\mtx{Q}$ and rows of $\mtx{R}$. But it is clear how signs must be chosen if the diagonal of $\mtx{R}$ is to approximate the singular values
of $\mtx{A}$.
%
%
}
Therefore $J$ completely determines how well the columns of $\mtx{Q}$ (resp., diagonal entries of $\mtx{R}$) approximate the left singular vectors of $\mtx{A}$ (resp., singular values of $\mtx{A}$).

The method of choosing pivots that sees the widest use today (a simple method based on column norms) was first described in \cite{BG:1965:QRCP}.
The straightforward implementation of this method can have subtle failure cases in finite-precision arithmetic, however, this can be resolved by carefully restructuring norm calculations \cite{DB:2008:failure_of_RRQR_software}.

\subsubsection{An established randomized algorithm for general matrices}
\label{subsec:fullrank_decomp:qrcp}

Here, we outline a remarkable algorithm first developed by Martinsson \cite{Martinsson:2015:QR} and Duersch and Gu \cite{DG:2017:QR}, and then refined by Martinsson, Quintana-Ort\'{i}, Heavner, and van de Geijn \cite{MOHvdG:2017:QR}.
This refined algorithm was introduced with the name \textit{Householder QR with Randomization for Pivoting} or \textit{HQRRP}.
As this name implies, the factor $\mtx{Q}$ from HQRRP is an $m \times m$ operator defined by $n$ Householder reflectors.
The algorithm can run much faster than standard QRCP methods by processing the matrix in column blocks, which makes it possible to cast the overwhelming majority of its operations in terms of \BLASlev{3}, instead of about half \BLASlev{2}.

While a full description of HQRRP is beyond our scope, we can outline its structure.
As input, it requires that the user provide a block size parameter $b$ and an oversampling parameter $s$.
Typical values for these parameters are $b = 64$ and $s = 10$.
HQRRP starts by forming a thin $(b+s) \times n$ sketch $\mtx{Y} = \mtx{S}\mtx{A}$, and then it enters the following iterative loop.
\begin{enumerate}
    \item[1.] Use any QRCP method to find $P_{\text{block}}$: the first $b$ pivots for $\mtx{Y}$.
    \item[2.] Process the panel $\mtx{A}[:,P_{\text{block}}]$ by QRCP.
    \item[3.] Suitably update $(\mtx{A},\mtx{Y})$ and return to Step 1.
\end{enumerate}
The update to $\mtx{A}$ at Step 3 can be handled by standard methods, such as those used in blocked unpivoted Householder QR.
The update to $\mtx{Y}$ is more subtle.
If done appropriately (particularly, by Duersch and Gu's method \cite{DG:2017:QR}) then the leading term in the FLOP count for HQRRP is identical to that of unpivoted Householder QR.
The one downside of this algorithm is that the diagonal entries of $\mtx{R}$ are not guaranteed to decrease across block boundaries.

\paragraph{Implementation notes.}
We adapted the C implementation from \cite{MOHvdG:2017:QR} into C\texttt{++} code at
\begin{quote}
    \url{https://github.com/rileyjmurray/hqrrp}.
\end{quote}
\noindent Our main change was to access BLAS and LAPACK through \BLAS{}\texttt{++} and \LAPACK{}\texttt{++}.
The modified code also allows for matrix dimensions to be specified with either 32-bit or 64-bit integers and includes a small test suite.

We briefly point out two opportunities to improve the performance of this algorithm.
The first is to use mixed-precision arithmetic.
Specifically, both the sketch of $\mtx{A}$ and the call to deterministic QRCP on that sketch could use reduced precision.
Given that the real purpose of QRCP on the sketch is to select the block pivot indices for $\mtx{A}$, it might be that loss of accuracy in that phase does not compromise the accuracy of the larger algorithm.
%
%
The second opportunity is to call unpivoted QR on the very matrix $\mtx{A}_{\text{panel}}$ in the second phase of processing a block; if pivoting is used in the second phase then the pivots can be determined by deterministic QRCP on the $\mtx{R}$ factor from the unpivoted QR of $\mtx{A}_{\text{panel}}$.

\FloatBarrier
\subsubsection{A novel randomized algorithm for very tall matrices}\label{subsec:cholesky_qrcp}

The following algorithm overcomes the limitation of the preconditioned Cholesky QR methodology from \cite{FGL:2021:CholeskyQR} of requiring full-rank data matrices.
It does so by using a randomized preconditioner based on QRCP.

\begin{algorithm}[htb]
\setstretch{1.0}
\caption{QRCP via sketch-and-precondition and Cholesky QR.}
\label{new_notation_QRRQR}
\begin{algorithmic}[1]
\State \textbf{function} $[\mtx{Q}, \mtx{R}, J] = \code{sap\_chol\_qrcp}(\mtx{A}, d)$
\vspace{0.5pt}
\Indent
    \Statex \quad Inputs:
    \Statex \begin{quote}
    A matrix $\mtx{A} \in \R^{m \times n}$, an integer $d$ satisfying $n \leq d \ll m$
    \end{quote}\vspace{4pt}
    \Statex \quad Output:
    \Statex \begin{quote}
    Column-orthonormal $\mtx{Q} \in \R^{m \times k}$, upper-triangular $\mtx{R} \in \R^{k \times n}$, and a permutation vector $J$ of length $n$.
    \end{quote}\vspace{4pt}
    \Statex  \quad Abstract subroutines:
    \Statex \begin{quote}
        $\code{SketchOpGen}$ generates an oblivious sketching operator
    \end{quote}\vspace{4pt}
     \setstretch{1.2}
    \State $\mtx{S} = \code{SketchOpGen}(d, m)$ ~~\codecomment{$\mtx{S}$ is $d \times m$}
    \State $[\mtx{Q}^{\mathrm{sk}}, \mtx{R}^{\mathrm{sk}}, J] = \code{qrcp}(\mtx{S}\mtx{A})$\label{new_notation_qrcp} \codecomment{$\mtx{S}\mtx{A}[\fslice,J] = \mtx{Q}^{\mathrm{sk}}\mtx{R}^{\mathrm{sk}}$}
    \State $k = \rank(\mtx{R}^{\mathrm{sk}})$ \label{new_notation_k_def}
    \State $\mtx{A}^{\mathrm{pre}} = \mtx{A}[\fslice{},J[\lslice{k}]](\mtx{R}^{\mathrm{sk}}[\lslice{k},\lslice{k}])^{-1}$\label{new_notation_pre} 
    \State $[\mtx{Q}, \mtx{R}^{\mathrm{pre}}] = \texttt{chol\_qr}(\mtx{A}^{\mathrm{pre}})$\label{new_notation_pre_decomp}
    \State $\mtx{R} = \mtx{R}^{\mathrm{pre}}\mtx{R}^{\mathrm{sk}}[\lslice{k},\fslice{}]$\label{new_notation_r_def}
    \State \textbf{return} $\mtx{Q}$, $\mtx{R}$, $J$
\EndIndent    
\end{algorithmic}
\end{algorithm}

\begin{remark}
    This monograph was released as a technical report in November 2022.
    It has come to our attention that \cref{new_notation_QRRQR} was discovered slightly earlier by Balabanov; it is termed \code{RRRCholesyQR2} in arXiv:2210.09953:v2 \cite{Balabanov:2022:cholQR}.
\end{remark}

The following proposition states that  \cref{new_notation_QRRQR} produces correct output in exact arithmetic, under mild assumptions on $(\mtx{S},\mtx{A})$.
We prove the proposition in \cref{app:cholqrcp}.

\begin{proposition}\label{prop:chol_exactness}
 Consider the context of \cref{new_notation_QRRQR}.
If $\rank(\mtx{S}\mtx{A}) = \rank(\mtx{A})$ then  $\mtx{A}[:, J] = \mtx{Q}\mtx{R}$.
\end{proposition}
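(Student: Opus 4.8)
The plan is to trace \cref{new_notation_QRRQR} line by line and check that the returned factors multiply back to $\mtx{A}[\fslice{},J]$; the only step that is not pure bookkeeping is showing that the rank hypothesis makes $\mtx{S}$ injective on $\range(\mtx{A})$.

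First I would unpack the QRCP step. Write $\mtx{B} := \mtx{A}[\fslice{},J]$, so that line~\ref{new_notation_qrcp} gives $\mtx{S}\mtx{B} = \mtx{Q}^{\mathrm{sk}}\mtx{R}^{\mathrm{sk}}$. Since $\mtx{Q}^{\mathrm{sk}}$ has orthonormal columns, $k = \rank(\mtx{R}^{\mathrm{sk}}) = \rank(\mtx{S}\mtx{B}) = \rank(\mtx{S}\mtx{A})$, which equals $\rank(\mtx{A})$ by hypothesis. In exact arithmetic, QRCP applied to a rank-$k$ matrix produces an $\mtx{R}^{\mathrm{sk}}$ whose trailing $n-k$ rows vanish and whose leading $k\times k$ block $\mtx{R}_{11} := \mtx{R}^{\mathrm{sk}}[\lslice{k},\lslice{k}]$ is nonsingular (the standard Businger--Golub pivoting guarantee). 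Hence, with $\mtx{Q}_1 := \mtx{Q}^{\mathrm{sk}}[\fslice{},\lslice{k}]$ and $\mtx{R}_1 := \mtx{R}^{\mathrm{sk}}[\lslice{k},\fslice{}] = [\mtx{R}_{11}\ \mtx{R}_{12}]$, we have $\mtx{S}\mtx{B} = \mtx{Q}_1\mtx{R}_1$. Partitioning $\mtx{B} = [\mtx{B}_1\ \mtx{B}_2]$ with $\mtx{B}_1 = \mtx{A}[\fslice{},J[\lslice{k}]]$ gives $\mtx{S}\mtx{B}_1 = \mtx{Q}_1\mtx{R}_{11}$ and $\mtx{S}\mtx{B}_2 = \mtx{Q}_1\mtx{R}_{12}$; eliminating $\mtx{Q}_1 = (\mtx{S}\mtx{B}_1)\mtx{R}_{11}^{-1}$ yields $\mtx{S}\!\left(\mtx{B}_2 - \mtx{B}_1\mtx{R}_{11}^{-1}\mtx{R}_{12}\right) = \mtx{0}$.

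The heart of the proof is to conclude from this that $\mtx{B}_2 = \mtx{B}_1\mtx{R}_{11}^{-1}\mtx{R}_{12}$. Each column of $\mtx{B}_2 - \mtx{B}_1\mtx{R}_{11}^{-1}\mtx{R}_{12}$ lies in $\range(\mtx{A})$, since the columns of $\mtx{B}_1$ and $\mtx{B}_2$ are columns of $\mtx{A}$. Applying rank--nullity to the restriction of $\mtx{S}$ to $\range(\mtx{A})$ and using $\range(\mtx{S}\mtx{A}) = \mtx{S}(\range(\mtx{A}))$ gives $\rank(\mtx{S}\mtx{A}) = \rank(\mtx{A}) - \dim\!\left(\ker\mtx{S}\cap\range(\mtx{A})\right)$, so the hypothesis forces $\ker\mtx{S}\cap\range(\mtx{A}) = \{\vct{0}\}$, i.e.\ $\mtx{S}$ is injective on $\range(\mtx{A})$. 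Therefore $\mtx{B}_2 - \mtx{B}_1\mtx{R}_{11}^{-1}\mtx{R}_{12} = \mtx{0}$, and together with the trivial identity $\mtx{B}_1 = \mtx{B}_1\mtx{R}_{11}^{-1}\mtx{R}_{11}$ this shows $\mtx{B} = \left(\mtx{B}_1\mtx{R}_{11}^{-1}\right)\mtx{R}_1 = \mtx{A}^{\mathrm{pre}}\,\mtx{R}^{\mathrm{sk}}[\lslice{k},\fslice{}]$, where $\mtx{A}^{\mathrm{pre}}$ is exactly the matrix built in line~\ref{new_notation_pre}.

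To finish, I would check that the Cholesky-QR step is well posed: from $\mtx{S}\mtx{B}_1 = \mtx{Q}_1\mtx{R}_{11}$ with $\mtx{R}_{11}$ nonsingular we get $\rank(\mtx{B}_1) \geq \rank(\mtx{S}\mtx{B}_1) = k$, and since $\mtx{B}_1$ has only $k$ columns, $\mtx{B}_1$ — and hence $\mtx{A}^{\mathrm{pre}} = \mtx{B}_1\mtx{R}_{11}^{-1}$ — has full column rank $k$; thus $\code{chol\_qr}$ succeeds in exact arithmetic and returns $(\mtx{Q}, \mtx{R}^{\mathrm{pre}})$ with $\mtx{A}^{\mathrm{pre}} = \mtx{Q}\mtx{R}^{\mathrm{pre}}$, $\mtx{Q}$ column-orthonormal and $\mtx{R}^{\mathrm{pre}}$ upper-triangular. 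Substituting into the previous display and using line~\ref{new_notation_r_def},
\[
    \mtx{A}[\fslice{},J] = \mtx{A}^{\mathrm{pre}}\,\mtx{R}^{\mathrm{sk}}[\lslice{k},\fslice{}] = \mtx{Q}\,\mtx{R}^{\mathrm{pre}}\mtx{R}^{\mathrm{sk}}[\lslice{k},\fslice{}] = \mtx{Q}\mtx{R},
\]
which is the claim (and one may note in passing that $\mtx{R} = \mtx{R}^{\mathrm{pre}}[\mtx{R}_{11}\ \mtx{R}_{12}]$ is upper-triangular, since $\mtx{R}^{\mathrm{pre}}\mtx{R}_{11}$ is). The main obstacle, such as it is, is the injectivity argument in the third paragraph; everything else is substitution, and the one place to be careful is invoking the exact-arithmetic structure of $\code{qrcp}$ on a rank-deficient input.
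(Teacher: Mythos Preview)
Your proof is correct and follows essentially the same approach as the paper's: both hinge on the observation that $\rank(\mtx{S}\mtx{A})=\rank(\mtx{A})$ forces $\mtx{S}$ to be injective on $\range(\mtx{A})$, which lets one lift the identity $\mtx{S}\mtx{B}_2=\mtx{S}\mtx{B}_1\mtx{R}_{11}^{-1}\mtx{R}_{12}$ back to $\mtx{B}_2=\mtx{B}_1\mtx{R}_{11}^{-1}\mtx{R}_{12}$. The only organizational difference is that the paper argues column-by-column via a separate proposition proved by contradiction, whereas you handle all trailing columns at once and establish injectivity directly by rank--nullity; your version is slightly more streamlined, and you also make explicit the check that $\mtx{A}^{\mathrm{pre}}$ has full column rank so that \code{chol\_qr} is well posed.
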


A practical implementation of \cref{new_notation_QRRQR} would need to consider aspects of finite-precision arithmetic.
One such aspect is that we cannot use the exact rank for $\mtx{R}^{\mathrm{sk}}$ on Line \ref{new_notation_k_def}.
Instead, some tolerance-based scheme would be needed. 

In analyzing the behavior of this algorithm our main concern is the condition number of $\mtx{A}^{\mathrm{pre}}$.
Indeed, if that matrix is not well-conditioned, then the factor $\mtx{Q}$ from Cholesky QR may not be orthonormal to machine precision. 
More generally, if $\cond(\mtx{A}^{\mathrm{pre}}) \geq \epsilon^{-1/2}$ (where $\epsilon$ is the working precision), then it is possible for Cholesky QR to fail outright.

Our next proposition says that if $\mtx{A}^{\mathrm{pre}}$ is formed in exact arithmetic then its condition number depends on neither the conditioning of $\mtx{A}$ nor that of $\mtx{A}^{\mathrm{sk}}$. 
Therefore if the distribution of the sketching operator is chosen judiciously, then the algorithm should return an accurate decomposition with extremely high probability.

\begin{proposition}\label{prop:chol_independence}
    Consider the context of \cref{new_notation_QRRQR} and let $\mtx{U}$ be an orthonormal basis for the range of $\mtx{A}$.
    If $\rank(\mtx{S}\mtx{A}) = \rank(\mtx{A})$, then the singular values of $\mtx{A}^{\mathrm{pre}}$ are the reciprocals of the singular values of $\mtx{SU}$. 
\end{proposition}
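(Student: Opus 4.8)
The plan is to recognize the map $\mtx{A}\mapsto\mtx{A}^{\mathrm{pre}}$ computed by \cref{new_notation_QRRQR} as an instance of sketched orthogonalization, invoke \cref{prop:left_sketch_precond}, and then read the singular-value identity directly off the structure of the algorithm. Throughout, set $k=\rank(\mtx{A})$, which equals $\rank(\mtx{S}\mtx{A})$ by hypothesis and is the value assigned on Line~\ref{new_notation_k_def}. Since $\mtx{S}\mtx{A}$ has rank $k$, a rank-revealing QRCP of $\mtx{S}\mtx{A}$ produces $\mtx{S}\mtx{A}[\fslice,J]=\mtx{Q}^{\mathrm{sk}}\mtx{R}^{\mathrm{sk}}$ with $\mtx{Q}^{\mathrm{sk}}\in\R^{d\times k}$ column-orthonormal, $\mtx{R}^{\mathrm{sk}}\in\R^{k\times n}$ having invertible upper-triangular leading block $\mtx{R}^{\mathrm{sk}}[\lslice{k},\lslice{k}]$, and $\range(\mtx{Q}^{\mathrm{sk}})=\range(\mtx{S}\mtx{A})$ (the latter because $\mtx{R}^{\mathrm{sk}}$ has full row rank).

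The first key step is the identity $\mtx{S}\mtx{A}^{\mathrm{pre}}=\mtx{Q}^{\mathrm{sk}}$. This is immediate once one notes that the first $k$ columns of $\mtx{S}\mtx{A}[\fslice,J]$ equal $\mtx{Q}^{\mathrm{sk}}\mtx{R}^{\mathrm{sk}}[\lslice{k},\lslice{k}]$, so that by Line~\ref{new_notation_pre},
\[
  \mtx{S}\mtx{A}^{\mathrm{pre}}=\mtx{S}\mtx{A}[\fslice,J[\lslice{k}]]\,(\mtx{R}^{\mathrm{sk}}[\lslice{k},\lslice{k}])^{-1}=\mtx{Q}^{\mathrm{sk}}.
\]
Writing $\mtx{A}^{\mathrm{pre}}=\mtx{A}\mtx{M}$ for the $n\times k$ matrix $\mtx{M}$ that selects columns $J[\lslice{k}]$ and right-multiplies by $(\mtx{R}^{\mathrm{sk}}[\lslice{k},\lslice{k}])^{-1}$, this says $\mtx{S}\mtx{A}\mtx{M}=\mtx{Q}^{\mathrm{sk}}$ is an orthonormal basis of $\range(\mtx{S}\mtx{A})$; together with $\rank(\mtx{S}\mtx{A})=\rank(\mtx{A})$, this means $\mtx{M}$ is a valid preconditioner for $\mtx{A}$ in the sense of \cref{subsec:precond_gen}, so \cref{prop:left_sketch_precond} applies. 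To keep the statement about singular values self-contained, I would also argue directly as follows. From $\rank(\mtx{S}\mtx{A}[\fslice,J[\lslice{k}]])=\rank(\mtx{Q}^{\mathrm{sk}}\mtx{R}^{\mathrm{sk}}[\lslice{k},\lslice{k}])=k$ and $\rank(\mtx{A})=k$ one gets $\rank(\mtx{A}[\fslice,J[\lslice{k}]])=k$, hence $\range(\mtx{A}[\fslice,J[\lslice{k}]])=\range(\mtx{A})$; right-multiplying by the invertible $(\mtx{R}^{\mathrm{sk}}[\lslice{k},\lslice{k}])^{-1}$ preserves the range, so $\mtx{A}^{\mathrm{pre}}$ has rank $k$ and $\range(\mtx{A}^{\mathrm{pre}})=\range(\mtx{A})=\range(\mtx{U})$. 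Therefore $\mtx{A}^{\mathrm{pre}}=\mtx{U}\mtx{W}$ with $\mtx{W}=\mtx{U}^{\trans}\mtx{A}^{\mathrm{pre}}\in\R^{k\times k}$ invertible, and since $\mtx{U}$ has orthonormal columns, $(\mtx{A}^{\mathrm{pre}})^{\trans}\mtx{A}^{\mathrm{pre}}=\mtx{W}^{\trans}\mtx{W}$, so $\sigma_i(\mtx{A}^{\mathrm{pre}})=\sigma_i(\mtx{W})$ for every $i$.

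The last step combines the two observations. Put $\mtx{G}=(\mtx{S}\mtx{U})^{\trans}(\mtx{S}\mtx{U})$; orthonormality of $\mtx{S}\mtx{A}^{\mathrm{pre}}=\mtx{S}\mtx{U}\mtx{W}$ gives $\mtx{W}^{\trans}\mtx{G}\mtx{W}=\mtx{I}_k$, hence $\mtx{G}=(\mtx{W}\mtx{W}^{\trans})^{-1}$. The eigenvalues of $\mtx{G}$ are the squared singular values of $\mtx{S}\mtx{U}$, and the eigenvalues of $\mtx{W}\mtx{W}^{\trans}$ are the squared singular values of $\mtx{W}$, hence of $\mtx{A}^{\mathrm{pre}}$; since $\mtx{G}$ is the inverse of $\mtx{W}\mtx{W}^{\trans}$, the two spectra are reciprocal. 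In particular $\mtx{S}\mtx{U}$ has full column rank $k$ (so all its singular values are positive and the reciprocals are meaningful), and ordering singular values decreasingly yields $\sigma_i(\mtx{A}^{\mathrm{pre}})=1/\sigma_{k+1-i}(\mtx{S}\mtx{U})$; equivalently, the multiset of singular values of $\mtx{A}^{\mathrm{pre}}$ is the multiset of reciprocals of the singular values of $\mtx{S}\mtx{U}$, which is the claim. The only place that calls for care is the rank-deficient bookkeeping in the first paragraph — justifying that a QRCP of the rank-$k$ matrix $\mtx{S}\mtx{A}$ may be taken with $\mtx{Q}^{\mathrm{sk}}$ having exactly $k$ orthonormal columns and $\mtx{R}^{\mathrm{sk}}[\lslice{k},\lslice{k}]$ invertible, which is precisely what a rank-revealing pivoting rule on a rank-$k$ input guarantees and what the algorithm implicitly assumes when it forms $(\mtx{R}^{\mathrm{sk}}[\lslice{k},\lslice{k}])^{-1}$ on Line~\ref{new_notation_pre}. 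Everything else is routine manipulation of Gram matrices and orthonormal factors.
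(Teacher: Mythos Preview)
Your proof is correct and takes essentially the same approach as the paper, which simply remarks that the result follows from \cref{prop:left_sketch_precond} and omits the details. You verify that $\mtx{M}$ (the column-selection-then-triangular-solve map) is a valid preconditioner for $\mtx{A}$ by checking $\mtx{S}\mtx{A}\mtx{M}=\mtx{Q}^{\mathrm{sk}}$, which is exactly what is needed to invoke that proposition; your additional self-contained argument via $\mtx{A}^{\mathrm{pre}}=\mtx{U}\mtx{W}$ and the Gram-matrix identity $\mtx{W}^{\trans}(\mtx{S}\mtx{U})^{\trans}(\mtx{S}\mtx{U})\mtx{W}=\mtx{I}_k$ is a clean direct route that bypasses the full machinery of \cref{lem:tool_for_pinv_M} used in the appendix proof of \cref{prop:left_sketch_precond}.
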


\cref{prop:chol_independence} follows easily from \cref{prop:left_sketch_precond}; we omit a formal proof.

\paragraph{Application to matrices with any aspect ratio.}
Although Cholesky QR only applies to very tall matrices, one could apply it to any $m \times n$ matrix $\mtx{A}$ (with $m \geq n$) by processing the matrix in blocks.

In fact, it would be natural to use Cholesky QR as the subroutine for processing a block of columns of $\mtx{A}$ in HQRRP.
Since each iteration of HQRRP performs QRCP on a sketch of $\mtx{A}$, the triangular factor from that run of QRCP can be used as the preconditioner in processing the subsequent panel of $\mtx{A}$.
However, there is a complication in this approach.
\begin{quote}
    HQRRP's update rule for $\mtx{A}$ requires that each panel's orthogonal factor is represented as a composition of $b$ Householder reflectors, where each reflector is $m \times m$.
    By contrast, Cholesky QR only returns an explicit $m \times b$ column-orthonormal matrix $\mtx{Q}$.
\end{quote}
This issue can be resolved by using a method to restore the full Householder representation of the explicit column-orthonormal matrix $\mtx{Q}$.
In \LAPACK{}, this is done with \code{sorhr\_col}, which amounts to unpivoted LU factorization.
While pairing Cholesky QR with \code{sorhr\_col} will reduce its speed benefit, it may still be faster than Householder QR (\code{GEQRF}) and Tall-and-Skinny QR (\code{GEQR}) in certain settings.
Detailed analysis of and benchmarks for this method are forthcoming. 

\FloatBarrier

\subsection{UTV, URV, and QLP decompositions}\label{subsec:UTV_URV_QLP}

If QRCP cannot be relied upon to provide an adequate surrogate for the SVD, then one can consider decompositions of the form
\[
    \mtx{A} = \mtx{U}\mtx{T}\mtx{V}^{\trans},
\]
where $\mtx{U}, \mtx{V}$ are column-orthogonal and $\mtx{T}$ is square and triangular.
This recovers the SVD when $\mtx{T}$ is the diagonal matrix of singular values of $\mtx{A}$.
It also recovers QRCP when $\mtx{V}$ is a permutation matrix.
These decompositions were first meaningfully studied by Stewart \cite{Stewart:1992:updating_URV,Stewart:1993:SIMAX:ULV,Stewart:1999:QLP}.
They are known by various names, including \textit{UTV}, \textit{URV}, and \textit{QLP}.
We have a slight preference for the name ``UTV'' for aesthetic reasons.

\subsubsection{Deterministic algorithms}

 Stewart's best-known algorithm for UTV (see \cite{Stewart:1999:QLP}) is as follows.
\begin{enumerate}
    \item Run QRCP on the original matrix: $\mtx{A} = \mtx{Q}_1\mtx{R}_1(\mtx{P}_1)^{\trans}$.
    \item Run QRCP on $(\mtx{R}_1)^{\trans}$, to obtain $\mtx{R}_1 = \mtx{P}_2(\mtx{R}_2)^{\trans}(\mtx{Q}_2)^{\trans}$.
    \item Grouping terms, we find the factors
    \[
        \mtx{A} = \big(\underbrace{\mtx{Q}_1\mtx{P}_2}_{ \mtx{U}}\big)\underbrace{(\mtx{R}_2)^{\trans}}_{\mtx{T}}\big(\underbrace{\mtx{P}_1\mtx{Q}_2}_{\mtx{V}}\big)^{\trans}.
    \]
    Note in particular that $\mtx{T}$ is \textit{lower} triangular.
\end{enumerate}
Assuming the standard pivoting scheme is used in the second call to QRCP, one can be certain that the diagonal entries of $\mtx{T}$ are in decreasing order: $T_{ii} \geq T_{jj}$ for $j \leq i$.
Numerical experiments show that the diagonal of $\mtx{T}$ can track the singular values of $\mtx{A}$ much better than the diagonal of $\mtx{R}_1$ (see, e.g., \cite[\S 3]{Stewart:1999:QLP}).
One can find intuition for this by considering the similarities between the successive calls to QRCP with the successive calls to QR in the well-known \textit{QR iteration}.
In \cite{FHH:1999:UTV}, Stewart's UTV algorithm is even described as ``half a QR iteration.''
Remarkably, this algorithm can be modified to interleave the computation of $\mtx{R}_1$ with factoring $\mtx{R}_1$ \cite[\S 5]{Stewart:1999:QLP}.
The resulting method, like QRCP, can be stopped early at a specified rank or once some accuracy metric is satisfied.

\paragraph{Complete Orthogonal Decomposition}

There is a notion of a UTV decomposition that is not the SVD, not QRCP, and yet predates Stewart's UTV by several decades.
It is called the \textit{complete orthogonal decomposition} (COD), and it is computed by one call to QRCP followed by one call to unpivoted QR \cite{HL:1969:COD}. 
The main use of a COD is to facilitate the application of a pseudoinverse $\mtx{A}^{\dagger}$ when $\mtx{A}$ is rank-deficient.
We note that this is only modestly in line with the ``spirit'' of UTV, which asks for a decomposition that can be used as a surrogate for the SVD more generally.
Still, the COD does have some historical importance in the development of randomized UTV algorithms.

\subsubsection{Randomized algorithms}

The first randomized algorithm for UTV was described in \cite[\S 5]{DDH:2007:URV}.
It used a random orthogonal transformation as a preconditioner for computing a COD, which made it safe to replace the usual call to QRCP with a call to unpivoted QR.
This approach does not produce good surrogates for the SVD on its own,
however, it has since been extended with power iteration ideas through the \textit{PowerURV} algorithm \cite[\S 3]{GM:2018:URV}.\footnote{
    We note that the authors of \cite{DDH:2007:URV} were not trying to develop a randomized algorithm for its own sake.
    Rather, they used randomization as a tool to reduce many linear algebra problems to a format amenable to recursive unpivoted QR, which can be accelerated by black-box fast matrix multiplication methods.
}
PowerURV is able to obtain better approximations of the SVD than Stewart's UTV without using any pivoted QR decompositions.

Much of the value in Stewart's algorithm for UTV is its ability to compute the decomposition incrementally.
The earliest randomized algorithm for UTV that enjoys this capability is given in \cite[Figure~4]{MQH:2019:URV}.
Qualitatively, this algorithm can be thought of as extending the ideas of HQRRP without relying on HQRRP as a black box.
In a historical context, it is notable because it is the first full-rank UTV algorithm to use sketching (i.e., random dimension reduction) rather than random rotations.

As we wrap up the discussion on this topic, we note that one can trivially incorporate randomization into Stewart's UTV by using HQRRP for the requisite QRCP calls.
There would be a downside to this approach in that the diagonal entries of $\mtx{T}$ would not be guaranteed to decrease across block boundaries.
However, that downside could be circumvented by using HQRRP for the initial QRCP of $\mtx{A}$ and then using a standard QRCP algorithm (e.g., \LAPACK{}'s \code{GEQP3}) for the QRCP of $(\mtx{R}_1)^{\trans}$.
The speedup of such an approach over Stewart's UTV would be fundamentally limited, but it should still be observable for $n \times n$ matrices even when $n$ is 
as small as 
a few thousand. 

\section{Solving unstructured linear systems}\label{subsec:general_linear_systems}


Two broad methodologies have emerged for incorporating randomization into general linear solvers.
The first aims to ameliorate the cost of common safeguards that are applied to fast but potentially unreliable direct methods.
The second aims to restructure computations in existing general-purpose iterative methods.
There is generally more excitement in the community for methods of this second kind, but methods of the first kind remain a subject of practical interest.

\subsection{Direct methods}\label{subsec:factor_linsys}

Direct methods for solving systems of linear equations center on finding a factored representation of the system matrix.
Most famously, we have the \textit{LU decomposition} of a general $n \times n$ matrix, which takes the form
\[
    \mtx{A} = \mtx{L}\mtx{U}
\]
for a lower-triangular matrix $\mtx{L}$ with unit diagonal ($L_{ii} = 1$ for all $i$) and an upper-triangular matrix $\mtx{U}$.
For Hermitian matrices, there is the \textit{\LDLt{} decomposition}
%
%
\[
    \mtx{A} = \mtx{L}\mtx{D}\mtx{L}^{\trans} ,
\]
where $\mtx{L}$ is unit lower-triangular and $\mtx{D}$ is block diagonal with blocks of size one and~two.
%

These are some of the most fundamental matrix decompositions.
Once in hand, they can be used to solve linear systems involving $\mtx{A}$ in $O(n^2)$ operations.
The standard methods for their computation exhibit good data locality and are naturally adapted to parallel processing environments.
However, these decompositions should be used cautiously; there are some nonsingular matrices for which they do not exist, or for which they cannot be computed stably in finite precision arithmetic.
Therefore these decompositions need to be carefully modified to ensure reliability without sacrificing too much speed.


\subsubsection{Stability through randomized pivoting}\label{subsec:fullrank_LU_pivoting}

Pivoting is the standard paradigm to modify LU and \LDLt{} for improved numerical stability.
For LU, we have partial pivoting and complete pivoting, which look like
\begin{equation}\label{eq:LU_pivoting}
    \mtx{P}\mtx{A} = \mtx{L}\mtx{U} \qquad\text{and}\qquad \mtx{P}_1\mtx{A}\mtx{P}_2 = \mtx{L}\mtx{U}
\end{equation}
respectively, where $\mtx{P},\mtx{P}_1,\mtx{P}_2$ are permutation matrices.

The standard algorithms for computing these decompositions are  Gaussian elimination with partial pivoting (GEPP) and Gaussian elimination with complete pivoting (GECP).
While GEPP is substantially faster than GECP, it has weaker theoretical guarantees than GECP when it comes to numerical behavior.
In \cite{MG:2015:LU}, Melgaard and Gu propose a randomized algorithm for partially pivoted LU that makes pivoting decisions in a manner similar to HQRRP (see page \pageref{subsec:fullrank_decomp:qrcp}). 
The randomized algorithm achieves efficiency comparable to that of GEPP, while also satisfying GECP-like element-growth bounds with high probability.

For \LDLt{}, pivoted decompositions take the form
\begin{equation}\label{eq:LDLt_pivoting}
    \mtx{A} = (\mtx{P}\mtx{L})\mtx{D}(\mtx{P}\mtx{L})^{\trans},
\end{equation}
where (again) $\mtx{D}$ is block-diagonal with blocks of size one and two.
%
There are a variety of ways to introduce pivoting into \LDLt{} decompositions.
The most notable are Bunch--Kaufman \cite{BK:1977:pivoted_LDLt} and bounded Bunch--Kaufman (which uses rook pivoting) \cite{AGL:1998:pivoted_LDLt}, both of which are available in \LAPACK{}.
In \cite{FXG:2018:symindef}, Feng, Xiao, and Gu propose a randomized algorithm for pivoted \LDLt{} that is as stable as GECP and yet only slightly slower than Bunch--Kaufman and bounded Bunch--Kaufman.
%
%

\subsubsection{Stability through randomized rotations}\label{subsec:fullrank_LU_rotating}

In \cref{subsec:UTV_URV_QLP}, we mentioned how the first randomized algorithm for UTV used randomized preconditioning to compute a COD-like factorization using only \textit{unpivoted} QR decompositions.
This was not the first use of randomization to remove the need for pivoting in matrix decompositions.
In fact, this idea was explored by Parker in 1995 to remove the need for pivoting in Gaussian elimination \cite{Parker:1995:butterfly}.
Here we summarize Parker's approach.

We begin by introducing some terms.
For an integer $d \geq 1$, a \textit{butterfly matrix} of size $2d \times 2d$ is a two-by-two block matrix, with $d \times d$ diagonal matrices in each of the four blocks.
Speaking loosely, a \textit{recursive butterfly transformation} (RBT) is a product of a chain of matrices, each with butterfly matrices as diagonal blocks.
RBTs of order $n$ (i.e., RBTs of size $n \times n$) are usually analyzed when $n$ is a power of two for the sake of simplicity.
The recursive structure in RBTs makes it possible to apply them with FFT-like methods.
In particular, an RBT of order $n = 2^\ell$ can be applied to an $n$-vector in $O(n \ell)$ time.
Detailed discussion on RBTs of general order can be found in \cite{Peca-Medlin:2021:random_RBTs}.

We are interested in RBTs that are \textit{orthogonal} and \textit{random}.
The orthogonality is useful since it means the same FFT-like algorithms used to apply an RBT can be used to apply its inverse.
The randomness in orthogonal RBT stems from how one chooses the entries in the diagonal matrices.
While there are a variety of ways that this can be done \cite{Parker:1995:butterfly}, we simply speak in terms of a distribution $\mathcal{D}_n$ over orthogonal RBTs of order $n$.

One of the major contributions of \cite{Parker:1995:butterfly} was to prove that for any nonsingular matrix $\mtx{A}$ of order $n$, one can sample $\mtx{B}_1, \mtx{B}_2$ iid from a certain distribution $\mathcal{D}_n$, so that matrix $\mtx{B}_1\mtx{A}\mtx{B}_2$ has an unpivoted LU decomposition with high probability.
Put another way, the decomposition
\[
    \mtx{A} = (\mtx{B}_1)^{\trans}\mtx{L}\mtx{U}(\mtx{B}_2)^{\trans}
\]
exists with high probability.

The high speed at which RBTs can be applied and the excellent data locality properties of unpivoted matrix decompositions have led to substantial interest in RBTs from the HPC community.
For example, implementation considerations for hybrid CPU/GPU machines were studied in \cite{BDHT:2013:butterfly} (in the single-node setting) and \cite{LLD:2020:butterfly} (in the distributed setting).

The idea of using RBTs to precondition an ``unsafe'' unpivoted method naturally applies to \LDLt{}.
In this case, one obtains factorizations of the form
\[
    \mtx{A} = (\mtx{B}\mtx{L})\mtx{D}(\mtx{B}\mtx{L})^{\trans}
\]
where $\mtx{B}$ is the random RBT.
Again, this methodology has received recent attention from the HPC community; see \cite{BBBDD:2014:butterfly} for work in the multi-core distributed-memory setting \cite{BBBDD:2014:butterfly} and \cite{BDRTY:2017:butterfly} for work in the setting of a single machine with a hybrid CPU/GPU architecture.

Remarkably, although the idea of RBTs seems predicated on destroying sparsity structure present in the matrix $\mtx{A}$, the random RBT methodology can be applied to sparse matrices without catastrophic fill-in.
See \cite{BLR:2014:butterfly} for work on this topic for both general matrices and symmetric/Hermitian indefinite matrices.

\subsection{Iterative methods}\label{subsec:linsys_iterative}

\subsubsection{Background on GMRES}
GMRES is a well-known iterative method for solving linear systems of the form $\mtx{A}\vct{x} = \vct{b}$ where $\mtx{A}$ is $n \times n$ and nonsingular.
The trajectory $(\vct{x}_p)_{p \geq 1}$ it generates has a simple variational characterization.
Specifically, $\vct{x}_p$ minimizes $L(\vct{x}) = \|\mtx{A}\vct{x} - \vct{b}\|_2^2$ over all vectors $\vct{x}$ in the $p$-dimensional \textit{Krylov subspace}
\begin{equation}\label{eq:def:krylov_subspace}
    K_p = \Span{\left\{\vct{b},\,\mtx{A}\vct{b},\,\ldots,\,\mtx{A}^{p-1}\vct{b}\right\}}.
\end{equation}
The standard implementation of GMRES uses the Arnoldi process.
This can be seen as a specialization of (modified) Gram--Schmidt to orthogonalize implicitly-defined matrices of the form 
$\mtx{K}_p = [\vct{b},\,\mtx{A}\vct{b},\,\ldots,\,\mtx{A}^{p-1}\vct{b}]$.
In particular, as iterations proceed, the Arnoldi process maintains a column-orthonormal matrix $\mtx{V}_p$ where $\range(\mtx{V}_p) = K_p$.
Optionally, it can also maintain an \textit{Arnoldi decomposition}, which represents $\mtx{A}\mtx{V}_p = \mtx{V}_{p+1}\mtx{H}_{p}$ in terms of an $n \times (p+1)$ column-orthonormal matrix $\mtx{V}_{p+1}$ and a $(p+1) \times p$ upper-Hessenberg matrix $\mtx{H}_p$.\footnote{A matrix is called upper-Hessenberg if all entries below the first subdiagonal are zero.}

Letting $T_{\text{mv}}(\mtx{A})$ denote the cost of a matrix-vector multiply with $\mtx{A}$, the Arnoldi decomposition up to step $p$ can be computed in time
\[
    O(p T_{\text{mv}}(\mtx{A}) + n p^2).
\]
If we are given this decomposition, then the least squares problem defining $\vct{x}_p$ can be solved in $O(np)$ time by applying a suitable direct method.
Strictly speaking, one does not need to compute $\vct{x}_{p-1}$ to compute $\vct{x}_p$.

We summarize some ways to introduce randomness into GMRES below.
They all work by relaxing the requirement that $\mtx{V}_p$ be column-orthonormal while retaining the requirement that $\range(\mtx{V}_p) = K_p$.
Some of them work by changing the loss function $L(\vct{x})$ to be minimized by $\vct{x}_p$.
These methods are of interest when the cost of the matrix-vector multiplies is dwarfed by the complexity of maintaining the Arnoldi decomposition.
We note that this situation can only arise when $\mtx{A}$ is a sparse or otherwise structured operator.

\subsubsection{Randomized GMRES: Arnoldi decompositions in a sketch-orthogonal basis}
The method from \cite[\S~4.2]{BG:2021:GramSchmidt} can be interpreted as using a ``sketched Arnoldi process'' based on sketched Gram--Schmidt.
It works by building up $\mtx{V}_p$ so that its columns are $\mtx{S}$-orthogonal in the sense of \eqref{eq:sketched_inner_product},
where $\mtx{S}$ is a $d \times n$ sketching operator ($p \lesssim d \ll n$).
Along the way, it maintains an \textit{Arnoldi-like decomposition} $\mtx{A}\mtx{V}_p = \mtx{V}_{p+1}\mtx{H}_p$, where $\mtx{V}_{p+1}$ is likewise $\mtx{S}$-orthogonal.
Access to this decomposition at step $p$ makes it possible to minimize the loss function $\|\mtx{S}(\mtx{A}\vct{x} - \vct{b})\|_2^2$ over all $\vct{x}$ in $K_p$ in only $O(np)$ added time.

To understand the quality of the solution obtained by this method it is helpful to consider the unconstrained formulation
\begin{equation}\label{eq:krylov_lstsq_subprob}
    \min_{\vct{z}}\|\mtx{A}\mtx{V}_p\vct{z} - \vct{b}\|_2^2.
\end{equation}
GMRES would return $\vct{x}_\star = \mtx{V}_p\vct{z}_\star$ where $\vct{z}_\star$ solves \eqref{eq:krylov_lstsq_subprob} exactly.
The sketched Arnoldi approach effectively approximates this solution by applying sketch-and-solve to \eqref{eq:krylov_lstsq_subprob}.
This puts us in a position to draw from our coverage of sketch-and-solve in \cref{subsubsec:sketch_and_solve}.
If $\delta$ is the effective distortion of $\mtx{S}$ for the subspace $K_{p+1}$, then the solution $\vct{x}_{\text{sk}}$ obtained by the sketched Arnoldi approach will satisfy $\|\mtx{A}\vct{x}_{\text{sk}} - \vct{b}\|_2 \leq (1+\delta)\|\mtx{A}\vct{x}_\star - \vct{b}\|_2$.

The big-$O$ time complexity of the sketched Arnoldi process is unchanged relative to the classic Arnoldi process.
However, the flop count for the sketched process can be up to a factor of two smaller.
The sketched process also makes better use of BLAS 2 over BLAS 1, and it has fewer synchronization points compared to the classic Arnoldi process based on modified Gram--Schmidt.
Taken together, using the sketched process can significantly reduce the wallclock time needed to obtain the decomposition of $\mtx{A}\mtx{V}_p$ while retaining the reliability of classic Arnoldi.

We note that a block version of this algorithm (for linear systems with multiple right-hand sides) is presented in the preprint \cite{BG:2021:Randomized_Block_Gram_Schmidt}.
For MATLAB implementations of these methods, see \cite{Balabanov:2022:randKylov}.

\subsubsection{Randomized GMRES: handling general non-orthogonal bases}
Both classic GMRES and the randomized variant given above maintain Arnoldi-like decompositions of matrices $\mtx{A}\mtx{V}_p$ at a cost of $O(np^2)$ time complexity.
Interestingly, this cost cannot be asymptotically reduced by forgoing the decomposition of $\mtx{A}\mtx{V}_p$.
The trouble is that building $\mtx{V}_p$ with full orthogonalization -- in the standard sense or the $\mtx{S}$-orthogonal sense -- already takes $O(np^2)$ time.

In \cite{NT:2021:krylov}, Nakatsukasa and Tropp identified that \eqref{eq:krylov_lstsq_subprob} has precisely the form needed to benefit from randomized algorithms, independent from how $\mtx{V}_p$ and $\mtx{A}\mtx{V}_p$ are generated.
Based on this observation they called attention to longstanding classical methods for computing non-orthogonal bases of Krylov subspaces.
For example, one can compute $\mtx{V}_p$ by a truncated $k$-step Arnoldi process for some $k \ll p$.
This can be done in $O(npk)$ time and can easily be implemented to provide a dense representation of $\mtx{A}\mtx{V}_p$ at no added cost.
Alternatively, it may be practical to use the Chebyshev method if one has knowledge of the spectrum of $\mtx{A}$.

\cite{NT:2021:krylov} primarily advocates for approximately solving \eqref{eq:krylov_lstsq_subprob} via sketch-and-solve, where the sketched subproblem is handled by factoring $\mtx{S}\mtx{A}\mtx{V}_p$.
Note that in exact arithmetic the solutions obtained from this method would coincide with those of GMRES based on sketched Arnoldi.
On the one hand this is very appealing, since the cost of running this method for $p$ iterations can \textit{easily} undercut the $O(np^2)$ cost of sketched Arnoldi.
On the other hand, the behavior of these methods can differ in finite-precision arithmetic.
If one is too lax in building the basis matrix $\mtx{V}_p$ then the condition number of $\mtx{A}\mtx{V}_p$ can explode as $p$ increases.

All in all, the design space for this methodology is large and worth navigating with care.
Valuable advice in this regard is given throughout \cite[\S 3 -- \S 5]{NT:2021:krylov}.
One particularly compelling comment is that one could simply solve \eqref{eq:krylov_lstsq_subprob} to high accuracy via a sketch-and-precondition method, such as \cref{alg:ols_orth_sap}.
The resulting solution in this case would be very close to that produced by GMRES.

\subsubsection{Nested randomization in block-projection and block-descent methods}

Having discussed GMRES at length, we now speak to a family of iterative solvers that do not use the Krylov subspace approach.

This family came into focus with the development of \textit{sketch-and-project} -- a template iterative algorithm for solving linear systems of the form $\mtx{F}\vct{z} = \vct{g}$, where $\mtx{F} \in \R^{M \times m}$ has at least as many rows as columns ($M \geq m$) \cite{GR:2015:iterative}.
%
%
Its special cases include randomized Kaczmarz \cite{SV:2008} and randomized block Kaczmarz \cite{NT:2014}.
It also has variants that are specifically designed for overdetermined least squares problems \cite{GIG:2021:RidgeSketch}.

Without getting into the mechanics of sketch-and-project in detail, we note that these methods share a significant weakness: their convergence rates worsen as one considers larger and larger problems.
We think they are most likely to be useful when one cannot fit an $m \times m$ matrix in memory.
While such situations fall outside our primary data model, the \textit{subproblems} encountered in sketch-and-project are amenable to methods we have covered.
Indeed, the subproblems are equivalent to problems of the form 
\begin{equation}
\min_{\vct{y} \in \R^m}\{ \|\vct{y} - \vct{b} \|_2^2 \,:\, \mtx{A}^{\trans}\vct{y} = \vct{c} \}, \tag{\eqref{eq:saddle_opt_y}, revisited}
\end{equation}
where the number of columns
$n$ in $\mtx{A}$ is a user-selected tuning parameter $n \ll m \leq M$.
Such problems are clearly amenable to \cref{alg:saddle_to_ols_sap}.

Recently, a general analysis framework for randomized linear system solvers based on block projection or block descent has been proposed~\cite{PJM:2022:randBlockLinearSys}.
We refer the reader to Table 3 of~\cite{PJM:2022:randBlockLinearSys} (and appendices A.15 -- A.26) for an extensive list of new and old randomized linear system solvers that are amenable to their proposed analysis framework.
Some of these methods are distinguished in their applicability to underdetermined problems.
As with sketch-and-project, the subproblems encountered in essentially all of these methods can be chosen to have a structure amenable to \cref{alg:saddle_to_ols_sap}.

\section{Trace estimation}
\label{sec:trace_estimation}

Many scientific computing and machine learning applications require estimating the trace of a square linear operator $\mtx{A}$ that is represented implicitly.
Randomized methods are especially effective for such problems.


\subsection{Trace estimation by sampling}\label{subsec:Girard_Hutchinson}
Let $\mtx{A}$ be $n \times n$ and $\{\vct{e}_1,\ldots,\vct{e}_n\}$ be the standard basis vectors in $\R^n$.
Clearly, one can compute the trace of $\mtx{A}$ with $n$ matrix-vector products by using the identity
\[
\trace(\mtx{A}) = \sum_{i=1}^n \vct{e}_i^{\trans}\mtx{A}\vct{e}_i.
\]
Randomization creates opportunities to estimate this quantity using $m \ll n$ matrix-vector multiplications.
The most basic method uses the fact that if $\vct{\omega} \sim \mathcal{D}$ is a random vector satisfying $\E[\vct{\omega}\vct{\omega}^{\trans}] = \mtx{I}_n$, then
\[
\trace(\mtx{A}) = \E\left[\vct{\omega}^{\trans}\mtx{A}\vct{\omega}\right].
\]
It is natural to approximate the expected value by the empirical mean.
That is, upon drawing $m$ independent vectors $\vct{\omega}_i \sim \mathcal{D}$, we estimate
\begin{equation}\label{eq:HutchinsonGirard_est}
    \trace(\mtx{A}) \approx \frac{1}{m}\sum_{i=1}^m \vct{\omega}_i^{\trans}\mtx{A}\vct{\omega}_i.
\end{equation}

The idea for this method goes back to 1987 with work by Girard \cite{Girard:1987:trace_est}, who proposed that $\mathcal{D}$ be the uniform distribution over the $\ell_2$ hypersphere with radius $\sqrt{n}$.
Shortly thereafter, Hutchinson proposed that one take $\mathcal{D}$ as a distribution over Rademacher random vectors \cite{Hutchinson:1990:traceEstim}.
Hutchinson's choice of $\mathcal{D}$ minimizes the variance of the estimator when $\mtx{A}$ is fixed, while Girard's choice minimizes the worst-case variance over sets of matrices that are closed under conjugation by unitary matrices; see \cite{Epperly:2023:blog} for an explanation of this point.

We call the right-hand side of \eqref{eq:HutchinsonGirard_est} a \textit{Girard--Hutchinson estimator}.
Such estimators require $m \in \Omega(1/\epsilon^2)$ samples to approximate $\trace(\mtx{A})$ to within $\epsilon$ error for some constant failure probability.

\subsection{Trace estimation with help from low-rank approximation}\label{subsec:trace_est_by_lowrank_approx}


\subsubsection{Compress and trace}

In \cite{SAI:2017:traceLogDet}, Saibaba, Alexanderian, and Ipsen propose two randomized algorithms for estimating the trace of a psd linear operator $\mtx{A}$.

When $\mtx{A}$ is accessible by matrix-vector products, the proposed method begins with a rangefinder step to find a column-orthonormal $n \times m$ matrix $\mtx{Q}$ where $\mtx{Q}\mtx{Q}^{\trans}\mtx{A}\mtx{Q}\mtx{Q}^{\trans} \approx \mtx{A}$.
The method then approximates
\[
    \trace(\mtx{A}) \approx \trace(\mtx{Q}\mtx{Q}^{\trans}\mtx{A}\mtx{Q}\mtx{Q}^{\trans}) = \trace(\mtx{Q}^{\trans}\mtx{A}\mtx{Q}).
\]
Whether or not this bound is accurate depends on the rate of $\mtx{A}$'s spectral decay and on how well-aligned $\mtx{Q}$ is with the dominant eigenvectors of $\mtx{A}$.
This method can provide for better relative error bounds than a Girard--Hutchinson estimator
if $\mtx{A}$'s spectral decay is sufficiently fast and $\mtx{Q}$ is obtained by power iteration.

Trace estimation is especially challenging when matrix-vector products with $\mtx{A}$ are expensive.
This often happens when $\mtx{A}$ is the image of another matrix $\mtx{B}$ under a matrix function, in the sense of \cite{Higham:2008:matrix_function_book}.
Saibaba~\textit{et~al}.\ consider the case where
\[
    \mtx{A} = \log(\mtx{I} + \mtx{B})
\]
for a psd matrix $\mtx{B}$.\footnote{For any positive definite matrix $\mtx{M}$, we use  $\log(\mtx{M})$ to denote the Hermitian matrix with the same eigenvectors as $\mtx{M}$ and whose eigenvalues are the logs of the eigenvalues of $\mtx{M}$. One can verify that $ \trace(\log(\mtx{M})) = \log\det\mtx{M}$ holds for any positive definite $\mtx{M}$.}
The idea here is again to find a tall $n \times m$ column-orthonormal $\mtx{Q}$ so that $\mtx{Q}\mtx{Q}^{\trans}\mtx{B}\mtx{Q}\mtx{Q}^{\trans}$ is a good low-rank approximation of $\mtx{B}$.
Then we approximate
\[
    \trace(\mtx{A}) \approx \sum_{i=1}^m \log\left(1 + \lambda_i(\mtx{Q}^{\trans}\mtx{B}\mtx{Q})\right)
\]
where $\lambda_i(\cdot)$ returns the $i^{\text{th}}$-largest eigenvalue of the given matrix.
Error bounds can be obtained for this estimate under suitable assumptions on the spectral decay of $\mtx{B}$.
We note that some of the techniques used to prove these bounds extend to any matrix function that is operator-monotone, such as the matrix square-root.

\subsubsection{Split, trace, and approximate}

In \cite{MMMW:2021:Hutchpp}, Meyer et al.~combined ideas from low-rank approximation with the Girard--Hutchinson estimator to obtain \code{Hutch++}.
This estimator starts by sampling a matrix $\mtx{Q}$ uniformly at random from the set of $n \times m$ column-orthonormal matrices.
It then defines the low-rank approximation $\mtx{\hat{A}} = \mtx{Q}\mtx{Q}^{\trans}\mtx{A}\mtx{Q}\mtx{Q}^{\trans}$ and computes the trace of this approximation by the formula
\[
    \trace(\mtx{\hat{A}}) = \trace(\mtx{Q}^{\trans}\mtx{A}\mtx{Q}).
\]
The last phase of \code{Hutch++} applies Girard--Hutchinson to the deflated matrix
\[
    \mtx{\Delta} = (\mtx{I} - \mtx{Q}\mtx{Q}^{\trans})\mtx{A}(\mtx{I} - \mtx{Q}\mtx{Q}^{\trans}),
\]
and adds this estimate to $\trace(\mtx{\hat{A}})$.

The basic validity of \code{Hutch++} follows by splitting the trace of $\mtx{A}$ into two parts:
\[
    \trace(\mtx{A}) = \trace(\mtx{\hat{A}}) + \trace(\mtx{A} - \mtx{\hat{A}})
\]
and verifying that $\trace(\mtx{A} - \mtx{\hat{A}}) = \trace(\mtx{\Delta})$.
As a splitting and deflation approach, this method is very effective in reducing the variance of the Girard--Hutchinson estimator.
Early results along these lines can be found in \cite{GSO:2017:trace_of_inv}, which investigated the use of deflation in estimating the trace of an inverted matrix.

The initial results proven for \code{Hutch++} applied only to psd matrices.
In that context, \code{Hutch++} can (with some small \textit{fixed} failure probability) compute $\trace(\mtx{A})$ to within $\epsilon$ relative error using only $O(1/\epsilon)$ matrix-vector products.
This is a substantial improvement upon the $O(1/\epsilon^2)$ matrix-vector products that are required by plain Girard--Hutchinson estimators.
In fact, the sample complexity of \code{Hutch++} cannot be improved when considering a large class of algorithms \cite[Theorems 4.1 and 4.2]{MMMW:2021:Hutchpp}.

Persson, Cortinovis, and Kressner have since extended \code{Hutch++} so that it can proceed adaptively, only terminating once some error tolerance has been achieved (up to a \textit{controllable} failure probability) \cite{PCK:2021:improvedHutch++}. 
The analysis of their modified \code{Hutch++} method notably accommodates symmetric indefinite matrices $\mtx{A}$.
We note that the accuracy guarantees of trace estimators for indefinite matrices cannot be as strong as those for positive definite matrices.
Indeed, relative error guarantees are essentially impossible when $\trace(\mtx{A}) = 0$.
Persson et al., therefore, provide additive error guarantees in this setting.

\subsubsection{Leveraging the exchangability principle}

In \cite{ETW:2023:XTrace}, Epperly, Tropp, and Webber develop a trace estimator based on the \textit{exchangability principle}.
In the context of trace estimation, this principle stipulates that if an algorithm computes its estimate based on $m$ pairs $\{(\vct{\omega}_i, \mtx{A}\vct{\omega}_i)\}_{i=1}^m$ where $\vct{\omega}_i$ are iid random vectors, then the minimum-variance unbiased estimator for $\trace(\mtx{A})$ must be invariant under relabelings $\{ \vct{\omega}_i \}_{i=1}^m \leftarrow \{ \vct{\omega}_{\sigma(i)} \}_{i=1}^m$ for permutations $\sigma$.

\code{Hutch++} does not respect the exchangability principle, since it uses randomness in two distinct stages: first to compute the matrix $\mtx{Q}$ and then to estimate the trace of the $\mtx{\Delta}$ by a a Girard--Hutchinson estimator.

The \code{XTrace} algorithm proposed in \cite{ETW:2023:XTrace} can be thought of as a symmetrized version of \code{Hutch++}.
Given $m$ samples $\{(\vct{\omega}_i, \mtx{A}\vct{\omega}_i)\}_{i=1}^m$, its estimate is an average of $m$ runs of \code{Hutch++}, where the $j^{\text{th}}$ run uses $\mtx{Q}_j = \code{orth}([\mtx{A}\vct{\omega}_i]_{i \neq j})$ and estimates $\trace(\mtx{\Delta}_j)$ by $\vct{\omega}_j^{\trans}\mtx{\Delta}_j \vct{\omega}_j$.
Implementing \code{XTrace} naively would be very expensive.
However, as explained in \cite[\S 2.1]{ETW:2023:XTrace}, a careful implementation can achieve the same asymptotic complexity as \code{Hutch++}.
\code{XTrace} also comes with adaptive-stopping and variance estimation methods analogous to those developed in \cite{PCK:2021:improvedHutch++}.

\subsection{Estimating the trace of $f(\mathrm{B})$ via integral quadrature}\label{subsec:trace_est_integral_quadrature}

\cref{subsec:trace_est_by_lowrank_approx} touched on a method for estimating the trace of $\mtx{A} = \log(\mtx{B} + \mtx{I})$, where $\mtx{B}$ is psd and $\log(\cdot)$ is the matrix logarithm.
This section covers powerful methods for a broader class of trace estimation problems.
The original method, now known as \textit{stochastic Lanczos quadrature} (SLQ), was introduced to the linear algebra community in \cite{BFG:1996:SLQ}, was popularized by \cite{UCS:2017:SLQ}, and has since extended in a few different ways 
\cite{CH:2022:trace_est_matrix_func,PK:2022:hutchpp_matrix_functions,CTU:2022:trace_est_monograph}.

To begin, consider how any function $f : \R \to \R$ can canonically be extended to act on a Hermitian matrix by acting separately on the eigenvalues of the matrix.
That is, if we expand $\mtx{B}$ in its eigenbasis
\[
    \mtx{B} = \sum_{i=1}^n \lambda_i \vct{u}_i\vct{u}_i^{\trans},
\]
then we can define
\begin{equation*}\label{eq:def_matrix_function}
    f(\mtx{B}) = \sum_{i=1}^n f(\lambda_i) \vct{u}_i\vct{u}_i^{\trans}.
\end{equation*}
Here we cover quadrature-based methods for approximating the trace of such matrices.
The concepts behind these methods apply whenever $f$ is sufficiently smooth and $\mtx{B}$ is Hermitian.
Theoretical guarantees for these methods are usually obtained under stronger assumptions, such as $f$ being analytic on $[\lambda_n, \lambda_1]$, or $\mtx{B}$ being psd.

\subsubsection{Technical background}

The concepts we summarize below are detailed in the book \cite{GM:2010:moments_matrices_quadrature}.

\paragraph{Riemann-Stieltjes integrals.}
Let $\mu$ be a real-valued function on $\R$.
The expression
\begin{equation}\label{eq:RS_integral}
    \int_{\R} f(t)\mathrm{d}\mu(t)
\end{equation}
is called the \textit{Riemann-Stieltjes integral} of $f$ against $\mu$.
We do not provide a formal definition of this integral.
Rather, we offer two footholds for understanding it.
 First, if $\mu$ is continuously differentiable, then \eqref{eq:RS_integral} is simply the Riemann integral of $t \mapsto f(t)(\tfrac{\mathrm{d}}{\mathrm{d}t}\mu(t))$.
 Second, recall the interpretation of Riemann integration in which one identifies $\mathrm{d}t \approx t_{\ell+1} - t_{\ell}$, where $t_{\ell} < t_{\ell+1}$ are consecutive points in a partition of the region of integration.
If the analogous interpretation is applied to \eqref{eq:RS_integral}, then we would say that $\mathrm{d}\mu(t) \approx \mu(t_{\ell+1}) - \mu(t_{\ell})$.

For our purposes we can assume that $\mu$ is nondecreasing.
We also assume that there are constants $L$ and $U$ where $\mu(t) = \mu(L)$ for all $t \leq L$ and $\mu(t) = \mu(U)$ for all $t \geq U$.
Under these assumptions, \eqref{eq:RS_integral} is well-defined whenever $f$ is continuous.

\paragraph{Quadrature and orthogonal polynomials.}

An \textit{$s$-point quadrature rule} for \eqref{eq:RS_integral} specifies $s$-vectors $\vct{w}$ and $\vct{\theta}$ (of weights and nodes respectively) to define an approximation
\begin{equation}\label{eq:quadrature_approx_error}
    \int_{\R} f(t) \mathrm{d}\mu(t) \approx \sum_{\ell = 1}^s w_\ell f(\theta_\ell).
\end{equation}
The nodes and weights selected by any reliable quadrature method will depend on~$\mu$.
One prominent approach to defining quadrature rules is to require that \eqref{eq:quadrature_approx_error} holds with equality whenever $f$ is a polynomial of degree $d$, where $d$ is suitably bounded in terms of $s$.
The idea behind this is that $s$ increases, we should be able to accommodate polynomials of higher degree.

\textit{Gaussian quadrature} achieves optimal sample complexity.
This method is exact for polynomials up to degree $2s - 1$, and there is no rule that can guarantee exactness for polynomials of degree higher than $2s-1$ with only $s$ samples.
There is a deep connection between Gaussian quadrature and orthogonal polynomials that make this quadrature rule viable in practice.
The connection uses the fact that, under our assumptions on $\mu$, it can be taken to define an inner product
\[
    \langle p, q \rangle_{\mu} = \int_{\R} p(t) q(t) \mathrm{d}\mu(t).
\]
This inner product can be used to define an orthonormal basis for the set of polynomials that have at most some prescribed degree.
When this orthonormal basis is sorted by degree, the resulting sequence of polynomials must satisfy a three-term recurrence relationship \cite[\S 2]{GM:2010:moments_matrices_quadrature}.
The coefficients of the recurrence relationship up to step $s$ can be assembled in a tridiagonal matrix $J$, called the Jacobi matrix, of size $s \times s$.
The nodes and weights of Gaussian quadrature against $\mu$ can be recovered from an eigendecomposition of the Jacobi matrix \cite[Theorem 6.2]{GM:2010:moments_matrices_quadrature}.

\subsubsection{Stochastic Lanczos quadrature : approximating Girard--Hutchinson}

A Girard--Hutchinson estimator for the trace of $f(\mtx{B})$ takes the form
\begin{equation*}\label{eq:hutchinson_for_matrix_func}
    T = \frac{1}{m}\sum_{i=1}^m T_i \quad\text{where}\quad T_i = \vct{\omega}_i^{\trans}f(\mtx{B})\vct{\omega}_i
\end{equation*}
for independent random vectors $\vct{\omega}_i$ drawn from a suitable distribution.
The naive way to compute this estimator would be to call a black-box function that implements the action of $f(\mtx{B})$; for each sample $i$ one would compute $\vct{v}_i = f(\mtx{B})\vct{\omega}_i$ and then take a dot product $T_i = \vct{\omega}_i^{\trans}\vct{v}_i$.
Here we describe an alternative approach which begins with an integral representation for $T_i$ and then approximates that integral via Gaussian quadrature \cite{BFG:1996:SLQ}.
The resulting method for trace estimation is now known as stochastic Lanczos quadrature (SLQ) \cite{UCS:2017:SLQ}.

\paragraph{Integral representation of a single sample.}

Let $u$ denote the piecewise constant function that is zero for $t \leq 0$ and one for $t > 0$.
This function can be used to define a Riemann-Stieltjes integral that samples $f$ at any prescribed point.
Specifically, for any scalar $z$, we have $f(z) = \int_{\R} f(t)\mathrm{d}u(t - z)$.
Therefore upon setting
\begin{equation}\label{eq:quadratic_estimator_measure}
    \mu_i(t) = \sum_{j=1}^n \left|\vct{\omega}_i^{\trans}\vct{u}_j\right|^2 u(t - \lambda_j),
\end{equation}
the following identity is immediate from the definition of $f(\mtx{B})$:
\begin{equation}\label{eq:quadratic_estimator_as_integral}
    T_i = \int_{\R} f(t) \mathrm{d}\mu_i(t).
\end{equation}
We note that this integral is written as being over all of $\R$, but it would suffice to integrate over the interval $[\lambda_n, \lambda_1]$.

\paragraph{Quadrature of a sample's integral representation.}
Integration is often described as a continuous analog of summation.
As such, one usually thinks of quadrature as an act of approximating a continuous operation by a discrete operation.
Quadrature of Riemann-Stieltjes integrals does not always follow this pattern.
Indeed, the integral \eqref{eq:quadratic_estimator_as_integral} can already be expressed as a weighted sum of $s = n$ point evaluations of $f$, with weights $w_{i\ell} = |\vct{\omega}_{i}^{\trans}\vct{u}_{\ell}|^2$ and nodes $\theta_{i\ell} = \lambda_\ell$.
The problem with this representation is that we do not know the weights or nodes a-priori.
Therefore in the setting of Riemann-Stieltjes integration it is possible that quadrature acts as a means of approximating an unknown integrator $\mu_i$ by a known integrator $\hat{\mu}_i$ for which we can efficiently compute $\int_{\R} f(t)\mathrm{d}\hat{\mu}_i(t)$.

Enter, \textit{Lanczos quadrature}.
This is a method for computing the Gaussian quadrature rule (or variations thereof) of Riemann-Stieltjes integrals with integrators of the form \eqref{eq:quadratic_estimator_measure} \cite[\S 7]{GM:2010:moments_matrices_quadrature}.
It uses the fact that the polynomials that are orthogonal with respect to the integrator $\mu_i$ are none other than the Lanczos polynomials associated with $(\mtx{B},\vct{\omega}_i)$ (see \cite[Theorem 4.2]{GM:2010:moments_matrices_quadrature}).
Hence, the Lanczos algorithm for computing an orthonormal basis for the $s$-dimensional Krylov subspace
\[
    \Span\{ \vct{\omega}_i, \mtx{B}\vct{\omega}_i, \ldots,\mtx{B}^{(s-1)}\vct{\omega}_i \}
\]
can be used to compute the Jacobi matrix.
Given that, standard tridiagonal eigensolvers can provide us with the nodes and weights needed for Gaussian quadrature.

\paragraph{Implementation notes.}

SLQ entails approximating $m$ samples of the form $\vct{\omega}_i^{\trans}f(\mtx{B})\vct{\omega}_i$, where each $\vct{\omega}_i$ is an independent random vector drawn from some distribution $\mathcal{D}$.
The quality of each approximate sample depends on the number of nodes allowed in the Gaussian quadrature rule, and hence on the number of steps in the Lanczos algorithm.

Taking $s$ steps of the Lanczos algorithm will always require $s-1$ matrix-vector products with $\mtx{B}$.
The arithmetic and storage complexity needed to compute each sample in SLQ depends on whether we run Lanczos proper or a version of Lanczos that only computes the data needed for Gaussian quadrature.
Indeed, in the latter case we have a substantial amount of freedom to make tradeoffs between computational complexity and numerical stability.
At one end this tradeoff, $s$ iterations of Lanczos with full orthogonalization costs $O(ns)$ storage and $O(ns^2)$ arithmetic.
At the other end of the tradeoff, performing no reorthogonalization reduces the costs to only $O(n)$ storage and $O(ns)$ arithmetic.
(We emphasize that these costs do not account for the $s-1$ matrix-vector products needed with $\mtx{B}$.)

SLQ is a powerful tool, with important applications in Gaussian process regression.
For an implementation of this method that scales to petascale problems by running on GPU farms, we refer the reader to the \textsf{IMATE} Python package \cite{Ameli:2022:IMATE}.

\subsubsection{Beyond stochastic Lanczos quadrature}

\paragraph{Accelerated quadrature-based methods.}
Let $\mtx{A} = f(\mtx{B})$.
The convergence rate of SLQ for estimating $\trace(\mtx{A})$ can only be as good as a Girard--Hutchinson estimator.
As such, one needs $m \in \Omega(1/\epsilon^2)$ samples in order to estimate $\trace(\mtx{A})$ to within $\epsilon$ error for some constant failure probability.
This leaves substantial improvement for SLQ in the case when $\mtx{A}$ is positive definite, where \code{Hutch++} could make do with $m \in \Omega(1/\epsilon)$ queries to $\mtx{A}$.
Luckily, it is possible to extend SLQ to use similar splitting techniques that \code{Hutch++} employs for its variance reduction; see \cite{CH:2022:trace_est_matrix_func} and \cite{PK:2022:hutchpp_matrix_functions} for details.

\paragraph{Spectral density estimation.}

One of SLQ's remarkable properties is that its quadrature rule for approximating \eqref{eq:quadratic_estimator_as_integral} does not depend on $f$.
As such, if the quadrature nodes and weights are computed to estimate $\trace(f(\mtx{B}))$ for one function $f$, then one can use those same nodes and weights to compute an estimate for $\trace(g(\mtx{B}))$ for another function $g$.
This gives some motivation for directly estimating the function
\begin{equation}\label{eq:spectral_density}
    \phi(t) = \textstyle\sum_{j=1}^n u(t - \lambda_i)
\end{equation}
which satisfies $\int_{\R} f(t)\mathrm{d}\phi(t) = \trace(f(\mtx{B}))$ for all continuous functions $f : \R \to \R$.
Note that $\phi$ is a nonnegative nondecreasing function with $\lim_{t \to \infty} \phi(t) = n$.
As such, $\phi / n$ is a cumulative probability distribution function that can be uniquely identified with the spectrum of $\mtx{B}$.

The problem of estimating a function of the form \eqref{eq:spectral_density} is a particular case of \textit{spectral density estimation}.
This problem, which has broader applications in the physical sciences than trace estimation, has been approached explicitly with randomized algorithms \cite{Lin:2016:traceEstim}.
Approaching the linear algebraic problem of trace estimation with this in mind can lead to new insights on how to leverage prior knowledge on the structure of $f$ or $\mtx{B}$ for algorithmic purposes.
In particular, \cite{CTU:2022:trace_est_monograph} provides a systematic treatment of quadrature-based trace estimation algorithms based on this perspective.

\chapter[Advanced Sketching: Leverage Score Sampling]{Advanced Sketching: \\ Leverage Score Sampling}
\chaptermark{Leverage Score Sampling}
\label{sec7:lev_scores}

\minitoc
\bigskip

Leverage scores quantify the extent to which a low-dimensional subspace aligns with coordinate subspaces.
They are fundamental to \RandNLA{} theory since they determine how well a matrix can be approximated through sketching by row or column selection, and thus indirectly how well a matrix can be approximated by sparse data-oblivious sketching methods \cite{DM16_CACM}.
They have algorithmic uses in least squares \cites{DMM06,DMMW12_JMLR} and low-rank approximation \cites{DMM:2008,BMD09_CSSP_SODA,MD16_chapter} among other topics.
More broadly, they play a key role in statistical regression diagnostics~\cite{ChatterjeeHadi88,MMY15}.

The computational value of leverage scores stems from how they induce data-aware probability distributions over the rows or columns of a matrix.
\textit{Leverage score sampling} refers to sketching by row or column sampling according to a leverage score distribution (or an approximation thereof).
The quality of sketches produced by leverage score sampling is relatively insensitive to numerical properties of the matrix to be sketched.
This can be contrasted with sketching by uniform row or column sampling, which can perform very poorly on certain families of matrices.

Leverage score distributions can be computed exactly with standard deterministic algorithms.
However, exact computation is expensive except in very specific cases (see \cref{sec8:tensors}).
Therefore in practice it is necessary to use randomized algorithms to \textit{approximate} leverage score distributions.
On the one hand, this point is significant since the costs of the approximation algorithms undermine the efficiency gains obtained from sketching by simple row or column selection, making the cost comparable to implementing data-oblivious random projection methods.
On the other hand, uniform sampling is clearly suboptimal in many cases, e.g., in that it can miss important nonuniformity structures needed to obtain data-aware subspace embeddings.
In general, the practical utility of leverage scores derives from when row or column selection of a matrix is \textit{required} by a particular application.
Leverage scores, therefore, compete with both uniform sampling and other methods for column (or row) selection as discussed in \cref{subsec:CSS_CX_computational}.

We emphasize that we have made no concrete plans regarding \RandLAPACK{}'s support for leverage score sampling methods.
We review them here since they are prominent and sophisticated sketching methods, and they \textit{might be} appropriate to support in \RandLAPACK{} via a suite of computational routines.

In what follows we introduce three flavors of leverage scores (\S \ref{subsec:lev_scores}) and methods for approximately computing them (\S \ref{subsec:Estimation-of-leverage-scores}).
We also cover three special topics: 
\cref{subsubsec:leverage-score-sparsified} explains how leverage scores can be used to define long-axis-sparse sketching operators (in the sense of \cref{subsubsec:randblas:LongAxSparse}), and \cref{subsubsec:determinantal-point-processes,subsubsec:leverage-score-inverse} discuss generalizations of leverage scores.

\section{Definitions and background}
\label{subsec:lev_scores}

Here we cover three types of leverage scores and corresponding approaches to leverage score sampling.
The first type of leverage score (which we mean by default) is applicable to sketching in the embedding regime.
As such, it is applicable primarily to highly overdetermined least squares problems or other saddle point problems with tall data matrices.
We spend more time on this first type of leverage score since it has theoretical value in understanding the behavior of \RandNLA{} algorithms. 
The second type is used for sketching in the sampling regime and has applications in a variety of low-rank approximation problems.
The third type is specifically for approximating psd matrices (typically kernel matrices) in the presence of explicit regularization.

\subsection{Standard leverage scores}
\label{subsec:lev_scores_standard}

Let $U$ be an $n$-dimensional linear subspace of $\R^m$ and $\mtx{P}_U$ be the orthogonal projector from $\R^m$ to $U$.
The $i^{\text{th}}$ \textit{leverage score} of $U$ is
\begin{equation}\label{eq:standard_lev_scores}
    \ell_i(U) = \|\mtx{P}_U \vct{\delta}_i\|_2^2 = \mtx{P}_{U}[i, i].
\end{equation}
where $\vct{\delta}_i$ is the $i^{\text{th}}$ standard basis vector.
Collectively, leverage scores describe how well the subspace $U$ aligns with the standard basis in $\R^m$.
They have algorithmic implications when we consider induced \textit{leverage score distributions}, defined by
\begin{equation}\label{eq:standard_lev_score_distribution}
    p_i(U) = \frac{\ell_i(U)}{\sum_{j=1}^m \ell_j(U)} = \frac{\ell_i(U)}{n}.
\end{equation}

Given a matrix $\mtx{A}$, one can associate as many sets of leverage scores to that matrix $\mtx{A}$, as one can associate subspaces to $\mtx{A}$.
Two of the most important such subspaces are $U = \range(\mtx{A})$ and $V = \range(\mtx{A}^{\trans})$.
In these contexts we say that the leverage score for the $i^{\text{th}}$ row of $\mtx{A}$ is $\ell_i(U)$, while the leverage score for the $j^{\text{th}}$ column is $\ell_j(V)$.
Such leverage scores provide leverage score distributions over the rows and columns of $\mtx{A}$, respectively.
Note that only one of these distributions can be nonuniform if $\mtx{A}$ is full-rank.
Therefore when speaking of leverage scores we typically assume the $m \times n$ matrix $\mtx{A}$ is tall, which allows for the possibility that $p(U)$ is nonuniform.

Moving forward, we routinely replace $U$ by $\mtx{A}$ in \eqref{eq:standard_lev_scores} and \eqref{eq:standard_lev_score_distribution}, with the understanding that $U = \range(\mtx{A})$.

\subsubsection{Probabilistic guarantees of sketching via row sampling}

Suppose $\mtx{S}$ is a wide $d \times m$ sketching operator that implements row sampling according to a probability distribution $\vct{q}$.
We are interested in evaluating the statistical quality of $\mtx{S}$ as a row sampling operator for an $m \times n$ matrix $\mtx{A}$.
Here, our measure of sketch quality the smallest $\epsilon \in (0, 1)$ where $\vct{y} \in \range(\mtx{A})$ implies
\begin{equation}\label{eq:raw_chernoff_target}
    (1-\epsilon)\|\vct{y}\|_2^2 \leq \|\mtx{S}\vct{y}\|_2^2 \leq (1+\epsilon)\|\vct{y}\|_2^2.
\end{equation}
Note that this metric is very similar to subspace embedding distortion.
In this monograph we have generally advocated for measuring sketch quality by a scale-invariant metric called \textit{effective distortion}.
Despite this, we care about \eqref{eq:raw_chernoff_target} since it provides for the following standard result (which we prove in \cref{subapp:row_selection_theory}).

\begin{restatable}{proposition}{}\label{prop:quality_of_general_row_sampling}
    Suppose $\mtx{A}$ is an $m \times n$ matrix of rank $n$.
    If
    \[
        r := \min_{j \in \idxs{m}}\frac{q_j}{p_j(\mtx{A})}
    \]
    then for all $0 < \epsilon < 1$, we have
    \begin{align}
        \Pr\left\{ \eqref{eq:raw_chernoff_target} \text{ fails for } (\mtx{S},\mtx{A},\epsilon)  \right\}
            & \leq 2n\left(\frac{\exp(\epsilon)}{(1+\epsilon)^{(1+\epsilon)}}\right)^{r d / n}\label{eq:row_sampling_failure_prob}
    \end{align}
    and $\exp(\epsilon) < (1+\epsilon)^{(1+\epsilon)}$.
\end{restatable}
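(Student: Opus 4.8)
The plan is to recognize \eqref{eq:raw_chernoff_target}, read as a requirement over all $\vct{y} \in \range(\mtx{A})$, as a spectral statement about a sum of independent rank-one random matrices, and then invoke a matrix Chernoff bound. First I would fix an $m \times n$ matrix $\mtx{U}$ whose columns form an orthonormal basis for $\range(\mtx{A})$; such $\mtx{U}$ exists because $\mtx{A}$ has rank $n$, and it satisfies $\ell_i(\mtx{A}) = \|\mtx{U}[i,\fslice]\|_2^2$ and $\sum_{i=1}^m \ell_i(\mtx{A}) = n$. Writing a generic element of $\range(\mtx{A})$ as $\vct{y} = \mtx{U}\vct{z}$, the condition \eqref{eq:raw_chernoff_target} holding for every such $\vct{y}$ is equivalent to $\|(\mtx{S}\mtx{U})^{\trans}(\mtx{S}\mtx{U}) - \mtx{I}_n\|_2 \leq \epsilon$, i.e., every eigenvalue of $(\mtx{S}\mtx{U})^{\trans}(\mtx{S}\mtx{U})$ lies in $[1-\epsilon,\,1+\epsilon]$. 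This reformulation is independent of the particular choice of $\mtx{U}$.

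Next I would unpack the row-sampling model. A $d \times m$ operator $\mtx{S}$ that samples rows according to $\vct{q}$ draws indices $t_1,\ldots,t_d$ i.i.d.\ from $\vct{q}$ and (with the standard unbiasing scale) sets its $k$-th row to $(d\,q_{t_k})^{-1/2}\vct{\delta}_{t_k}^{\trans}$, so that $(\mtx{S}\mtx{U})^{\trans}(\mtx{S}\mtx{U}) = \sum_{k=1}^d \mtx{X}_k$ with $\mtx{X}_k = (d\,q_{t_k})^{-1}\,\mtx{U}[t_k,\fslice]^{\trans}\mtx{U}[t_k,\fslice]$. The $\mtx{X}_k$ are i.i.d.\ psd matrices with $\E[\mtx{X}_k] = d^{-1}\sum_{j=1}^m q_j (d\,q_j)^{-1}\mtx{U}[j,\fslice]^{\trans}\mtx{U}[j,\fslice] = d^{-1}\mtx{I}_n$, hence $\E[\sum_k\mtx{X}_k] = \mtx{I}_n$, so $\mu_{\min} = \mu_{\max} = 1$ in the notation of the matrix Chernoff bound. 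Moreover each $\mtx{X}_k$ is rank one with $\lambda_{\max}(\mtx{X}_k) = (d\,q_{t_k})^{-1}\ell_{t_k}(\mtx{A}) \leq d^{-1}\max_j \ell_j(\mtx{A})/q_j = (n/d)\max_j p_j(\mtx{A})/q_j = n/(dr)$, since $1/r = \max_j p_j(\mtx{A})/q_j$. Thus $R := n/(dr)$ is a uniform upper bound on $\lambda_{\max}(\mtx{X}_k)$.

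Then I would invoke the matrix Chernoff inequality for sums of i.i.d.\ psd matrices (as in \cite{Tropp:2015:matrixConcentrationBook}): with $\mu_{\min} = \mu_{\max} = 1$ and uniform eigenvalue bound $R$, one has $\Pr\{\lambda_{\min}(\sum_k \mtx{X}_k) \leq 1-\epsilon\} \leq n(e^{-\epsilon}/(1-\epsilon)^{1-\epsilon})^{1/R}$ and $\Pr\{\lambda_{\max}(\sum_k \mtx{X}_k) \geq 1+\epsilon\} \leq n(e^{\epsilon}/(1+\epsilon)^{1+\epsilon})^{1/R}$, where $1/R = rd/n$. Because $e^{-\epsilon}/(1-\epsilon)^{1-\epsilon} \leq e^{\epsilon}/(1+\epsilon)^{1+\epsilon}$ for $\epsilon \in (0,1)$ (an elementary one-variable inequality), both tail bounds are dominated by $(e^{\epsilon}/(1+\epsilon)^{1+\epsilon})^{rd/n}$; a union bound over the lower- and upper-tail failure events then gives \eqref{eq:row_sampling_failure_prob}. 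The closing claim $\exp(\epsilon) < (1+\epsilon)^{1+\epsilon}$ is, after taking logarithms, the assertion $g(\epsilon) := (1+\epsilon)\log(1+\epsilon) - \epsilon > 0$ for $\epsilon > 0$; since $g(0) = 0$ and $g'(\epsilon) = \log(1+\epsilon) > 0$, this follows at once.

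The main obstacle is not any single calculation but making the reduction in the first two paragraphs airtight: verifying that \eqref{eq:raw_chernoff_target} over all of $\range(\mtx{A})$ is \emph{exactly} the spectral condition on $(\mtx{S}\mtx{U})^{\trans}(\mtx{S}\mtx{U})$ regardless of the basis $\mtx{U}$, and tracking the normalization constants in the definition of $\mtx{S}$ so that $\E[(\mtx{S}\mtx{U})^{\trans}(\mtx{S}\mtx{U})] = \mtx{I}_n$ on the nose and $\lambda_{\max}(\mtx{X}_k)$ is controlled by $\min_j q_j/p_j(\mtx{A})$. Once $\mu_{\min} = \mu_{\max} = 1$ and $R = n/(dr)$ are correctly identified, the matrix Chernoff bound supplies the rest with no further work.
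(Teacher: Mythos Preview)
Your proposal is correct and takes essentially the same approach as the paper: both reduce \eqref{eq:raw_chernoff_target} to the spectral condition $\|(\mtx{S}\mtx{U})^{\trans}(\mtx{S}\mtx{U}) - \mtx{I}_n\|_2 \leq \epsilon$ for an orthonormal basis of $\range(\mtx{A})$, write the Gram matrix as a sum of i.i.d.\ rank-one psd terms with mean $\mtx{I}_n$ and eigenvalue bound $n/(dr)$, and then apply the matrix Chernoff inequality with a union bound. The only cosmetic difference is that the paper takes the specific orthonormal basis $\mtx{U} = \mtx{A}(\mtx{A}^{\trans}\mtx{A})^{-1/2}$ and phrases the rank-one summands as $\mtx{G}^{-1/2}((\mtx{S}\mtx{A})[i,:])^{\trans}((\mtx{S}\mtx{A})[i,:])\mtx{G}^{-1/2}$, whereas you work with a generic $\mtx{U}$; the leverage-score identification and the resulting bound $L = n/(dr)$ are identical.
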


The proposition's basic message is that the probability of $\mtx{S}\mtx{A}$ being a good sketch improves as $\vct{q}$ gets closer to the leverage score distribution $p(\mtx{A})$, where ``closer'' means that the value $r$ becomes larger.
This makes it desirable for  $\vct{q}$ to approximate the leverage score distribution.
In practice, such approximations would be obtained by first estimating leverage scores (e.g., via the method described in \cref{subsec:approx_std_lev_scores}) and then normalizing according to the estimates.
That is, we compute $\hat{\ell}$ as an estimate of $\ell(\mtx{A})$, then set
\[
    q_i = \frac{\hat{\ell}_i}{\sum_{j=1}^m \hat{\ell}_j}.
\]
With this in mind, we turn to our next question:
how large should $d$ be so that the failure probability \eqref{eq:row_sampling_failure_prob} tends to zero as $n$ tends to infinity?

As a short answer, it can be shown that taking $d \in O\left(n \log n / r \epsilon^2\right)$ is sufficient for \eqref{eq:row_sampling_failure_prob} to tend to zero as $n$ tends to infinity.\footnote{
Technically, this choice of $d$ also gives an explicit rate at which the probability tends to zero, but we do not dwell on that here.
}
%
%
With (exact) leverage score sampling we are fortunate to have $r = 1$, and so it suffices for the embedding dimension to satisfy
\begin{equation}
    d_{\text{lev}} \in O\left(\frac{n \log n}{\epsilon^2}\right).
\label{eqn:mm:lev_sampling_dim}
\end{equation}
To describe the bound with uniform sampling, we introduce the \textit{coherence of $\mtx{A}$} as
\[
\mathscr{C}(\mtx{A}) := m \max_{i \in \idxs{m}}\ell_i(\mtx{A}).
\]
It is easily be shown that coherence is bounded by $n \leq \mathscr{C}(\mtx{A}) \leq m$ and that uniform sampling leads to $r = n/\mathscr{C}(\mtx{A})$.
In view of these facts, the embedding dimension for uniform sampling should be on the order of
\[
    d_{\text{unif}} \in O\left(\frac{\mathscr{C}(\mtx{A})\log n}{\epsilon^2}\right).
\]
This is no better than leverage score sampling, and it can be \textit{much} worse.

As a final point on the effectiveness of sketching by row selection methods, consider the situation of using \textit{approximate} leverage scores where we have a bound $q_j \geq \beta p_j(\mtx{A})$ for all $j$.
In such a situation we would have $\beta \leq r$ and setting $d = d_{\text{lev}}/\beta$ would suffice to achieve the same guarantees as leverage score sampling.


\subsubsection{Preconditioned leverage score sampling, hidden in plain sight}
Many data-oblivious sketching operators can be described as applying a ``rotation'' and then performing coordinate subsampling.
Here are two such examples.
\begin{itemize}
    \item A wide $d \times m$ Haar sketching operator $\mtx{S}$ can be viewed as a composition of an $m \times m$ orthogonal matrix followed by a coordinate sampling operator.
    \item The diagonal sign flip and the fast trig transform in an SRFT amounts to a rotation, and the full action of the SRFT is just applying coordinate sampling to the rotated input. 
\end{itemize}
In both cases, the rotation acts as a type of preconditioner for sampling, i.e., as a transformation that converts a given problem into a related form that is more suitable for sampling methods~\cite{DM16_CACM}.
The example of SRFTs is especially informative, since using an embedding dimension $d \in O(n \log n)$ suffices for a $d \times m$ SRFT to be a subspace embedding with constant distortion (say, distortion 1/2) with high probability~\cite{AMT:2010:Blendenpik}.

\subsubsection{Formulas for leverage scores}

There are many concrete ways to express the leverage scores of a tall $m \times n$ matrix $\mtx{A}$.
Here is an expression that emphasizes the matrix itself, without making explicit reference to its range:
\begin{equation} \label{eq:leverage-score-dist-definition}
    \ell_j(\mtx{A}) = \mtx{A}[j,:] \, (\mtx{A}^{\trans} \mtx{A})^\dagger \, \mtx{A}[j,:]^{\trans}.
\end{equation}
We can obtain other concrete expressions for the leverage scores by considering \textit{any} matrix $\mtx{U}$ whose columns form an orthonormal basis for $U = \range(\mtx{A})$.
For example, this matrix $\mtx{U}$ could be the $\mtx{Q}$ from a QR decomposition or the $\mtx{U}$ from the SVD or any other such matrix.
Any such matrix suffices since $\mtx{P} = \mtx{U}\mtx{U}^*$, as the orthogonal projector onto $U$, satisfies
\[
\ell_j(\mtx{A}) = \| \mtx{U}[j,:] \|_2^2 = (\mtx{U}\mtx{U}^{\trans})[j, j].
\]
The subspace perspective is useful since it shows that leverage scores are unchanged if $\mtx{A}$ is replaced by $\mtx{A}\mtx{A}^{\trans}$.
More generally, if $\mtx{A} = \mtx{E}\mtx{F}$ and $\mtx{F}$ has full row-rank then the leverage scores of $\mtx{E}$ match those of $\mtx{A}$.

\subsection{Subspace leverage scores}

The standard leverage scores described in \cref{subsec:lev_scores_standard} are not suitable for low-rank approximation.
The first problem is that it is perfectly reasonable to ask for a low-rank approximation of a matrix that is invertible but has many small singular values.
In such situations both the row and column leverage scores will be uniform, and hence contain no information.
The second problem is that the map from a matrix to its leverage scores is not locally continuous at $\mtx{A}$ whenever $\mtx{A}$ is rank-deficient. (As a general rule, it is difficult to solve linear algebra problems where the map from problem data to the solution is discontinuous.)

These shortcomings can partially be addressed with the concept of \textit{subspace leverage scores}, which are also called \textit{rank-$k$ leverage scores} and \textit{leverage scores relative to the best rank-$k$ approximation}; see \cite[\S 5]{DMMW12_JMLR} along with \cite{DMM:2008} as an earlier conference version of the same.

Expressing the $m \times n$ matrix $\mtx{A}$ by its compact SVD, $\mtx{A} = \mtx{U}\mtx{\Sigma}\mtx{V}^{\trans}$, the \textit{rank-$k$ leverage scores} for its range are
\[
    \ell_j^{k}(\mtx{A}) = \|\mtx{U}[j,\lslice{k}]\|_2^2.
\]
Note that the rank-$k$ leverage scores can be nonuniform regardless of the aspect ratio of the matrix.
Indeed, so long as $k < \rank(\mtx{A})$, the rank-$k$ leverage scores of both $\range(\mtx{A})$ and $\range(\mtx{A}^{\trans})$ can be nonuniform.
The problem of discontinuity of the map from a matrix to its rank-$k$ subspace leverage scores can still persist.
More generally, there is a problem that a matrix may admit multiple distinct ``best rank-$k$ approximations'' for a given value of $k$.
These problems are less troublesome if one assumes that the $k^{\text{th}}$ spectral gap $\sigma_k(\mtx{A}) - \sigma_{k+1}(\mtx{A})$ is bounded away from zero.
(This assumption is perhaps more often made than well-justified.)
Alternatively, one can consider how well the computed scores approximate the leverage scores for some ``nearby'' rank-$k$ space~\cite{DMMW12_JMLR}.

Let us turn to how subspace leverage scores are \textit{used}.
Continuing to focus on the case of row sampling, we are interested in the rank-$k$ leverage score distribution
\[
    p_j^k(\mtx{A}) = \frac{\ell_j^k(\mtx{A})}{\sum_{i=1}^m \ell_i^k(\mtx{A})}.
\]
If $\mtx{S}$ denotes a $d \times m$ row-sampling operator induced by $\vct{p}^k(\mtx{A})$, then the sketch $\mtx{Y} = \mtx{S}\mtx{A}$ leads naturally to the approximation $\Aa = \mtx{A}\mtx{Y}^{\dagger}\mtx{Y}$.
Letting $\mtx{A}_k$ denote some best-rank-$k$ approximation of $\mtx{A}$ in a unitarily invariant matrix norm ``$\|\cdot\|$,'' it is possible to choose $d$ sufficiently large so that
\begin{equation}\label{eq:lev_score_lowrank_approx_want}
    \|\Ao - \Aa\| \lesssim \|\Ao - \Ao_k\|
\end{equation}
holds with high probability.
Note that if $\mtx{Y}$ were an arbitrary matrix then it would be possible to choose $\mtx{Y}$ so that the projection $\Aa = \Ao\mtx{Y}^{\dagger}\mtx{Y}$ was equal to some best-rank-$k$ approximation of $\Ao$.
However, the restriction that the rows of $\mtx{Y}$ are scaled rows of $\mtx{A}$ significantly limits the projectors that could be used to define $\Aa$.
Because of this limitation, one may need $d \gg k$ to have any chance that \eqref{eq:lev_score_lowrank_approx_want} holds.

\begin{remark}
One rarely samples according to an exact rank-$k$ leverage score distribution in practice.
Rather, one uses randomized algorithms to approximate them.
The key fact that enables this approximation is that leverage scores (``standard'' or ``subspace'') are preserved if we replace $\mtx{A}$ by $\mtx{A}\mtx{A}^*$.
Moreover, as leverage scores quantify a notion of eigenvector localization, we should note that in many applications one has domain knowledge that eigenvalues should be localized \cite{SCS10}, and this could be used to construct approximations.
\end{remark}

 \subsection{Ridge leverage scores}\label{subsec:ridge_leverage_scores}

Ridge leverage scores are used to approximate matrices in the presence of explicit regularization.
That is, we are given an $m \times m$ psd matrix $\mtx{K}$ and a positive regularization parameter $\lambda$, and we approximate $\mtx{K} + \lambda\mtx{I}$ by $\mtx{\hat{K}} + \lambda \mtx{I}$ where $\mtx{K}$ is a psd matrix of rank $n \ll m$.
The low-rank structure in these approximations makes it much cheaper to apply $(\mtx{\hat{K}} + \lambda \mtx{I})^{-1}$ compared to $(\mtx{K} + \lambda \mtx{I})^{-1}$.
This motivates the following question. 
\begin{quote}
    What rank $n$ is needed for $(\mtx{\hat{K}} + \lambda \mtx{I})^{-1}$ to approximate $(\mtx{K} + \lambda \mtx{I})^{-1}$ up to some fixed accuracy?
\end{quote}
It turns out that this is determined by quantity $\trace(\mtx{K}(\mtx{K} + \lambda\mtx{I})^{-1})$, 
which is called the \textit{effective rank of $\mtx{K}$}.
Using $\mu_i$ to denote the $i^{\text{th}}$-largest eigenvalue of $\mtx{K}$, we can express the effective rank as
\[
    \trace(\mtx{K}(\mtx{K} + \lambda\mtx{I})^{-1}) = \sum_{i=1}^m \frac{\mu_i}{\mu_i + \lambda}.
\]

Since we are working with psd matrices it is natural to define $\mtx{\hat{K}}$ as a \Nystrom{} approximation of $\mtx{K}$ with respect to some sketching operator $\mtx{S}$ (see \cref{subsubsec:herm_eig_algs}). 
Taking that as given, this leaves the question of how to choose the distribution for $\mtx{S}$.
Here it is worth considering how many numerically-low-rank psd matrices arising in applications are defined implicitly through pairwise evaluations of a \textit{kernel function} on a given dataset.
Taking $\mtx{S}$ as a column-selection operator is especially appealing in these settings.

\cite{AM:2015:KRR} introduced ridge leverage scores as a framework for data-aware column sampling in this context.
Formally, the \textit{ridge leverage scores of $(\mtx{K},\lambda)$} are
\begin{equation}\label{eq:ridge_lev_scores_K}
    \ell_i(\mtx{K};\lambda) = \left(\mtx{K}\left(\mtx{K} + \lambda\mtx{I}\right)^{-1}\right)[i, i].
\end{equation}
In certain cases -- particularly for estimating ridge leverage scores -- it can be convenient to express these quantities in terms of a matrix $\mtx{B}$ that satisfies $\mtx{K} = \mtx{B}\mtx{B}^*$ and that has at least as many rows as columns.
Specifically, by expressing $\mtx{B}$ in terms of its compact SVD, one can show that
\begin{equation}\label{eq:ridge_lev_scores_B}
    \ell_i(\mtx{K};\lambda) = \vct{b}_i^{\trans}\left(\mtx{B}^{\trans}\mtx{B} + \lambda\mtx{I}\right)^{-1}\vct{b}_i
\end{equation}
where $\vct{b}_i^{\trans}$ is the $i^{\text{th}}$ row of $\mtx{B}$.
We note that the identity matrix appearing in \eqref{eq:ridge_lev_scores_B} will be smaller than that from \eqref{eq:ridge_lev_scores_K} if $\mtx{B}$ is not square.


\section{Approximation schemes}\label{subsec:Estimation-of-leverage-scores}

Computing leverage scores exactly is an expensive proposition.
If $\mtx{A}$ is a tall $m \times n$ matrix, then it takes $O(mn^2)$ time to compute the standard leverage scores exactly.\footnote{The preferred way to do this would be to take the row norms of the factor $\mtx{Q}$ from a thin QR decomposition of $\mtx{A}$.}
If one is interested in subspace leverage scores and $k$ is small, then one can in principle use Krylov methods to approximate the dominant $k$ singular vectors in far less than $O(mn^2)$ time.
Such methods are not very reliable for producing good approximations of the truncated SVD, but they might suffice for estimating leverage scores.
If we want to compute the ridge leverage scores of an $m \times m$ matrix $\mtx{K}$ exactly, then the straightforward implementation takes $O(m^3)$ time.

These facts necessitate the development of efficient and reliable methods for \textit{leverage score estimation}, which we discuss below.
While these methods are generally too sophisticated for the \RandBLAS{}, they may be appropriate for higher-level libraries such as \RandLAPACK{}.


\subsection{Standard leverage scores}\label{subsec:approx_std_lev_scores}

Suppose the $m \times n$ matrix $\mtx{A}$ is very tall, i.e., $m \gg n$.
Here we summarize a method by Drineas et al.\ that can compute approximate leverage scores, to within a constant multiplicative error factor, in $O(m n \log m)$ time, i.e., in roughly the time it takes to implement a random projection, with some constant failure probability bounded away from one \cite{DMMW12_JMLR}.
This can offer improved efficiency over straightforward $O(m n^2)$ approaches when $m \gg n$ and yet $m \in o(2^n)$.

We set the stage for this method by expressing leverage scores as follows
\begin{equation} \label{eq:leverage-score-estimation-step-1}
    \ell_j(\mtx{A}) = \| \vct{\delta}_j^{\trans} \mtx{U} \|_2^2 = \| \vct{\delta}_j^{\trans} \mtx{U} \mtx{U}^{\trans} \|_2^2 = \| \vct{\delta}_j^{\trans} \mtx{A} \mtx{A}^\dagger \|_2^2
\end{equation}
where we note that the second equality in the above display follows from unitary invariance of the spectral norm.
The method proceeds by approximating two operations in the right-most expression in \eqref{eq:leverage-score-estimation-step-1}. First we approximate the pseudoinverse of $\mtx{A}$ and then we approximate the matrix-matrix product $\mtx{A} \mtx{A}^\dagger$.
It is important to note that using approximations in both steps is essential for asymptotic complexity improvements, since traditional methods would take $O(mn^2)$ for the first step and $O(m^2 n)$ time for the second step.
(In extreme situations, depending on the hardware that would be used, it may be worth performing the matrix-matrix product of the second step explicitly.)

The pseudoinverse computation is approximated by applying a wide $d_1 \times m$ SRFT $\mtx{S}_1$ to the left of $\mtx{A}$. 
Letting $\mtx{U}_1 \mtx{\Sigma}_1 \mtx{V}_1^{\trans}$ be an SVD of this $d_1 \times n$ sketched matrix $\mtx{S}_1\mtx{A}$, we approximate
\begin{align*} \label{eq:leverage-score-estimation-step-2}
    \ell_j(\mtx{A}) \approx \hat{\ell}_j(\mtx{A}) 
    &= \| \vct{\delta}_j^{\trans} \mtx{A} (\mtx{S}_1 \mtx{A})^\dagger \|_2^2 \\
    &= \| \vct{\delta}_j^{\trans} \mtx{A} \mtx{V}_1 \mtx{\Sigma}_1^{-1} \mtx{U}_1^{\trans} \|_2^2 \\
    &= \| \vct{\delta}_j^{\trans} \mtx{A} \mtx{V}_1 \mtx{\Sigma}_1^{-1} \|_2^2
\end{align*}
at a cost of $O(d_1 n^2)$.
However, we are not out of the woods yet, since multiplying $\mtx{A}$ with $\mtx{V}_1 \mtx{\Sigma}_1^{-1}$ would still cost $O(m n^2)$.
This is addressed by applying a tall sketching operator $\mtx{S}_2$ of size $n \times d_2$ to the right of $\mtx{V}_1 \mtx{\Sigma}_1^{-1}$ before multiplying it by $\mtx{A}$.
That is, we further approximate
\begin{equation}
    \hat{\ell}_j(\mtx{A}) \approx \hat{\hat{{\ell}}}_j(\mtx{A}) 
    = \| \vct{\delta}_j^{\trans} \mtx{A} (\mtx{V}_1 \mtx{\Sigma}_1^{-1} \mtx{S}_2) \|_2^2.
\end{equation}
This reduces the cost of the matrix multiplication to $O(m n d_2)$ and hence the cost of the overall procedure to $O(d_1 n^2 + d_2 m n)$.
\cite{DMMW12_JMLR} gives details on how large $d_1$ and $d_2$ must be to ensure useful accuracy guarantees for the approximate leverage scores; see also \cite[\S 5.2]{MMY15} for a related evaluation.

This estimation method can be adapted to efficiently compute ``cross-leverage scores,'' as well as subspace leverage scores; see \cite{DMMW12_JMLR} for details.
It also has natural adjustments to make it faster.
For example, \cite{CW:2017:nnztime} suggest replacing the SRFT $\mtx{S}_1$ by $\tilde{\mtx{S}}_1 = \mtx{F} \mtx{C}$ where $\mtx{C}$ is a CountSketch and $\mtx{F}$ is an SRFT that further compresses the output of $\mtx{C}$; \cite{NN:2013:OSNAPs} propose replacing $\mtx{S}_1$ by a SASO (recall from \cref{subsubsec:randblas:ShortAxSparse} that a SASO is generalized CountSketch), which yields a similar speed-up as that achieved in \cite{CW:2017:nnztime}. 

\subsection{Subspace leverage scores}\label{subsec:approx_subspace_leverage_scores}

There is a wide range of possibilities for estimating subspace leverage scores.
We describe two such methods here (slightly adapted) from \cite{DMMW12_JMLR}.
Let us say that we want to estimate the rank-$k$ leverage scores of $\mtx{A}$ for some $k \ll \min\{m, n\}$.
Both of the algorithms below work by finding the \textit{exact} leverage scores of an implicit rank-$k$ matrix $\mtx{\hat{A}}$, for which a distance $\|\mtx{\hat{A}} - \mtx{A}\|$ is near-optimal among all rank-$k$ approximations.

\subsubsection{An adaptation of \cite[Algorithm 5]{DMMW12_JMLR}}

The original goal of this algorithm was to return the leverage scores of a rank-$k$ approximation of $\mtx{A}$ that was near-optimal in Frobenius norm.
Framing things more abstractly, the approach requires that the user specify an oversampling parameter $s \in O(k)$.
Its first step is to compute a rank-$(k+s)$ QB decomposition of $\mtx{A}$ (e.g., by some method from \cref{subsubsec:qb_alg}) $\mtx{A} \approx \mtx{Q}\mtx{B}$.
Next, it computes the top $k$ left singular vectors of $\mtx{B}$ by some traditional method.
Letting $\mtx{U}_k$ denote the $(k+s) \times k$ matrix of such leading left singular vectors, the algorithm takes the columns of $\mtx{Q}\mtx{U}_k$ to define approximations of the leading $k$ left singular vectors of $\mtx{A}$.
The row-norms of this matrix define the approximate rank-$k$ leverage scores.

In context, \cite[Algorithm 5]{DMMW12_JMLR} used an elementary QB decomposition with $\mtx{Q} = \code{orth}(\mtx{A}\mtx{S})$ for an $n \times (k+s)$ Gaussian operator $\mtx{S}$.
The analysis of this algorithm presumed that $s \geq \lceil k/\epsilon + 1\rceil$ for some tolerance parameter $\epsilon$.
The meaning of $\epsilon$ was as follows: when viewed as random variables, the returned leverage scores coincide with those of a rank-$k$ approximation $\Aa$ where
\[
\E\|\Aa - \Ao\|_{\text{F}}^2 \leq (1 + \epsilon)\sum_{j > k}\sigma_j(\mtx{A})^2.
\]

Looking back at this error bound from our present perspective, it is clear that a huge variety of similar bounds can be obtained by using different methods for the QB decomposition.
One possibility on this front would be to use adaptive QB algorithms that approximate $\mtx{A}$ to some prescribed accuracy.
Subspace leverage scores obtained in this way may be well-suited for approximating $\mtx{A}$ by a low-rank submatrix-oriented decomposition up to prescribed accuracy.

\subsubsection{A description of \cite[Algorithm 4]{DMMW12_JMLR}}

This is a two-stage method to find the leverage scores of a rank-$k$ approximation to $\mtx{A}$ that is near-optimal in spectral norm.

To understand the first stage, recall that some of the simplest QB algorithms make use of power iteration as described in \cref{subsubsec:data_aware}.
That is, rather than setting $\mtx{Q} = \code{orth}(\mtx{A}\mtx{S})$ for Gaussian $\mtx{S}$, they set $\mtx{S} = (\mtx{A}^*\mtx{A})^q \mtx{S}_0$ for Gaussian $\mtx{S}_0$.
Practical implementations of QB based on power iteration introduce stabilization between successive applications of $\mtx{A}$ and $\mtx{A}^*$.
Such stabilization preserves the range of $\mtx{A}\mtx{S}$, but it may change its singular vectors.
If such stabilization is \textit{not} used, then the left singular vectors of $\mtx{A}(\mtx{A}^*\mtx{A})^q \mtx{S}_0$ for Gaussian $\mtx{S}_0$ would be reasonable approximations to the leading left singular vectors of $\mtx{A}$ (modulo numerical problems that are sure to arise for moderate $q$).

The observation above is the basis for \cite[Algorithm 4]{DMMW12_JMLR}.
In context, its first stage is to compute $\mtx{S}_{q+1} = (\mtx{A}\mtx{A}^{\trans})^q\mtx{A}\mtx{S}_0$ from an $n \times 2k$ Gaussian operator $\mtx{S}_0$.
In a second stage, approximate leverage scores of $\mtx{S}_{q+1}$ -- call them $\hat{\ell}_i$ -- are obtained from any method that ensures
\[
|\hat{\ell}_i - \ell_i(\mtx{S}_{q+1})| \leq \epsilon\ \ell_i(\mtx{S}_{q+1}).
\]
These approximations are the estimates for the rank-$k$ leverage scores of $\mtx{A}$.

\cite[Lemma 15 and Theorem 16]{DMMW12_JMLR} prescribe a value for $q$ (as a function of $m, n, k$, and $\epsilon$) that ensures an approximation guarantee for the leverage score estimates given above.
Specifically, for the prescribed $q$, the estimated leverage scores are within a factor $\tfrac{1-\epsilon}{2(1+\epsilon)}$ of the leverage scores of a rank-$k$ matrix $\Aa$ that satisfies
\[
    \E\|\Aa - \Ao\|_2 \leq (1+\epsilon/10)\sigma_{k+1}(\mtx{A}).
\]
As before, the randomness in this expectation is over the randomness used to estimate the leverage scores.


\subsection{Ridge leverage scores}

A wide variety of algorithms have been devised to estimate ridge leverage scores or carry out approximate ridge leverage score sampling.
The simplest such algorithm, proposed in \cite{AM:2015:KRR} alongside the definition of ridge leverage scores, proceeds as follows:
\begin{itemize}
    \item Start with a distribution $\vct{p} = (p_i)_{i \in \idxs{m}}$ over the column index set of $\mtx{K}$.
    \item Construct a column selection operator $\mtx{S}$ with $n$ columns, where each column is independently set to $\vct{\delta}_i \in \R^m$ with probability $p_i$.
    \item Compute the \Nystrom{} approximation of $\mtx{K}$ with respect to $\mtx{S}$. Suppose the approximation is represented as $\mtx{\hat{K}} = \mtx{B}\mtx{B}^{\trans}$ for an $m \times n$ matrix $\mtx{B}$.
    \item Using $\vct{b}_i \in \R^n$ for the $i^{\text{th}}$ row of $\mtx{B}$, take $\tilde{\ell}_i := \vct{b}_i^{\trans}(\mtx{B}^{\trans}\mtx{B} + \lambda \mtx{I})^{-1}\vct{b}_i$ as an approximation for the $i^{\text{th}}$ ridge leverage score of $\mtx{K}$ with regularization $\lambda$.
\end{itemize}
One can of course start with $\vct{p} = (1/m)_{i \in \idxs{m}}$ as the uniform distribution over columns of $\mtx{K}$.
An alternative starting point is the distribution $\vct{p} = \diag(\mtx{K})/\trace(\mtx{K})$.
While the latter distribution can lead to useful theoretical guarantees (see \cite[Theorem 4]{AM:2015:KRR}) it is not suitable for computing very accurate approximations.

Iterative methods should be used if accurate approximations to ridge leverage scores are desired.
Notably, most of the iterative methods in the literature \textit{simultaneously} estimate the ridge leverage scores and sample columns from $\mtx{K}$ according to the estimates \cites{MM:2017:lev_approx_for_nystrom,CLV:2017:estimate_Nys_ridge_leverage,RCCR:2018:estimating_Nys_ridge_leverage}.
This algorithmic structure blurs the distinction between approximating ridge leverage scores and producing a \Nystrom{} approximation of $\mtx{K}$ via column selection.
This precise nature of the blurring can also vary substantially from one algorithm to another.
For example, \cite[Algorithms 2 and 3]{MM:2017:lev_approx_for_nystrom} are very different from \cite[Algorithm 1]{CLV:2017:estimate_Nys_ridge_leverage}, which in turn is materially different from \cite[Algorithms 1 and 2]{RCCR:2018:estimating_Nys_ridge_leverage}.

The abundance and sophistication of these methods make it impractical for us to summarize them here.
We instead settle for stating their general qualitative conclusions.
Letting $d = \trace(\mtx{K}(\mtx{K} + \lambda\mtx{I})^{-1})$ denote the effective rank of $\mtx{K}$, one can construct an approximation $\mtx{\hat{K}}$ of rank $n \in O(d \log d)$ for which $\|\mtx{K} - \mtx{\hat{K}}\|_2 \leq \lambda$ holds with high probability.
Furthermore, these approximations can be constructed in time $O(mn^2)$ using only $n$ column samples from $\mtx{K}$.
We refer the reader to the cited works above for details on specific algorithms.

\section{Special topics and further reading}

Here, we mention a handful of generalizations and variants of leverage score sampling that, while not part of our immediate plans, may be of longer-term interest.
The interested reader should consult the source material for details of what we describe below.
In addition to those source materials, the interested reader is referred to Sobczyk and Gallopoulos' paper \cite{SG:2021:estimatingLevScores}, which is accompanied by a carefully developed C++ and Python library called \code{pylspack} \cite{SG:2022:pylspack}.
We also recommend Larsen and Kolda's recent work \cite[\S 4 and Appendix A]{LK:2020:leverageSampleTensors} -- which includes practical advice on leverage score sampling and theoretical results with explicit constant~factors.

\subsection{Leverage score sparsified embeddings}
\label{subsubsec:leverage-score-sparsified}

Our concept of long-axis-sparse operators from \cref{subsubsec:randblas:LongAxSparse} is based on the \textit{Leverage Score Sparsified} or \textit{LESS} embeddings of Derezi{\'n}ski et al. \cite{DLDM20_sparse_TR}.
Here we explain the role of leverage scores when using these sketching operators. 

Let $\mtx{S}$ be a a random $d \times m$ long-axis-sparse operator ($d \ll m$) with sparsity parameter $k$ and sampling distribution $\vct{p} = (p_1,\ldots,p_m)$.
The idea of LESS embeddings is that varying $k$ should provide a way to interpolate between the low cost of sketching by row sampling and the high cost of sketching by Gaussian operators, while still obtaining a sketch that is meaningfully Gaussian-like.
Indeed, if $k \approx n = \rank(\mtx{A})$, then \cite{Der22_Algorithmic_TR} showed that, with high probability, the resulting sketching operator is nearly indistinguishable from a dense sub-gaussian operator (such as Gaussian or Rademacher), despite the reduction from $O(dmn)$ time to $O(dn^2)$. This performance comparison was demonstrated for several estimation tasks involving the inverse covariance matrix $\mtx{A}^{\trans}\mtx{A}$ \cite{DLDM20_sparse_TR}, as well for the Newton Sketch optimization method \cite{DLPM21_newtonless_TR}.

As with other uses of leverage scores, approximate leverage scores suffice for LESS embeddings; and the computational cost of a LESS embedding is typically dominated by the cost of estimating the leverage scores of $\mtx{A}$.
The use of leverage scores in the sparsification pattern is essential for theoretically showing that a LESS embedding exhibits nearly identical performance to a Gaussian operator for all matrices $\mtx{A}$.
Good empirical performance observed in practice, to a varying degree, also when $\vct{p}$ is the uniform distribution and $k \ll \rank(\mtx{A})$ \cite[\S 5]{DLPM21_newtonless_TR}.

\subsection{Determinantal point processes}
\label{subsubsec:determinantal-point-processes}

In many data analysis applications, submatrix-oriented decompositions such as \Nystrom{} approximation via column selection are desirable for their interpretability.
In this context, we may wish to produce a very small but high-quality sketch of the matrix $\mtx{A}$, using a method more refined (albeit slower) than leverage score sampling.
Here we discuss Determinantal Point Processes (DPPs; \cite{DM21_NoticesAMS}) as one of many such methods from the literature.

Let $\mtx{A}$ be an $m \times m$ psd matrix.
A \textit{Determinantal Point Process} is a distribution over index subsets $J\subseteq \idxs{m}$ such that:
\begin{equation*}
  \mathbb{P}(J = S) = \frac{\det(\mtx{A}[S,S])}{\det(\mtx{A}+\mtx{I})}.
\end{equation*}
The above DPP formulation is known as an L-ensemble, and it is also sometimes called volume sampling~\cites{deshpande2006matrix,DM10_TR}.
Unlike leverage score sampling, individual indices sampled in a DPP are not drawn independently, but rather jointly, to minimize redundancies in the sampling process.
In fact, a DPP can be viewed as an extension of leverage score sampling that incorporates dependencies between the samples, inducing diversity in the selected subset~\cite{kulesza2012determinantal}. 
%
%
%
%

DPP sampling can be used to construct improved \Nystrom{} approximations $\mtx{\hat A} =(\mtx{A}\mtx{S})(\mtx{S}^{\trans}\mtx{A}\mtx{S})^\dagger(\mtx{A}\mtx{S})^{\trans}$ where the selection matrix $\mtx{S}$ corresponds to the random subset $J$.
 In particular, \cites{deshpande2006matrix,guruswami2012optimal,DKM20_CSSP_neurips} established strong guarantees for this approach in terms of the nuclear norm error relative to the best rank $k$ approximation: $\|\mtx{\hat A}-\mtx{A}\|_* \leq (1+\epsilon) \|\mtx{A}_k-\mtx{A}\|_*$, where $k$ is the target rank and the subset size $|J|$ is chosen to be equal or slightly larger than $k$.
 DPPs have also found applications in machine learning \cites{kulesza2012determinantal,DKM20_CSSP_neurips,DM21_NoticesAMS} as a method for constructing diverse and interpretable data representations.

It is challenging to implement efficient methods for sampling from a DPP, and this is an area of ongoing work.
One promising method has recently been proposed by Poulson \cite{Poulson:2020:DPPs}.
Two other classes of methods can be obtained by exploiting the connection between DPPs and ridge leverage scores.
\begin{enumerate}
    \item One can use intermediate sampling with ridge leverage scores to produce a larger index set $T$, which is then trimmed down to produce a smaller DPP sample $J\subseteq T$ \cites{derezinski2019fast,derezinski2019exact,calandriello2020sampling}.
    \item One can use iterative refinement on a Markov chain, where we start with an initial subset $J_1$, and then we gradually update it by swapping out one index at a time, producing a sequence of subsets $J_1,J_2,J_3,...,$ which rapidly converges to a DPP distribution \cites{anari2016monte,anari2020isotropy,anari2022domain}.
\end{enumerate}
The computational cost of these procedures is usually dominated by the cost of estimating the ridge leverage scores (recall that there are methods for doing this that do not need to access all of $\mtx{A}$).
However, these procedures carry additional overhead since some of the ridge leverage score samples must be discarded to produce the final DPP sample.

\subsection{Further variations on leverage scores}
\label{subsubsec:leverage-score-inverse}

In the case of tall data matrices $\mtx{A}$, leverage scores are useful for finding data-aware sketching operators $\mtx{S}$ so that the Euclidean norms of vectors in the range of $\mtx{S}\mtx{A}$ are comparable to the Euclidean norms of vectors in the range of $\mtx{A}$.
A related concept called \textit{Lewis weights} can be used for matrix approximation where we want $\mtx{S}$ to approximately preserve the $p$-norm of vectors in the range of $\mtx{A}$ for some $p \neq 2$ \cite{cohen2015lp}.
These are improved versions of leverage-like scores used by Dasgupta et al. for $\ell_p$ regression \cite{DDHKM09_lp_SICOMP}.
A more recently proposed concept samples according to the probabilities
\[
    p_i = \frac{\left\|\left(\mtx{A}^{\trans}\mtx{A}\right)^{-1}\mtx{A}[i,:]\right\|_2}{\sum_{j=1}^m \left\|\left(\mtx{A}^{\trans}\mtx{A}\right)^{-1}\mtx{A}[j,:]\right\|_2}
\]
in order to estimate the \textit{variability} of sketch-and-solve solutions to overdetermined least squares problems \cite{MZXMM22_JMLR}; see also \cite{MMY15} for related importance sampling probabilities that come with useful statistical properties.
Similar probabilities (where all norms in the above expression were squared) were studied in \cite{derezinski2019minimax}.

\chapter[Advanced Sketching: Tensor Product Structures]{Advanced Sketching:
\\ Tensor Product Structures}
\chaptermark{Tensor Product Structures}
\label{sec8:tensors}

\minitoc

\vspace{1em}

This \nameCref{sec8:tensors} considers efficient sketching of data with tensor product structure.
We specifically focus on implicit matrices with Kronecker and Khatri--Rao product structure.
These structures are of interest in \RandNLA{} due to their prominent role in certain randomized algorithms for tensor decomposition.
A secondary point of interest is that the operators discussed in this \nameCref{sec8:tensors} can also be used for sketching unstructured matrices. 
They may, for example, be used as alternatives to unstructured test vectors in norm and trace estimation \cite{bujanovic2021NormTraceEstimation}.
In this case, the main benefit would not be improved speed but reduced storage requirements for storing the sketching operator.

In \cref{subsubsec:def-Kronecker-Khatri--Rao-matrices} we define the Kronecker and Khatri--Rao matrix products.
\cref{subsec:Sketching-of-Kronecker-and-Khatri--Rao} presents four families of sketching operators that can be applied efficiently to matrices that are stored implicitly with these product structures.
\cref{subsec:partial-sketch-reuse} discusses implementation considerations for the structured sketching operators in this \nameCref{sec8:tensors}, with a focus on how they can be used in tensor decomposition algorithms.

\subsubsection{A note on scope}
We should emphasize that algorithms for general tensor computations are out-of-scope for \RandLAPACK{}.
The functionality described here would only be made available as utility functions (i.e., computational routines) for \textit{facilitating} certain tensor computations.
This is part of a broader idea that \RandLAPACK{} should facilitate advanced sketching operations of interest in \RandNLA{} that are outside the scope of the \RandBLAS{}.

\section{The Kronecker and Khatri--Rao products} \label{subsubsec:def-Kronecker-Khatri--Rao-matrices}

Suppose that $\mtx{B}$ is an $m \times n$ matrix and $\mtx{C}$ is a $p \times q$ matrix.
The Kronecker product of $\mtx{B}$ and $\mtx{C}$ is the $mp \times nq$ matrix
\begin{equation*}
    \mtx{B} \otimes \mtx{C} = 
    \begin{bmatrix}
        \mtx{B}[1,1] \cdot \mtx{C} & \mtx{B}[1,2] \cdot \mtx{C} & \cdots & \mtx{B}[1,n] \cdot \mtx{C} \\
        \mtx{B}[2,1] \cdot \mtx{C} & \mtx{B}[2,2] \cdot \mtx{C} & \cdots & \mtx{B}[2,n] \cdot \mtx{C} \\
        \vdots & \vdots & & \vdots \\
        \mtx{B}[m,1] \cdot \mtx{C} & \mtx{B}[m,2] \cdot \mtx{C} & \cdots & \mtx{B}[m,n] \cdot \mtx{C} \\
    \end{bmatrix}.
\end{equation*}
If $\mtx{B}$ and $\mtx{C}$ have the same number of columns (i.e., if $n=q$), then their Khatri--Rao product is the $mp \times n$ matrix
\begin{equation*}
    \mtx{B} \odot \mtx{C} = 
    \begin{bmatrix}
        \mtx{B}[:, 1] \otimes \mtx{C}[:, 1] & \mtx{B}[:, 2] \otimes \mtx{C}[:, 2] & \cdots & \mtx{B}[:, n] \otimes \mtx{C}[:, n]
    \end{bmatrix}.
\end{equation*}
The Khatri--Rao product is sometimes also referred to as the \emph{matching columnwise Kronecker product} for transparent reasons.
The Kronecker and Khatri--Rao products for more than two matrices are defined in the obvious way.
Note that for two vectors $\vct{x}$ and $\vct{y}$ we have that
\[
\vct{x} \otimes \vct{y} = \vct{x} \odot \vct{y} = \operatorname{vec}(\vct{x} \circ \vct{y})
\]
where $\circ$ denotes the outer product and $\operatorname{vec}(\cdot)$ is an operator that turns a matrix into a vector by vertically concatenating its columns.
We also use $\circledast$ to denote the elementwise (Hadamard) product.

Matrices with Kronecker and Khatri--Rao product structure tend to be \emph{very} large.
For example, consider matrices $\mtx{B}_1, \ldots, \mtx{B}_L$, all of size $m \times n$. 
Their Kronecker product $\mtx{B}_1 \otimes \cdots \otimes \mtx{B}_L$ is an $m^L \times n^L$ matrix and their Khatri--Rao product $\mtx{B}_1 \odot \cdots \odot \mtx{B}_L$ is an $m^L \times n$ matrix.
The exponential dependence on $L$ means that these products can become very large even if the matrices $\mtx{B}_1,\ldots,\mtx{B}_L$ are not especially large.
Even just forming and storing these products may therefore be prohibitively~expensive. 

Kronecker and Khatri--Rao product matrices feature prominently in algorithms for tensor decomposition (i.e., decomposition of multidimensional arrays into sums and products of more elementary objects, see \cref{subsec:partial-sketch-reuse}).
They also appear in a variety of other contexts when sketching techniques are helpful, such as for representation of polynomial kernels \cites{pham2013FastScalable, avron2014SubspaceEmbeddings, woodruff2020NearInputSparsity, woodruff2022LeverageScoreSampling}, when fitting polynomial chaos expansion models in surrogate modeling \cites{zhou2015WeightedDiscreteLeast, seshadri2017EffectivelySubsampledQuadratures, cheng2022QuadratureSampling}, multi-dimensional spline fitting \cite{diao2018SketchingKronecker}, and in PDE inverse problems \cite{chen2020StructuredRandomSketching}.


\section{Sketching operators}
\label{subsec:Sketching-of-Kronecker-and-Khatri--Rao}

\cref{subsubsec:Row-structured-sketches} introduces sketching operators that are distinguished by having rows with particular structures.
\cref{subsubsec:Kronecker_SRFTs} discusses a variant of the SRFT with an additional tensor-produce structure.
\cref{subsubsec:TensorSketch} discusses TensorSketch operators, which are analogous to CountSketch operators from \cref{subsubsec:randblas:ShortAxSparse}.
In \cref{subsubsec:Recursive_sketch} we describe sketching operators that are recursive and have multi-stage structure. 
These incorporate some of the sketching operators discussed in the previous subsections as stepping stones.
\cref{subsubsec:tensor_product_lev_scores} covers row sampling methods for tall matrices with tensor product structure.

We note that the sketching operators in Sections~\ref{subsubsec:Row-structured-sketches}--\ref{subsubsec:Recursive_sketch} are all oblivious, whereas the sampling-based methods in \cref{subsubsec:tensor_product_lev_scores} are not.
We also note that all of the oblivious sketching operators we discuss could be applied to \textit{unstructured} matrices.
This would yield no speed benefit compared to using their unstructured counterparts, but it would reduce the storage requirement compared to traditional dense sketching operators of the kind supported by the \RandBLAS{}.

\subsection{Row-structured tensor sketching operators} \label{subsubsec:Row-structured-sketches}

Here we describe three types of sketching operators whose rows can be applied to Kronecker and Khatri--Rao product matrices very efficiently.
The second of these methods requires notions of tensor representations such as the \textit{CP format}, which we will revisit in \cref{subsec:partial-sketch-reuse}.

\subsubsection{Khatri--Rao products of elementary sketching operators}

The most basic row-structured sketching operator takes the form 
\begin{equation} \label{eq:structured-row-sketch}
    \mtx{S} = (\mtx{S}_1 \odot \mtx{S}_2 \odot \cdots \odot \mtx{S}_L)^{\trans},
\end{equation}
where each $\mtx{S}_k$ is an appropriate random matrix of size $m_k \times d$ for $k \in \idxs{L}$.
Such an operator maps $(m_1\cdots m_L)$-vectors to $d$-vectors.
It can be efficiently applied to Kronecker product vectors, which in turn means that it can be applied efficiently (column-wise) to both Kronecker and Khatri--Rao product matrices. 
Consider vectors $\vct{x}_1, \ldots, \vct{x}_L$ where $\vct{x}_k$ is a length-$m_k$ vector.
The operator in \eqref{eq:structured-row-sketch} is then applied to a vector $\vct{v} = \vct{x}_1 \otimes \cdots \otimes \vct{x}_L$ via the formula
\begin{equation*}
    \mtx{S}
    \vct{v}
    = (\mtx{S}_1^{\trans} \vct{x}_1) \circledast (\mtx{S}_2^{\trans} \vct{x}_2) \circledast \cdots \circledast (\mtx{S}_L^{\trans} \vct{x}_L).
\end{equation*}

To the best of our knowledge, \cite{biagioni2015RandomizedInterpolative} were the first to use random matrices of the form \eqref{eq:structured-row-sketch} to accelerate tensor computations in the spirit of RandNLA.\footnote{Similar ideas were used earlier for applications in differential privacy; see \cites{kasiviswanathan2010PricePrivately,rudelson2012RowProducts}.}
They suggest drawing the entries of each $\mtx{S}_k$ independently from a distribution with mean zero and unit variance, but they do not provide any theoretical guarantees for the performance of such sketching operators.
Sun et al.\ \cite{sun2018TensorRandom} independently propose using operators of the form \eqref{eq:structured-row-sketch} where the submatrices $\mtx{S}_k$ are chosen to be either Gaussian or sparse operators.
They also propose a variance-reduced modification which is an appropriate rescaling of the sum of several maps of the form \eqref{eq:structured-row-sketch}.
They provide theoretical guarantees for sketching operators in \eqref{eq:structured-row-sketch} with $L=2$ (and its variance-reduced modification) when $\mtx{S}_1$ and $\mtx{S}_2$ have entries that are drawn independently from an appropriately scaled mean-zero sub-Gaussian distribution, leaving analysis for the case when $L > 2$ open for future work.

\subsubsection{Row-wise vectorized tensors}

Rakhshan and Rabusseau \cite{rakhshan2020TensorizedRandoma} propose a distribution of sketching operators for which the $i^{\text{th}}$ row is given by $\mtx{S}[i, :] = \vectorize(\mathcal{X}_i)^{\trans}$, where $\mathcal{X}_i$ is a tensor in some factorized format and $\vectorize$ is a function that returns a vectorized version of its input as a column vector ($\vectorize(\mathcal{X}) = \mathcal{X}(:)$ in Matlab notation).
More specifically, they consider two cases:
In the first case, $\mathcal{X}_i$ is in \textit{CP format} and defined elementwise via 
\begin{equation}
    \mathcal{X}_i[j_1, j_2, \ldots, j_L] = \sum_{r=1}^R \vct{a}_r^{(i, 1)}[j_1] \cdot \vct{a}_r^{(i, 2)}[j_2] \cdots  \vct{a}_r^{(i, L)}[j_L]
\end{equation}
where the vector entries $\vct{a}_r^{(i, n)}[j_n]$ are drawn independently from an appropriately scaled Gaussian distribution.
In the second case, $\mathcal{X}_i$ is in so-called \textit{tensor train format} and defined elementwise via
\begin{equation}
    \mathcal{X}_i[j_1, j_2, \ldots, j_L] = \mtx{A}_{j_1}^{(i, 1)} \mtx{A}_{j_2}^{(i, 2)} \cdots \mtx{A}_{j_L}^{(i, L)},
\end{equation}
where $L$ is the number of tensor modes, and each matrix $\mtx{A}_{j_n}^{(i, n)}$ is of size $R_{n} \times R_{n+1}$ where $R_{1} = R_{L+1} = 1$ to ensure that the product is a scalar.
The entries of $\mtx{A}_{j_n}^{(i, n)}$ are drawn independently from an appropriately scaled Gaussian distribution.

For both of the constructs described above, the inner product of $\vectorize(\mathcal{X}_i)$ and Kronecker product vectors can be computed efficiently due to the special structure of the CP and tensor train formats.
This makes efficient application of the operator to Kronecker and Khatri--Rao product matrices possible.
Theoretical guarantees are provided for these vectorized tensor sketching operators in \cite{rakhshan2020TensorizedRandoma}.
The follow-up work \cite{rakhshan2021RademacherRandom} shows that the results for the tensor train-based sketching operators also extend to the case when the cores are drawn from a Rademacher distribution.

\subsubsection{Two-stage operators}

Iwen et al.\ \cite{iwen2020LowerMemory} propose a two-stage sketching procedure for mapping $(m_1 \cdots m_L)$-vectors to $d$-vectors.
The first step consists of applying a row-structured matrix $(\mtx{S}_1 \otimes \cdots \otimes \mtx{S}_L)$, where each $\mtx{S}_k$ is a sketching operator of size $p_k \times m_k$.
This maps the $(m_1 \cdots m_L)$-vector to an intermediate embedding space of dimension $(p_1 \cdots p_L)$.
This is then followed by another sketching operator $\mtx{T}$ of size $d \times (p_1 \cdots p_L)$ which maps the intermediate representation to the final $d$-dimensional space.

\subsection{The Kronecker SRFT} \label{subsubsec:Kronecker_SRFTs}

Kronecker SRFTs are a variant of the SRFTs discussed in Section~\ref{subsec:srfts}.
They can be applied very efficiently to a Kronecker product vector without forming the vector explicitly.
They were first proposed by \cite{battaglino2018PracticalRandomized} for efficient sketching of the Khatri--Rao product matrices that arise in tensor CP decomposition.
Theoretical analysis of these sketching operators can be found in \cites{jin2020FasterJohnson,  malik2020GuaranteesKronecker, bamberger2021JohnsonlindenstraussEmbeddings}.

The Kronecker SRFT that maps $(m_1 \cdots m_L)$-vectors to $d$-vectors takes the form
\begin{equation} \label{eq:Kronecker-SRFT}
    \mtx{S} = \sqrt{\frac{m_1 \cdots m_L}{d}} \mtx{R} \Big( \bigotimes_{k=1}^L \mtx{F}_k \Big) \Big( \bigotimes_{k=1}^L \mtx{D}_k \Big),
\end{equation}
where each $\mtx{D}_k$ is a diagonal $m_k \times m_k$ matrix of independent Rademachers, each $\mtx{F}_k$ is an $m_k \times m_k$ fast trigonometric transform, and $\mtx{R}$ randomly samples $d$ components from an $(m_1 \cdots m_L)$-vector. 
The Kronecker SRFT replaces the $\mtx{F}$ and $\mtx{D}$ operators in the standard SRFT by Kronecker products of smaller operators of the same form.
With $\vct{x}_1, \ldots, \vct{x}_L$ defined as in \cref{subsubsec:Row-structured-sketches}, the operator in \eqref{eq:Kronecker-SRFT} can be applied efficiently to $\vct{x}_1 \otimes \cdots \otimes \vct{x}_L$ via the formula
\begin{equation*}
    \sqrt{\frac{m_1 \cdots m_L}{d}} \mtx{R} \Big( \bigotimes_{k = 1}^L \mtx{F}_k \Big) \Big( \bigotimes_{k=1}^L \mtx{D}_k \Big) \Big(\bigotimes_{k=1}^L \vct{x}_k\Big) 
    = \sqrt{\frac{m_1 \cdots m_L}{d}} \mtx{R} \Big( \bigotimes_{k = 1}^L \mtx{F}_k \mtx{D}_k \vct{x}_k \Big).
\end{equation*}
The formula shows that only those entries in $\bigotimes_k \mtx{F}_k \mtx{D}_k \vct{x}_k$ that are sampled by $\mtx{R}$ need to be computed.
From this, we can back out the indices of each vector $\mtx{F}_k \mtx{D}_k \vct{x}_k$ that need to be computed.
Given these indices one could compute the relevant entries of these vectors using subsampled FFT methods of the kind alluded to in \cref{subsec:srfts}.
We note that this formula is straightforwardly extended to Kronecker and Khatri--Rao product matrices.


\subsection{TensorSketch} \label{subsubsec:TensorSketch}

A TensorSketch operator is a kind of structured CountSketch that can be applied very efficiently to Kronecker product matrices.\footnote{Recall that a CountSketch is a SASO in the sense of \cref{subsubsec:randblas:ShortAxSparse}. Each short-axis vector in a CountSketch has a single nonzero entry,  sampled from the Rademacher distribution.} 
The improved computational efficiency of TensorSketch comes at the cost of needing a larger embedding dimension than CountSketch. 
TensorSketch was first proposed in \cite{pagh2013CompressedMatrix} for fast approximate matrix multiplication.
It was further developed in \cites{pham2013FastScalable, avron2014SubspaceEmbeddings, diao2018SketchingKronecker} where it is used for low-rank approximation, regression, and other tasks.

Let $\vct{x}_1,\ldots,\vct{x}_L$ be defined as in Section~\ref{subsubsec:Row-structured-sketches}.
A TensorSketch, which we denote by $\mtx{S}$ below, maps an $(m_1 \cdots m_L)$-vector $\vct{v} = \vct{x}_1 \otimes \cdots \otimes \vct{x}_L$ to a $d$-vector via the formula
\begin{equation} \label{eq:TensorSketch-vector-formula}
    \mtx{S}\vct{v} = \FFT^{-1} \Big( \startimes_{k=1}^L \FFT(\mtx{S}_k \vct{x}_k) \Big),
\end{equation}
where each $\mtx{S}_k$ is an independent CountSketch that maps $m_k$-vectors to $d$-vectors.
Here, $\FFT$ denotes the discrete Fourier transform which can be efficiently applied using fast Fourier transform (FFT) methods. 
TensorSketches use the fact that polynomials can be multiplied using the DFT, which is why DFT and its inverse appear in the formula above; see \cite{pagh2013CompressedMatrix} for details.

\begin{remark}
We have not investigated whether fast trig transforms other than the discrete Fourier transform (e.g., the discrete cosine transform) can be used for this type of sketching operator.
\end{remark}

\subsection{Recursive sketching}\label{subsubsec:Recursive_sketch}

In order to achieve theoretical guarantees, the sketching operators discussed so far require an embedding dimension $d$ which scales \emph{exponentially} with $L$ when embedding a vector of the form $\vct{x}_1 \otimes \cdots \otimes \vct{x}_L$.
Ahle et al.\ \cite{ahle2020ObliviousSketchingSIAM} propose sketching operators that are computed recursively and have the remarkable property that their requisite embedding dimensions scale \emph{polynomially} with $L$.
Since \cite{ahle2020ObliviousSketchingSIAM} are concerned with oblivious subspace embedding of polynomial kernels, they consider the case when all $\vct{x}_1, \ldots, \vct{x}_L$ are of the same length. 
However, their results should extend to the general case when the vectors are of different lengths (for example, see \cite[Corollary~18]{malik2021MoreEfficient}).

Suppose $\vct{x}_1, \ldots, \vct{x}_L$ are $m$-vectors and that $L = 2^q$ for a positive integer $q$.
The recursive sketching operator first computes
\[
\vct{y}^{(0)}_k = \mtx{T}_k \vct{x}_k \quad\text{for}\quad k \in \idxs{L}
\]
where $\mtx{T}_1,\ldots,\mtx{T}_L$ are independent SASOs (e.g., CountSketches, see  \cref{subsubsec:randblas:ShortAxSparse}) that map $m$-vectors to $d$-vectors.
The $d$-vectors $\vct{y}_1^{(0)},\ldots,\vct{y}_L^{(0)}$ are now combined pairwise into $L/2=2^{q-1}$ vectors.
This is done by computing
\[
\vct{y}_k^{(1)} = \mtx{S}_k (\vct{y}_{2k-1}^{(0)} \otimes \vct{y}_{2k}^{(0)}) \quad\text{for}\quad k \in \idxs{L/2}
\]
where $\mtx{S}_1,\ldots,\mtx{S}_{L/2}$ are independent sketching operators that map $d^2$-vectors to $d$-vectors.
If the initial $\mtx{T}_1,\ldots,\mtx{T}_L$ are CountSketches then the $\mtx{S}_i$ are canonically TensorSketches.
If instead $\mtx{T}_1,\ldots,\mtx{T}_L$ are more general SASOs then the $\mtx{S}_i$ are canonically Kronecker SRFTs.
Regardless of which configuration we use, the pairwise combination of vectors is repeated for a total of $q = \log_2(L)$ steps until a single $d$-vector remains, which is the embedding of $\vct{x}_1 \otimes \cdots \otimes \vct{x}_L$.
The case when $L$ is not a power of two is handled by adding additional vectors $\vct{x}_{k} = \vct{e}_1$ for $k = L+1, \ldots, 2^{\lceil \log_2(L) \rceil}$ where $\vct{e}_1$ is the first standard basis vector in $\R^m$.
Recursive sketching operators are linear despite their somewhat complicated description.


Song et al.\ \cite{song2021FastSketchingPolynomial} develop a similar recursive sketching operator which takes inspiration from the one discussed above and applies it to the sketching of polynomial kernels.
For the degree-$L$ polynomial kernel, this involves sketching of matrices of the form $\mtx{A}^{\odot L} = \mtx{A} \odot \cdots \odot \mtx{A}$, where the matrix $\mtx{A}$ appears $L$ times in the right-hand~side.

The recursive sketching operator by \cite{ahle2020ObliviousSketchingSIAM} can be described by a binary tree, with each node corresponding to an appropriate sketching operator.
Ma and Solomonik \cite{ma2022CostEfficientGaussian} generalize this idea by allowing for other graph structures, but limit nodes in these graphs to be associated with Gaussian sketching operators. 
Under this framework, they develop a structured sketching operator whose embedding dimension only scales \emph{linearly} with $L$.
These operators can be adapted for efficient application to vectors with general tensor network structure which includes Kronecker products of vectors as a special case.

\subsection[Leverage score sampling for implicit matrices with tensor product structures]{Leverage score sampling for implicit matrices with tensor product structures}
\subsectionmark{Leverage score samplers}
\label{subsubsec:tensor_product_lev_scores}

Consider the problem of sketching and solving a least squares problem
\begin{equation}\label{eq:tensor_least_squares}
    \min_{\mtx{x}}\| \mtx{A} \mtx{X} - \mtx{Y}\|_{\mathrm{F}}
\end{equation}
when the columns of $\mtx{A}$ have tensor product structure and $\mtx{Y}$ is a thin unstructured matrix.
The sketching operators discussed so far in this \nameCref{sec8:tensors} can be efficiently applied to $\mtx{A}$.
However, since $\mtx{Y}$ lacks structure, these sketching operators require accessing all nonzero elements of $\mtx{Y}$.
This can be prohibitively expensive in applications such as the following. 
\begin{itemize}
    \item In iterative methods for tensor decomposition, one typically solves a sequence of least squares problems for which $\mtx{A}$ is structured and $\mtx{Y}$ contains all the entries of the tensor being decomposed \cite{kolda2009TensorDecompositions}.
    When $\mtx{Y}$ has a fixed proportion of nonzero entries, the cost will therefore scale exponentially with the number of tensor indices---a manifestation of the \emph{curse-of-dimensionality}.
    \item When fitting polynomial chaos expansion functions in surrogate modeling \cites{zhou2015WeightedDiscreteLeast, seshadri2017EffectivelySubsampledQuadratures, cheng2022QuadratureSampling}, $\mtx{A}$ contains evaluations of a multivariate polynomial on a structured quadrature grid and $\mtx{Y}$ (which will now be a column vector) contains the outputs of an expensive data generation process (e.g., an experiment or high-fidelity PDE simulation).
\end{itemize}

In both example applications, it is clearly desirable to avoid using all entries of $\mtx{Y}$ when solving \eqref{eq:tensor_least_squares}.
As discussed in \cref{sec7:lev_scores}, leverage score sampling can be used to sketch-and-solve least squares problems without accessing all entries of the right-hand side $\mtx{Y}$ while still providing performance guarantees.
Here we discuss how to take advantage of the structure of $\mtx{A}$ to speed up leverage score sampling.


\subsubsection{Kronecker product structure}
Consider a Kronecker product $\mtx{A} = \mtx{A}_1 \otimes \cdots \otimes \mtx{A}_L$ of $m_k \times n_k$ matrices $\mtx{A}_k$.
It is possible to perform \emph{exact} leverage score sampling on $\mtx{A}$ without even forming it. 
Cheng et al.\ \cite{cheng2016SPALSFast} used this fact to approximately solve least squares problems with Kronecker product design matrices, which has applications in algorithms for Tucker tensor decomposition.
Formal statements and proofs of these results later appeared in~\cite{diao2019OptimalSketching}.

To see how the sampling works, let $(p_i)$ be the leverage score sampling distribution of $\mtx{A}$ and let $(p_{i_k})$ be the leverage score sampling distribution of $\mtx{A}_k$ for $k\in\idxs{L}$.
For any $i \in \idxs{\prod_{k=1}^L m_k}$ and corresponding multi-index $(i_1, \ldots, i_L)$ satisfying
\begin{equation} \label{eq:kronecker-row-sampling}
    \mtx{A}[i,:] = \mtx{A}_1[i_1, :] \otimes \cdots \otimes \mtx{A}_L[i_L,:],
\end{equation}
it holds that 
\begin{equation} \label{eq:product-lev-score-distribution}
    p_i = p_{i_1}^{(1)} p_{i_2}^{(2)} \cdots p_{i_L}^{(L)}.
\end{equation}
Therefore, instead of drawing an index $i$ according to $(p_i)$, one can draw the index $i_k$ according to $(p_{i_k}^{(k)})$ for each $k \in \idxs{L}$.
Due to \eqref{eq:kronecker-row-sampling}, the row corresponding to the drawn index can be computed and rescaled without constructing $\mtx{A}$.
This process can be easily adapted to drawing multiple samples.

Fahrbach et al.\ \cite{fahrbach2022SubquadraticKronecker} discuss how the sampling approach above can be adapted for use in ridge regression when the design matrix is a Kronecker product.
Malik et al.\ \cite{malik2022FastAlgorithms} show an approach for efficient sampling according to the exact leverage scores of matrices of the form $\mtx{A}[:,J]$ when $\mtx{A}$ is a Kronecker product and $J$ is an index vector that satisfies certain monotonicity properties.

\subsubsection{Khatri--Rao product structure}
Sampling according to the leverage scores of a Khatri--Rao product matrix $\mtx{A} = \mtx{A}_1 \odot \cdots \odot \mtx{A}_L$ is more challenging than it is for a Kronecker product matrix.
Still, several approaches for doing so have been proposed. 
We divide them into two categories.
The methods in the first category sample according to the leverage scores  of the \emph{Kronecker product} of $\mtx{A}_1, \ldots, \mtx{A}_L$ instead of the Khatri--Rao product since this allows for simple and efficient sampling.
This can be viewed as sampling from a coarse approximation of the Khatri--Rao product leverage scores.
The methods in the second category sample according to exact or high-accuracy approximations of the Khatri--Rao product leverage score distribution.

\paragraph{Sampling according to Kronecker product leverage scores}

As noted by \cites{cheng2016SPALSFast, battaglino2018PracticalRandomized}, the leverage scores of $\mtx{A}$ can be upper bounded by
\begin{equation} \label{eq:kr-leverage-score-upper-bound}
    \ell_i(\mtx{A}) \leq \prod_{k=1}^L \ell_{i_k}(\mtx{A}_k),
\end{equation}
where $(i_1,\ldots,i_L)$ is the multi-index corresponding to $i$.
The two papers \cites{cheng2016SPALSFast, LK:2020:leverageSampleTensors}  use the expression on the right-hand side of \eqref{eq:kr-leverage-score-upper-bound} as an approximation to the exact leverage scores on the left-hand side. 
By using the bound \eqref{eq:kr-leverage-score-upper-bound}, they are able to prove theoretical performance guarantees when this approach is used for sketch-and-solve in least squares problems.
More precisely, Cheng et al.\ \cite{cheng2016SPALSFast} sample according to a mixture of the distribution in \eqref{eq:product-lev-score-distribution} and a distribution which depends on the magnitude of the dependent variables (i.e., the entries in the ``right-hand sides'' in the least squares problem).
Larsen and Kolda \cite{LK:2020:leverageSampleTensors} sample with respect to only the distribution in \eqref{eq:product-lev-score-distribution}.
Bharadwaj et al.\ \cite{bharadwaj2022DistributedMemoryRandomized} extend the work by \cite{cheng2016SPALSFast, LK:2020:leverageSampleTensors} to a distributed-memory setting and provide high-performance parallel implementations.
Ideas similar to those in \cite{LK:2020:leverageSampleTensors} are developed for the more complicated design matrices that arise in algorithms for tensor ring decomposition in \cite{malik2021SamplingBasedMethod}.
Those matrices have columns that are sums of vectors with Kronecker product structure.

\paragraph{Sampling according to exact or high-quality approximations of leverage scores}

Malik \cite{malik2021MoreEfficient} proposes a different approach for the Khatri--Rao product least squares problem.
By combining some of the ideas for fast leverage score estimation (see  \S \ref{subsec:Estimation-of-leverage-scores}) and recursive sketching (see \S \ref{subsubsec:Recursive_sketch}) with a sequential sampling approach, he improves the sampling and computational complexities of \cite{LK:2020:leverageSampleTensors}.
Malik et al.\ \cite{malik2022SamplingBasedDecomposition} simplify and generalize the method by \cite{malik2021MoreEfficient} to a wider family of structured matrices.
An upshot of this work is a method for efficiently sampling a Khatri--Rao product matrix according to its \emph{exact} leverage score distribution without forming the matrix.


Motivated by applications in kernel methods, \cite{woodruff2020NearInputSparsity} develop a recursive leverage score sampling method for sketching of matrices of the form $\mtx{A}^{\odot L} = \mtx{A} \odot \cdots \odot \mtx{A}$.
Their method starts by sampling from a coarse approximation to the leverage score sampling distribution and then iteratively refining it.
These ideas are further refined in \cite{woodruff2022LeverageScoreSampling} where the method is also extended to general Khatri--Rao products of matrices that can all be distinct.

\section{Partial updates to Kronecker product sketches} \label{subsec:partial-sketch-reuse}


The structured sketching operators discussed in \cref{subsec:Sketching-of-Kronecker-and-Khatri--Rao} are notable in that they are defined in terms of multiple smaller sketching operators.
Here we discuss situations when it is advantageous to reuse some of these smaller sketches across multiple calls to the structured sketching operator.
The examples we discuss come from works that use sketching in tensor decomposition algorithms.
Our goal with this discussion is to bring to light some of the functionality we think is important for structured sketches to have in order to best support potential usage in the tensor~community. 

By tensor, we mean multi-index arrays containing real numbers.
A tensor with $L$ indices is called an \textit{$L$-way tensor}.
Vectors and matrices are one- and two-way tensors, respectively.
Calligraphic capital letters (e.g., $\mathcal{X}$) are used to denote tensors with three or more indices.
Much like matrix decomposition, the purpose of tensor decomposition is to decompose a tensor into some number of simpler components.
We only give minimal background material on tensor decomposition here; see the review papers \cites{kolda2009TensorDecompositions, cichocki2015TensorDecompositions, sidiropoulos2017TensorDecomposition} for further details.

\subsection{Background on the CP decomposition}\label{subsubsec:fancy_sketch:cp_decomp_ols}

We first consider the CANDECOMP/PARAFAC (CP) decomposition which is also known as the canonical polyadic decomposition \cite[\S 3]{kolda2009TensorDecompositions}.
It decomposes an $L$-way tensor $\mathcal{X}$ of size $m_1 \times m_2 \times \cdots \times m_L$ into a sum of $R$ rank-1 tensors:
\begin{equation} \label{eq:CP-decomposition}
    \mathcal{X} = \sum_{r=1}^R \vct{a}_r^{(1)} \circ \vct{a}_r^{(2)} \circ \cdots \circ \vct{a}_r^{(L)},
\end{equation}
where $\circ$ denotes the outer product.
The $m_n \times R$ matrices $\mtx{A}^{(n)} = \begin{bmatrix} \vct{a}_1^{(n)} & \cdots & \vct{a}_R^{(n)} \end{bmatrix}$ for $n \in\idxs{L}$ are called factor matrices. 
When $R$ is sufficiently large, we can express the factor matrices as the solution to
\begin{equation} \label{eq:CP-decomposition-opt}
    \argmin_{\mtx{A}^{(1)}, \ldots, \mtx{A}^{(L)}} \Big\| \mathcal{X} - \sum_{r=1}^R \vct{a}_r^{(1)} \circ \vct{a}_r^{(2)} \circ \cdots \circ \vct{a}_r^{(L)} \Big\|_{\text{F}},
\end{equation}
where $\|\cdot\|_{\text{F}}$ denotes the Frobenius norm as generalized to tensors in the obvious way.
The broader problem of tensor decomposition is concerned with approximately solving \eqref{eq:CP-decomposition-opt}.
In particular, it is common to seek locally optimal solutions to this problem even when $R$ is too small for an identity of the form \eqref{eq:CP-decomposition} to hold for $\mathcal{X}$.

It is computationally intractable to solve \eqref{eq:CP-decomposition-opt} in the general case.
However, the problem admits several heuristics that are effective in practice.
One of the most popular heuristics is alternating minimization, wherein one solves for only one factor matrix at a time while keeping the others fixed.
That is, one solves a sequence of problems of the form
\begin{equation} \label{eq:CP-decomposition-opt-ALS}
    \mtx{A}^{(n)} = \argmin_{\mtx{A}} \Big\| \mathcal{X} - \sum_{r=1}^R \vct{a}_r^{(1)}  \circ \cdots \circ \vct{a}_r^{(n - 1)} \circ \vct{a}_r \circ \vct{a}_r^{(n+1)} \circ \cdots \circ \vct{a}_r^{(L)} \Big\|_{\text{F}}
\end{equation}
for $n \in \idxs{L}$.
If we adopt appropriate notation then \eqref{eq:CP-decomposition-opt-ALS} can be stated as a familiar linear least squares problem.
Accordingly, this alternating minimization approach is called \textit{alternating least squares} (ALS).
The ALS approach cycles through the indices $n \in \idxs{L}$ multiple times until some termination criteria is met.
Typical termination criteria include reaching a maximum number of iterations or seeing that the improvement in the objective falls below some threshold.

\subsubsection{Formulating and solving the least squares problem}

We begin by introducing flattened representations of $\mathcal{X}$.
Specifically, for $n \in \idxs{L}$, the $m_n \times \left(\prod_{j \neq n} m_j\right)$ matrix $\mtx{X}_{(n)}$ is given by horizontally concatenating the mode-$n$ \emph{fibers} 
$\mathcal{X}[i_1, \ldots, i_{n-1}, :, i_{n+1}, \ldots, i_N]$ as columns.
Such a matrix can be expressed in Matlab notation as follows
\begin{equation} \label{eq:mode-n-unfolding-def}
    \mtx{X}_{(n)} = \code{reshape}\left(\code{permute}(\mathcal{X}, [n, 1, \ldots, n-1 , n+1, \ldots, L]), m_n, \prod_{j \neq n} m_j\right).
\end{equation}
Next, we introduce the following flattened tensorizations of the factor matrices:
\begin{equation} \label{eq:A-neq-n}
    \mtx{A}^{\neq n} 
    := \mtx{A}^{(L)} \odot \cdots \odot \mtx{A}^{(n+1)} \odot \mtx{A}^{(n-1)} \odot \cdots \odot \mtx{A}^{(1)} 
    =: \bigodot_{\substack{j = L \\ j \neq n}}^1 \mtx{A}^{(j)}.
\end{equation}
Where we emphasize that the order of the matrices in the above product is important; our notation for the Khatri--Rao product at right reflects how the order proceeds from $j=L$ to $j=1$, skipping $j = n$. 

In terms of these matrices, the ALS update rule for the $n^{\text{th}}$ factor matrix is
\begin{equation} \label{eq:CP-factor-matrix-update}
    \mtx{A}^{(n)} 
    = \argmin_{\mtx{A}} \big\| \mtx{A}^{\neq n} \mtx{A}^{\trans} - \mtx{X}_{(n)}^{\trans} \big\|_{\text{F}}.
\end{equation}
We note that the Gram matrix for this least squares problem can be computed efficiently by the formula 
\begin{align}\label{eq:gram_of_katrirao}
    \mtx{A}^{\neq n \trans} \mtx{A}^{\neq n} &= (\mtx{A}^{(L) \trans} \mtx{A}^{(L)}) \circledast \cdots \circledast (\mtx{A}^{(n+1) \trans} \mtx{A}^{(n+1)}) \nonumber \\ & \qquad \quad  \circledast (\mtx{A}^{(n-1) \trans} \mtx{A}^{(n-1)}) \circledast \cdots \circledast (\mtx{A}^{(1) \trans} \mtx{A}^{(1)}).
\end{align}
Therefore solving the least squares problem in \eqref{eq:CP-factor-matrix-update} via the normal equations can be very efficient \cite[\S 3.4]{kolda2009TensorDecompositions}.
Indeed, the ALS update rule for the $n^{\text{th}}$ factor matrix becomes
\begin{equation} \label{eq:normal-equation-update}
    \mtx{A}^{(n)} = \mtx{X}_{(n)} \mtx{A}^{\neq n} (\mtx{A}^{\neq n \trans} \mtx{A}^{\neq n})^\dagger.
\end{equation}
The most expensive part of this update is actually computing $\mtx{X}_{(n)} \mtx{A}^{\neq n}$ \cite[\S 3.1.1]{battaglino2018PracticalRandomized}, which is analogous to the vector $\mtx{F}^{*}\vct{h}$ in the normal equations for an overdetermined least squares problem $\min_{\vct{z}}\|\mtx{F}\vct{z} - \vct{h}\|_2^2$.
Therefore, the fact that computing this matrix is the computational bottleneck in solving \eqref{eq:CP-factor-matrix-update} is the opposite of what one would expect when not working with tensors.
This phenomenon is why row-sampling sketching operators have been successful in ALS algorithms that use sketch-and-solve for the least squares subproblems \cite{LK:2020:leverageSampleTensors}. 

\begin{remark}
Although it is cheap to form the Gram matrix \eqref{eq:gram_of_katrirao}, there is potential for \textit{very} bad conditioning even when $L$ is small.
We do not know how seriously the poor conditioning affects ALS approaches to CP decomposition in practice.
\end{remark}

\subsection{Sketching for the CP decomposition}


Battaglino et al.\ \cite{battaglino2018PracticalRandomized} apply the Kronecker SRFT from Section~\ref{subsubsec:Kronecker_SRFTs} to the least squares problem in \eqref{eq:CP-factor-matrix-update}.
Letting $\mtx{T}_j$ and $\mtx{F}_j$ be of size $m_j \times m_j$, the sketching operator used before solving for the $n^{\text{th}}$ factor matrix is
\begin{equation} \label{eq:CP-decomposition-Kronecker-SRFT}
    \mtx{S}_n = \sqrt{ \frac{\prod_{\substack{j = 1\\j \neq n}}^L m_j}{d}} \mtx{R} \Big( \bigotimes_{\substack{j = L \\ j \neq n}}^1 \mtx{T}_j \Big) \Big( \bigotimes_{\substack{j = L \\ j \neq n}}^1 \mtx{F}_j \Big).
\end{equation}
Our notation for the Kronecker product operator indexes from $j = L$ to $j = 1$ so as to mimic our earlier notation for the Khatri--Rao product (see \eqref{eq:A-neq-n}).

A by-the-book application of this operator would require drawing new $\mtx{R}$ and $(\mtx{F}_j)_{j \neq n}$ every time it is applied in \eqref{eq:CP-factor-matrix-update}, i.e., $L$ times for every execution of the for loop.
Battaglino et al.\ \cite[Alg.~4]{battaglino2018PracticalRandomized} instead propose drawing $\mtx{F}_1, \ldots, \mtx{F}_L$ once and then reusing them throughout the algorithm, only drawing $\mtx{R}$ anew for each least squares problem.
This reduces the computational cost considerably since it allows for greater reuse of various computed quantities.
More specifically, the expensive application of the full Kronecker SRFT to the unstructured matrix $\mtx{X}_{(n)}^{\trans}$ does not have to be computed for every least squares problem.

Larsen and Kolda \cite{LK:2020:leverageSampleTensors} also sketch the least squares problems in \eqref{eq:CP-factor-matrix-update}.
They use the efficient leverage score sampling scheme for Khatri--Rao products discussed in Section~\ref{subsubsec:tensor_product_lev_scores}.
This approach also allows for some reuse between subsequent sketches. 
When solving for $\mtx{A}^{(n)}$ in \eqref{eq:CP-factor-matrix-update}, a row with multi-index $(i_1, \ldots, i_{n-1}, i_{n+1}, \ldots, i_L)$ is sampled with probability $p^{(1)}_{i_1} \cdots p^{(n-1)}_{i_{n-1}} p^{(n+1)}_{i_{n+1}} \cdots p^{(L)}_{i_L}$, where $(p^{(k)}_{i_k})$ is the leverage score sampling distribution for $\mtx{A}^{(k)}$.
Since each $\mtx{A}^{(k)}$ only change for every $L^\text{th}$ least squares problem, the probability distribution $(p^{(k)}_{i_k})$ can be used in $L-1$ least squares problems before it needs to be recomputed.

\subsection{Background on the Tucker decomposition}

The Tucker decomposition \cite[\S 4]{kolda2009TensorDecompositions} is another popular method that decomposes an $L$-way tensor $\mathcal{X}$ of size $m_1 \times m_2 \times \cdots \times m_L$ into 
\begin{equation}
    \sum_{r_1 = 1}^{R_1} \sum_{r_2 = 1}^{R_2} \cdots \sum_{r_L = 1}^{R_L} \mathcal{G}[r_1, r_2, \ldots, r_L] \, \vct{a}_{r_1}^{(1)} \circ \vct{a}_{r_2}^{(2)}  \circ \cdots \circ \vct{a}_{r_L}^{(L)},
\end{equation}
where the so-called \textit{core tensor} $\mathcal{G}$ is of size $R_1 \times R_2 \times \cdots \times R_L$. 
The $m_n \times R_n$ matrices $\mtx{A}^{(n)} = \begin{bmatrix} \vct{a}_1^{(n)} & \cdots & \vct{a}_{R_n}^{(n)} \end{bmatrix}$ for $n \in \idxs{L}$ are called factor matrices.
Similarly to the CP decomposition, the core tensor and factor matrices can be initialized randomly and then updated iteratively via ALS:\footnote{
    The update rules in \eqref{eq:Tucker-factor-matrix-update} and \eqref{eq:Tucker-core-update} have been formulated as least squares problems in order to show where sketching can be applied in the ALS algorithm.
    A more standard formulation of the update rules can be found in \cite[\S 4.2]{kolda2009TensorDecompositions}.
}
\begin{align}
    &\text{For } n \text{ in } \idxs{L} : \;\;\;\; \mtx{A}^{(n)} = \argmin_{\mtx{A}} \| \mtx{B}^{\neq n} \mtx{G}_{(n)}^{\trans} \mtx{A}^{\trans} - \mtx{X}_{(n)}^{\trans}\|_\text{F}, \label{eq:Tucker-factor-matrix-update} \\
    &\mathcal{G} = \argmin_{\mathcal{Z}} \| \mtx{B} \vectorize(\mathcal{Z}) - \vectorize(\mathcal{X}) \|_\text{F} \label{eq:Tucker-core-update},
\end{align}
where
\begin{align*}
    &\mtx{B}^{\neq n} = \mtx{A}^{(L)} \otimes \cdots \otimes \mtx{A}^{(n+1)} \otimes \mtx{A}^{(n-1)} \otimes \cdots \otimes \mtx{A}^{(1)}, \\
    &\mtx{B} = \mtx{A}^{(L)} \otimes \cdots \otimes \mtx{A}^{(1)},
\end{align*}
and the unfolding $\mtx{G}_{(n)}$ is defined analogously to $\mtx{X}_{(n)}$ in \eqref{eq:mode-n-unfolding-def}.
The steps in \eqref{eq:Tucker-factor-matrix-update} and \eqref{eq:Tucker-core-update} are then repeated until some convergence criterion is met.
We note that the least squares problems  \eqref{eq:Tucker-factor-matrix-update} and \eqref{eq:Tucker-core-update} are highly overdetermined when $(R_n)_{n\in\idxs{L}}$ are small compared to $(m_n)_{n \in \idxs{L}}$.

\subsection{Sketching for the Tucker decomposition}

Malik and Becker \cite{malik2018LowrankTucker} apply the TensorSketch discussed in Section~\ref{subsubsec:TensorSketch} to these problems.
From a straightforward adaption of \eqref{eq:TensorSketch-vector-formula} to matrix Kronecker products, we have that the TensorSketch of the design matrix $\mtx{B}^{\neq n}$ is computed via the formula
\begin{equation*}
    \FFT^{-1}\bigg(\Big( \bigodot_{\substack{j = L \\ j \neq n}}^1 \big( \FFT(\mtx{S}_j \mtx{A}^{(j)}) \big)^\top \Big)^\top \bigg),
\end{equation*}
where $\mtx{S}_j$ is a $d \times m_j$ CountSketch, and where $\top$ denotes transpose without complex conjugation. 
The formula for sketching $\mtx{B}$ is the same except for that it also includes the $n^{\text{th}}$ term in the Khatri--Rao product.

Instead of drawing new CountSketches for every application of TensorSketch, \cite[Alg.~2]{malik2018LowrankTucker} draw two sets of independent CountSketches at the start of the algorithm:
$(\mtx{S}^{(1)}_j)_{j=1}^L$ where $\mtx{S}^{(1)}_j$ is of size $d_1 \times m_j$, and $(\mtx{S}^{(2)}_j)_{j=1}^L$ where $\mtx{S}^{(2)}_j$ is of size $d_2 \times m_j$.
These two sets of sketches are then reused throughout the algorithm: $(\mtx{S}^{(1)}_j)$ are used for sketching \eqref{eq:Tucker-factor-matrix-update} and $(\mtx{S}^{(2)}_j)$ are used for sketching \eqref{eq:Tucker-core-update}.
The latter least squares problems are much larger than the former.
Using two sets of sketching operators makes it possible to choose a larger embedding dimension for the latter problem, i.e., choosing $d_2 > d_1$.
By reusing CountSketches in this fashion, considerable improvement in run time is achieved.
Moreover, it is possible to compute all relevant sketches of unfoldings of $\mathcal{X}$ at the start of the algorithm, leading to an algorithm that requires only a single pass of $\mathcal{X}$ in order to decompose~it.

\subsection{Implementation considerations}

We deem it most appropriate to implement the structured sketches discussed in \cref{subsec:Sketching-of-Kronecker-and-Khatri--Rao} in \RandLAPACK{} rather than \RBLAS{}.
In order to facilitate the applications discussed in Section~\ref{subsec:partial-sketch-reuse}, it should be possible to update or redraw specific components of the sketching operator after it has been created.
For example, when applying the operator in \eqref{eq:CP-decomposition-Kronecker-SRFT} in an ALS algorithm for CP decomposition as in \cite[Alg.~4]{battaglino2018PracticalRandomized}, we want to keep the random diagonal matrices $\mtx{F}_1, \ldots, \mtx{F}_L$ fixed but draw a new sampling operator $\mtx{R}$ before each application of $\mtx{S}_n$.

In the applications above, components are shared across the $L$ different sketching operators that are used when updating the $L$ different factor matrices.
Rather than defining $L$ different sketching operators with shared components, it is better to define a single operator that contains all components and which allows ``leaving one component out'' when applied to a matrix or vector.
For example, consider a Kronecker SRFT from \eqref{eq:Kronecker-SRFT} but with reversed order in the Kronecker products.
It contains the components $\mtx{R}$ and $\mtx{F}_1, \ldots, \mtx{F}_L$.
A user should be able to supply a parameter $n$ which indicates that the $n^{\text{th}}$ term in the Kronecker products should be left out when computing the sketch, resulting in a sketch of the form \eqref{eq:CP-decomposition-Kronecker-SRFT}.

\appendix
\chapter{Details on Basic Sketching}\label{app:sketching}

\minitoc

This \nameCref{app:sketching} covers sketching theory and implementation of sketching operators.
Its contents are relevant to \cref{sec2:rblas,sec3:LS_and_optim,sec7:lev_scores}.

\section{Subspace embeddings and effective distortion}
\label{subapp:effective_distortion}

Let $\mtx{S}$ be a wide $d \times m$ sketching operator and $L$ be a linear subspace of $\R^m$.
Recall from \cref{subsec:sketch_quality} that $\mtx{S}$ \textit{embeds $L$ into $\R^d$ with distortion $\delta \in [0, 1]$} if
\[
    (1-\delta)\|\vct{x}\|_2 \leq \|\mtx{S}\vct{x}\|_2 \leq (1+\delta)\|\vct{x}\|_2
\]
holds for all $\vct{x}$ in $L$.
We often use the term \textit{$\delta$-embedding} for such an operator.
Note that if $\mtx{S}$ is a $\delta$-embedding and $\delta'$ is greater than $\delta$, then $\mtx{S}$ is also a $\delta'$-embedding.
It can be useful to speak of the smallest distortion $\delta$ for which $\mtx{S}$ is a $\delta$-embedding for $L$;
we call this \textit{the distortion of $\mtx{S}$ for $L$}, and denote it by
\begin{align*}
    \mathscr{D}(\mtx{S};L) = \inf\{\, \delta  \,:\,
        & 0 \leq \delta \leq 1 \\
        & \mtx{S} \text{ is a } \delta\text{-embedding for } L \}.
\end{align*}
In this notation, we have $\mathscr{D}(\mtx{S};L) \geq 1$ when $\ker\mtx{S} \cap L$ is nontrivial.
If there is a unit vector $\vct{x}$ in $L$ where $\|\mtx{S}\vct{x}\| > 2$, then $\mathscr{D}(\mtx{S};L) = +\infty$.

Subspace embedding distortion has a significant limitation in that it depends on the scale of $\mtx{S}$, while many \RandNLA{} algorithms have no such dependence.
This shortcoming leads us to define the \textit{effective distortion of $\mtx{S}$ for $L$} as
\begin{equation}\label{eq:effective_dist_def}
    \mathscr{D}_{\mathrm{e}}(\mtx{S};L) = \inf_{t > 0}\mathscr{D}(t\mtx{S};L).
\end{equation}
Here, the infimum is over $t > 0$ rather than $t \neq 0$ since $\mathscr{D}(\mtx{S};L) = \mathscr{D}(-\mtx{S};L)$.

There is a convenient formula for effective distortion using concepts of \textit{restricted singular values} and \textit{restricted condition numbers}.
Restricted singular values are a fairly general concept of use in random matrix theory; see, e.g., \cite{OT:2017}.
They are measures an operator's ``size'' when considered from different vantage points within a set of interest.
Formally, we define the largest and smallest restricted singular values of a sketching operator $\mtx{S}$ for a subspace $L$ as
\[
    \sigma_{\max}(\mtx{S};L) = \max_{\vct{x} \in L}\left\{ \|\mtx{S}\vct{x}\|_2 \,:~\|\vct{x}\|_2 = 1\right\}
\]
and
\[
    \sigma_{\min}(\mtx{S};L) = \min_{\vct{x} \in L}\left\{ \|\mtx{S}\vct{x}\|_2 \,:~\|\vct{x}\|_2 = 1\right\}
\]
Given these concepts, we define the restricted condition number of $\mtx{S}$ on $L$ as
\[
\cond(\mtx{S}; L) = \frac{\sigma_{\max}(\mtx{S}; L)}{ \sigma_{\min}(\mtx{S}; L)}, 
\]
where we take $c / 0 = +\infty$ for any $c \geq 0$.

We have formulated the concepts of restricted singular values and condition numbers in a way that reflects their geometric meaning.
More concrete descriptions can be obtained by considering any matrix $\mtx{U}$ whose columns provide an orthonormal basis for $L$.
With this one can see that $\sigma_{\min}(\mtx{S};L)$ and $\sigma_{\max}(\mtx{S};L)$ coincide with the extreme singular values of $\mtx{S}\mtx{U}$, and that $\cond(\mtx{S};L) = \cond(\mtx{S}\mtx{U})$.

Next, we provide the connection between restricted condition numbers and effective distortion.
\cref{subsubapp:eff_dist_in_sap} explores this connection more in the context of sketch-and-precondition algorithms for saddle point problems.
\begin{proposition}\label{prop:effective_distortion}
    Let $L$ be a linear subspace and $\mtx{S}$ be a sketching operator on $L$.
    The effective distortion of $\mtx{S}$ for $L$ is
    \[
        \mathscr{D}_{\mathrm{e}}(\mtx{S};L) = \frac{\kappa - 1}{\kappa + 1}
    \]
    where we take $(\infty-1)/(\infty+1) = 1$.
\end{proposition}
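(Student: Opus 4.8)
The plan is to reduce everything to a one-variable analysis of how the scaling factor $t$ interacts with the restricted singular values $\sigma_{\min} := \sigma_{\min}(\mtx{S};L)$ and $\sigma_{\max} := \sigma_{\max}(\mtx{S};L)$. First I would record the elementary fact that the set $\{\|\mtx{S}\vct{x}\|_2 : \vct{x} \in L,\ \|\vct{x}\|_2 = 1\}$ equals the closed interval $[\sigma_{\min}, \sigma_{\max}]$. This follows by fixing an orthonormal basis matrix $\mtx{U}$ for $L$ and observing that $\|\mtx{S}\mtx{U}\vct{z}\|_2^2 = \vct{z}^{\trans}(\mtx{S}\mtx{U})^{\trans}(\mtx{S}\mtx{U})\vct{z}$ is the Rayleigh quotient of a positive semidefinite matrix, whose values over the unit sphere form $[\lambda_{\min}, \lambda_{\max}]$ by continuity and connectedness; as noted in the excerpt, $\sqrt{\lambda_{\min}} = \sigma_{\min}$ and $\sqrt{\lambda_{\max}} = \sigma_{\max}$ are exactly the restricted singular values. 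Consequently, for $t > 0$ and $\delta \in [0,1)$, the operator $t\mtx{S}$ is a $\delta$-embedding for $L$ if and only if both $(1-\delta) \le t\sigma_{\min}$ and $t\sigma_{\max} \le (1+\delta)$.

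Next I would swap the two infima. Since $\mathscr{D}(t\mtx{S};L)$ is itself an infimum over admissible $\delta$, we have
\[
    \mathscr{D}_{\mathrm{e}}(\mtx{S};L) = \inf\bigl\{\, \delta \in [0,1] \,:\, \exists\, t > 0 \text{ with } t\mtx{S} \text{ a } \delta\text{-embedding for } L \,\bigr\}.
\]
Assume first $\sigma_{\min} > 0$, so that $\kappa = \cond(\mtx{S};L) = \sigma_{\max}/\sigma_{\min} \in [1,\infty)$. By the previous step, for $\delta \in [0,1)$ a valid scaling $t$ exists if and only if the interval $\bigl[\tfrac{1-\delta}{\sigma_{\min}},\, \tfrac{1+\delta}{\sigma_{\max}}\bigr]$ is nonempty, i.e., $(1-\delta)\sigma_{\max} \le (1+\delta)\sigma_{\min}$, which rearranges to $\delta \ge \tfrac{\sigma_{\max}-\sigma_{\min}}{\sigma_{\max}+\sigma_{\min}} = \tfrac{\kappa-1}{\kappa+1}$. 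Hence the set in the display above is the interval $\bigl[\tfrac{\kappa-1}{\kappa+1},\,1\bigr]$, with infimum $\tfrac{\kappa-1}{\kappa+1}$; moreover this value is attained, since at $\delta = \tfrac{\kappa-1}{\kappa+1}$ the admissible interval for $t$ collapses to the single point $t^\star = \tfrac{2}{(\kappa+1)\sigma_{\min}} = \tfrac{2\kappa}{(\kappa+1)\sigma_{\max}}$.

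Finally I would dispatch the degenerate case $\sigma_{\min} = 0$, where the convention $c/0 = +\infty$ gives $\kappa = +\infty$ and $\tfrac{\kappa-1}{\kappa+1} = 1$ by the stated convention. Here, for any $t > 0$ there is a unit $\vct{x} \in L$ with $\|t\mtx{S}\vct{x}\|_2 = 0$, so the lower bound $(1-\delta)\|\vct{x}\|_2 \le \|t\mtx{S}\vct{x}\|_2$ forces $\delta \ge 1$; conversely $t\mtx{S}$ is a $1$-embedding once $t$ is small enough that $t\sigma_{\max} \le 2$ (recall $\sigma_{\max} < \infty$ since $L$ is finite-dimensional). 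Thus the set of achievable $\delta$ is $\{1\}$ and $\mathscr{D}_{\mathrm{e}}(\mtx{S};L) = 1$, matching the formula. The only mildly delicate points are the interval-range fact in the first step and careful bookkeeping in these edge cases where $\ker\mtx{S} \cap L$ is nontrivial; everything else is one-line algebra.
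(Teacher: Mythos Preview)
Your proof is correct and follows essentially the same approach as the paper: both reduce the subspace embedding condition to the two scalar constraints $(1-\delta) \le t\sigma_{\min}$ and $t\sigma_{\max} \le (1+\delta)$, then determine the minimal $\delta$ by the same algebra, arriving at $t^\star = 2/(\sigma_{\max}+\sigma_{\min})$ and $\delta = (\kappa-1)/(\kappa+1)$. Your version is slightly more careful than the paper's in that you explicitly treat the degenerate case $\sigma_{\min}=0$ (the paper only states the convention $(\infty-1)/(\infty+1)=1$ without verifying it), and your framing via swapping the two infima is a clean way to organize the argument; but the substance is identical.
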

\begin{proof}
    The scaled sketching operator $t\mtx{S}$ is a $\delta$-embedding for $L$ if and only if
    \[
        (1-\delta)\|\vct{x}\|_2 \leq t\|\mtx{S}\vct{x}\|_2 \leq (1+\delta)\|\vct{x}\|_2\quad\text{for all }\vct{x} \text{ in } L.
    \]
    This is equivalent to
    \[
        \frac{1-\delta}{t} \leq \frac{\|\mtx{S}\vct{x}\|_2}{\|\vct{x}\|_2} \quad\text{and}\quad \frac{\|\mtx{S}\vct{x}\|_2}{\|\vct{x}\|_2} \leq \frac{1+\delta}{t} \quad\text{for all } \vct{x} \text{ in } L \setminus \{\vct{0}\}. 
    \]
    To simplify these bounds, we optimize over $\vct{x}$.
    Abbreviating $\sigma_1 := \sigma_{\max}(\mtx{S};L)$ and $\sigma_n := \sigma_{\max}(\mtx{S};L)$ for $n = \dim(L)$, we find that $t\mtx{S}$ is a $\delta$-embedding if and only if
    \[
        \frac{1-\delta}{t} \leq \sigma_n \quad\text{and}\quad \frac{1+\delta}{t} \leq \sigma_1.
    \]
    These identities can be rearranged to find the following constraints on $\delta$:
    \[
        1 - \sigma_1 t \leq \delta \quad\text{and}\quad t \sigma_n - 1 \leq \delta.
    \]
    The value of $t$ which permits minimum $\delta$ is that which makes the lower bounds coincide.
    That is, the optimal $t$ is $t_\star = 2 / (\sigma_1 + \sigma_n)$.
    Plugging this into the bounds above, our constraints on $\delta$ reduce to
    \[
        \delta \geq 1 - \sigma_1 t_\star = t_{\star} \sigma_n - 1 = \frac{\sigma_1 - \sigma_n}{\sigma_1 + \sigma_n} = \frac{\kappa - 1}{\kappa + 1},
    \]
    as desired.
\end{proof}

\subsection{Effective distortion of Gaussian operators}

It is informative to consider the concepts of restricted condition numbers and effective distortion for Gaussian sketching operators.
Therefore, let us suppose that our $d \times m$ sketching operator $\mtx{S}$ has iid mean-zero Gaussian entries, and consider an $n$-dimensional subspace $L$ in $\R^m$.
By rotational invariance of Gaussian distribution, we can infer that the distribution of $\cond(\mtx{S};L)$ coincides with that of $\cond(\mtx{\tilde{S}})$ for a $d \times n$ Gaussian matrix $\mtx{\tilde{S}}$.
Strong concentration results are available to understand the distribution of $\cond(\mtx{\tilde{S}})$.

Specifically, letting $d = sn$ for a constant $s > 1$, results by Silverstein \cite{Silverstein:1985:wishartEigval} and Geman \cite{Geman:1980:normOfGaussianMatrix} imply
\begin{equation}\label{eq:cond_of_gaussian}
    \cond(\mtx{S};L) \to \frac{\sqrt{s} + 1}{\sqrt{s} - 1} \quad\text{almost surely as}\quad n \to \infty. 
\end{equation}
This can be turned around using \cref{prop:effective_distortion} to obtain 
\begin{equation}\label{eq:eff_dist_of_gaussian}
    \mathscr{D}_{\mathrm{e}}(\mtx{S};L) \to  \frac{1}{\sqrt{s}} \quad\text{almost surely as}\quad n \to \infty. 
\end{equation}
We emphasize that \eqref{eq:cond_of_gaussian} and \eqref{eq:eff_dist_of_gaussian} hold for any fixed $n$-dimensional subspace $L$.
These facts justify aggressively small choices of embedding dimension when using Gaussian sketching operators in randomized algorithms for least squares.
Meng, Saunders, and Mahoney come to the same conclusion in their work on LSRN  \cite[Theorem 4.4]{MSM:2014:LSRN}.

\section{Short-axis-sparse sketching operators}\label{subapp:SJLTs}

In this appendix we make liberal use of the abbreviation \textit{SASO} (for ``short-axis-sparse sketching operator'') introduced in \cref{subsubsec:randblas:ShortAxSparse}.
Without loss of generality, we describe SASOs in the wide case, i.e., when $\mtx{S}$ is $d \times m$ with $d \ll m$.

\subsection{Implementation notes }\label{subapp:SASO_implement}

\subsubsection{Constructing SASOs column-wise}\label{subsubapp:sjlts_construct}

It is extremely cheap to construct and store a wide SASO.
The precise storage format depends on how one wants to apply the SASO later on, which can vary depending on context.
However, the construction is embarrassingly parallel across columns provided one uses CBRNGs (counter-based random number generators; see \S \ref{subsec:randblas_rngs}), and this structure leads to canonical methods for generating SASOs.

We first consider the SASO construction where row indices are partitioned into index sets $I_1,\ldots,I_k$ of roughly equal size.
Given such a partition, the indices of nonzeros for a given column are chosen by taking one element (independently and uniformly) from each of the index sets $I_j$.
The naive implementation can sample these row indices with $k$ parallel calls to the CBRNG.

Now consider the construction where the row indices for a column are chosen by selecting $k$ elements from $\idxs{d}$ uniformly without replacement.
This can be done in $O(km)$ time by using Fisher-Yates sampling and carefully re-using workspace.
For a concrete implementation, we refer the reader to
\begin{quote}
\url{https://github.com/BallisticLA/RandBLAS/blob/19sept22/src/sjlts.cc#L14-L78}.\footnote{This code was written when we used the term ``SJLT'' for what we now call a ``SASO.''}
\end{quote}
While the implementation above is sequential, it is easy to parallelize. Given $T$ threads, the natural modification to the algorithm takes $O(mk/T)$ time and requires $O(Td)$ workspace.
The constants in the big-$O$ notation are small.

\subsubsection{Remarks on storage formats}

It is reasonable for a standard library to restrict SASOs to only the most common sparse matrix formats.
We believe both compressed sparse row (CSR) and compressed sparse column (CSC) are worth considering.
CSC is the more natural of the two since (wide) SASOs are constructed columnwise.
If CSR format is preferred for some reason, then we recommend constructing $\mtx{S}$ columnwise while retaining extra data to facilitate conversion to CSR.

In principle, if the nonzero entries of $\mtx{S}$ are $\pm 1$ \textit{and} CSC is used as the storage format, then one could do away with storing the nonzero values explicitly; one could instead store the sign information using signed integers for the row indices.
We do not favor this approach since it precludes working with SASOs with more than two distinct nonzero values.

For the matrices $\mtx{A}$ and $\mtx{A}_{\text{sk}}$, we must consider whether they are in column-major or row-major format.
Indeed, both formats need to be supported since \cref{sec3:LS_and_optim} framed all least squares problems with tall data matrices.
While this was without loss of generality from a mathematical perspective, a user with an underdetermined least squares problem involving a wide column-major data matrix $\mtx{B}$ is effectively needing to sketch the tall row-major matrix $\mtx{A} = \mtx{B}^{\trans}$.

\subsubsection{Applying a wide SASO}\label{subsubapp:sjlts_apply}

There are four combinations of storage formats we need to consider for $(\mtx{S},\mtx{A})$.

\paragraph{$\mtx{S}$ is CSC, $\mtx{A}$ is row-major.}
Suppose we have $P$ processors.
Our suggested approach is to partition the row index set $\idxs{d}$ into $I_1,\ldots,I_P$ and to have each processor be responsible for its own block of rows.
The $p^{\text{th}}$ processor computes its row block by streaming over the columns of $\mtx{S}$ and rows of $\mtx{A}$, accumulating outer products as indicated below
\[
    \mtx{A}_{\text{sk}}[I_p, :] = \sum_{\ell\in\idxs{m}}\mtx{S}[I_p,\ell]\mtx{A}[\ell,:].
\]
An individual term $\mtx{S}[I_p,\ell]\mtx{A}[\ell,:]$ can cheaply be accumulated into $\mtx{A}_{\text{sk}}[I_p, :]$ by using the fact that $\mtx{S}[I_p,\ell]$ is extremely sparse.
If $R$ denotes the number of nonzeros in $\mtx{S}[I_p, \ell]$, then the outer-product accumulation can be computed with $R$ \code{axpy} operations involving the row $\mtx{A}[\ell,:]$.
Note that since $\mtx{S}$ has $k$ nonzeros per column (with row indices distributed uniformly at random), this value $R$ is a sum of $|I_p|$ iid Bernoulli random variables with mean $k/d$.
Therefore the expected number of \code{axpy}'s performed by processor $p$ for term $\ell$ is $|I_p|k/d$.

\paragraph{$\mtx{S}$ is CSR, $\mtx{A}$ is row-major.}
Here, we suggest that the $d$ rows of $\mtx{A}_{\text{sk}}$ be computed separately from one another.
An individual row is given by $\mtx{A}_{\text{sk}}[i,:] = \mtx{S}[i,:]\mtx{A}$.
Evaluating the product of this sparse vector and dense matrix can be done by taking a linear combination of a small number of rows of $\mtx{A}$.
Specifically, if $R$ is the number of nonzeros in $\mtx{S}[i,:]$ then computing $\mtx{A}_{\text{sk}}[i,:]$ only requires $R$ rows from $\mtx{A}$.
Since the columns of $\mtx{S}$ are independent, $R$ is a sum of $m$ iid Bernoulli random variables with mean $k/d$.
Therefore we expect to access $mk/d$ rows of $\mtx{A}$ in order to compute $\mtx{A}_{\text{sk}}[i,:]$.


\paragraph{$\mtx{S}$ is CSC, $\mtx{A}$ is column-major.}
Here, we suggest that the $n$ columns of $\mtx{A}_{\text{sk}}$ be computed separately from one another.
An individual column is given by $\mtx{A}_{\text{sk}}[:,j] = \mtx{S}\mtx{A}[:,j]$.
We evaluate this product by taking a linear combination of the columns of $\mtx{S}$, according to
\[
    \mtx{A}_{\text{sk}}[:,j] = \sum_{\ell\in\idxs{m}}\mtx{S}[:,\ell]\mtx{A}[\ell,j].
\]
Note that each of the $\ell$ terms in this sum is a sparse vector with $k$ nonzero entries.
Based on our preliminary experiments, this method has mediocre single-thread performance, but it has excellent scaling properties for many-core machines.

\paragraph{$\mtx{S}$ is CSR, $\mtx{A}$ is column-major.}
We were unable to determine a method that parallelizes well for this pair of data formats.
The most efficient algorithm may be to convert $\mtx{S}$ to CSC and then to apply the preferred method when $\mtx{S}$ is CSC and $\mtx{A}$ is column-major.

\subsection{Theory and practical usage}\label{subapp:SASO_usage}

\subsubsection{SASO theory}\label{subsubapp:sjlts_theory}

A precursor to the SASOs we consider is described in \cite{DKS:2010:SJLT}, which sampled row indices for nonzero entries from $\idxs{d}$ with replacement. 
The first theoretical analysis of the SASOs we consider was conducted in \cite{KN:2012:SJLTs} and concerned the distributional Johnson-Lindenstrauss property.
Shortly thereafter, \cite{CW:2013:CWTransform} and \cite{MM13_STOC:CountSketch} studied OSE properties for SASOs with a single nonzero per column; the latter referred to the construction as ``CountSketch.''

Theoretical analyses for OSE properties of general SASOs (i.e., those with more than one nonzero per column) were first carried out by \cites{NN:2013:OSNAPs,KN:2014:SJLTs} and subsequently improved by \cites{BDN:2015,Cohen:2016:SJLTs}.
Much of the SASO analysis has been through the lens of ``hashing functions,'' and does not require that the columns of the sketching operator are fully independent.
We do not know of any practical advantage to SASOs with partial dependence across the columns.

\begin{remark}[Navigating the literature]
    \cite{CW:2017:nnztime} is a longer journal version of \cite{CW:2013:CWTransform}.
    \cite{KN:2014:SJLTs} and \cite{KN:2012:SJLTs} have the same title, and 
    the former is considered a more developed journal version of the latter.
\end{remark}

\subsubsection{Selecting parameters for SASOs}\label{subsubapp:sjlts_params}

We are in the process of developing recommendations for how to set the parameters $d$ and $k$ for a distribution over SASOs.
So far we have observed that when $d$ is fixed the sketch quality increases rapidly with $k$ before reaching a plateau.
As one point of reference, we have observed that there is no real benefit in $k$ being larger than eight when embedding the range of a $100,000 \times 2,000$ matrix into a space with ambient dimension $d = 6,000$.
Furthermore, for the data matrices we tested, the restricted condition numbers of those sketching operators were tightly concentrated at $O(1)$.
Extensive experiments with parameter selection for SASOs in a least squares context are given in \cite{Urano:2013}.

\section{Theory for sketching by row selection}\label{subapp:row_selection_theory}

Here we prove \cref{prop:quality_of_general_row_sampling}.
Our proof is inspired by \cite[Problem 5.13]{Tropp:2021:LecNotes}, which begins with the following adaptation of \cite[Theorem 5.1.1]{Tropp:2015:matrixConcentrationBook}.

\begin{theorem}\label{thm:matrix_chernoff}
    Consider an independent family $\{\mtx{X}_1,\ldots,\mtx{X}_s\} \subset \Herm^n$ of random psd matrices that satisfy $\lambda_{\max}(\mtx{X}_i) \leq L$ almost surely.
    Let $\mtx{Y} = \sum_{i=1}^s \mtx{X}_i$, and define the mean parameters
    \[
            \mu_{\max} = \lambda_{\max}(\E\mtx{Y}) \quad\text{and}\quad \mu_{\min} = \lambda_{\min}(\E\mtx{Y}).
    \]
    One has that
    \begin{align*}
        &\Pr\left\{ \lambda_{\max}(\mtx{Y} - (1+t)\E\mtx{Y}) \geq 0 \right\} \leq n \left[\frac{\exp(t)}{(1+t)^{(1+t)}}\right]^{\mu_{\max}/L} \quad\text{for } t > 0,\quad\text{and}\\
        &\Pr\left\{ \lambda_{\max}((1-t)\E\mtx{Y} - \mtx{Y}) \geq 0 \right\} \leq n\left[\frac{\exp(-t)}{(1-t)^{(1-t)}}\right]^{\mu_{\min}/L}\quad\text{for } t \in (0, 1).
    \end{align*}
\end{theorem}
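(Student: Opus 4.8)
The plan is to establish this as a (minor) reformulation of the matrix Chernoff inequality \cite{Tropp:2015:matrixConcentrationBook,Tropp:2019:LecNotes}, whose proof rests on the matrix Laplace-transform method of Ahlswede--Winter and Oliveira combined with Lieb's concavity theorem. I would prove the upper-tail estimate in detail and obtain the lower-tail one by the symmetric argument applied to $-\mtx{Y}$, using $\lambda_{\min}(\mtx{Y}) = -\lambda_{\max}(-\mtx{Y})$ together with the chord bound $\mathrm{e}^{-\theta z}\le 1+\tfrac{\mathrm{e}^{-\theta L}-1}{L}z$ on $[0,L]$ in place of its $\theta>0$ counterpart, and $\E\mtx{Y}\succeq\mu_{\min}\mtx{I}$ in place of $\E\mtx{Y}\preceq\mu_{\max}\mtx{I}$. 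It is cleanest to first work with the event $\{\lambda_{\max}(\mtx{Y})\ge(1+t)\mu_{\max}\}$; in the applications of interest (e.g.\ $\E\mtx{Y}=\mtx{I}_n$ in \cref{prop:quality_of_general_row_sampling}) one has $\E\mtx{Y}=\mu\mtx{I}$, so $\mu_{\max}=\mu_{\min}=\mu$ and this event coincides verbatim with $\{\lambda_{\max}(\mtx{Y}-(1+t)\E\mtx{Y})\ge0\}$, which is all that is needed downstream.

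The skeleton has four steps. (i) \emph{Matrix Markov.} For any $\theta>0$, the inequality $\trace\exp(\cdot)\ge\exp(\lambda_{\max}(\cdot))$ and Markov's inequality give $\Pr\{\lambda_{\max}(\mtx{Y})\ge s\}\le \mathrm{e}^{-\theta s}\,\E\trace\exp(\theta\mtx{Y})$, so everything reduces to bounding the matrix mgf of $\mtx{Y}=\sum_i\mtx{X}_i$. (ii) \emph{Subadditivity of the matrix cgf.} Using independence and iterating, via the tower property, the consequence of Lieb's theorem that $\mtx{H}\mapsto\trace\exp(\mtx{H}+\mtx{C})$ is concave for each fixed Hermitian $\mtx{C}$, one obtains $\E\trace\exp(\theta\mtx{Y})\le\trace\exp\!\big(\textstyle\sum_i\log\E\,\mathrm{e}^{\theta\mtx{X}_i}\big)$. (iii) \emph{Semidefinite moment bound.} Since each $\mtx{X}_i$ is psd with $\mtx{X}_i\preceq L\mtx{I}$, convexity of $z\mapsto\mathrm{e}^{\theta z}$ on $[0,L]$ gives, through the functional calculus, $\mathrm{e}^{\theta\mtx{X}_i}\preceq\mtx{I}+g(\theta)\mtx{X}_i$ with $g(\theta)=(\mathrm{e}^{\theta L}-1)/L$; taking expectations and using $\log(\mtx{I}+\mtx{M})\preceq\mtx{M}$ yields $\log\E\,\mathrm{e}^{\theta\mtx{X}_i}\preceq g(\theta)\E\mtx{X}_i$. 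Summing over $i$, using monotonicity of $\trace\exp$ under $\preceq$, and $\E\mtx{Y}\preceq\mu_{\max}\mtx{I}$, gives $\E\trace\exp(\theta\mtx{Y})\le n\exp(g(\theta)\mu_{\max})$. (iv) \emph{Scalar optimization.} Taking $s=(1+t)\mu_{\max}$ leaves $n\inf_{\theta>0}\exp\!\big(\mu_{\max}(g(\theta)-\theta(1+t))\big)$; the stationary point $\theta=\tfrac1L\log(1+t)$ collapses the bound to $n\big[\mathrm{e}^t/(1+t)^{1+t}\big]^{\mu_{\max}/L}$, and the elementary estimate $(1+t)\log(1+t)>t$ for $t>0$ (resp.\ its $t\in(0,1)$ analogue, $(1-t)\log(1-t)>-t$, for the lower tail) furnishes the asserted strictness $\mathrm{e}^t<(1+t)^{1+t}$.

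The main obstacle to a genuinely self-contained proof is Step (ii): Lieb's concavity theorem is a nontrivial input from matrix analysis, and even granting it, some care is needed with the conditional-expectation induction that strips off one summand at a time. In a survey treatment I would invoke \cite[Thm.~5.1.1]{Tropp:2015:matrixConcentrationBook} for this step, presenting Steps (i), (iii), (iv) explicitly for concreteness and pointing to \cite{Tropp:2015:matrixConcentrationBook,Tropp:2019:LecNotes} for the proof of matrix-cgf subadditivity.
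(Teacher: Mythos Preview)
The paper does not prove this theorem: it merely states it as ``the following adaptation of \cite[Theorem 5.1.1]{Tropp:2015:matrixConcentrationBook}'' and uses it as a black box to prove \cref{prop:quality_of_general_row_sampling}. Your proposal correctly reconstructs the standard matrix-Laplace proof from Tropp's monograph (matrix Markov, Lieb-based subadditivity of the cgf, chord bound on $[0,L]$, scalar optimization), which is precisely the argument that sits behind the cited result.

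One point worth highlighting is that you caught something real: the events $\{\lambda_{\max}(\mtx{Y}-(1+t)\E\mtx{Y})\ge 0\}$ and $\{\lambda_{\max}(\mtx{Y})\ge(1+t)\mu_{\max}\}$ are \emph{not} equivalent in general (the former is larger), and the textbook proof you outline only directly controls the latter. You correctly observe that the paper's sole application has $\E\mtx{Y}=\mtx{I}_n$, where the two events coincide, so the discrepancy is immaterial downstream. That is exactly the right diagnosis, and it is a more careful reading of the statement than a casual reader might make.
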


Here, we restate the result we aim to prove. 

\begin{proposition}[Adaptation of \cref{prop:quality_of_general_row_sampling}]\label{prop:quality_of_general_row_sampling_redux}
    Suppose $\mtx{A}$ is an $m \times n$ matrix of rank $n$, $\vct{q}$ is a distribution over $\idxs{m}$, and $t$ is in $(0, 1)$.
    Let $\mtx{S}$ be a $d \times m$ sketching operator with rows that are distributed iid as
    \[
        \mtx{S}[i, :] = \frac{\vct{\delta}_j}{\sqrt{d q_j}} \text{ with probability } q_j,
    \]
    and let $E(t,\mtx{S})$ denote the event that 
    \begin{equation*}
        (1-t)\|\vct{y}\|_2^2 \leq \|\mtx{S}\vct{y}\|_2^2 \leq (1+t)\|\vct{y}\|_2^2~~\forall\, \vct{y} \in \range\mtx{A}.
    \end{equation*}
    Using $r := \min_{j \in \idxs{m}}\frac{q_j}{p_j(\mtx{A})}$, we have
    \begin{align*}
        \Pr\left\{ E(t,\mtx{S}) \text{ fails} \right\}
            & \leq 2n\left(\frac{\exp(t)}{(1+t)^{(1+t)}}\right)^{r d / n}.
    \end{align*}
\end{proposition}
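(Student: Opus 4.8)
The natural route is to recast the subspace-embedding event as a two-sided operator inequality for a sum of independent rank-one psd matrices and then invoke the matrix Chernoff bound (\cref{thm:matrix_chernoff}). Fix a matrix $\mtx{U}\in\R^{m\times n}$ whose columns form an orthonormal basis for $\range\mtx{A}$; such a $\mtx{U}$ exists since $\rank\mtx{A}=n$, and it satisfies $\ell_j(\mtx{A})=\|\mtx{U}[j,:]\|_2^2$ and $p_j(\mtx{A})=\ell_j(\mtx{A})/n$. Parametrizing $\vct{y}=\mtx{U}\vct{z}$ gives $\|\vct{y}\|_2^2=\|\vct{z}\|_2^2$ and $\|\mtx{S}\vct{y}\|_2^2=\vct{z}^{\trans}\mtx{Y}\vct{z}$ with $\mtx{Y}:=(\mtx{S}\mtx{U})^{\trans}(\mtx{S}\mtx{U})$, so $E(t,\mtx{S})$ holds if and only if $(1-t)\mtx{I}_n\preceq\mtx{Y}\preceq(1+t)\mtx{I}_n$. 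Hence the failure of $E(t,\mtx{S})$ is contained in $\{\lambda_{\max}(\mtx{Y})\ge 1+t\}\cup\{\lambda_{\min}(\mtx{Y})\le 1-t\}$, and it suffices to bound the probabilities of these two events. (We may assume $r>0$; if $q_j=0$ for some $j$ with $p_j(\mtx{A})>0$ then $r=0$ and the claimed bound, being $2n\ge 1$, is vacuous.)

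Write $\mtx{Y}=\sum_{i=1}^d\mtx{X}_i$ with $\mtx{X}_i:=(\mtx{S}[i,:]\mtx{U})^{\trans}(\mtx{S}[i,:]\mtx{U})$. Since the rows of $\mtx{S}$ are iid, the $\mtx{X}_i$ are iid; and on the event that row $i$ selects index $j$ we have $\mtx{S}[i,:]\mtx{U}=\mtx{U}[j,:]/\sqrt{d q_j}$, so $\mtx{X}_i=(d q_j)^{-1}\mtx{U}[j,:]^{\trans}\mtx{U}[j,:]$ is rank-one and psd. Two quantities feed into \cref{thm:matrix_chernoff}. First,
\[
\E\mtx{X}_i=\sum_{j\in\idxs{m}}q_j\cdot\frac{1}{d q_j}\,\mtx{U}[j,:]^{\trans}\mtx{U}[j,:]=\frac{1}{d}\mtx{U}^{\trans}\mtx{U}=\frac{1}{d}\mtx{I}_n,
\]
so $\E\mtx{Y}=\mtx{I}_n$ and $\mu_{\max}=\mu_{\min}=1$. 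Second, on the event that row $i$ selects $j$,
\[
\lambda_{\max}(\mtx{X}_i)=\frac{\|\mtx{U}[j,:]\|_2^2}{d q_j}=\frac{\ell_j(\mtx{A})}{d q_j}=\frac{n}{d}\cdot\frac{p_j(\mtx{A})}{q_j}\le\frac{n}{dr},
\]
using $\ell_j(\mtx{A})=n\,p_j(\mtx{A})$ and $q_j/p_j(\mtx{A})\ge r$ for every $j$. Thus $\lambda_{\max}(\mtx{X}_i)\le L:=n/(dr)$ almost surely, whence $\mu_{\max}/L=\mu_{\min}/L=dr/n$.

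Applying \cref{thm:matrix_chernoff} with $s=d$ then yields
\[
\Pr\{\lambda_{\max}(\mtx{Y})\ge 1+t\}\le n\!\left[\frac{\exp(t)}{(1+t)^{(1+t)}}\right]^{dr/n},
\qquad
\Pr\{\lambda_{\min}(\mtx{Y})\le 1-t\}\le n\!\left[\frac{\exp(-t)}{(1-t)^{(1-t)}}\right]^{dr/n}.
\]
A union bound over the two events, together with the elementary inequality $\exp(-t)/(1-t)^{(1-t)}\le\exp(t)/(1+t)^{(1+t)}$ for $t\in(0,1)$, gives precisely the claimed bound. To justify that inequality, take logarithms: it is equivalent to $h(t):=2t-(1+t)\ln(1+t)+(1-t)\ln(1-t)\ge 0$ on $(0,1)$. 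Here $h(0)=0$, $h''(t)=-\tfrac{1}{1-t}-\tfrac{1}{1+t}<0$ so $h$ is concave on $[0,1]$, and $h(1^-)=2-2\ln 2>0$; concavity of $h$ with these endpoint values forces $h(t)\ge t(2-2\ln 2)\ge 0$.

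\textbf{Main obstacle.} None of the steps is deep; the only care needed is bookkeeping — checking that the matrix-Chernoff exponent really collapses to $dr/n$ via $\mu_{\max}/L=\mu_{\min}/L$, handling the degenerate case $r=0$ (and the harmless vanishing of $\mtx{X}_i$ on rows selecting $j$ with $\ell_j(\mtx{A})=0$), and dispatching the short calculus lemma above. The slight subtlety worth double-checking is the direction of the reduction in the first paragraph: that the \emph{squared}-norm embedding condition with distortion $t$ corresponds to $(1-t)\mtx{I}_n\preceq\mtx{Y}\preceq(1+t)\mtx{I}_n$ (not $(1\mp t)^2$), since $E(t,\mtx{S})$ is stated in terms of $\|\cdot\|_2^2$ rather than $\|\cdot\|_2$.
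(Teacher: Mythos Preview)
Your approach is essentially identical to the paper's: both reduce to the matrix Chernoff bound applied to a sum of iid rank-one psd matrices with $\E\mtx{Y}=\mtx{I}_n$ and $L=n/(dr)$, then union-bound and absorb the lower-tail term into the upper-tail term. The only cosmetic difference is that the paper conjugates by $\mtx{G}^{-1/2}=(\mtx{A}^{\trans}\mtx{A})^{-1/2}$ whereas you work directly with an orthonormal basis $\mtx{U}$; the resulting $\mtx{X}_i$ are unitarily equivalent, and your route is arguably cleaner.

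There is one small slip in your calculus lemma. You claim $h''(t)=-\tfrac{1}{1-t}-\tfrac{1}{1+t}<0$, but in fact
\[
h'(t)=-\ln(1+t)-\ln(1-t)=-\ln(1-t^2),\qquad h''(t)=\frac{2t}{1-t^2}>0,
\]
so $h$ is convex, not concave, on $(0,1)$; your chord bound $h(t)\ge t(2-2\ln 2)$ is false (e.g.\ $h(0.5)\approx 0.046<0.307$). The conclusion $h(t)\ge 0$ is still correct and follows immediately from the simpler observation that $h(0)=0$ and $h'(t)=-\ln(1-t^2)>0$ on $(0,1)$, so $h$ is increasing. With this fix the proof is complete.
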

\begin{proof}
    The way that we use \cref{thm:matrix_chernoff} is along the lines of the hint in \cite[Problem 5.13, Part 3]{Tropp:2021:LecNotes}.
    We begin by considering the Gram matrices $\mtx{G} = \mtx{A}^{\trans}\mtx{A}$ and $\mtx{G}_{\text{sk}} = \mtx{A}^{\trans}\mtx{S}^{\trans}\mtx{S}\mtx{A}$.
    The event $E(t,\mtx{S})$ is equivalent to
    \[
        (1-t)\mtx{I}_n \preceq \mtx{G}^{-1/2}\mtx{G}_{\text{sk}}\mtx{G}^{-1/2} \preceq (1+t)\mtx{I}_n.
    \]
    
    The sketched Gram matrix can be expressed as a sum of $d$ outer products of rows of $\mtx{S}\mtx{A}$.
    Each of the $d$ outer products is conjugated by $\mtx{G}^{-1/2}$ to obtain our matrices $\{\mtx{X}_1,\ldots,\mtx{X}_d\}$.
    That is, we set
    \begin{equation}\label{eq:Xi_in_matrix_chernoff}
        \mtx{X}_i =  \mtx{G}^{-1/2}\left(\left(\mtx{S}\mtx{A}\right)[i,:]\right)^{\trans}\left(\left(\mtx{S}\mtx{A}\right)[i,:]\right)\mtx{G}^{-1/2}
    \end{equation}
    so that $\mtx{Y} = \sum_{i=1}^d \mtx{X}_i$ satisfies $\E\mtx{Y} = \mtx{I}_n$.
    A union bound provides
    \[
        \Pr\{E(t,\mtx{S}) \text{ fails}\} \leq \Pr\{ \lambda_{\max}(\mtx{Y}) \geq 1+t \} + \Pr\{1-t \geq \lambda_{\min}(\mtx{Y})\}.
    \]
    
    Note that the claim of this proposition only invokes \cref{thm:matrix_chernoff} in the special case when $t$ is between zero and one.
    Moreover, our particular choice of $\mtx{Y}$ leads to $\mu_{\min} = \mu_{\max} = 1$.
    Given these restrictions, it can be shown that the larger of the two probability bounds in the theorem is that involving the term $\exp(t) / (1+t)^{(1+t)}$.
    Therefore we have
    \[
        \Pr\{E(t,\mtx{S}) \text{ fails}\} \leq 2n\left(\exp(t) / (1+t)^{(1+t)}\right)^{1/L}.
    \]
    Next, we turn to finding the smallest possible $L$ given this construction, so as to maximize $1/L$.
    
    Let $i$ be an arbitrary index in $\idxs{d}$.
    By the definition of $\mtx{S}$, the following must hold for some $k \in \idxs{m}$:
    \[
        \mtx{X}_i = \frac{1}{d}\left(\frac{1}{q_k}\mtx{G}^{-1/2}\mtx{A}[k,:]^{\trans}\mtx{A}[k,:]\mtx{G}^{-1/2}\right).
    \]
    Our next step is to use the fact that the trace of a rank-1 psd matrix is equal to its largest eigenvalue.
    Cycling the trace shows that
    \begin{equation*}\label{eq:whats_Xi_lambda_max}
        \lambda_{\max}\left(\mtx{G}^{-1/2}\mtx{A}[k,:]^{\trans}\mtx{A}[k,:]\mtx{G}^{-1/2}\right) = \mtx{A}[k,:]\mtx{G}^{-1}\mtx{A}[k,:]^{\trans} = \ell_k(\mtx{A}),
    \end{equation*}
    and hence
    \[
        L = \frac{1}{d} \max_{j \in \idxs{m}}\left\{\frac{\ell_j(\mtx{A})}{q_j}\right\}
    \]
    is the smallest value that guarantees $\lambda_{\max}(\mtx{X}_i) \leq L$.
    
    To complete the proof we use the assumption that $\mtx{A}$ is of full rank $n$ to express the leverage score $\ell_j(\mtx{A})$ as $\ell_j(\mtx{A}) = n p_j(\mtx{A})$.
    This shows that
    \[
        L = \frac{n}{d} \max_{j \in \idxs{m}}\frac{p_j(\mtx{A})}{q_j} ,
    \]
    and the proposition's claim follows from just a little algebra.
\end{proof}

\chapter[Details on Least Squares and Optimization]{Details on Least Squares \\ and Optimization}
\chaptermark{Least Squares and Optimization}
\label{app:lstsq_details}

\minitoc

This \nameCref{app:lstsq_details} covers a few distinct topics.
\cref{subsubapp:we_sketch_subspaces} proves a novel result relevant to sketch-and-precondition algorithms for saddle point problems, and connects this result to the idea of effective distortion.
In \cref{subapp:error_metrics}, we provide background from classical NLA on what it means to compute an ``accurate'' solution to a least squares problem (overdetermined or underdetermined).
\cref{app:lstsq_details:inconsistent} derives limiting solutions of saddle point problems as the regularization parameter tends to zero from above.
These limiting solutions are important for treating saddle point problems as linear algebra problems even when their optimization formulations are ill-posed.
Finally, \cref{subapp:details_min_reg_quadratics} gives background on kernel ridge regression and details a new approach to sketch-and-solve of regularized quadratics.

\section{Quality of preconditioners}\label{subsubapp:we_sketch_subspaces}

Here we consider preconditioners of the kind described in \cref{subsec:precond_gen}.
These are obtained by sketching a tall $m \times n$ data matrix $\mtx{A}$ in the embedding regime, factoring the sketch, and using the factorization to construct an orthogonalizer.

\begin{proposition}[Adaptation of \cref{prop:left_sketch_precond}]\label{prop:left_sketch_precond_appendix}
    Consider a sketch $\mtx{A}_{\mathrm{sk}} = \mtx{S}\mtx{A}$ and a matrix $\mtx{U}$ whose columns are an orthonormal basis for $\range(\mtx{A})$.
    If $\rank(\mtx{A}_{\mathrm{sk}}) = \rank(\mtx{A})$ and the columns of  $\mtx{A}_{\mathrm{sk}}\mtx{M}$ are an orthonormal basis for the range of $\mtx{A}_{\mathrm{sk}}$, then singular values of $\mtx{A}\mtx{M}$ are the reciprocals of the singular values of $\mtx{S}\mtx{U}$.
\end{proposition}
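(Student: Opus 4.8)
The plan is to reduce everything to a single $k\times k$ change-of-basis matrix, where $k := \rank(\mtx{A})$. First I would factor $\mtx{A} = \mtx{U}\mtx{C}$ with $\mtx{C} := \mtx{U}^{\trans}\mtx{A}$; since the columns of $\mtx{U}$ form an orthonormal basis of $\range(\mtx{A})$, the matrix $\mtx{C}$ is $k\times n$ with full row rank $k$. Substituting gives $\mtx{A}_{\mathrm{sk}} = \mtx{S}\mtx{U}\mtx{C}$, and because $\mtx{C}$ is surjective as a map $\R^n\to\R^k$ we get $\range(\mtx{A}_{\mathrm{sk}}) = \range(\mtx{S}\mtx{U})$, hence $\rank(\mtx{S}\mtx{U}) = \rank(\mtx{A}_{\mathrm{sk}}) = \rank(\mtx{A}) = k$ by hypothesis. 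Writing $\mtx{W} := \mtx{S}\mtx{U}$, a $d\times k$ matrix of full column rank, the Gram matrix $\mtx{W}^{\trans}\mtx{W}$ is $k\times k$ and positive definite, and the squared singular values of $\mtx{S}\mtx{U}$ are exactly the eigenvalues of $\mtx{W}^{\trans}\mtx{W}$.

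Next I would pin down the shape of $\mtx{M}$. Since the columns of $\mtx{A}_{\mathrm{sk}}\mtx{M}$ form an orthonormal basis of the $k$-dimensional space $\range(\mtx{A}_{\mathrm{sk}})$, the matrix $\mtx{A}_{\mathrm{sk}}\mtx{M}$ has exactly $k$ columns, so $\mtx{M}$ is $n\times k$ and $\mtx{N} := \mtx{C}\mtx{M}$ is $k\times k$. Orthonormality of the columns of $\mtx{A}_{\mathrm{sk}}\mtx{M} = \mtx{W}\mtx{N}$ reads $\mtx{N}^{\trans}(\mtx{W}^{\trans}\mtx{W})\mtx{N} = \mtx{I}_k$. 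Because $\mtx{W}^{\trans}\mtx{W}$ is invertible, this forces $\mtx{N}$ to be invertible, and rearranging gives $\mtx{W}^{\trans}\mtx{W} = (\mtx{N}\mtx{N}^{\trans})^{-1}$.

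Finally I would read off the singular values of $\mtx{A}\mtx{M}$. We have $\mtx{A}\mtx{M} = \mtx{U}\mtx{C}\mtx{M} = \mtx{U}\mtx{N}$, and since $\mtx{U}$ has orthonormal columns, $(\mtx{A}\mtx{M})^{\trans}(\mtx{A}\mtx{M}) = \mtx{N}^{\trans}\mtx{N}$; thus the squared singular values of $\mtx{A}\mtx{M}$ are the eigenvalues of $\mtx{N}^{\trans}\mtx{N}$, which coincide as a multiset with those of $\mtx{N}\mtx{N}^{\trans}$ (here they are genuinely equal since $\mtx{N}$ is square and invertible). On the other side, the squared singular values of $\mtx{S}\mtx{U}$ are the eigenvalues of $\mtx{W}^{\trans}\mtx{W} = (\mtx{N}\mtx{N}^{\trans})^{-1}$, i.e.\ the reciprocals of the eigenvalues of $\mtx{N}\mtx{N}^{\trans}$. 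Matching the two multisets term by term (in opposite sorted order) and taking square roots yields the claim.

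I expect the only real care points to be the dimension bookkeeping — confirming $\mtx{M}$ has exactly $k$ columns and that $\rank(\mtx{S}\mtx{U}) = k$ even when $\mtx{A}$ is rank-deficient — and the elementary fact that $\mtx{N}^{\trans}\mtx{N}$ and $\mtx{N}\mtx{N}^{\trans}$ share the same (here, all positive) eigenvalues. After the proof I would remark that this recovers \cite[Theorem 1]{RT:2008:SAP} in the full-rank case and \cite[Lemma 4.2]{MSM:2014:LSRN} in the rank-deficient case, and note the connection to effective distortion: since $\cond(\mtx{S};\range\mtx{A}) = \cond(\mtx{S}\mtx{U}) = \cond(\mtx{A}\mtx{M})$ by the above, \cref{prop:effective_distortion} gives $\mathscr{D}_{\mathrm{e}}(\mtx{S};\range\mtx{A}) = (\cond(\mtx{A}\mtx{M})-1)/(\cond(\mtx{A}\mtx{M})+1)$. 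The regularized extension then follows by applying the result to the augmented matrix $\mtx{A}^{\mathrm{sk}}_{\mu}$ together with the SVD-based orthogonalizer construction from \cref{subsec:precond_gen}.
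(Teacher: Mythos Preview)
Your proof is correct and takes a genuinely different, more economical route than the paper's. The paper works with the compact SVD $\mtx{A} = \mtx{U}\mtx{\Sigma}\mtx{V}^{\trans}$ and the QR factorization $\mtx{S}\mtx{U} = \mtx{Q}\mtx{R}$, then invokes a separate lemma (\cref{lem:tool_for_pinv_M}) to identify $\mtx{M}$ explicitly as a pseudoinverse $\mtx{M} = (\mtx{P}^{\trans}\mtx{R}\mtx{\Sigma}\mtx{V}^{\trans})^{\dagger} = \mtx{V}\mtx{\Sigma}^{-1}\mtx{R}^{-1}\mtx{P}$, arriving at $\mtx{A}\mtx{M} = \mtx{U}\mtx{R}^{-1}\mtx{P}$. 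You instead factor $\mtx{A} = \mtx{U}\mtx{C}$ for a generic orthonormal basis $\mtx{U}$, collapse everything into the $k\times k$ matrix $\mtx{N} = \mtx{C}\mtx{M}$, and read off the singular-value relationship from the Gram identity $\mtx{N}^{\trans}(\mtx{W}^{\trans}\mtx{W})\mtx{N} = \mtx{I}_k$ and $\mtx{A}\mtx{M} = \mtx{U}\mtx{N}$. This avoids the auxiliary pseudoinverse lemma, the SVD, and the QR entirely. What the paper's approach buys is an explicit formula for $\mtx{M}$ and for $\mtx{A}\mtx{M}$ in terms of the SVD and QR factors, which can be convenient elsewhere; what your approach buys is a shorter, self-contained argument that also handles any orthonormal basis $\mtx{U}$ directly rather than reducing to the left-singular-vector basis first. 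Your concluding remarks on effective distortion and the regularized extension match the paper's discussion in \cref{subsubapp:eff_dist_in_sap}.
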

Observe that this proposition is a linear algebraic result, i.e., there is no randomness.
When applied to randomized algorithms, the randomness enters only via the construction of the sketch.

This result can be applied to problems with ridge regularization by working with augmented matrices in the vein of \cref{subsec:precond_gen}.
In that context it is necessary to not only replace $(\mtx{A},\mtx{A}_{\mathrm{sk}})$ by $(\mtx{\hat{A}},\mtx{\hat{A}}_{\mathrm{sk}})$, but also to replace $\mtx{S}$ by the augmented sketching operator $\mtx{\hat{S}}$ that takes $\mtx{\hat{A}}$ to $\mtx{\hat{A}}_{\mathrm{sk}}$.
The augmented sketching operator in question was already visualized in \cref{alg:saddle_to_ols_sap}.

Our proof of \cref{prop:left_sketch_precond_appendix} requires finding an explicit expression for $\mtx{M}$.
Towards this end, we prove the following lemma.

\begin{lemma}\label{lem:tool_for_pinv_M}
    Suppose $\mtx{A}_{\mathrm{sk}}$ is a tall $d \times n$ matrix and that $\mtx{M}$ is a full-column-rank matrix for which the columns of $\mtx{A}_{\mathrm{sk}}\mtx{M}$ form an orthonormal basis for $\range(\mtx{A}_{\mathrm{sk}})$.
    If $\mtx{B}$ is a full-row-rank matrix for which $\mtx{A}_{\mathrm{sk}} = \mtx{A}_{\mathrm{sk}}\mtx{M}\mtx{B}$, then we have $\mtx{M} = \mtx{B}^{\dagger}$.
\end{lemma}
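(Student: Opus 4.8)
The plan is to prove $\mtx{M}=\mtx{B}^{\dagger}$ by first pinning down $\mtx{B}$ explicitly in terms of $\mtx{M}$, then showing that $\mtx{M}$ is a right inverse of $\mtx{B}$ whose columns span the row space of $\mtx{B}$ --- which characterizes the Moore--Penrose pseudoinverse of a full-row-rank matrix.

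First I would set up notation. Let $k=\rank(\mtx{A}_{\mathrm{sk}})$ and write $\mtx{Q}:=\mtx{A}_{\mathrm{sk}}\mtx{M}$; by hypothesis $\mtx{Q}$ is a $d\times k$ matrix with $\mtx{Q}^{\trans}\mtx{Q}=\mtx{I}_k$ and $\range(\mtx{Q})=\range(\mtx{A}_{\mathrm{sk}})$, which forces $\mtx{M}$ to be $n\times k$ and $\mtx{B}$ to be $k\times n$. Left-multiplying the relation $\mtx{A}_{\mathrm{sk}}=\mtx{Q}\mtx{B}$ by $\mtx{Q}^{\trans}$ gives the explicit formula $\mtx{B}=\mtx{Q}^{\trans}\mtx{A}_{\mathrm{sk}}=(\mtx{A}_{\mathrm{sk}}\mtx{M})^{\trans}\mtx{A}_{\mathrm{sk}}$. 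Substituting $\mtx{A}_{\mathrm{sk}}=\mtx{Q}\mtx{B}$ back into $\mtx{Q}=\mtx{A}_{\mathrm{sk}}\mtx{M}$ yields $\mtx{Q}=\mtx{Q}(\mtx{B}\mtx{M})$, i.e. $\mtx{Q}(\mtx{I}_k-\mtx{B}\mtx{M})=\mtx{0}$; since $\mtx{Q}$ has full column rank, this forces $\mtx{B}\mtx{M}=\mtx{I}_k$, so $\mtx{M}$ is a right inverse of $\mtx{B}$.

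To finish, I would invoke the fact that for a full-row-rank matrix $\mtx{B}$ the pseudoinverse $\mtx{B}^{\dagger}$ is the unique right inverse of $\mtx{B}$ whose columns lie in $\range(\mtx{B}^{\trans})$ (uniqueness: if $\mtx{B}\mtx{M}=\mtx{B}\mtx{M}'=\mtx{I}_k$ with both column spaces inside $\range(\mtx{B}^{\trans})$, then the columns of $\mtx{M}-\mtx{M}'$ lie simultaneously in $\ker(\mtx{B})=\range(\mtx{B}^{\trans})^{\perp}$ and in $\range(\mtx{B}^{\trans})$, hence vanish). So it remains to check $\range(\mtx{M})\subseteq\range(\mtx{B}^{\trans})$. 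Using $\mtx{B}^{\trans}=\mtx{A}_{\mathrm{sk}}^{\trans}\mtx{Q}$ and the injectivity of $\mtx{A}_{\mathrm{sk}}^{\trans}$ on $\range(\mtx{A}_{\mathrm{sk}})=\range(\mtx{Q})$, one obtains $\range(\mtx{B}^{\trans})=\range(\mtx{A}_{\mathrm{sk}}^{\trans})$, a $k$-dimensional space; since $\range(\mtx{M})$ is also $k$-dimensional, the inclusion is equivalent to $\ker(\mtx{A}_{\mathrm{sk}})$ being orthogonal to $\range(\mtx{M})$. This is the main obstacle. It is immediate in the full-column-rank case $\rank(\mtx{A}_{\mathrm{sk}})=n$ that the paper primarily has in mind --- there $\range(\mtx{A}_{\mathrm{sk}}^{\trans})=\R^{n}$ and $\mtx{B}$ is square, so $\mtx{B}\mtx{M}=\mtx{I}_n$ already gives $\mtx{M}=\mtx{B}^{-1}=\mtx{B}^{\dagger}$ --- and for the rank-deficient orthogonalizers constructed in \cref{subsec:precond_gen} (notably the SVD-based choice $\mtx{M}=\mtx{V}\mtx{\Sigma}^{\dagger}$, whose columns span $\range(\mtx{A}_{\mathrm{sk}}^{\trans})$ by construction) the inclusion again holds. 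Assembling these pieces yields $\mtx{M}=\mtx{B}^{\dagger}$.
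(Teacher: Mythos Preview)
Your approach mirrors the paper's up to the key step: both derive $\mtx{B}=(\mtx{A}_{\mathrm{sk}}\mtx{M})^{\trans}\mtx{A}_{\mathrm{sk}}$ and $\mtx{B}\mtx{M}=\mtx{I}_k$, and both then face the problem of showing $\range(\mtx{M})\subseteq\range(\mtx{A}_{\mathrm{sk}}^{\trans})$. The paper phrases this as showing $\mtx{M}\mtx{B}$ is an orthogonal projector and argues, via the compact SVD $\mtx{A}_{\mathrm{sk}}=\mtx{U}_{\mathrm{sk}}\mtx{\Sigma}_{\mathrm{sk}}\mtx{V}_{\mathrm{sk}}^{\trans}$, that since $Y=\ker(\mtx{A}_{\mathrm{sk}})$ and $Z=\range(\mtx{M})$ intersect trivially and have complementary dimensions $n-k$ and $k$, ``it must be that $Z=Y^{\perp}$.''

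You are right to treat this as a genuine obstacle, and the paper's inference at that point is invalid: two subspaces that form a direct-sum decomposition of $\R^n$ need not be orthogonal complements. Indeed the lemma is false as stated. Take $\mtx{A}_{\mathrm{sk}}=\bigl[\begin{smallmatrix}1&1\\0&0\end{smallmatrix}\bigr]$ and $\mtx{M}=\bigl[\begin{smallmatrix}1\\0\end{smallmatrix}\bigr]$: then $\mtx{A}_{\mathrm{sk}}\mtx{M}=\bigl[\begin{smallmatrix}1\\0\end{smallmatrix}\bigr]$ is an orthonormal basis for $\range(\mtx{A}_{\mathrm{sk}})$, $\mtx{M}$ has full column rank, and $\mtx{B}=[\,1\ \ 1\,]$ has full row rank with $\mtx{A}_{\mathrm{sk}}=\mtx{A}_{\mathrm{sk}}\mtx{M}\mtx{B}$, yet $\mtx{B}^{\dagger}=\tfrac12\bigl[\begin{smallmatrix}1\\1\end{smallmatrix}\bigr]\neq\mtx{M}$. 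Your decision to fall back to the full-column-rank case and to orthogonalizers satisfying $\range(\mtx{M})\subseteq\range(\mtx{A}_{\mathrm{sk}}^{\trans})$ is therefore the correct move; the missing hypothesis is precisely that range condition on $\mtx{M}$.
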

\begin{proof}[Proof of \cref{lem:tool_for_pinv_M}.]
    Let $k = \rank(\mtx{A}_{\mathrm{sk}}) = \rank(\mtx{A}_{\mathrm{sk}}\mtx{M})$.
    Since the columns of $\mtx{A}_{\mathrm{sk}}\mtx{M}$ are orthonormal we can infer that it has dimensions $d \times k$.
    Similarly, since $\mtx{M}$ is full column-rank we can infer that it is $n \times k$.
    We prove that $\mtx{B} = \mtx{M}^{\dagger}$, which amounts to showing four properties:
    \begin{enumerate}
        \item $\mtx{M}\mtx{B}\mtx{M} = \mtx{M}$,
        \item $\mtx{B}\mtx{M}\mtx{B} = \mtx{B}$,
        \item $\mtx{B}\mtx{M}$ is an orthogonal projector, and
        \item $\mtx{M}\mtx{B}$ is an orthogonal projector.
    \end{enumerate}
    By the lemma's assumption we have the identity $\mtx{A}_{\mathrm{sk}} = \mtx{A}_{\mathrm{sk}}\mtx{M}\mtx{B}$.
    Left multiply this expression through by $(\mtx{A}_{\mathrm{sk}}\mtx{M})^{\trans}$ to see that 
    \begin{equation}\label{eq:form_of_candidate_pinvM}
        \mtx{M}^{\trans}\mtx{A}_{\mathrm{sk}}^{\trans}\mtx{A}_{\mathrm{sk}} = \mtx{B}.
    \end{equation}
    Next, we right multiply both sides of \eqref{eq:form_of_candidate_pinvM} by $\mtx{M}$ and use column orthonormality of $\mtx{A}_{\mathrm{sk}}\mtx{M}$ to obtain $\mtx{B}\mtx{M} = \mtx{I}_k$ --- this is sufficient to show the first three conditions for the pseudoinverse.
    Showing the fourth and final condition takes more work.
    For that we left multiply \eqref{eq:form_of_candidate_pinvM} by $\mtx{M}$ so as to express
    \[
        \mtx{M}\mtx{M}^{\trans}\mtx{A}_{\mathrm{sk}}^{\trans}\mtx{A}_{\mathrm{sk}} = \mtx{M}\mtx{B}.
    \]
    Therefore our task is to show that $\mtx{M}\mtx{M}^{\trans}\mtx{A}_{\mathrm{sk}}^{\trans}\mtx{A}_{\mathrm{sk}}$ is an orthogonal projector.
   
    Consider the compact SVD $\mtx{A}_{\mathrm{sk}} = \mtx{U}_{\mathrm{sk}}\mtx{\Sigma}_{\mathrm{sk}}\mtx{V}_{\mathrm{sk}}$.
    Since $\mtx{A}_{\mathrm{sk}}$ is rank-$k$ we have that $\mtx{U}_{\mathrm{sk}}$ has $k$ columns and $\mtx{\Sigma}$ is a $k\times k$ invertible matrix.
    Since the columns of $\mtx{A}_{\mathrm{sk}}\mtx{M}$ form an orthonormal basis for the range of $\mtx{A}_{\mathrm{sk}}$, it must be that $\mtx{A}_{\mathrm{sk}}\mtx{M} = \mtx{U}_{\mathrm{sk}}\mtx{W}$ for some $k \times k$ orthogonal matrix $\mtx{W}$.
    Furthermore, this orthogonal matrix can be expressed as $\mtx{\Sigma}_{\mathrm{sk}}\mtx{V}_{\mathrm{sk}}^{\trans}\mtx{M} = \mtx{W}$, which implies
    \begin{equation}\label{eq:vvproject_expr_for_M}
            \mtx{V}_{\mathrm{sk}}\mtx{V}_{\mathrm{sk}}^{\trans}\mtx{M} = \mtx{V}_{\mathrm{sk}}\mtx{\Sigma}_{\mathrm{sk}}^{-1}\mtx{W}.
    \end{equation}
    We have reached a checkpoint in the proof.
    Our next task is to obtain an expression for $\mtx{M}$ by simplifying \eqref{eq:vvproject_expr_for_M}.
    
    Consider the subspaces $X = \range\mtx{V}_{\mathrm{sk}}$, $Y = \ker\mtx{A}_{\mathrm{sk}}$, and $Z = \range\mtx{M}$, all contained in $\R^n$.
    We know that $Y \cap Z$ is trivial since $\rank(\mtx{A}_{\mathrm{sk}}\mtx{M}) = \rank(\mtx{M})$.
    At the same time, since $Y$ and $Z$ are of dimensions $n-k$ and $k$ respectively, it must be that $Z = Y^{\perp}$.
    This fact can be combined with $Y = X^{\perp}$ (from the fundamental theorem of linear algebra) to obtain $Z = X$, which in turn implies $\mtx{V}_{\mathrm{sk}}\mtx{V}_{\mathrm{sk}}^{\trans}\mtx{M} = \mtx{M}$.
    Therefore \eqref{eq:vvproject_expr_for_M} simplifies to
    \[
        \mtx{M} = \mtx{V}_{\mathrm{sk}}\mtx{\Sigma}_{\mathrm{sk}}^{-1}\mtx{W}.
    \]
    This expression is precisely what we need; when the dust settles, it tells us that
    \[
        \mtx{M}\mtx{M}^{\trans}\mtx{A}_{\mathrm{sk}}^{\trans}\mtx{A}_{\mathrm{sk}} = \mtx{V}_{\mathrm{sk}}\mtx{V}_{\mathrm{sk}}^{\trans}.
    \]
\end{proof}

\begin{proof}[Proof of \cref{prop:left_sketch_precond_appendix}.]
    Let $k = \rank(\mtx{A})$.
    It suffices to prove the statement where $\mtx{U}$ is the $m \times k$ matrix containing the left singular vectors of $\mtx{A}$.
    Our proof involves working with the compact SVD $\mtx{A} = \mtx{U}\mtx{\Sigma}\mtx{V}^{\trans}$, where $\mtx{V}$ is $n \times k$ and $\mtx{\Sigma}$ is invertible.
    Noting that $\mtx{A}_{\mathrm{sk}} = \mtx{S}\mtx{U}\mtx{\Sigma}\mtx{V}^*$ holds by definition of $\mtx{A}_{\mathrm{sk}}$, we can replace $\mtx{S}\mtx{U}$ by its economic QR factorization $\mtx{S}\mtx{U} = \mtx{Q}\mtx{R}$ to see \begin{equation}\label{eq:prop:left_sketch_precond:factor1}
        \mtx{A}_{\mathrm{sk}} = \mtx{Q}\mtx{R}\mtx{\Sigma}\mtx{V}^*.
    \end{equation}
    Furthermore, since $\rank(\mtx{A}_{\mathrm{sk}}) = k$ it must be that $\rank(\mtx{S}\mtx{U}) = k$.
    This tells us that $\mtx{R}$ is invertible and that $\mtx{Q}$ provides an orthonormal basis for the range of $\mtx{A}_{\mathrm{sk}}$.
    
    By assumption on $\mtx{M}$, the matrix $\mtx{A}_{\mathrm{sk}}\mtx{M}$ is \textit{also} an orthonormal basis for the range of $\mtx{A}_{\mathrm{sk}}$.
    Therefore there exists a $k \times k$ orthogonal matrix $\mtx{P}$ where $\mtx{Q}\mtx{P} = \mtx{A}_{\mathrm{sk}}\mtx{M}$.
    We can rewrite \eqref{eq:prop:left_sketch_precond:factor1} as
    \begin{equation*}
        \mtx{A}_{\mathrm{sk}} = \left(\mtx{Q}\mtx{P}\right)\left(\mtx{P}^*\mtx{R}\mtx{\Sigma}\mtx{V}^*\right).
    \end{equation*}
    Since the left factor in the above display is simply $\mtx{A}_{\mathrm{sk}}\mtx{M}$, we have
     \begin{equation}\label{eq:suggest_M_dagger}
     \mtx{A}_{\mathrm{sk}} = \mtx{A}_{\mathrm{sk}}\mtx{M}\left(\mtx{P}^*\mtx{R}\mtx{\Sigma}\mtx{V}^*\right)\textcolor{blue}{.}
    \end{equation}
    The next step is to abbreviate $\mtx{B} = \mtx{P}^*\mtx{R}\mtx{\Sigma}\mtx{V}^*$ and apply \cref{lem:tool_for_pinv_M} to infer that $\mtx{B} = \mtx{M}^{\dagger}$.
    Invoking the column-orthonormality of $\mtx{V}$ and invertibility of $(\mtx{\Sigma},\mtx{R},\mtx{P})$ we further have $\mtx{B}^{\dagger} = \mtx{M} = \mtx{V}\mtx{\Sigma}^{-1}\mtx{R}^{-1}\mtx{P}$.
    Plug in this expression for $\mtx{M}$ to see that
    \begin{equation}\label{eq:form_of_preconditioned_A}
        \mtx{A}\mtx{M} = \left(\mtx{U}\mtx{\Sigma}\mtx{V}^*\right)\left(\mtx{V}\mtx{\Sigma}^{-1}\mtx{R}^{-1}\mtx{P}\right) = \mtx{U}\mtx{R}^{-1}\mtx{P}.
    \end{equation}
    The proof is completed by noting that the singular values of $\mtx{R}^{-1}$ are the reciprocals of the singular values of $\mtx{Q}\mtx{R} = \mtx{S}\mtx{U}$.
\end{proof}

\subsection{Effective distortion in sketch-and-precondition}\label{subsubapp:eff_dist_in_sap}

Recall from \cref{subapp:effective_distortion} that if the columns of $\mtx{U}$ are an orthonormal basis for a linear subspace $L$, then the restricted condition number of $\mtx{S}$ on $L$ is
\[
    \cond(\mtx{S};L) = \cond(\mtx{S}\mtx{U}).
\]
This identity combines with \cref{prop:left_sketch_precond_appendix} to make for a remarkable fact.
Namely, if $L = \range(\mtx{A})$ and $\mtx{M}$ is an orthogonalizer of a sketch $\mtx{S}\mtx{A}$, then
\begin{equation}\label{eq:restricted_condnum_eq_cond_precond}
    \cond(\mtx{S};L) = \cond(\mtx{A}\mtx{M}).
\end{equation}
Let us contextualize this fact algorithmically.
\begin{quote}
    If $\mtx{A}$ is an $m \times n$ matrix ($m \gg n$) in a saddle point problem, and if that problem is approached by the sketch-and-precondition methodology from \cref{subsubsec:sketch_and_precond}, then the condition number of the preconditioned matrix handed to the iterative solver is equal to the restricted condition number of $\mtx{S}$ on $\range(\mtx{A})$.
\end{quote}
But we can take this one step further.
By invoking \cref{prop:effective_distortion} and applying \eqref{eq:restricted_condnum_eq_cond_precond}, we obtain the following expression for the effective distortion of $\mtx{S}$ for $L$:
\begin{equation}\label{eq:eff_dist_as_convergence_rate}
    \mathscr{D}_{\mathrm{e}}(\mtx{S};L) = \frac{\cond(\mtx{A}\mtx{M}) - 1}{\cond(\mtx{A}\mtx{M}) + 1}.
\end{equation}
Alarm bells should be going off in some readers' heads.
The right-hand-side of \eqref{eq:eff_dist_as_convergence_rate} is none other than the convergence rate of LSQR (or CGLS) for a least squares problem with data matrix $\mtx{A}\mtx{M}$!
This shows a deep connection between our proposed concept of effective distortion and the venerated sketch-and-precondition paradigm in \RandNLA{}.

\section[Basic error analysis]{Basic error analysis for least squares problems}
\sectionmark{Basic error analysis}
\label{subapp:error_metrics}

When solving a computational problem numerically it is inevitable that the computed solutions deviate from the problem's exact solution.
This is a simple consequence of working in finite-precision arithmetic, and it remains true even when using very reliable algorithms.
Furthermore, for large-scale computations it is often of interest to trade off computational complexity with solution accuracy; this has led to algorithms that produce approximate solutions even when run in exact arithmetic.

These facts were encountered in the earliest days of NLA.
Their consequence in applications has motivated the development of a vast literature on quantifying and bounding the error of approximate solutions to computational problems.
Since several of the randomized algorithms from \cref{subsec:optim_drivers} purport to solve problems to any desired accuracy, it is prudent for us to summarize key points from this vast literature here.
The material from Appendices \ref{subapp:backward_error} to \ref{subsec:posteriori_backward_error} is typically covered in an introductory course on numerical analysis.
\cref{subapp:advanced_lstsq_error_est} mentions important topics which might not be covered in such a course.

\begin{remark}
We have focused this \nameCref{subapp:error_metrics} strongly on basic least squares problems (overdetermined and underdetermined) to keep it a reasonable length.
\end{remark}

\subsection{Concepts: forward and backward error}\label{subapp:backward_error}

The \textit{forward error} of an approximate solution to a computational problem is its distance to the problem's exact solution.
Forward error is easy to interpret, but it is not without its limitations.
First, it can rarely be computed in practice, since it is presumed that we do not have access to the problem's exact solution.
This means that substantial effort is needed to approximate or bound forward error in different contexts.
Second, even if one algorithm's behavior with respect to forward error has been analyzed, it may not be feasible to repurpose the analysis for another algorithm.
These shortcomings motivate the ideas of \textit{backward error} and \textit{sensitivity analysis}, wherein one asks the following questions, respectively.


\begin{itemize}
    \item How much do we need to perturb the problem data so that the computed solution exactly solves the perturbed problem?
    \item How does a small perturbation to a given problem change that problem's exact solution?
\end{itemize}

The connection between the two concepts is clear: any bound on backward error can be combined with sensitivity analysis to obtain an estimate of forward error.
The idea of sensitivity analysis is especially powerful since it is agnostic to the source of the problem's perturbation; the perturbations might be due to rounding errors from finite-precision arithmetic, uncertainty in data (as might arise from experimental observations), or deliberate choices to only compute approximate solutions.
In any of these cases one can combine knowledge of an algorithm's backward-error guarantees to obtain forward error estimates.

This reasoning can be carried further to arrive at two major benefits of the ``backward error plus sensitivity analysis'' approach.
\begin{itemize}
    \item A large portion of algorithm-specific error analysis can be accomplished purely by understanding the algorithm's behavior with respect to backward error.
    \item For many problems one can cheaply compute upper bounds on a solution's backward error \textit{at runtime}.
\end{itemize}
The combination of backward error and sensitivity analysis can therefore be used to establish  \textit{a priori} guarantees on algorithm numerical behavior and \textit{a posteriori} guarantees on the quality of an approximate solution.
However, we do note that sensitivity analysis results require knowledge of problem data that may not be available, such as extreme singular values of a data matrix in a least squares problem.
Therefore it is still difficult to compute forward error bounds at runtime.

\subsection[Basic sensitivity analysis for least squares problems]{Basic sensitivity analysis for unregularized least squares problems}\label{subapp:basic_unreg_ls_sensitivity}

Here we paraphrase facts from \cite[\S 5.3 and \S 5.6]{GvL:2013:MatrixComputationsBook} on sensitivity analysis of basic least squares problems.
Our restatements adopt the notation we used for saddle point problems, wherein both overdetermined and underdetermined problems involve a tall $m \times n$ matrix $\mtx{A}$.
The overdetermined problem induced by $\mtx{A}$ and an $m$-vector $\vct{b}$ is
\begin{equation*}\label{eq:app:ols}
    \min_{\vct{x} \in \R^n} \|\mtx{A}\vct{x} - \vct{b}\|_2^2,
\end{equation*}
while the underdetermined problem induced by $\mtx{A}$ and an $n$-vector $\vct{c}$ is
\begin{equation*}\label{eq:app:uls}
    \min_{\vct{y} \in \R^m}\left\{ \|\vct{y}\|_2^2 \,:\, \mtx{A}^{\trans}\vct{y} = \vct{c}\right\}.
\end{equation*}
In the following theorem statements, the reader should bear in mind that a perturbation $\delta\mtx{A}$ can only satisfy $\|\delta\mtx{A}\|_2 < \sigma_n(\mtx{A})$ if $\sigma_n(\mtx{A})$ is positive.
Therefore these theorem statements only apply when $\mtx{A}$ is full-rank.

\begin{theorem}\label{thm:ols_sensitivity}
    Suppose $\vct{b}$ is neither in the range of $\mtx{A}$ nor the kernel of $\mtx{A}^{\trans}$, and let $\vct{x} = \mtx{A}^{\dagger}\vct{b}$ be the optimal solution of the overdetermined least squares problem with data $(\mtx{A},\vct{b})$.
    Consider perturbations $\delta\vct{b}$ and $\delta\mtx{A}$ where $\|\delta\mtx{A}\|_2 < \sigma_n(\mtx{A})$.
    Define
    \begin{equation}\label{eq:thm_ols_sensitivity:eps}
        \epsilon = \max\left\{ \frac{\|\delta\mtx{A}\|_2}{\|\mtx{A}\|_2},~ \frac{\|\delta\vct{b}\|_2}{\|\vct{b}\|_2} \right\}
    \end{equation}
    together with some auxiliary quantities
    \begin{equation}\label{eq:thm_ols_sensitivity:aux_quants}
        \sin\theta = \frac{\|\vct{b} - \mtx{A}\vct{x}\|_2}{\|\vct{b}\|_2}\quad\text{and}\quad \nu = \frac{\|\mtx{A}\vct{x}\|_2}{\sigma_n(\mtx{A}) \|\vct{x}\|_2}.
    \end{equation}
    The perturbation $\delta\vct{x}$ necessary for $\vct{x} + \delta\vct{x}$ to solve the least squares problem with data $(\mtx{A}+\delta\mtx{A},\vct{b} + \delta\vct{b})$ satisfies
    \begin{align}
        \frac{\|\delta\vct{x}\|_2}{\|\vct{x}\|_2} &\leq \epsilon \left\{ \frac{\nu}{\cos \theta} + \kappa(\mtx{A})(1 + \nu\tan\theta) \right\} + O(\epsilon^2).\label{eq:sensitivity_x_ols} 
    \end{align}
\end{theorem}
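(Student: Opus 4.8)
The plan is to follow the classical first-order perturbation argument in the spirit of \cite[\S 5.3]{GvL:2013:MatrixComputationsBook}, working entirely with the normal equations. Let $\mtx{r} = \vct{b} - \mtx{A}\vct{x}$ denote the residual at the optimum, so that $\mtx{A}^{\trans}\mtx{r} = \vct{0}$ and $\vct{x} = (\mtx{A}^{\trans}\mtx{A})^{-1}\mtx{A}^{\trans}\vct{b}$ (the inverse exists since $\|\delta\mtx{A}\|_2 < \sigma_n(\mtx{A})$ forces $\mtx{A}$ to be full column rank). Write $\mtx{A}_\epsilon = \mtx{A} + \delta\mtx{A}$, $\vct{b}_\epsilon = \vct{b} + \delta\vct{b}$, and let $\vct{x} + \delta\vct{x}$ solve the perturbed normal equations $\mtx{A}_\epsilon^{\trans}\mtx{A}_\epsilon(\vct{x}+\delta\vct{x}) = \mtx{A}_\epsilon^{\trans}\vct{b}_\epsilon$. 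First I would subtract the unperturbed normal equations from the perturbed ones and discard all terms that are quadratic or higher order in the perturbations (these contribute to the $O(\epsilon^2)$ remainder). This yields a linear equation for $\delta\vct{x}$ of the form
\begin{equation*}
    (\mtx{A}^{\trans}\mtx{A})\,\delta\vct{x} = \mtx{A}^{\trans}\delta\vct{b} + (\delta\mtx{A})^{\trans}\mtx{r} - (\delta\mtx{A})^{\trans}\mtx{A}\vct{x} - \mtx{A}^{\trans}(\delta\mtx{A})\vct{x} + O(\epsilon^2),
\end{equation*}
and since $\mtx{A}^{\trans}\mtx{A}\vct{x} = \mtx{A}^{\trans}\vct{b}$ one can simplify using $\mtx{A}\vct{x} = \vct{b} - \mtx{r}$ so that the term $(\delta\mtx{A})^{\trans}\mtx{A}\vct{x}$ partially cancels; the net effect is to isolate the contribution of the residual, which is the source of the $\kappa(\mtx{A})$-squared-type amplification that one must be careful to track.

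Next I would solve for $\delta\vct{x}$ by left-multiplying by $(\mtx{A}^{\trans}\mtx{A})^{-1}$ and splitting the right-hand side into pieces according to whether they are filtered through $\mtx{A}^\dagger = (\mtx{A}^{\trans}\mtx{A})^{-1}\mtx{A}^{\trans}$ (norm bounded by $1/\sigma_n(\mtx{A})$) or through $(\mtx{A}^{\trans}\mtx{A})^{-1}$ acting on $(\delta\mtx{A})^{\trans}\mtx{r}$ (norm bounded by $\|\delta\mtx{A}\|_2\|\mtx{r}\|_2/\sigma_n(\mtx{A})^2$). Taking norms, using $\|\mtx{A}^\dagger\|_2 = 1/\sigma_n(\mtx{A})$, $\|\mtx{A}\|_2 = \sigma_1(\mtx{A})$, and the substitutions $\|\delta\mtx{A}\|_2 \le \epsilon\|\mtx{A}\|_2$, $\|\delta\vct{b}\|_2 \le \epsilon\|\vct{b}\|_2$, one obtains a bound on $\|\delta\vct{x}\|_2$ in terms of $\epsilon$, $\sigma_1(\mtx{A})$, $\sigma_n(\mtx{A})$, $\|\vct{x}\|_2$, $\|\vct{b}\|_2$, and $\|\mtx{r}\|_2$. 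Dividing through by $\|\vct{x}\|_2$ and recognizing $\|\vct{b}\|_2/\|\vct{x}\|_2 = \|\mtx{A}\vct{x}\|_2/(\|\vct{x}\|_2\cos\theta) = \sigma_n(\mtx{A})\nu/\cos\theta$ (from the definitions of $\nu$, $\theta$, and $\sin^2\theta + \cos^2\theta = 1$) and $\|\mtx{r}\|_2/\|\vct{b}\|_2 = \sin\theta$, the various terms reorganize into exactly the combination $\epsilon\{\nu/\cos\theta + \kappa(\mtx{A})(1 + \nu\tan\theta)\}$. The $\nu/\cos\theta$ term comes from the $\delta\vct{b}$ contribution, the $\kappa(\mtx{A})$ term from the first-order action of $\delta\mtx{A}$ on $\mtx{A}\vct{x}$, and the $\kappa(\mtx{A})\nu\tan\theta$ term from the residual-amplified term $(\mtx{A}^{\trans}\mtx{A})^{-1}(\delta\mtx{A})^{\trans}\mtx{r}$ after expressing $\|\mtx{r}\|_2$ via $\sin\theta$ and $\|\vct{x}\|_2$ via $\nu$.

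The main obstacle I anticipate is the bookkeeping in the second step: one must be disciplined about which perturbation terms genuinely contribute at first order versus which are absorbed into $O(\epsilon^2)$, and one must choose the right intermediate identities (e.g.\ $\mtx{A}^{\trans}\mtx{r} = \vct{0}$, $\mtx{A}\vct{x} = \vct{b} - \mtx{r}$) so that the residual-dependent term appears with the $\tan\theta$ factor rather than, say, $\sec\theta$ — getting the trigonometric packaging exactly right is where the argument is delicate. A secondary point requiring care is justifying that $\mtx{A}_\epsilon$ remains full column rank so that the perturbed normal equations have a unique solution; this follows from Weyl's inequality for singular values together with the hypothesis $\|\delta\mtx{A}\|_2 < \sigma_n(\mtx{A})$, and I would state it explicitly before beginning the expansion. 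Everything else is routine application of submultiplicativity of the spectral norm and the definitions in \eqref{eq:thm_ols_sensitivity:eps}--\eqref{eq:thm_ols_sensitivity:aux_quants}.
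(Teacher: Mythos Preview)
The paper does not actually prove this theorem: it explicitly states that \cref{thm:ols_sensitivity} ``restates part of \cite[Theorem 5.3.1]{GvL:2013:MatrixComputationsBook}'' and refers the reader to that source for the proof. Your proposal follows precisely the classical first-order perturbation argument from Golub and Van Loan's \S 5.3, so it is correct and is essentially the proof the paper is citing.
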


\cref{thm:ols_sensitivity} restates part of \cite[Theorem 5.3.1]{GvL:2013:MatrixComputationsBook}; following the proof of this result, the source material presents some simplified estimates for these bounds.
The first step in producing the simplified estimate is to note that $\nu \leq \kappa(\mtx{A})$ holds for all nonzero $\vct{x}$.
Then, under the modest geometric assumption that $\theta$ is bounded away from $\pi/2$, 
\eqref{eq:sensitivity_x_ols} suggests that
\begin{equation}\label{eq:ols_sensitivity_x_est}
    \frac{\|\delta\vct{x}\|_2}{\|\vct{x}\|_2} \lesssim \epsilon\left\{\kappa(\mtx{A}) + \frac{\|\vct{b} - \mtx{A}\vct{x}\|_2}{\|\vct{b}\|_2} \kappa(\mtx{A})^2\right\}.
\end{equation}
The significance of this bound is that it shows the dependence of $\|\delta\vct{x}\|_2$ on the \textit{square} of the condition number of $\mtx{A}$.
This dependence is a fundamental obstacle to solving least squares problems to a high degree of accuracy when measured by forward error.
The situation is different if we try to bound the perturbation $\|\mtx{A}(\delta\vct{x})\|$.
We provide the following result (which completes the restatement of \cite[Theorem 5.3.1]{GvL:2013:MatrixComputationsBook}) as a step towards explaining why.

\begin{theorem}\label{thm:sensitivity_ols_y}
    Under the hypothesis and notation of \cref{thm:ols_sensitivity},
    we have
    \begin{align}
    \frac{\|\mtx{A}(\delta\vct{x})\|_2}{\|\vct{b} - \mtx{A}\vct{x}\|_2} &\leq \epsilon\left\{ \frac{1}{\sin \theta} + \kappa(\mtx{A})\left(\frac{1}{\nu \tan \theta} + 1\right)\right\} + O(\epsilon^2).\label{eq:sensitivity_y_ols}
    \end{align}
\end{theorem}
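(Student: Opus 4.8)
\textbf{Proof plan for \cref{thm:sensitivity_ols_y}.}

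The plan is to adapt the standard perturbation analysis for overdetermined least squares so as to track the perturbation in the \emph{fitted value} $\mtx{A}\vct{x}$ rather than in $\vct{x}$ itself. Since \cref{thm:ols_sensitivity} has already been stated (and its proof presumed available from \cite[Theorem 5.3.1]{GvL:2013:MatrixComputationsBook}), I would keep the same setup: let $\vct{x} = \mtx{A}^{\dagger}\vct{b}$ be optimal for the unperturbed problem, let $\vct{r} = \vct{b} - \mtx{A}\vct{x}$ denote the residual, and let $\delta\vct{x}$ be the perturbation so that $\vct{x} + \delta\vct{x}$ solves the least squares problem with data $(\mtx{A} + \delta\mtx{A}, \vct{b} + \delta\vct{b})$. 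First I would recall the first-order expansion of $\delta\vct{x}$ in terms of the data perturbations, which splits into a component in $\range(\mtx{A}^{\trans})$ that captures the change in the projection of $\vct{b}$ onto $\range(\mtx{A})$ and a component coming from how the residual $\vct{r}$ is ``rotated'' by $\delta\mtx{A}$. Concretely, to first order,
\[
\mtx{A}(\delta\vct{x}) = \mtx{A}\mtx{A}^{\dagger}\left(\delta\vct{b} - (\delta\mtx{A})\vct{x}\right) + (\mtx{A}^{\dagger})^{\trans}(\delta\mtx{A})^{\trans}\vct{r} + O(\epsilon^2),
\]
where the first group of terms lies in $\range(\mtx{A})$ and the second term accounts for the residual contribution. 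The key point is that $\mtx{A}(\delta\vct{x})$ avoids the $(\mtx{A}^{\dagger})^{\dagger}\mtx{A}^{\dagger} \sim \sigma_n^{-2}$ amplification that plagues $\|\delta\vct{x}\|_2$, because left-multiplying by $\mtx{A}$ cancels one factor of $\mtx{A}^{\dagger}$ in the most dangerous term.

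Next I would bound each piece. Using $\|\mtx{A}\mtx{A}^{\dagger}\|_2 = 1$ and $\|(\mtx{A}^{\dagger})^{\trans}\|_2 = \sigma_n(\mtx{A})^{-1}$, together with $\|\delta\vct{b}\|_2 \le \epsilon\|\vct{b}\|_2$ and $\|\delta\mtx{A}\|_2 \le \epsilon\|\mtx{A}\|_2$, one gets
\[
\|\mtx{A}(\delta\vct{x})\|_2 \le \epsilon\left(\|\vct{b}\|_2 + \|\mtx{A}\|_2\|\vct{x}\|_2 + \frac{\|\mtx{A}\|_2}{\sigma_n(\mtx{A})}\|\vct{r}\|_2\right) + O(\epsilon^2).
\]
Now I would divide through by $\|\vct{r}\|_2 = \|\vct{b} - \mtx{A}\vct{x}\|_2$ and rewrite the resulting ratios in terms of the auxiliary quantities $\sin\theta = \|\vct{r}\|_2/\|\vct{b}\|_2$, $\nu = \|\mtx{A}\vct{x}\|_2/(\sigma_n(\mtx{A})\|\vct{x}\|_2)$, and $\kappa(\mtx{A}) = \|\mtx{A}\|_2/\sigma_n(\mtx{A})$. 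The term $\|\vct{b}\|_2/\|\vct{r}\|_2$ becomes $1/\sin\theta$; the term $\|\mtx{A}\|_2\|\vct{x}\|_2/\|\vct{r}\|_2$ needs to be massaged: writing $\|\mtx{A}\|_2\|\vct{x}\|_2 = \kappa(\mtx{A})\,\sigma_n(\mtx{A})\|\vct{x}\|_2 = \kappa(\mtx{A})\|\mtx{A}\vct{x}\|_2/\nu$, and then using $\|\mtx{A}\vct{x}\|_2/\|\vct{r}\|_2 = \cos\theta/\sin\theta = 1/\tan\theta$ (which follows from the orthogonality of $\mtx{A}\vct{x}$ and $\vct{r}$ together with $\|\mtx{A}\vct{x}\|_2 = \|\vct{b}\|_2\cos\theta$), gives the contribution $\kappa(\mtx{A})/(\nu\tan\theta)$; the last term $\|\mtx{A}\|_2\|\vct{r}\|_2/(\sigma_n(\mtx{A})\|\vct{r}\|_2) = \kappa(\mtx{A})$. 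Collecting these yields exactly the bracketed expression $\{1/\sin\theta + \kappa(\mtx{A})(1/(\nu\tan\theta) + 1)\}$ in \eqref{eq:sensitivity_y_ols}.

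The main obstacle is getting the first-order expansion of $\mtx{A}(\delta\vct{x})$ \emph{clean}, i.e., correctly identifying which terms survive multiplication by $\mtx{A}$ and verifying that no hidden $\sigma_n^{-2}$ factor remains. This requires care with the perturbation of the pseudoinverse: the formula $\delta(\mtx{A}^{\dagger}) = -\mtx{A}^{\dagger}(\delta\mtx{A})\mtx{A}^{\dagger} + \mtx{A}^{\dagger}(\mtx{A}^{\dagger})^{\trans}(\delta\mtx{A})^{\trans}(\mtx{I} - \mtx{A}\mtx{A}^{\dagger}) + (\mtx{I} - \mtx{A}^{\dagger}\mtx{A})(\delta\mtx{A})^{\trans}(\mtx{A}^{\dagger})^{\trans}\mtx{A}^{\dagger}$ has three terms, and one must check that after applying $\mtx{A}$ on the left and pairing with $\vct{b}$, the term with $\mtx{A}\mtx{A}^{\dagger}(\mtx{A}^{\dagger})^{\trans} \sim \sigma_n^{-1}$ pairs only against the residual $\vct{r} = (\mtx{I} - \mtx{A}\mtx{A}^{\dagger})\vct{b}$, whereas the dangerous $\mtx{A}\mtx{A}^{\dagger}(\mtx{A}^{\dagger})^{\trans}\mtx{A}^{\dagger} \sim \sigma_n^{-1}$ contributions cancel against the $(\mtx{I} - \mtx{A}^{\dagger}\mtx{A})$ projector. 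Since this is precisely the analysis already carried out in the cited source for \cref{thm:ols_sensitivity}, I would lean on it and simply note that the bound on $\|\mtx{A}(\delta\vct{x})\|_2$ drops out of the same expansion by retaining fewer factors of $\mtx{A}^{\dagger}$; the remaining work is the bookkeeping translation into $\theta$, $\nu$, and $\kappa(\mtx{A})$ described above.
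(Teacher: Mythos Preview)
Your proof is correct and follows the standard perturbation-theoretic route. However, note that the paper does not actually prove this theorem: it explicitly presents \cref{thm:sensitivity_ols_y} as a restatement of \cite[Theorem~5.3.1]{GvL:2013:MatrixComputationsBook} and gives no argument of its own. Your derivation---first-order expansion of $\delta\vct{x}$ via the pseudoinverse perturbation formula, left-multiplication by $\mtx{A}$ to kill one factor of $\mtx{A}^{\dagger}$, then bounding and translating into $\theta$, $\nu$, $\kappa(\mtx{A})$---is precisely the approach one finds in the cited source. One minor simplification: since the hypotheses of \cref{thm:ols_sensitivity} force $\mtx{A}$ to have full column rank (the condition $\|\delta\mtx{A}\|_2 < \sigma_n(\mtx{A})$ requires $\sigma_n(\mtx{A}) > 0$), the projector $\mtx{I} - \mtx{A}^{\dagger}\mtx{A}$ vanishes identically, so the third term in your pseudoinverse perturbation formula is zero from the outset rather than requiring a cancellation argument.
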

\cref{thm:sensitivity_ols_y} can be seen as a sensitivity analysis result for a very specific class of dual saddle point problems.
Specifically, since we have assumed that $\mtx{A}$ is full rank, $\vct{y}$ solves the dual problem if and only if $\vct{y} = \vct{b} - \mtx{A}\vct{x}$ where $\vct{x}$ solves the primal problem.
In the same vein, if $\vct{x} + \delta\vct{x}$ solves a perturbed primal problem and we set $\delta\vct{y} = -\mtx{A}(\delta\vct{x})$, then $\vct{y} + \delta\vct{y}$ solves the perturbed dual problem.

As with the bound for $\delta\vct{x}$, \eqref{eq:sensitivity_y_ols} can be estimated under mild geometric assumptions; \cite[pg. 267]{GvL:2013:MatrixComputationsBook} points out that if $\theta$ is sufficiently bounded away from 0 and $\pi/2$, then we should have
\begin{equation}\label{eq:sensitivity_y_ols_est}
    \frac{\|\delta\vct{y}\|_2}{\|\vct{y}\|_2} \lesssim \epsilon\, \kappa(\mtx{A}).
\end{equation}
This shows there is more hope for solving dual saddle point problems to a high degree of forward error accuracy, at least by comparison to primal saddle point problems.
Indeed, the following adaptation of \cite[Theorem 5.6.1]{GvL:2013:MatrixComputationsBook} provides an even more favorable sensitivity analysis result for underdetermined least squares.

\begin{theorem}\label{thm:uls_sensitivity}
    Let $\vct{y} = (\mtx{A}^{\trans})^{\dagger}\vct{c}$ solve the underdetermined least squares problem with data $(\mtx{A},\vct{c})$ for a nonzero vector $\vct{c}$.
    Consider perturbations $\delta\vct{c}$ and $\delta\mtx{A}$ where 
    \[
        \epsilon = \max\left\{\frac{\|\delta\vct{c}\|_2}{\|\vct{c}\|_2},~ \frac{\|\delta\mtx{A}\|_2}{\|\mtx{A}\|_2}\right\} < \sigma_n(\mtx{A}).
    \]
    The perturbation $\delta\vct{y}$ needed for $\vct{y} + \delta\vct{y}$ to solve the underdetermined least squares problem with data $(\mtx{A} + \delta\mtx{A}, \vct{c} + \delta\vct{c})$ satisfies
    \begin{equation}\label{eq:sensitivity_y_uls}
        \frac{\|\delta\vct{y}\|_2}{\|\vct{y}\|_2} \leq 3\, \epsilon\,\kappa(\mtx{A}) + O(\epsilon^2).
    \end{equation}
\end{theorem}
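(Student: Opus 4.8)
The plan is to mirror the template already used for \cref{thm:ols_sensitivity} and \cref{thm:sensitivity_ols_y}, which is the classical argument of Golub and Van Loan \cite{GvL:2013:MatrixComputationsBook}: write the exact and perturbed solutions in closed form through a pseudoinverse, expand that pseudoinverse to first order in the perturbation, and then estimate the resulting terms in operator norm. Since $\sigma_n(\mtx{A}) > 0$, the matrix $\mtx{A}$ has full column rank $n$, so $\mtx{B} := \mtx{A}^{\trans}$ has full row rank and the minimum-norm feasible point is $\vct{y} = \mtx{B}^{\dagger}\vct{c}$, with $\mtx{B}^{\dagger} = \mtx{B}^{\trans}(\mtx{B}\mtx{B}^{\trans})^{-1}$. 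Writing $\delta\mtx{B} = (\delta\mtx{A})^{\trans}$ and $\tilde{\mtx{B}} = \mtx{B} + \delta\mtx{B}$, the hypothesis $\|\delta\mtx{A}\|_2 < \sigma_n(\mtx{A})$ together with Weyl's inequality guarantees that $\tilde{\mtx{B}}$ still has full row rank, so $\tilde{\vct{y}} = \tilde{\mtx{B}}^{\dagger}(\vct{c} + \delta\vct{c})$ is well-defined and is the unique solution of the perturbed problem; then $\delta\vct{y} = \tilde{\vct{y}} - \vct{y}$.

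Next I would invoke the standard first-order perturbation formula for the pseudoinverse of a rank-preserving perturbation (Wedin's identity). The key simplification here is that, because $\mtx{B}$ has full row rank, $\mtx{B}\mtx{B}^{\dagger} = \mtx{I}_n$, so the term proportional to $(\mtx{I} - \mtx{B}\mtx{B}^{\dagger})$ drops out. Using in addition $\mtx{B}^{\dagger}\vct{c} = \vct{y}$ and the identity $(\mtx{B}^{\dagger})^{\trans}\mtx{B}^{\dagger} = (\mtx{B}\mtx{B}^{\trans})^{-1}$, this leaves
\begin{equation*}
    \delta\vct{y} = -\,\mtx{B}^{\dagger}(\delta\mtx{B})\vct{y} \;+\; (\mtx{I} - \mtx{B}^{\dagger}\mtx{B})(\delta\mtx{B})^{\trans}(\mtx{B}\mtx{B}^{\trans})^{-1}\vct{c} \;+\; \mtx{B}^{\dagger}\,\delta\vct{c} \;+\; O(\epsilon^2).
\end{equation*}

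The remaining work is three norm estimates, all built from $\|\mtx{B}^{\dagger}\|_2 = 1/\sigma_n(\mtx{A})$ and $\|\delta\mtx{A}\|_2 \leq \epsilon\|\mtx{A}\|_2$. For the first term, $\|\mtx{B}^{\dagger}(\delta\mtx{B})\vct{y}\|_2 \leq \|\mtx{B}^{\dagger}\|_2\|\delta\mtx{A}\|_2\|\vct{y}\|_2 \leq \epsilon\,\kappa(\mtx{A})\|\vct{y}\|_2$. For the third, $\|\mtx{B}^{\dagger}\delta\vct{c}\|_2 \leq \|\delta\vct{c}\|_2/\sigma_n(\mtx{A}) \leq \epsilon\|\vct{c}\|_2/\sigma_n(\mtx{A})$, and feasibility gives $\vct{c} = \mtx{A}^{\trans}\vct{y}$, hence $\|\vct{c}\|_2 \leq \|\mtx{A}\|_2\|\vct{y}\|_2$, so this term is also $\leq \epsilon\,\kappa(\mtx{A})\|\vct{y}\|_2$. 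For the middle term, $\mtx{I} - \mtx{B}^{\dagger}\mtx{B}$ is an orthogonal projector, so it suffices to bound $\|\vct{w}\|_2$ with $\vct{w} := (\mtx{B}\mtx{B}^{\trans})^{-1}\vct{c}$; since $\mtx{B}^{\trans}\vct{w} = \mtx{B}^{\dagger}\vct{c} = \vct{y}$ and $\mtx{B}^{\trans} = \mtx{A}$ is injective, $\|\vct{y}\|_2 \geq \sigma_n(\mtx{A})\|\vct{w}\|_2$, giving $\|\vct{w}\|_2 \leq \|\vct{y}\|_2/\sigma_n(\mtx{A})$ and hence a bound of $\epsilon\,\kappa(\mtx{A})\|\vct{y}\|_2$ for this term too. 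Adding the three contributions yields $\|\delta\vct{y}\|_2 \leq 3\,\epsilon\,\kappa(\mtx{A})\|\vct{y}\|_2 + O(\epsilon^2)$, which is the claim.

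I expect the main obstacle to be not any single inequality but the careful bookkeeping around the first-order expansion: one must cite or re-derive Wedin's pseudoinverse identity in the rank-preserving regime, verify cleanly that the $(\mtx{I} - \mtx{B}\mtx{B}^{\dagger})$ term vanishes for full-row-rank $\mtx{B}$, and confirm that the remainder is genuinely $O(\epsilon^2)$ uniformly. This last point leans on $\|\delta\mtx{A}\|_2 < \sigma_n(\mtx{A})$ keeping $\sigma_n(\tilde{\mtx{A}})$ bounded away from zero, so that $\|\tilde{\mtx{B}}^{\dagger}\|_2$ remains $O(1/\sigma_n(\mtx{A}))$ and the linearization error scales quadratically in $\epsilon$. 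Everything past that is routine and of the same flavor as the proofs already behind \cref{thm:ols_sensitivity,thm:sensitivity_ols_y}.
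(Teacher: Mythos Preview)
The paper does not actually prove this theorem; it simply restates \cite[Theorem~5.6.1]{GvL:2013:MatrixComputationsBook} in the notation of saddle point problems and refers the reader there. Your argument---first-order expansion of $\tilde{\mtx{B}}^{\dagger}$ via Wedin's identity in the rank-preserving regime, then three straightforward operator-norm bounds each contributing $\epsilon\,\kappa(\mtx{A})\|\vct{y}\|_2$---is exactly the classical Golub--Van Loan derivation behind that cited result, and it is correct as written.
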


\subsection{Sharper sensitivity analysis for overdetermined problems}\label{subapp:sharper_sensitivity}

The analysis results in \cref{subapp:basic_unreg_ls_sensitivity} have notable limitations: they hide constants in $O(\epsilon^2)$ terms.
Luckily there are a wealth of more precise results in the literature that work with different notions of error.
One good example along these lines for overdetermined least squares is given in \cite[Fact 5.14]{Ilse:2009:book}, which obtains a relative error bound normalized by the solution of the \textit{perturbed problem} rather than the original problem.
We paraphrase this fact below.

\begin{theorem}\label{thm:ilse_ols_sensitivity}
    Consider an overdetermined least squares problem with data $(\mtx{A}_o,\vct{b}_o)$ that is solved by $\vct{x}_o = (\mtx{A}_o)^\dagger(\vct{b}_o)$; consider also perturbed problem data $\mtx{A} = \mtx{A}_o + \delta\mtx{A}_o$ and $\vct{b} = \vct{b}_o + \delta\vct{b}_o$ together with solution $\vct{x} = \mtx{A}^{\dagger}\vct{b}$.
    If we have $\rank(\mtx{A}_o) = \rank(\mtx{A}) = n$ and define  
    \[
    \epsilon_A = \frac{\|\delta\mtx{A}_o\|_2}{\|\mtx{A}_o\|_2} \quad\text{and}\quad \epsilon_b = \frac{\|\delta\vct{b}_o\|}{\|\mtx{A}_o\|_2\|\vct{x}\|_2},
    \]
    then we have
    \begin{equation}\label{eq:ilse_sensitivity_x_ols}
        \frac{\|\vct{x}_o - \vct{x}\|_2}{\|\vct{x}\|_2} \leq (\epsilon_A + \epsilon_b) \kappa(\mtx{A}_o)+ \epsilon_A \frac{\left[\kappa(\mtx{A}_o)\right]^2\|\vct{y}\|_2}{\|\mtx{A}_o\|_2\|\vct{x}\|_2}
    \end{equation}
    for $\vct{y} = \vct{b} - \mtx{A}\vct{x}$.
\end{theorem}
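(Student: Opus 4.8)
The plan is to derive an \emph{exact} expression for the error vector $\delta\vct{x} := \vct{x}_o - \vct{x}$ in terms of the perturbations $\delta\mtx{A}_o$ and $\delta\vct{b}_o$, and then bound it term by term. The two ingredients are the normal equations for each problem. Since $\rank(\mtx{A}) = n$, the perturbed solution satisfies $\mtx{A}^{\trans}\mtx{A}\vct{x} = \mtx{A}^{\trans}\vct{b}$, equivalently $\mtx{A}^{\trans}\vct{y} = \vct{0}$ for the perturbed residual $\vct{y} = \vct{b} - \mtx{A}\vct{x}$; since $\rank(\mtx{A}_o) = n$, the original solution satisfies $\mtx{A}_o^{\trans}\mtx{A}_o\vct{x}_o = \mtx{A}_o^{\trans}\vct{b}_o$ with $\mtx{A}_o^{\trans}\mtx{A}_o$ invertible.

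First I would substitute $\vct{x}_o = \vct{x} + \delta\vct{x}$ into the original normal equations, obtaining the exact identity $\mtx{A}_o^{\trans}\mtx{A}_o\,\delta\vct{x} = \mtx{A}_o^{\trans}(\vct{b}_o - \mtx{A}_o\vct{x})$. Then I would rewrite the right-hand side using $\mtx{A}_o = \mtx{A} - \delta\mtx{A}_o$ and $\vct{b}_o = \vct{b} - \delta\vct{b}_o$: the quantity $\vct{b}_o - \mtx{A}_o\vct{x}$ equals $\vct{y} - \delta\vct{b}_o + \delta\mtx{A}_o\vct{x}$, and, crucially, $\mtx{A}_o^{\trans}\vct{y} = (\mtx{A} - \delta\mtx{A}_o)^{\trans}\vct{y} = -\delta\mtx{A}_o^{\trans}\vct{y}$ because $\mtx{A}^{\trans}\vct{y} = \vct{0}$. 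Combining these and using $(\mtx{A}_o^{\trans}\mtx{A}_o)^{-1}\mtx{A}_o^{\trans} = \mtx{A}_o^{\dagger}$ yields
\[
\delta\vct{x} = -(\mtx{A}_o^{\trans}\mtx{A}_o)^{-1}\delta\mtx{A}_o^{\trans}\vct{y} + \mtx{A}_o^{\dagger}\big(\delta\mtx{A}_o\vct{x} - \delta\vct{b}_o\big).
\]
No truncation has occurred, which is exactly why the final bound carries no $O(\epsilon^2)$ term; this is the payoff of normalizing by $\|\vct{x}\|_2$, the \emph{perturbed} solution, rather than by $\|\vct{x}_o\|_2$.

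The remaining step is routine norm estimation. Taking Euclidean norms, applying the triangle inequality and submultiplicativity, and substituting $\|(\mtx{A}_o^{\trans}\mtx{A}_o)^{-1}\|_2 = \sigma_n(\mtx{A}_o)^{-2} = \kappa(\mtx{A}_o)^2/\|\mtx{A}_o\|_2^2$ together with $\|\mtx{A}_o^{\dagger}\|_2 = \sigma_n(\mtx{A}_o)^{-1} = \kappa(\mtx{A}_o)/\|\mtx{A}_o\|_2$, one gets
\[
\|\delta\vct{x}\|_2 \le \frac{\kappa(\mtx{A}_o)^2}{\|\mtx{A}_o\|_2^2}\|\delta\mtx{A}_o\|_2\|\vct{y}\|_2 + \frac{\kappa(\mtx{A}_o)}{\|\mtx{A}_o\|_2}\big(\|\delta\mtx{A}_o\|_2\|\vct{x}\|_2 + \|\delta\vct{b}_o\|_2\big).
\]
Dividing through by $\|\vct{x}\|_2$ and recognizing $\epsilon_A = \|\delta\mtx{A}_o\|_2/\|\mtx{A}_o\|_2$ and $\epsilon_b = \|\delta\vct{b}_o\|_2/(\|\mtx{A}_o\|_2\|\vct{x}\|_2)$ recovers exactly \eqref{eq:ilse_sensitivity_x_ols}.

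The only genuine obstacle is bookkeeping in the first stage: one must be careful to use the \emph{perturbed} residual $\vct{y}$ (not the original residual), to introduce both perturbations consistently, and to invoke $\mtx{A}^{\trans}\vct{y} = \vct{0}$ at precisely the right moment, since it is this orthogonality that lets the otherwise-quadratic cross term collapse into the clean expression above. Everything downstream is elementary.
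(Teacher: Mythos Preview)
Your proof is correct. The paper does not actually prove this theorem; it is stated as a paraphrase of \cite[Fact 5.14]{Ilse:2009:book} and cited without proof, so there is no in-paper argument to compare against. Your derivation---expressing $\delta\vct{x}$ exactly via the original normal equations, exploiting $\mtx{A}^{\trans}\vct{y}=\vct{0}$ to reduce $\mtx{A}_o^{\trans}\vct{y}$ to $-\delta\mtx{A}_o^{\trans}\vct{y}$, and then taking norms---is precisely the standard route and recovers the bound without any hidden higher-order terms.
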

Bounds of similar character are given for $\vct{y}$ on \cite[pg. 101]{Ilse:2009:book}.
These bounds are useful for understanding how solutions exhibit different sensitivity for perturbations to the data matrix compared to perturbations to the right-hand-side.
Even better bounds can be obtained by assuming \textit{structured perturbations}.
For example, if $\range(\mtx{A}_o) = \range(\mtx{A})$, then the sensitivity of the overdetermined least squares solution depends only linearly on $\kappa(\mtx{A}_o)$ \cite[Exercise 5.1]{Ilse:2009:book}.

Our discussion of sensitivity analysis has only considered \textit{normwise} perturbations to the problem data.
More informative bounds can be had by considering \textit{componentwise} perturbations.
For example, one can measure a perturbation of an initial matrix $\mtx{A}$ by the smallest $\alpha$ for which $|\delta A_{ij}| \leq \alpha|A_{ij}|$ for all $i,j$.
We refer the reader to \cite[\S 20.1]{Higham:2002:book} for a componentwise sensitivity analysis result on overdetermined least squares.

\subsection{Simple constructions to bound backward error}\label{subsec:posteriori_backward_error}

Here we describe two methods for constructing perturbations to problem data that render an approximate solution exact.
By computing the norms of these perturbations, we can obtain upper bounds on (normwise) backward error.
Such bounds are useful as termination criteria for iterative solvers.

The notation here matches that of \cref{thm:ilse_ols_sensitivity}.
That is, we say our original least squares problem has data $(\mtx{A}_o,\vct{b}_o)$ and that $\vct{x}$ is an \textit{approximate} solution to this problem.

\begin{remark}
    For discussion on \textit{componentwise} backward error bounds for overdetermined least squares we again refer the reader to \cite[\S 20.1]{Higham:2002:book}.
\end{remark}

\subsubsection{Inconsistent overdetermined problems}
Letting $\vct{r} = \vct{b}_o - \mtx{A}_o\vct{x}$, we define
\begin{equation}\label{eq:back_err_construct:inconsistent_ols:A}
    \delta\mtx{A}_o = \frac{\vct{r}\vct{r}^{\trans}\mtx{A}_o}{\|\vct{r}\|_2^2}\quad\text{and}\quad\mtx{A} = \mtx{A}_o + \delta\mtx{A}_o,
\end{equation}
and subsequently
\begin{equation}\label{eq:back_err_construct:inconsistent_ols:b}
\delta\vct{b}_o = -(\delta\mtx{A}_o)\vct{x} \quad\text{and}\quad \vct{b} = \vct{b}_o + \delta\vct{b}_o.
\end{equation}
Some simple algebra shows that $\vct{x}$ satisfies the normal equations 
\[
    \mtx{A}^{\trans}\left(\vct{b} - \mtx{A}\vct{x}\right) = \vct{0},
\]
therefore it solves the overdetermined least squares problem with data $(\mtx{A},\vct{b})$.

This construction was first given in \cite[Theorem 3.2]{Stewart:1977:LINPACK}.
It is especially nice since the perturbation is rank-1, and so its spectral norm
\[
\|\delta\mtx{A}_o\|_2 = \frac{\|\mtx{A}_o^{\trans}\vct{r}\|_2}{\|\vct{r}\|_2}
\]
is easily computed at runtime by an iterative solver for overdetermined least squares.
Furthermore, if the iterative solver in question is LSQR, and if we assume exact arithmetic, then
the perturbation will satisfy $\delta\mtx{A}_o\vct{x} = \vct{0}$ \cite[\S 6.2]{PS:1982}.
Therefore the vector $\delta\vct{b}_o$ in \eqref{eq:back_err_construct:inconsistent_ols:b} is zero  when running LSQR (or any equivalent method) in exact arithmetic.

\subsubsection{Consistent overdetermined problems: a word of warning}

The perturbations given in \eqref{eq:back_err_construct:inconsistent_ols:A} -- \eqref{eq:back_err_construct:inconsistent_ols:b} are not suitable for least squares problems where the optimal residual, $(\mtx{I} -\mtx{A}_o\mtx{A}_o^{\dagger})\vct{b}_o$, is zero or nearly zero.
In these situations one should use a perturbation designed for consistent linear systems.
We describe such a construction here based on termination criteria used in LSQR.

Let $\delta\vct{b}_o$ be an arbitrary $m$-vector.
Suppose we set $\delta\mtx{A}_o$ as a function of $\delta\vct{b}_o$ in the following way:
\[
    \delta\mtx{A}_o = \frac{\left(\delta\vct{b}_o + \vct{b}_o - \mtx{A}_o\vct{x}\right)\vct{x}^{\trans}}{ \|\vct{x}\|_2^2}.
\]
It can be seen that $\mtx{A}\vct{x} = \vct{b}$ upon taking $\vct{b} = \vct{b}_o + \delta\vct{b}_o$ and $\mtx{A} = \mtx{A}_o + \delta\mtx{A}_o$, and so $\vct{x}$ trivially solves the perturbed least squares problem with data $(\mtx{A},\vct{b})$.

One can obtain many backward-error constructions by considering different choices for $\delta\vct{b}_o$ as a function of $\vct{x}$, the problem data $(\mtx{A}_o,\vct{b}_o)$, and desired error tolerances.
The construction for LSQR considers two tolerance parameters $\epsilon_A, \epsilon_b \in [0, 1)$, and sets $\delta\vct{b}_o$ as follows \cite[\S 6.1]{PS:1982}:
\begin{equation}\label{eq:back_err_construct:consistent_ols:lsqr_b}
    \delta\vct{b}_o = \left(\frac{\epsilon_b \|\vct{b}_o\|_2 }{\epsilon_b\|\vct{b}_o\|_2 + \epsilon_A \|\mtx{A}_o\|\|\vct{x}\|_2}\right)(\mtx{A}_o\vct{x} - \vct{b}_o).
\end{equation}
The parameters $\epsilon_A$ and $\epsilon_b$ indicate the (relative) sizes of perturbations to $(\mtx{A}_o,\vct{b}_o)$ that a user deems allowable.
The authors of \cite{PS:1982} suggest that ``allowable'' be based on the extent to which $(\mtx{A}_o,\vct{b}_o)$ are not known exactly in applications.

It is natural to want to reduce the two tolerance parameters $(\epsilon_A,\epsilon_b)$ to a single tolerance parameter.
For example, one might take $\epsilon_A = \epsilon_b$.
Unfortunately, our experience is that taking $\epsilon_A = \epsilon_b$ can produce unreliable algorithm behavior for overdetermined problems.
Therefore we recommend that one sets $\epsilon_A = 0$ if one wants to think only in terms of a single tolerance for consistent overdetermined problems.
While this may seem like a blunt solution, it ensures that $\delta\mtx{A}_o = \mtx{0}$, which is useful in applying \cref{thm:ilse_ols_sensitivity}.
If setting $\epsilon_A = 0$ still feels too extreme then one might consider setting $\epsilon_A = (\epsilon_b)^2 \ll \epsilon_b$.

\begin{remark}
    As a minor detail, we point out that the norm of $\mtx{A}_o$ in \eqref{eq:back_err_construct:consistent_ols:lsqr_b} is deliberately ambiguous.
    While the spectral norm would probably be most natural, the formal LSQR algorithm replaces $\|\mtx{A}_o\|$ by an \textit{estimate} of $\|\mtx{A}_o\|_{\mathrm{F}}$ that monotonically increases from one iteration to the next; see \cite[\S 5.3]{PS:1982}.
\end{remark}

\subsection{More advanced concepts}\label{subapp:advanced_lstsq_error_est}

Some of the earliest work on backward-error analysis for solutions to linear systems focused on componentwise backward error for direct methods \cite{OP:1964:back_err_linsys}.
A principal shortcoming of componentwise error metrics is that they are expensive to compute, especially as stopping criteria for iterative solvers.
\cite{ADR:1992:stop-iter-solver} investigates metrics for componentwise backward error suitable for iterative solvers.
%
%

The ``backward error plus sensitivity analysis'' approach may overestimate forward error.
Alternative estimates are available for some Krylov subspace methods such as PCG, wherein one uses algorithm-specific recurrences to estimate forward error in the Euclidean norm or the norm induced by $\Alift := [\mtx{A};\sqrt{\mu}]$.
See, for example, \cites{AK:2001:error_est_pcg,ST:2002:error_est_PCG,ST:2005:error_est_PCG}.
These error bounds are more accurate when used with a good preconditioner, which we can generally expect to have when using the randomized algorithms described herein.

It is not easy to apply sensitivity analysis results to compute forward error bounds at runtime.
A primary obstacle in this regard is the need to have accurate estimates for the extreme singular values of $\mtx{A}_o$ or the perturbation $\mtx{A}$ (depending on the sensitivity analysis result in question).
On this topic we note that if $\mtx{M}$ is an SVD-based preconditioner then we will have computed the singular values and right singular vectors of a sketch $\mtx{S}\mtx{A}_o$.
Those singular values can be used as approximations to the reciprocals of the singular values of $\mtx{A}_o$.
It is conceivable that more accurate approximations could be obtained by applying iterative preconditioned eigenvalue estimation methods for $(\mtx{A}_o)^{\trans}\mtx{A}_o$.
Such iterative methods typically require initialization with an approximate eigenvector.
On this front one can use the leading (resp. trailing) left singular vector of $\mtx{M}$ to approximate the trailing (resp. leading) right singular vector of $\mtx{A}_o$.
One should not expect too much of such estimates, however.\footnote{Any ``cheap'' method for estimating the smallest singular value even of \textit{triangular} matrices can return substantial overestimates and underestimates  \cite{DDM:2001:error_bound_complexity}.}

Finally, we note that some Krylov-subspace iterative methods can estimate condition numbers.
For example, when LSQR is applied to a problem with data matrix $\mtx{L}$, it can estimate the Frobenius condition number $\|\mtx{L}\|_{\mathrm{F}}\|\mtx{L}^{\dagger}\|_{\mathrm{F}}$.
Bear in mind that in our context we call LSQR with the preconditioned augmented data matrix, $\mtx{L} = \Alift\mtx{M}$.
It would be useful to embed estimators for \textit{componentwise condition numbers} (which are known to be computable in polynomial time \cite{Demmel:1992:dist_to_singular}) into Krylov subspace solvers.

\section{Ill-posed saddle point problems }\label{app:lstsq_details:inconsistent}

Our saddle point formulations of least squares problems can be problematic when $\mtx{A}$ is rank-deficient and $\mu$ is zero, in which case our problems can actually be infeasible or unbounded below.
This \nameCref{app:lstsq_details:inconsistent} uses a limiting analysis to define \textit{canonical solutions} to saddle point problems in these settings.

We begin by recalling
\begin{align}
    & \min_{\vct{x}\in\R^n}\|\mtx{A}\vct{x} - \vct{b}\|_2^2 + \mu\|\vct{x}\|_2^2 + 2\vct{c}^{\trans}\vct{x}, \tag{\eqref{eq:saddle_opt_x}, revisited} \\
    & \min_{\vct{y} \in \R^m} \|\mtx{A}^{\trans}\vct{y} - \vct{c}\|_2^2 + \mu\|\vct{y} - \vct{b}\|_2^2, \tag{\eqref{eq:underdet_ridge}, revisited} \\
    \text{and}\qquad & 
    \min_{\vct{y} \in \R^m}\{ \|\vct{y} - \vct{b} \|_2^2 \,:\, \mtx{A}^{\trans}\vct{y} = \vct{c} \}. \tag{\eqref{eq:saddle_opt_y}, revisited}
\end{align}
We also note the following form of solutions to \eqref{eq:underdet_ridge}, when $\mu$ is positive
\begin{equation}\label{eq:parameterized_opt_y}
    \vct{y}(\mu) = \left(\mtx{A}\mtx{A}^{\trans} + \mu\mtx{I}\right)^{-1}\left(\mtx{A}\vct{c} + \mu\vct{b}\right).
\end{equation}

\begin{proposition}\label{prop:limiting_sol_y}
    For any tall $m \times n$ matrix $\mtx{A}$, any $m$-vector $\vct{b}$, and any $n$-vector $\vct{c}$, we have
    \begin{equation}\label{eq:limiting_sol_y}
        \lim_{\mu \downarrow 0} \vct{y}(\mu) = (\mtx{A}^{\trans})^{\dagger}\vct{c} + (\mtx{I} - \mtx{A}\mtx{A}^\dagger)\vct{b}.
    \end{equation}
\end{proposition}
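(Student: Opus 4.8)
The plan is to diagonalize the Hermitian positive semidefinite matrix $\mtx{M} := \mtx{A}\mtx{A}^{\trans}$ and track how the resolvent $(\mtx{M} + \mu\mtx{I})^{-1}$ behaves on the two invariant subspaces $\range(\mtx{A})$ and $\range(\mtx{A})^{\perp}$ as $\mu \downarrow 0$. Concretely, write the full eigendecomposition $\mtx{M} = \mtx{W}\mtx{\Lambda}\mtx{W}^{\trans}$ with $\mtx{W}$ an $m \times m$ orthogonal matrix and $\mtx{\Lambda} = \diag(\lambda_1,\ldots,\lambda_m)$, ordered so that $\lambda_i > 0$ for $i \leq k := \rank(\mtx{A})$ and $\lambda_i = 0$ for $i > k$. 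Then $(\mtx{M} + \mu\mtx{I})^{-1} = \mtx{W}\diag\!\big((\lambda_1+\mu)^{-1},\ldots,(\lambda_m+\mu)^{-1}\big)\mtx{W}^{\trans}$ for every $\mu > 0$, and $\range(\mtx{A}) = \Span\{\vct{w}_1,\ldots,\vct{w}_k\}$, with $\mtx{A}\mtx{A}^{\dagger}$ equal to the orthogonal projector onto this subspace and $\mtx{I} - \mtx{A}\mtx{A}^{\dagger}$ the projector onto its orthogonal complement.

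First I would split $\vct{y}(\mu)$ into the contribution from $\mtx{A}\vct{c}$ and the contribution from $\mu\vct{b}$. For the $\mtx{A}\vct{c}$ term: since $\mtx{A}\vct{c} \in \range(\mtx{M})$, its expansion in the basis $\{\vct{w}_i\}$ only involves $i \leq k$, so $(\mtx{M}+\mu\mtx{I})^{-1}\mtx{A}\vct{c} \to \mtx{M}^{\dagger}\mtx{A}\vct{c}$ as $\mu \downarrow 0$ by continuity on that finite-dimensional block. Then I would invoke the standard pseudoinverse identity $(\mtx{A}\mtx{A}^{\trans})^{\dagger}\mtx{A} = (\mtx{A}^{\trans})^{\dagger}$ (readily checked from the SVD $\mtx{A} = \mtx{U}\mtx{\Sigma}\mtx{V}^{\trans}$) to rewrite this limit as $(\mtx{A}^{\trans})^{\dagger}\vct{c}$.

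Next I would handle the $\mu\vct{b}$ term. Decompose $\vct{b} = \mtx{A}\mtx{A}^{\dagger}\vct{b} + (\mtx{I}-\mtx{A}\mtx{A}^{\dagger})\vct{b}$. On the first piece (supported on $i \leq k$), the scalar multipliers are $\mu/(\lambda_i + \mu) \to 0$; on the second piece (supported on $i > k$, where $\lambda_i = 0$), the multipliers equal $\mu/(0+\mu) = 1$ identically, so $\mu(\mtx{M}+\mu\mtx{I})^{-1}\vct{b} = (\mtx{I}-\mtx{A}\mtx{A}^{\dagger})\vct{b} + o(1)$. Adding the two contributions yields $\lim_{\mu\downarrow 0}\vct{y}(\mu) = (\mtx{A}^{\trans})^{\dagger}\vct{c} + (\mtx{I}-\mtx{A}\mtx{A}^{\dagger})\vct{b}$, as claimed.

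I expect no serious obstacle here; the only points requiring a little care are (i) justifying the eigenbasis bookkeeping cleanly — in particular that $\range(\mtx{A}\mtx{A}^{\trans}) = \range(\mtx{A})$ and that the projectors onto the eigenspaces of $\mtx{M}$ for zero/nonzero eigenvalues coincide with $\mtx{I}-\mtx{A}\mtx{A}^{\dagger}$ and $\mtx{A}\mtx{A}^{\dagger}$ — and (ii) confirming the identity $(\mtx{A}\mtx{A}^{\trans})^{\dagger}\mtx{A} = (\mtx{A}^{\trans})^{\dagger}$, which I would dispatch in one line via the SVD. A fully equivalent alternative would be to substitute the compact SVD of $\mtx{A}$ directly into \eqref{eq:parameterized_opt_y} and pass to the limit entrywise in the diagonalized coordinates; I would mention this as the computational variant but present the projector-based argument as the main proof for transparency.
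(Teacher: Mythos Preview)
Your proposal is correct and takes essentially the same approach as the paper: both arguments diagonalize $\mtx{A}\mtx{A}^{\trans}$ (the paper via the compact SVD $\mtx{A} = \mtx{U}\mtx{\Sigma}\mtx{V}^{\trans}$, you via the full eigendecomposition, which is the same data), split the resolvent into its action on $\range(\mtx{A})$ and $\range(\mtx{A})^{\perp}$, and pass to the limit termwise on $\mtx{A}\vct{c}$ and $\mu\vct{b}$ separately. The ``computational variant'' you mention at the end is exactly what the paper writes out, so the two proofs differ only in presentation.
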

\begin{proof}
    Let $k = \rank(\mtx{A})$. If $k = 0$ then the claim is trivial since \eqref{eq:parameterized_opt_y} reduces to $\vct{y}(\mu) = \vct{b}$ for all $\mu > 0$.
    Henceforth, we assume $k > 1$.
    To establish the claim, consider how the compact SVD $\mtx{A} = \mtx{U}\mtx{\Sigma}\mtx{V}^{\trans}$ lets us express
    \[
        \mtx{A}\mtx{A}^{\trans} + \mu\mtx{I}_m = \mtx{H}_{\mu} + \mtx{G}_{\mu}
    \]
    in terms of the Hermitian matrices
    \begin{align*}
        \mtx{H}_{\mu} &= \mtx{U}\left(\mtx{\Sigma}^2 + \mu\mtx{I}_k\right)\mtx{U}^{\trans} \\
        \text{and}\quad\mtx{G}_{\mu} &= \mu(\mtx{I}_m - \mtx{U}\mtx{U}^{\trans}).
    \end{align*}
    Since $\mtx{H}_\mu\mtx{G}_{\mu} = \mtx{G}_{\mu}\mtx{H}_{\mu} = \mtx{0}$, the following identity holds for all positive $\mu$:
    \begin{align*}
        \left(\mtx{A}\mtx{A}^{\trans} + \mu\mtx{I}\right)^{-1} 
            &= \mtx{H}_{\mu}^{\dagger} + \mtx{G}_{\mu}^{\dagger}.
    \end{align*}
    Furthermore, by expressing
    \begin{align*}
        \mtx{H}_{\mu}^{\dagger}\mtx{A}\vct{c}
            &= \mtx{U}\left(\mtx{\Sigma}^2 + \mu\mtx{I}_k\right)^{-1}\mtx{\Sigma}\mtx{V}^{\trans}\vct{c} \\ 
        \mtx{G}_{\mu}^{\dagger}\mtx{A}\vct{c} &= \vct{0} \\
        \mu\mtx{H}_{\mu}^{\dagger}\vct{b} &= \mtx{U}\left(\mtx{\Sigma}^2/\mu + \mtx{I}_k\right)^{-1}\mtx{U}^{\trans}\vct{b} \\ 
        \mu\mtx{G}_{\mu}^{\dagger}\vct{b} &=  (\mtx{I}_m - \mtx{U}\mtx{U}^{\trans})\vct{b}
    \end{align*}
    we find that
    \begin{align*}
        \vct{y}(\mu) 
            &= \mtx{H}_\mu^{\dagger}(\mtx{A}\vct{c} + \mu\vct{b}) + \mtx{G}_{\mu}^{\dagger}(\mtx{A}\vct{c} + \mu\vct{b}) \\
            &\rightarrow \mtx{U}\mtx{\Sigma}^{-1}\mtx{V}^{\trans}\vct{c} + (\mtx{I}_m - \mtx{U}\mtx{U}^{\trans})\vct{b}.
    \end{align*}
    This is equivalent to the desired claim since $(\mtx{A}^{\trans})^{\dagger} = \mtx{U}\mtx{\Sigma}^{-1}\mtx{V}^{\trans}$ and $\mtx{A}\mtx{A}^{\dagger} = \mtx{U}\mtx{U}^{\trans}$.
\end{proof}

In light of the above proposition, we take $\vct{y}(0) = (\mtx{A}^{\trans})^{\dagger}\vct{c} + (\mtx{I} - \mtx{A}\mtx{A}^\dagger)\vct{b}$ as our canonical solution to the dual problem when $\mu = 0$.

Now we let $\vct{x}(\mu)$ denote the solution to \eqref{eq:saddle_opt_x} parameterized by $\mu > 0$.
It is clear that this is given by
\[
    \vct{x}(\mu) = \left(\mtx{A}^{\trans}\mtx{A} + \mu\mtx{I}\right)^{-1}(\mtx{A}^{\trans}\vct{b} - \vct{c}).
\]
It is easy to show that if $\vct{c}$ is not orthogonal to the kernel of $\mtx{A}$, then the norms $\|\vct{x}(\mu)\|$ will diverge to infinity as $\mu$ tends to zero.
However, if $\vct{c}$ \textit{is} orthogonal to the kernel of $\mtx{A}$, then we have
\begin{equation}\label{eq:limiting_sol_x}
    \lim_{\mu \downarrow 0}\vct{x}(\mu) =(\mtx{A}^{\trans}\mtx{A})^\dagger(\mtx{A}^{\trans}\vct{b} - \vct{c}) =: \vct{x}(0).
\end{equation}
We actually take the limit above as our canonical solution to the primal problem \eqref{eq:saddle_opt_x} regardless of whether or not $\vct{c}$ is orthogonal to the kernel of $\mtx{A}$.
Our reasons for this are two-fold. 
First, the values $\vct{x}(0), \vct{y}(0)$ given above are unchanged when $\vct{c}$ is replaced by its orthogonal projection onto range of $\mtx{A}^{\trans}$.
Second, the value $\vct{y}(0)$ is always the limiting solution to the dual problem. Meanwhile, the proposed value for $\vct{x}(0)$ relates to $\vct{y}(0)$ by $\vct{y}(0) = \vct{b} - \mtx{A}\vct{x}(0)$.

\section{Minimizing regularized quadratics}\label{subapp:details_min_reg_quadratics}

\cref{subapp:primer_krr} provides a brief introduction to kernel ridge regression (KRR). It covers the finite-dimensional linear algebraic formulation and the Hilbert space formulation of this regression model, and it explains how ridge regression can be understood in the KRR framework.
\cref{subapp:KRR_AM15_SASAP} presents a novel preconditioner-generation procedure for solving a \textit{sketch} of the regularized quadratic minimization problem \eqref{eq:unconstr_quadratic_opt}.

\subsection{A primer on kernel ridge regression}\label{subapp:primer_krr}



Kernel ridge regression (KRR) is a type of nonparametric regression for learning real-valued nonlinear functions $f : \mathcal{X} \to \R$.
It can be formulated as a linear algebra problem as follows: we are given $\lambda > 0$, an $m \times m$ psd ``kernel matrix'' $\mtx{K}$, and a vector of observations $\vct{h}$ in $\R^m$; we want to solve
\begin{equation}\label{eq:krr_finite_quadopt}
 \operatornamewithlimits{argmin}_{\vct{\alpha}\in\R^m} \textstyle\frac{1}{m}\|\mtx{K}\vct{\alpha} - \vct{h}\|_2^2 + \lambda\, \vct{\alpha}^{\trans}\mtx{K}\vct{\alpha}.
\end{equation}
\noindent Equivalently, we want to solve the \textit{KRR normal equations} $(\mtx{K} + m \lambda \,\mtx{I})\vct{\alpha} = \vct{h}$.
The normal equations formulation makes it clear that KRR is an instance of \eqref{eq:unconstr_quadratic_opt}.

A standard library for \RandNLA{} would be well-served to not dwell on how $\mtx{K}$ is defined; it should instead only focus on how $\mtx{K}$ can be accessed.
However, strictly speaking, \eqref{eq:krr_finite_quadopt} only encodes a KRR problem when the entries of $\mtx{K}$ are given by pairwise evaluations of a suitable two-argument \textit{kernel function} on some datapoints $\{\vct{x}_i\}_{i=1}^m \subset \mathcal{X}$.
Letting $k : \mathcal{X} \times \mathcal{X} \to \R$ denote this kernel function, 
the user will take $\vct{\alpha}$ that approximately solves \eqref{eq:krr_finite_quadopt} to define the learned model $g(\vct{z}) = \sum_{i=1}^m \alpha_i k(\vct{x}_i, \vct{z})$.

\subsubsection{A more technical description}

The kernel function $k$ induces a \textit{reproducing kernel Hilbert space}, $\mathcal{H}$, of real-valued functions on $\mathcal{X}$.
This space is (up to closure) equal to the set of real-linear combinations of functions $\vct{y} \mapsto k^{\vct{u}}(\vct{y}) := k(\vct{y},\vct{u})$ parameterized by $\vct{u} \in \mathcal{X}$.
Additionally, if the function
\[
        \vct{y} \mapsto f(\vct{y}) = \sum_{i=1}^m \alpha_i k(\vct{y},\vct{x}_i)
\]
is parameterized by $\vct{\alpha} \in \R^m$ and $\{\vct{x}_i\}_{i=1}^m \subset \mathcal{X}$, then its squared norm is given by
\[
\|f\|_{\mathcal{H}}^2 = \sum_{i=1}^m\sum_{j=1}^m \alpha_i\alpha_j k(\vct{x}_i,\vct{x}_j).
\]
Using the kernel matrix $\mtx{K}$ with entries $K_{ij} = k(\vct{x}_i,\vct{x}_j)$, we can express that squared norm as $\|f\|_{\mathcal{H}}^2 = \vct{\alpha}^{\trans}\mtx{K}\vct{\alpha}$. 
Furthermore, for any $\vct{u} \in \mathcal{X}$ and any $f \in \mathcal{H}$ we have $f(\vct{u}) = \langle f, k^{\vct{u}}\rangle_{\mathcal{H}}$.
For details on reproducing kernel Hilbert spaces we refer the reader to \cite{aronszajn:1950:reprod_kern_hilbert_space}.

KRR problem data consists of observations $\{(\vct{x}_i, h_i)\}_{i=1}^m \subset \mathcal{X} \times \R$ and a positive regularization parameter $\lambda$.
We presume there are functions $g$ in $\mathcal{H}$ for which $g(\vct{x}_i) \approx h_i$, and we try to obtain such a function by solving
\begin{equation}\label{eq:krr_funcspace}
\min_{g \in \mathcal{H}}~ \frac{1}{m}\sum_{i=1}^m (g(\vct{x}_i) - h_i)^2 + \lambda \| g \|_{\mathcal{H}}^2.
\end{equation}
It follows from \cite{KW:1970:representer_thm} that the solution to \eqref{eq:krr_funcspace} is in the span of the functions $\{k^{\vct{x}_i}\}_{i=1}^m$.
Specifically, the solution is
$g_\star = \sum_{i=1}^m \alpha_i k^{\vct{x}_i}$ 
where $\vct{\alpha}$ solves \eqref{eq:krr_finite_quadopt}.

\subsubsection{Why is ridge regression a special case of kernel ridge regression?}

Suppose we have an $m \times n$ matrix $\mtx{X} = [\vct{x}_1,\ldots,\vct{x}_n]$ with linearly independent columns, and that we want to estimate a linear functional $\hat{g} : \R^m \to \R$  given access to the $n$ point evaluations $(\vct{x}_i, \hat{g}(\vct{x}_i))_{i=1}^n$.

Given a regularization parameter $\lambda > 0$, ridge regression concerns finding the linear function $g : \R^m \to \R$ that minimizes
\[
    L(g) = \left\|\begin{bmatrix} g(\vct{x}_1) \\ \vdots \\ g(\vct{x}_n) \end{bmatrix} - \begin{bmatrix} \hat{g}(\vct{x}_1) \\ \vdots \\ \hat{g}(\vct{x}_n)\end{bmatrix}\right\|_2^2 + n \lambda  \|g\|^2.
\]
To make this concrete, let us represent $\hat{g}$ and $g$ by $m$-vectors $\vct{\hat{g}}$ and $\vct{g}$ respectively, and set $\vct{h} = \mtx{X}^{\trans}\vct{\hat{g}}$.
We also adopt a slight abuse of notation to write $L(g) = L(\vct{g})$, so that the task of ridge regression can be framed as minimizing
\[
    L(g) = \|\mtx{X}^{\trans}\vct{g} - \vct{h}\|_2^2 + \lambda n \|\vct{g}\|_2^2.
\]

\begin{remark}
    We pause to emphasize that this is a KRR problem with $n$ datapoints that define functions on $\mathcal{X} = \R^m$.
    The parameter ``$m$'' here has nothing to do with the number of datapoints in the problem; our notational choices for $(m, n)$ here are for consistency with \cref{sec3:LS_and_optim}.
\end{remark}

The essential part of framing ridge regression as a type of KRR is showing that the optimal estimate $\vct{g}$ is in the range of $\mtx{X}$.
To see why this is the case, let $\mtx{P}$ denote the orthogonal projector onto the range of $\mtx{X}$.
Using $\mtx{X}^{\trans}\mtx{P}\vct{g} = \mtx{X}^{\trans}\vct{g}$, we have that
\begin{align*}
    L(\vct{g}) &= \|\mtx{X}^{\trans}\mtx{P}\vct{g} - \vct{h}\|_2^2 + \lambda n \left(\|\mtx{P}\vct{g}\|_2^2 + \|(\mtx{I} - \mtx{P})\vct{g}\|_2^2\right) \\
    &\geq \|\mtx{X}^{\trans}\mtx{P}\vct{g} - \vct{h}\|_2^2 + \lambda n \|\mtx{P}\vct{g}\|_2^2 \\
    &= L(\mtx{P}\vct{g}),
\end{align*}
and so $g$ minimizes $L$ only if $L(\mtx{P}\vct{g}) = L(\vct{g})$.
Since $L(\mtx{P}\vct{g}) = L(\vct{g})$ holds if and only if $\mtx{P}\vct{g} = \vct{g}$, we have that $\vct{g} = \mtx{X}\vct{\alpha}$ for some $\vct{\alpha}$ in $\R^n$.
Therefore, under our stated assumption that the columns of $\mtx{X}$ are linearly independent, the following problems are equivalent
\begin{align*}
    & \argmin\{ L(g) \,:\, g \text{ is a linear functional on } \R^m\}, \\
    & \argmin\{ \|\mtx{X}^{\trans}\mtx{X}\vct{\alpha} - \vct{h}\|_2^2 + \lambda n\|\mtx{X}\vct{\alpha}\|_2^2 \,:\, \vct{\alpha} \in \R^n \}, \text{ and } \\
    & \argmin\{ \|\mtx{X}\vct{\alpha} - \vct{\hat{g}}\|_2^2 + \lambda n \|\vct{\alpha}\|_2^2 \,:\, \vct{\alpha} \in \R^n \}.
\end{align*}
The second of these problems is KRR with a scaled objective and the $n \times n$ kernel matrix $\mtx{K} = \mtx{X}^{\trans}\mtx{X}$.
The last of these problems is ridge regression in the familiar form.

Given this description of ridge regression, one obtains KRR by applying the so-called ``kernel trick'' (see, e.g., \cite[\S 14]{Murphy:2012:ML_book}).
That is, one replaces $h_j = \vct{x}_j^{\trans}\vct{\hat{g}}$ by
\[
    h_j = \langle k^{\vct{x}_j}, \hat{g}
    \rangle_{\mathcal{H}} = \hat{g}(\vct{x}_j)
\]
and expresses the point evaluation of $g = \sum_{i=1}^n \alpha_i k^{\vct{x}_i}$ at $\vct{x}_j$ by
\[
    g(\vct{x}_j) = \sum_{i=1}^n \alpha_i \langle k^{\vct{x}_i}, k^{\vct{x}_j}\rangle_{\mathcal{H}}.
\]
We note that within the KRR formalism it is allowed for $\mtx{K}$ to be singular, so long as it is psd.
This is because if $\vct{\beta}$ is any vector in the kernel of $\mtx{K}$ then the function $\vct{u} \mapsto \sum_{i=1}^n \beta_i k(\vct{x}_i, \vct{u})$ is identically equal to zero.

\subsection{Efficient sketch-and-solve for regularized quadratics}\label{subapp:KRR_AM15_SASAP}

Let $\mtx{G}$ be an $m \times m$ psd matrix and $\mu$ be a positive regularization parameter.
The sketch-and-solve approach to KRR from \cite{AM:2015:KRR} can be considered generically as a sketch-and-solve approach to the regularized quadratic minimization problem \eqref{eq:unconstr_quadratic_opt}.
The generic formulation is to approximate $\mtx{G} \approx \mtx{A}\mtx{A}^*$ with an $m \times n$ matrix $\mtx{A}$ ($m \gg n$) and then solve
\begin{equation}\label{eq:krr_sketch_k_norm_eq}
    \left(\mtx{A}\mtx{A}^* + \mu \mtx{I}\right)\vct{z} = \vct{h}.
\end{equation}
Identifying $\vct{b} = \vct{h} / \mu$, $\vct{c} = \vct{0}$, and $\vct{y} = \vct{z}$ shows that this amounts to a dual saddle point problem of the form \eqref{eq:underdet_ridge}.
Here we explain how the sketch-and-precondition paradigm can efficiently be applied to solve \eqref{eq:krr_sketch_k_norm_eq} under the assumption that $\mtx{A}\mtx{A}^{\trans}$ defines a \Nystrom{} approximation of $\mtx{G}$.

Let $\mtx{S}_o$ be an initial $m \times n$ sketching operator.
The resulting sketch $\mtx{Y} = \mtx{G}\mtx{S}_o$ and factor $\mtx{R} = \texttt{chol}(\mtx{S}_o^{\trans}\mtx{Y})$ together define $\mtx{A} = \mtx{Y}\mtx{R}^{-1}$.
This defines a \Nystrom{} approximation since
\[
    \mtx{A}\mtx{A}^{\trans} = \left(\mtx{K}\mtx{S}_o\right)\left(\mtx{S}_o^{\trans}\mtx{K}\mtx{S}_o^{}\right)^{\dagger}\left(\mtx{K}\mtx{S}_o\right)^{\trans}.
\]
Recall that the problem of preconditioner generation entails finding an orthogonalizer of a sketch of $\mtx{A}_{\mu} = [\mtx{A}; \sqrt{\mu}\mtx{I}]$.
The fact that $\mtx{A}$ is only represented implicitly makes this delicate, but it remains doable, as we explain below.

For the sketching phase of preconditioner generation, we sample a $d \times m$ operator $\mtx{S}$ (with $d \gtrsim n$) and set
\[
    \mtx{A}_{\mu}^{\text{sk}} = \begin{bmatrix} \mtx{S} & \mtx{0} \\ \mtx{0} & \mtx{I} \end{bmatrix}\mtx{A}_{\mu} =  \begin{bmatrix} \mtx{S}\mtx{Y} \\ \sqrt{\mu}\mtx{R} \end{bmatrix}\mtx{R}^{-1}.
\]
We then compute the SVD of the augmented matrix
\[
    \begin{bmatrix} \mtx{S}\mtx{Y} \\ \sqrt{\mu}\mtx{R} \end{bmatrix} = \mtx{U}\mtx{\Sigma}\mtx{V}^{\trans}
\]
and find that setting $\mtx{M} = \mtx{R}\mtx{V}\mtx{\Sigma}^{-1}$ satisfies $\mtx{A}_{\mu}^{\text{sk}}\mtx{M} = \mtx{U}$.
The preconditioned linear operator $\mtx{A}_{\mu}\mtx{M}$ (and its adjoint) should be applied in the iterative solver by noting the identity
\[
    \begin{bmatrix}\mtx{A} \\ \sqrt{\mu}\mtx{I}\end{bmatrix} \mtx{M} = \begin{bmatrix} \mtx{Y} \\ \sqrt{\mu}\mtx{R}\end{bmatrix}\mtx{V}\mtx{\Sigma}^{-1}.
\]
This identity is important since it shows that $\mtx{R}^{-1}$ need never be applied at any point in the sketch-and-precondition approach to \eqref{eq:krr_sketch_k_norm_eq}.

\chapter[Details on Low-Rank Approximation]{Details on Low-Rank \\ Approximation 
}
\chaptermark{Low-rank Approximation}
\label{app:lowrank}

\minitoc

\section{Theory for submatrix-oriented decompositions}\label{app:theory_submatrix_oriented}

\subsection{Approximation quality in low-rank ID}\label{app:lowrank:ID}

\begin{proposition}[Restatement of \cref{prop:regularity_ID_accuracy}]
    Let $\mtx{\tilde{A}}$ be any rank-$k$ approximation of $\Ao$ that satisfies the spectral norm error bound $\|\Ao - \mtx{\tilde{A}}\|_2 \leq \epsilon$.
    If 
     $\mtx{\tilde{A}} = \mtx{\tilde{A}}[\fslice,J]\mtx{X}$ for some $k \times n$ matrix $\mtx{X}$ and an index vector $J$, then $\Aa = \Ao[\fslice,J]\mtx{X}$ is a low-rank column ID that satisfies
    \begin{equation}
        \|\Ao - \Aa \|_2 \leq (1 + \|\mtx{X}\|_2) \epsilon.\tag{\eqref{eq:carryover_ID_bound}\text{, restated}}
    \end{equation}
    Furthermore, if $|X_{ij}| \leq M$ for all $(i,j)$, then
    \begin{equation}
        \|\mtx{X}\|_2 \leq \sqrt{1 + M^2 k (n - k)}. \tag{\eqref{eq:interp_matrix_spectral_bound}\text{, restated}}
    \end{equation}
\end{proposition}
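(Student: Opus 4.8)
\textbf{Proof plan for Proposition~\ref{prop:regularity_ID_accuracy}.}

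The plan is to handle the two claimed bounds separately, since they are logically independent: \eqref{eq:carryover_ID_bound} is a perturbation estimate that transfers the approximation quality of $\mtx{\tilde{A}}$ to $\Aa$, while \eqref{eq:interp_matrix_spectral_bound} is a purely combinatorial estimate on the size of an interpolation matrix whose entries are bounded in modulus.

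For \eqref{eq:carryover_ID_bound}, first I would write $\Ao - \Aa = (\Ao - \mtx{\tilde{A}}) + (\mtx{\tilde{A}} - \Aa)$ and bound each summand by the triangle inequality for the spectral norm. The first summand is at most $\epsilon$ by hypothesis. For the second summand, I would use the assumption $\mtx{\tilde{A}} = \mtx{\tilde{A}}[\fslice,J]\mtx{X}$ together with the definition $\Aa = \Ao[\fslice,J]\mtx{X}$ to write
\[
    \mtx{\tilde{A}} - \Aa = \left(\mtx{\tilde{A}}[\fslice,J] - \Ao[\fslice,J]\right)\mtx{X}.
\]
Since the columns of $\mtx{\tilde{A}}[\fslice,J] - \Ao[\fslice,J]$ form a subset of the columns of $\mtx{\tilde{A}} - \Ao$, extracting a column submatrix cannot increase the spectral norm, so $\|\mtx{\tilde{A}}[\fslice,J] - \Ao[\fslice,J]\|_2 \leq \|\mtx{\tilde{A}} - \Ao\|_2 \leq \epsilon$. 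Submultiplicativity then gives $\|\mtx{\tilde{A}} - \Aa\|_2 \leq \|\mtx{X}\|_2\,\epsilon$, and combining yields \eqref{eq:carryover_ID_bound}. I should also note, as the proposition discussion already observes, that the hypothesis forces $\mtx{\tilde{A}}[\fslice,J]$ to have full column rank $k$, so that $\mtx{X} = (\mtx{\tilde{A}}[\fslice,J])^{\dagger}\mtx{\tilde{A}}$ is well-defined and $\mtx{X}[\fslice,J] = \mtx{I}_k$; this last identity is what makes $\Aa$ a genuine low-rank column ID.

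For \eqref{eq:interp_matrix_spectral_bound}, I would exploit the structure $\mtx{X}[\fslice,J] = \mtx{I}_k$. After permuting columns by $J$ (which leaves $\|\mtx{X}\|_2$ unchanged), $\mtx{X}$ has the block form $[\mtx{I}_k \mid \mtx{W}]$ where $\mtx{W}$ is the $k \times (n-k)$ block of ``off-skeleton'' entries, each bounded in modulus by $M$. Then $\|\mtx{X}\|_2^2 = \|\mtx{X}\mtx{X}^{\trans}\|_2 = \|\mtx{I}_k + \mtx{W}\mtx{W}^{\trans}\|_2 = 1 + \|\mtx{W}\|_2^2$. It remains to bound $\|\mtx{W}\|_2^2 \leq \|\mtx{W}\|_{\mathrm{F}}^2 \leq M^2 k(n-k)$, the last step since $\mtx{W}$ has $k(n-k)$ entries each of magnitude at most $M$. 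Assembling these gives $\|\mtx{X}\|_2 \leq \sqrt{1 + M^2 k(n-k)}$.

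I do not expect a serious obstacle here; the main point requiring care is the observation that deleting columns from a matrix does not increase its spectral norm (so that the column-submatrix perturbation is controlled by the full perturbation), and the bookkeeping that the hypothesis genuinely pins down $\mtx{X}$ via the pseudoinverse and forces $\mtx{X}[\fslice,J] = \mtx{I}_k$. Both are standard, so the proof should be short.
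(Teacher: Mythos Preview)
Your proposal is correct and follows essentially the same approach as the paper: triangle inequality plus the submatrix--submultiplicativity argument for \eqref{eq:carryover_ID_bound}, and the block form $[\mtx{I}_k \mid \mtx{W}]$ plus a Frobenius bound on $\mtx{W}$ for \eqref{eq:interp_matrix_spectral_bound}. Your derivation of the second bound is in fact slightly cleaner than the paper's: where the paper invokes (without proof) the general block inequality $\|[\mtx{U},\mtx{V}]\|_2 \leq \sqrt{\|\mtx{U}\|_2^2 + \|\mtx{V}\|_{\mathrm{F}}^2}$, you compute $\|\mtx{X}\mtx{X}^{\trans}\|_2 = \|\mtx{I}_k + \mtx{W}\mtx{W}^{\trans}\|_2 = 1 + \|\mtx{W}\|_2^2$ exactly, which is more transparent and avoids the extra lemma.
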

\begin{proof}

Proceeding in the grand tradition of adding zero and applying the triangle inequality, we have the bound
\begin{align}
     \|\Ao - \Aa \|_2 
        &= \| (\Ao - \mtx{\tilde{A}}) + (\mtx{\tilde{A}} - \Aa)\|_2 \nonumber \\
        &\leq \| \Ao - \mtx{\tilde{A}}\|_2 + \|\mtx{\tilde{A}} - \Aa\|_2. \label{eq:ID_quality_bound_triangle_ineq}
\end{align}
We prove \eqref{eq:carryover_ID_bound} by bounding the two terms in \eqref{eq:ID_quality_bound_triangle_ineq}.
The first term is trivial since we have already assumed $\| \Ao - \mtx{\tilde{A}}\|_2 \leq \epsilon$.
We bound the second term by using the identity $\mtx{\tilde{A}} - \Aa = (\mtx{\tilde{A}}[:, J] - \Ao[:, J])\mtx{X}$ and then invoking submultiplicivity of the spectral norm:
\begin{equation}\label{eq:ID_quality_bound_submultiplicative}
     \|\mtx{\tilde{A}} - \Aa\|_2  \leq \|\mtx{\tilde{A}}[:, J] - \Ao[:, J]\|_2 \|\mtx{X}\|_2.
\end{equation}
Finally, since the spectral norm of a matrix is at least as large as the spectral norm of any of its submatrices, we obtain $\|\mtx{\tilde{A}}[:, J] - \Ao[:, J]\|_2 \leq \|\mtx{\tilde{A}} - \Ao\|_2 \leq \epsilon$.
Combining this with \eqref{eq:ID_quality_bound_submultiplicative} shows that \eqref{eq:ID_quality_bound_triangle_ineq} implies \eqref{eq:carryover_ID_bound}.

Now we address \eqref{eq:interp_matrix_spectral_bound}.
For this, consider the $m \times k$ matrix $\mtx{C} = \mtx{\tilde{A}}[\fslice,J]$.
Because we have assumed $\mtx{\tilde{A}} = \mtx{C}\mtx{X}$ has rank $k$ and that $\mtx{X}$ is $k \times n$, we can infer that $\mtx{C}$ is full column-rank.
We can also extract the columns from both sides of $\mtx{\tilde{A}} = \mtx{C}\mtx{X}$ at indices $J$ to find the identity $\mtx{C} = \mtx{C}\mtx{X}[\fslice,J]$.
Multiplying this identity through on the left by $\mtx{C}^{\dagger}$, we can use  $\mtx{C}^{\dagger}\mtx{C} = \mtx{I}_k$ to obtain $\mtx{X}[\fslice,J] = \mtx{I}_k$.

Now if $|X_{ij}| \leq M$ for all $(i,j)$ \textit{in addition to} $\mtx{X}[\fslice,J] = \mtx{I}_k$, then there exists a permutation $P$ of the column index set $\idxs{n}$ where $\mtx{X}[\fslice,P] = [\mtx{I}_k, \mtx{V}]$ for a $k \times (n-k)$ matrix $\mtx{V}$ satisfying $|V_{ij}| \leq M$.
Since permuting the columns of $\mtx{X}$ does not change its spectral norm, it suffices to bound $\|[\mtx{I}_k, \mtx{V}]\|_2$.
Towards this end, we claim without proof that for any block matrix $\mtx{W} = [\mtx{U},\mtx{V}]$, one has
\[
    \|\mtx{W}\|_2 \leq \sqrt{\|\mtx{U}\|_2^2 + \|\mtx{V}\|_{\mathrm{F}}^2}.
\]
This, combined with $\|\mtx{I}_k\|_2 = 1$ and $\|\mtx{V}\|_{\mathrm{F}}^2 \leq M^2 k (n-k)$, gives \eqref{eq:interp_matrix_spectral_bound}.
\end{proof}

\begin{remark}
    The bound \eqref{eq:interp_matrix_spectral_bound} is not the best possible.
    Indeed, looking at the final steps in the proposition's proof, we see that it suffices for $M$ to bound the entries of $\mtx{X}$ that are not part of the identity submatrix.
\end{remark}

\subsection{Truncation in column-pivoted matrix decompositions}\label{app:theory_truncate_column_piv}

This part of the appendix follows up on \cref{subsec:column_pivoted}.
It examines how changing basis of a column-pivoted decomposition can affect approximation quality when truncating these decompositions.
To begin, we set forth some definitions.

Our matrix of interest, $\mtx{G}$, is $r \times c$ and given through a decomposition $\mtx{G}\mtx{P} = \mtx{F}\mtx{T}$ for a permutation matrix $\mtx{P}$ and upper-triangular $\mtx{T}$.
Let us say that $\mtx{F}$ has $w = \min\{c, r\}$ columns (and hence that $\mtx{T}$ has as many rows).
We use $\mathcal{U}$ to denote the set invertible upper-triangular matrices of order $w$.
For a positive integer $k < \rank(\mtx{G})$, we consider the matrix-valued function $g_k$ that is defined on $\mathcal{U}$ according to
\begin{equation*}
         g_k(\mtx{U}) =  \mtx{F}(\mtx{U}^{-1})[\fslice,\lslice{k}] \mtx{U}[\lslice{k},\fslice]\mtx{T}\mtx{P}^{\trans}.
\end{equation*}
Note that for every diagonal $\mtx{D} \in \mathcal{U}$ we have $g_k(\mtx{D}) = (\mtx{F}[\fslice,\lslice{k}])(\mtx{T}[\lslice{k},\fslice])\mtx{P}^{\trans}$.

\begin{proposition}\label{prop:change_basis_column_piv}
    Partition the factors $\mtx{F}$ and $\mtx{T}$ into blocks $[\mtx{F}_1,\mtx{F}_2]$ and $[\mtx{T}_1;\mtx{T}_2]$
    so that $\mtx{F}_1$ has $k$ columns and $\mtx{T}_1$ has $k$ rows.
    If $\mtx{U}_\star$ is an optimal solution to 
    \begin{equation}\label{eq:column_piv_change_basis_fancy}
        \operatornamewithlimits{min}_{\mtx{U} \in \mathcal{U}}\|\mtx{G} - g_k(\mtx{U})\|_{\mathrm{F}}.
    \end{equation}
    then the following identity holds in any unitarily invariant norm:
    \[
        \left\|\mtx{G} - g_k(\mtx{U}_\star)\right\| =
        \left\|\left(\mtx{I} - \mtx{F}^{}_1\mtx{F}_1^{\dagger}\right)\mtx{F}_2\mtx{T}_2\right\|.
    \]
    Furthermore, we have $\|\mtx{G} - g_k(\mtx{I}_{w \times w})\| = \|\mtx{F}_2\mtx{T}_2\|$, and the identity matrix is optimal for \eqref{eq:column_piv_change_basis_fancy} if and only if $\range(\mtx{F}_2)$ and $\range(\mtx{F}_1)$ are orthogonal.
\end{proposition}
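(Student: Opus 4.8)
The plan is to directly characterize what the best change of basis $\mtx{U}$ can achieve. Fix the block partitions $\mtx{F} = [\mtx{F}_1,\mtx{F}_2]$ and $\mtx{T} = [\mtx{T}_1;\mtx{T}_2]$ as in the statement, and similarly partition an arbitrary $\mtx{U} \in \mathcal{U}$ as a block upper-triangular matrix with leading $k\times k$ block $\mtx{U}_{11}$, off-diagonal block $\mtx{U}_{12}$, and trailing block $\mtx{U}_{22}$; partition $\mtx{U}^{-1}$ analogously. The key algebraic observation I would establish first is that $g_k(\mtx{U})$ depends on $\mtx{U}$ only through the column space spanned by $\mtx{F}(\mtx{U}^{-1})[\fslice,\lslice{k}]$, i.e. by the first $k$ columns of $\mtx{F}\mtx{U}^{-1}$. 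Indeed, because $\mtx{U}$ is upper triangular, $\mtx{U}[\lslice{k},\fslice]\mtx{T} = \mtx{U}_{11}\mtx{T}_1 + [\,\mtx{0}\ \ \mtx{U}_{12}\mtx{T}_2\,]$ \ldots but in fact the cleaner route is to note $g_k(\mtx{U}) = (\mtx{F}\mtx{U}^{-1})[\fslice,\lslice{k}]\,(\mtx{U}\mtx{T})[\lslice{k},\fslice]\,\mtx{P}^{\trans}$ and that $(\mtx{F}\mtx{U}^{-1})(\mtx{U}\mtx{T}) = \mtx{F}\mtx{T} = \mtx{G}\mtx{P}$. Writing $\mtx{H} = \mtx{F}\mtx{U}^{-1}$ and $\mtx{N} = \mtx{U}\mtx{T}$, we have $g_k(\mtx{U}) = \mtx{H}[\fslice,\lslice{k}]\mtx{N}[\lslice{k},\fslice]\mtx{P}^{\trans}$, which is the rank-$\leq k$ truncation of the column-pivoted decomposition $\mtx{G}\mtx{P} = \mtx{H}\mtx{N}$. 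So $\mtx{G}\mtx{P} - g_k(\mtx{U})\mtx{P} = \mtx{H}[\fslice,\tslice{(k+1)}]\mtx{N}[\tslice{(k+1)},\fslice]$, whose norm equals $\|\mtx{H}_2\mtx{N}_2\|$ with the obvious block notation.

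The second step is to identify the reachable column spaces $\range(\mtx{H}_1)$ as $\mtx{U}$ ranges over $\mathcal{U}$. Here the upper-triangularity of $\mtx{U}$ (hence of $\mtx{U}^{-1}$) is crucial: the first $k$ columns of $\mtx{F}\mtx{U}^{-1}$ are $\mtx{F}_1(\mtx{U}^{-1})_{11} = \mtx{F}_1 \mtx{U}_{11}^{-1}$, so $\range(\mtx{H}_1) = \range(\mtx{F}_1)$ is \emph{fixed}, independent of $\mtx{U}$. This means $\mtx{H}_1\mtx{H}_1^\dagger = \mtx{F}_1\mtx{F}_1^\dagger$ always. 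Now $\mtx{H}_1\mtx{N}_1 + \mtx{H}_2\mtx{N}_2 = \mtx{G}\mtx{P}$, so $\mtx{H}_2\mtx{N}_2 = \mtx{G}\mtx{P} - \mtx{H}_1\mtx{N}_1$, and since $\range(\mtx{H}_1)$ is fixed, the term $\mtx{H}_1\mtx{N}_1$ can be any matrix whose column space lies in $\range(\mtx{F}_1)$ — in particular, by choosing $\mtx{U}$ appropriately one can realize $\mtx{H}_1\mtx{N}_1 = \mtx{F}_1\mtx{F}_1^\dagger \mtx{G}\mtx{P}$ (the orthogonal projection of $\mtx{G}\mtx{P}$ onto $\range(\mtx{F}_1)$). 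I would verify this realizability claim by an explicit choice of $\mtx{U}_{11},\mtx{U}_{12}$: since $\mtx{F}_1\mtx{T}_1 = \mtx{F}_1\mtx{F}_1^\dagger(\mtx{F}_1\mtx{T}_1 + \mtx{F}_2\mtx{T}_2) + (\ldots)$, one solves for the off-diagonal block of $\mtx{U}$ so that $\mtx{H}_1\mtx{N}_1$ absorbs exactly the $\range(\mtx{F}_1)$-component of $\mtx{F}_2\mtx{T}_2$. Conversely, $\mtx{H}_2\mtx{N}_2 = (\mtx{I} - \mtx{F}_1\mtx{F}_1^\dagger)\mtx{G}\mtx{P} + \mtx{F}_1\mtx{F}_1^\dagger\mtx{G}\mtx{P} - \mtx{H}_1\mtx{N}_1$, and since the first term is orthogonal to $\range(\mtx{F}_1)$ while the other two lie in $\range(\mtx{F}_1)$, by Pythagoras (in Frobenius norm) $\|\mtx{H}_2\mtx{N}_2\|_{\mathrm F}^2 \geq \|(\mtx{I} - \mtx{F}_1\mtx{F}_1^\dagger)\mtx{G}\mtx{P}\|_{\mathrm F}^2$ with equality exactly when $\mtx{H}_1\mtx{N}_1 = \mtx{F}_1\mtx{F}_1^\dagger\mtx{G}\mtx{P}$. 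Using $\mtx{G}\mtx{P} = \mtx{F}_1\mtx{T}_1 + \mtx{F}_2\mtx{T}_2$ and $(\mtx{I} - \mtx{F}_1\mtx{F}_1^\dagger)\mtx{F}_1 = \mtx{0}$, the minimal value is $\|(\mtx{I} - \mtx{F}_1\mtx{F}_1^\dagger)\mtx{F}_2\mtx{T}_2\|$. Unitary invariance of $\|\cdot\|$ removes the trailing $\mtx{P}^{\trans}$ throughout. The identity $\|\mtx{G} - g_k(\mtx{I})\| = \|\mtx{F}_2\mtx{T}_2\|$ is immediate from the definitions, and the optimality-of-identity claim follows by comparing $\|\mtx{F}_2\mtx{T}_2\|$ with $\|(\mtx{I} - \mtx{F}_1\mtx{F}_1^\dagger)\mtx{F}_2\mtx{T}_2\|$: these are equal (and both the minimum) iff $\mtx{F}_1\mtx{F}_1^\dagger\mtx{F}_2\mtx{T}_2 = \mtx{0}$, which — since one may assume $\mtx{T}_2$ has full row rank in the nontrivial case, or argue on the range directly — holds iff $\range(\mtx{F}_2) \perp \range(\mtx{F}_1)$.

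The main obstacle I anticipate is making the ``realizability'' step fully rigorous, i.e. showing that the minimum over the restricted set $\mathcal{U}$ of invertible \emph{upper-triangular} matrices genuinely attains the projection bound, and that the resulting $\mtx{U}$ can be taken invertible (so it lies in $\mathcal{U}$, not merely its closure). This requires a careful block computation verifying that the needed off-diagonal block of $\mtx{U}$ exists and that the diagonal blocks can be chosen nonsingular; one must also handle the subtlety that $\mtx{F}_1$ may fail to have full column rank, in which case one works with $\mtx{F}_1\mtx{F}_1^\dagger$ and the column space rather than with $\mtx{T}_1^{-1}$. A secondary technical point is that the Pythagorean argument is stated for the Frobenius norm, while the conclusion is claimed for all unitarily invariant norms; I would resolve this by noting that the \emph{optimal} choice of $\mtx{U}$ is determined by the Frobenius-norm problem, and then evaluating $\|\mtx{G} - g_k(\mtx{U}_\star)\|$ in an arbitrary unitarily invariant norm using the already-derived closed form $\|(\mtx{I} - \mtx{F}_1\mtx{F}_1^\dagger)\mtx{F}_2\mtx{T}_2\|$.
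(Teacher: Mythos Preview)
Your approach is correct and essentially the same as the paper's: both reduce to the identity $\mtx{G}-g_k(\mtx{U})=(\mtx{F}_2\mtx{T}_2-\mtx{F}_1\mtx{M}\mtx{T}_2)\mtx{P}^{\trans}$ with $\mtx{M}=\mtx{U}_{11}^{-1}\mtx{U}_{12}$ ranging freely over $k\times(w-k)$ matrices, then observe that $\mtx{M}_\star=\mtx{F}_1^{\dagger}\mtx{F}_2$ achieves the projection bound. One small overstatement to clean up: it is not true that $\mtx{H}_1\mtx{N}_1$ can be \emph{any} matrix with columns in $\range(\mtx{F}_1)$ (its row space is constrained by $\mtx{T}$), but your argument never actually uses that claim---the Pythagorean lower bound and the explicit realizability of the projection target are all you need, and both are valid.
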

\begin{proof}
    Our proof requires working with several block matrices.
    First, the matrices $\mtx{T}_1$ and $\mtx{T}_2$ are further partitioned so that
    \[
        \mtx{T} = \begin{bmatrix} \mtx{T}_1 \\ \mtx{T}_2 \end{bmatrix} = \begin{bmatrix} \mtx{T}_{11} & \mtx{T}_{12} \\ \mtx{0} & \mtx{T}_{22} \end{bmatrix}
    \]
    where $\mtx{T}_{11}$ is $k \times k$ and $\mtx{T}_{22}$ is $(w-k) \times (c-k)$.
    Next, we introduce $\mtx{U} \in \mathcal{U}$ and partition it twice:
    \[
        \mtx{U} = \begin{bmatrix} \mtx{U}_1 \\ \mtx{U}_2 \end{bmatrix} = \begin{bmatrix} \mtx{U}_{11} & \mtx{U}_{12} \\ \mtx{0} & \mtx{U}_{22} \end{bmatrix}.
    \]
    In the expressions above, $\mtx{U}_1$ is a wide matrix of shape $k \times w$, $\mtx{U}_{11}$ is a square matrix of order $k$, and $\mtx{U}_{22}$ a square matrix of order $w-k$.
    Note that since $\mtx{U}$ is upper-triangular, the same is true of $\mtx{V} = \mtx{U}^{-1}$.
    We partition $\mtx{V} = [\mtx{V}_1, \mtx{V}_2]$ into a leading block of $k$ columns and a trailing block of $w-k$ columns.
    
    The point of all this notation is to help us find useful expressions for $g_k(\mtx{U})$.
    Our first such expression is
    \begin{equation}\label{eq:expr_gkU1}
        g_k(\mtx{U}) = \mtx{F}\mtx{V}_1\mtx{U}_1\mtx{T}\mtx{P}^{\trans}.
    \end{equation}
    As a step towards finding the next expression, we apply block a matrix-inversion identity to compute
    \[
        \mtx{F}\mtx{V}_1 = \mtx{F}_1\mtx{U}_{11}^{-1}.
    \]
    Meanwhile, a simple block matrix multiply gives
    \[
    \mtx{U}_1\mtx{T} = \mtx{U}_{11}\mtx{T}_1 + \mtx{U}_{12}\mtx{T}_2.
    \]
    We plug these two identities into \eqref{eq:expr_gkU1} to obtain
    \begin{equation*}\label{eq:expr_gkU2}
        g_k(\mtx{U}) = \mtx{F}_1\left(\mtx{T}_1 + \mtx{U}_{11}^{-1}\mtx{U}_{12}\mtx{T}_2\right)\mtx{P}^{\trans}.
    \end{equation*}
    This expression is just what we need.
    By combining it with the identity $\mtx{G} = \mtx{F}\mtx{T}\mtx{P}^{\trans}$, we easily compute the difference
    \begin{equation}\label{eq:expr_G_gk_diff}
        \mtx{G} - g_k(\mtx{U}) = (\mtx{F}_2\mtx{T}_2 - \mtx{F}^{}_1\mtx{U}_{11}^{-1}\mtx{U}^{}_{12}\mtx{T}^{}_2)\mtx{P}^{\trans}.
    \end{equation}
    
    Having access to \eqref{eq:expr_G_gk_diff} marks a checkpoint in our proof.
    With it, we obtain the following identity for the distance from $\mtx{G}$ to $g_k(\mtx{U})$ in any unitarily invariant norm:
    \[
        \|\mtx{G} - g_k(\mtx{U})\| = \|\mtx{F}_2\mtx{T}_2 - \mtx{F}^{}_1\mtx{U}_{11}^{-1}\mtx{U}^{}_{12}\mtx{T}^{}_2\|.
    \]
    This implies our claim about $g_k(\mtx{I}_{w \times w})$, since if $\mtx{U} $ is diagonal, then $\mtx{U}_{12} = \mtx{0}$.
    Therefore all that remains is our claim about matrices that solve \eqref{eq:column_piv_change_basis_fancy}.
    The truth of this claim is easier to see after a change variables.
    Upon replacing $\mtx{U}_{11}^{-1}\mtx{U}^{}_{12}$ by a general $k \times (w-k)$ matrix, we can express
    \[
        \operatornamewithlimits{min}_{\mtx{U} \in \mathcal{U}}\|\mtx{G} - g_k(\mtx{U})\|_{\mathrm{F}}  = \min\left\{ \|\mtx{F}_2\mtx{T}_2 - \mtx{F}^{}_1\mtx{B}\mtx{T}^{}_2\|_{\mathrm{F}}~\big|~ \mtx{M} \in \R^{k \times (w-k)}\right\},
    \]
    and it is easily shown that the matrix $\mtx{M}_{\star} = \mtx{F}_1^{\dagger}\mtx{F}_2^{}$ is an optimal solution to the problem on the right-hand side of this equation.
\end{proof}

\section[Computational routines]{Computational routine interfaces and implementations}\label{app:lowrank:pseudocode}

As we explained in \cref{sec4:lowrank}, the design space for low-rank approximation algorithms is quite large.
Here we illustrate the breadth and depth of that design space with pseudocode for computational routines needed for four drivers: \code{SVD1}, \code{EVD1}, \code{EVD2}, and \code{CURD1} (Algorithms \ref{alg:svd} through \ref{alg:curd1}, respectively).
All pseudocode here uses Python-style zero-based indexing.
 
The dependency structure of these drivers and their supporting functions is given in \cref{fig:lowrank_approx_dependencies}.
From the figure we see that the following three interfaces are central to low-rank approximation.
\begin{itemize}
    \item $\mtx{Y} = \code{Orth}(\mtx{X})$ returns an orthonormal basis for the range of a tall input matrix; the number of columns in $\mtx{Y}$ will never be larger than that of $\mtx{X}$ and may be smaller. The simplest implementation of \code{Orth} is to return the orthogonal factor from an economic QR decomposition of $\mtx{X}$.
    \item $\mtx{S} = \code{SketchOpGen}(\ell, k)$ returns an $\ell \times k$ oblivious sketching operator sampled from some predetermined distribution.
    The most common distributions used for low-rank approximation were covered in \cref{subsec:dense_skops}.
    In actual implementations, this function would accept an input representing the state of the random number generator.
    \item $\mtx{Y} = \code{Stabilizer}(\mtx{X})$ has similar semantics $\code{Orth}$. It differs in that it only requires $\mtx{Y}$ to be better-conditioned than $\mtx{X}$ while preserving its range.
    The relaxed semantics open up the possibility of methods that are less expensive than computing an orthonormal basis, such as taking the lower-triangular factor from an LU decomposition with column pivoting.
\end{itemize}
We explain the remaining interfaces as they arise in our implementations.

\begin{figure}[!htb]
    \centering
    \includegraphics[width=.65\textwidth]{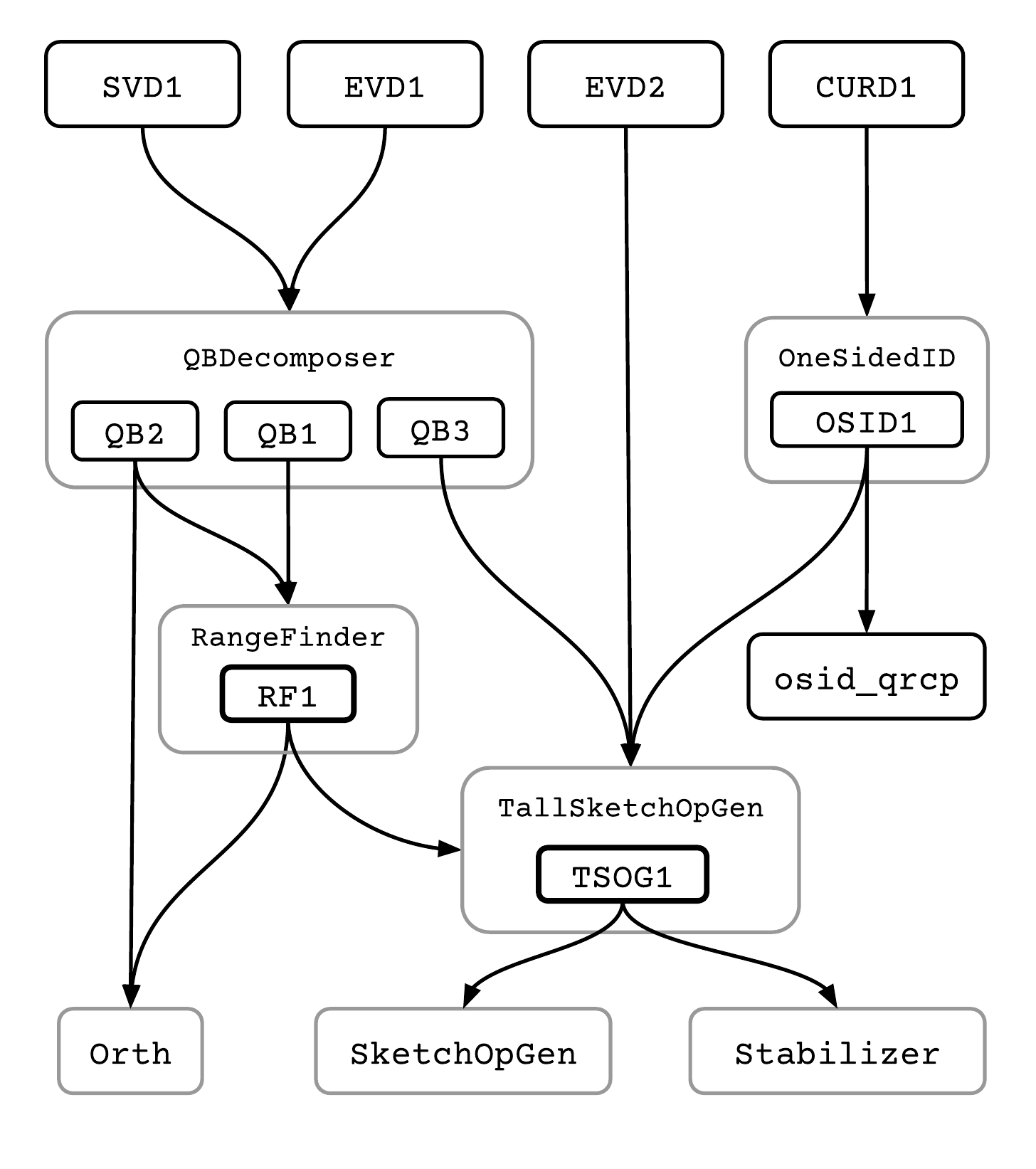}
    {\small \caption{Dependency illustration for low-rank approximation functionality.
    Lighter gray boxes correspond to abstract \textit{interfaces} which specify semantics. 
    Any interface can have many different \textit{implementations}. 
    To keep things at a reasonable length the only interface with multiple implementations is \code{QBDecomposer}. \cref{subsec:lowrank_subroutines} describes many algorithms that could be used for any such interface.
    }\label{fig:lowrank_approx_dependencies}}
\end{figure}

\FloatBarrier

The computational routines represented in \cref{fig:lowrank_approx_dependencies} include Algorithms \ref{alg:rf1} through \ref{alg:osid1}.
This appendix provides pseudocode for one additional function that is not reflected in the figure: \cref{alg:rocs1} shows one way to perform row or column subset selection.
We note that while \cref{alg:rocs1} is not used in the drivers mentioned above, it could
easily have been used in a different implementation of~\code{CURD1}.

\subsection{Power iteration for data-aware sketching}\label{app:subsub:power_iter}

When a \code{TallSketchOpGen} is called with parameters $(\mtx{A}, k)$, it produces an $n \times k$ sketching operator where $\range(\mtx{S})$ is reasonably well-aligned with the subspace spanned by the $k$ leading right singular vectors of $\mtx{A}$.
Here, ``reasonably'' is assessed with respect to the computational cost incurred by running \code{TallSketchOpGen}.
One extreme case of interest is to return an oblivious sketching operator without reading any entries of $\mtx{A}$.

This method uses a $p$-step power iteration technique.
When $p = 0$, the method returns an oblivious sketching operator.
It is recommended that one use $p > 0$ (e.g., $p \in \{2, 3\}$) when the singular values of $\mtx{A}$ exhibit ``slow'' decay.


\begin{algorithm}[H]
\caption{\code{TSOG1} : a \code{TallSketchOpGen} based on a power method, conceptually following \cite{ZM:2020:LU}. The returned sketching operator is suitable for sketching $\mtx{A}$ from the right for purposes of low-rank approximation.}\label{alg:TSOG1}
\setstretch{1.0}
\begin{algorithmic}[1]
\State \textbf{function} $\code{TSOG1}(\mtx{A}, k)$\vspace{0.5pt} 
        \Indent
            \Statex \quad Inputs:
            \Statex \begin{quote}
                $\mtx{A}$ is $m \times n$, and $k \ll \min\{m, n\}$ is a positive integer.
            \end{quote}\vspace{4pt}
            \Statex \quad Output:
            \Statex \begin{quote}
                $\mtx{S}$ is $n \times k$, intended for later use in computing $\mtx{Y} = \mtx{A}\mtx{S}$.
            \end{quote}\vspace{4pt}
            \Statex \quad Abstract subroutines:
            \Statex \begin{quote}
               \code{SketchOpGen} and \code{Stabilizer}
            \end{quote}\vspace{4pt}
            \Statex \quad Tuning parameters:
            \Statex \begin{quote}
                $p \geq 0$ controls the number of steps in the power method. It is equal to the total number of matrix-matrix multiplications that will involve either $\mtx{A}$ or $\mtx{A}^{\trans}$.
                If $p = 0$ then this function returns an oblivious sketching operator.\\
                $q \geq 1$ is the number of matrix-matrix multiplications with $\mtx{A}$ or $\mtx{A}^{\trans}$ that accumulate before the stabilizer is called.
            \end{quote}\vspace{4pt}
            \setstretch{1.1}
    \State $p_{\mathrm{done}} = 0$
    \If{$p$\text{ is even}}
        \State $\mtx{S} = \code{SketchOpGen}(n, k)$
    \Else
        \State $\mtx{S} = \mtx{A}^{\trans} \code{SketchOpGen}(m, k)$
        \State $p_{\mathrm{done}} = p_{\mathrm{done}} + 1$
        \If{$p_{\mathrm{done}} \mod q = 0$}
            \State $\mtx{S} = \code{stabilizer}(\mtx{S})$
        \EndIf
    \EndIf
    \While{$p - p_{\mathrm{done}} \geq 2$}
        \State $\mtx{S} = \mtx{A}\mtx{S}$
        \State $p_{\mathrm{done}} = p_{\mathrm{done}} + 1$
        \If{$p_{\mathrm{done}} \mod q = 0$}
            \State $\mtx{S} = \code{stabilizer}(\mtx{S})$
        \EndIf
        \State $\mtx{S} = \mtx{A}^{\trans} \mtx{S}$
        \State $p_{\mathrm{done}} = p_{\mathrm{done}} + 1$
        \If{$p_{\mathrm{done}} \mod q = 0$}
            \State $\mtx{S} = \code{stabilizer}(\mtx{S})$
        \EndIf
    \EndWhile
    \State \textbf{return} $\mtx{S}$
\EndIndent    
\end{algorithmic}
\end{algorithm}

\FloatBarrier


\subsection{RangeFinders and QB decompositions}\label{app:subsub:qb}

A general $\code{RangeFinder}$ takes in a matrix $\mtx{A}$ and a target rank parameter $k$, and returns a matrix $\mtx{Q}$ of rank $d = \min\{k, \rank(\mtx{A})\}$ such that the range of $\mtx{Q}$ is an approximation to the space spanned by $\mtx{A}$'s top $d$ left singular vectors. 

The rangefinder problem may also be viewed in the following way: given a matrix $\mtx{A} \in \mathbb{R}^{m \times n}$ and a target rank $k \ll \min(m, n)$, find a matrix $\mtx{Q}$ with $k$ columns such that the error $\|{\mtx{A} - \mtx{Q}\mtx{Q}^{\trans}\mtx{A}}\|$ is ``reasonably'' small.
Some \code{RangeFinder} implementations are iterative and can accept a target accuracy as a third argument.

The \code{RangeFinder} below, \code{RF1}, is very simple.
It relies on an implementation of the \code{TallSketchOpGen} interface (e.g., \code{TSOG1}) as well as the \code{Orth} interface.

\begin{algorithm}[H]
\setstretch{1.0}
\caption{\code{RF1} : a \code{RangeFinder} that orthogonalizes a single row sketch}\label{alg:rf1}
\begin{algorithmic}[1]
\State \textbf{function} $\code{RF1}(\mtx{A}, k)$\vspace{0.5pt} 
        \Indent
            \Statex \quad Inputs:
            \Statex \begin{quote}
                $\mtx{A}$ is $m \times n$, and $k \ll \min\{m, n\}$ is a positive integer
            \end{quote}\vspace{4pt}
            \Statex \quad Output:
            \Statex \begin{quote}
            $\mtx{Q}$ is a column-orthonormal matrix with $d = \min\{k, \rank\mtx{A}\}$ columns. We have $\range(\mtx{Q}) \subset \range(\mtx{A})$; it is intended that $\range(\mtx{Q})$ is an approximation to the space spanned by $\mtx{A}$'s top $d$ left singular vectors.
            \end{quote}\vspace{4pt}
            \Statex \quad Abstract subroutines and tuning parameters:
            \Statex \begin{quote}
                \code{TallSketchOpGen}
            \end{quote}\vspace{4pt}
            \setstretch{1.1}
    \State $\mtx{S} = \code{TallSketchOpGen}(\mtx{A},k)$ ~~\codecomment{$\mtx{S}$ is $n \times k$}
    \State $\mtx{Y} = \mtx{A}\mtx{S}$
    \State $\mtx{Q} = \code{orth}(\mtx{Y})$
    \State \textbf{return} $\mtx{Q}$
\EndIndent 
\end{algorithmic}
\end{algorithm}

The conceptual goal of QB decomposition algorithms is to produce an approximation $\|\mtx{A} - \mtx{QB}\| \leq \epsilon$ (for some unitarily-invariant norm), where $\rank(\mtx{QB}) \leq \min\{k, \rank(\mtx{A})\}$.
Our next three algorithms are different implementations of the \code{QBDecomposer} interface.
The first two of these algorithms require an implementation of the \code{RangeFinder} interface.
The ability of the implementation \code{QB1} to control accuracy is completely dependent on that of the underlying rangefinder.

\begin{algorithm}[H]
    \setstretch{1.0}
    \caption{\code{QB1} : a \code{QBDecomposer} that falls back on an abstract rangefinder}\label{alg:qb1}
    \begin{algorithmic}[1]
        \State \textbf{function} $\code{QB1}(\mtx{A}, k, \epsilon)$\vspace{0.5pt} 
        \Indent
            \Statex \quad Inputs:
            \Statex \begin{quote}
                $\mtx{A}$ is an $m \times n$ matrix and $k \ll \min\{m, n\}$ is a positive integer. \\
                $\epsilon$ is a target for the relative error
                $\|\mtx{A} - \mtx{Q}\mtx{B}\| / \|\mtx{A}\|$ measured in some unitarily-invariant norm.
                This parameter is passed directly to the \code{RangeFinder}, which determines its precise interpretation.
            \end{quote}\vspace{4pt}
            \Statex \quad Output:
            \Statex \begin{quote}
                $\mtx{Q}$ an $m \times d$ matrix returned by the underlying \code{RangeFinder} and $\mtx{B} = \mtx{Q}^{\trans}\mtx{A}$ is $d \times n$; we can be certain that $d \leq \min\{k, \rank(\mtx{A})\}$.
                The matrix $\mtx{Q}\mtx{B}$ is a low-rank approximation of $\mtx{A}$.
            \end{quote}\vspace{4pt}
            \Statex \quad Abstract subroutines and tuning parameters:
            \Statex \begin{quote}
                \code{RangeFinder}
            \end{quote}\vspace{4pt}
            \setstretch{1.1}
            \State $\mtx{Q} = \code{RangeFinder}(\mtx{A}, k, \epsilon)$
            \State $\mtx{B} = \mtx{Q}^{\trans}\mtx{A}$
            \State \textbf{return} $\mtx{Q}, \mtx{B}$
        \EndIndent 
    \end{algorithmic}
\end{algorithm}

The following algorithm builds up a QB decomposition incrementally.
It's said to be \textit{fully-adaptive} because it has fine-grained control over the error $\|\mtx{A} - \mtx{Q}\mtx{B}\|_{\text{F}}$.
If the algorithm is called with $k = \min\{m, n\}$, then its output will satisfy $\|\mtx{A} - \mtx{Q}\mtx{B}\|_{\text{F}} \leq \epsilon$.

\begin{algorithm}[H]
    \setstretch{1.0}
    \caption{\code{QB2} : a \code{QBDecomposer} that's fully-adaptive \\ (see \cite[Algorithm 2]{YGL:2018}) }\label{alg:qb2}
    \begin{algorithmic}[1]
        \State \textbf{function} $\code{QB2}(\mtx{A},k,\epsilon)$ \vspace{0.5pt} 
        \Indent
            \Statex \quad Inputs:
            \Statex \begin{quote}
                $\mtx{A}$ is an $m \times n$ matrix and $k \ll \min\{m, n\}$ is a positive integer. \\
                $\epsilon$ is a target for the relative error
                $\|\mtx{A} - \mtx{Q}\mtx{B}\|_{\mathrm{F}} / \|\mtx{A}\|_{\mathrm{F}}$.
                This parameter is used as a termination criterion upon reaching the desired accuracy. 
            \end{quote}\vspace{4pt}
            \Statex \quad Output:
            \Statex \begin{quote}
                $\mtx{Q}$ an $m \times d$ matrix combined of successive outputs from the underlying \code{RangeFinder} and $\mtx{B} = \mtx{Q}^{\trans}\mtx{A}$ is $d \times n$; we can be certain that $d \leq \min\{k, \rank(\mtx{A})\}$.
                The matrix $\mtx{Q}\mtx{B}$ is a low-rank approximation of $\mtx{A}$.
            \end{quote}\vspace{4pt}
            \Statex \quad Abstract subroutines:
            \Statex \begin{quote}
                 \code{RangeFinder}
            \end{quote}\vspace{4pt}
            \Statex \quad Tuning parameters:
            \Statex \begin{quote}
                $\mathrm{block{\_}size} \geq 1$ - at every iteration (except possibly for the final \\ 
                iteration), $\mathrm{block{\_}size}$ columns are added to the matrix $\mtx{Q}$.
            \end{quote}\vspace{4pt}
            \setstretch{1.1}
            \State $d = 0$
            \State $\mtx{Q} = [\ ] \in \R^{m \times d}$ \codecomment{Preallocation is dangerous; $k = \min\{m, n\}$ is allowed.}
            \State $\mtx{B} = [\ ] \in \R^{d \times n}$
            \State $\mathrm{squared{\_}error} = \|\mtx{A}\|_{\text{F}}^2$
            \While{$k > d$}
                \State $\mathrm{block{\_}size} = \min\{\mathrm{block{\_}size},~ k - d\}$
                \State $\mtx{Q}_i = \code{RangeFinder}(\mtx{A},\ \mathrm{block{\_}size})$
                 \label{alg:qb2:rangefinder} 
                \State $\mtx{Q}_i = \code{orth}(\mtx{Q}_i - \mtx{Q}\left(\mtx{Q}^{\trans}\mtx{Q}_i\right))$ \codecomment{for numerical stability}
                \State $\mtx{B}_i = \mtx{Q}_i^{\trans}\mtx{A}$ \codecomment{original matrix $\mtx{A}$ is valid here}
                \State $\mtx{B} = \begin{bmatrix} \mtx{B} \\ \ \mtx{B}_i \end{bmatrix}$
                \State $\mtx{Q} = \begin{bmatrix} \mtx{Q} & \mtx{Q}_i \end{bmatrix}$
                \State $d = d + \mathrm{block{\_}size}$
                \State $\mtx{A} = \mtx{A} - \mtx{Q}_i\mtx{B}_i$  \codecomment{modification can be implicit, but is required by Line \ref{alg:qb2:rangefinder} } 
                \State $\mathrm{squared{\_}error}  = \mathrm{squared{\_}error} - \|\mtx{B}_i\|_{\text{F}}^2$
                \codecomment{compute by a stable method}
                \If{$\mathrm{squared{\_}error} \leq \epsilon^2$}
                    \State \code{break}
                \EndIf
            \EndWhile
            \State \textbf{return} $\mtx{Q}, \mtx{B}$
        \EndIndent
    \end{algorithmic}
\end{algorithm}

Our third and final QB algorithm also builds up its approximation incrementally.
It is called \textit{pass-efficient} because it does not access the data matrix $\mtx{A}$ within its main loop (see \cite{dkm_matrix1} for the original definition of the pass-efficient model).
The algorithm can use a requested error tolerance as an early-stopping criterion.
This function should never be called with $k = \min\{m, n\}$.
We note that it takes a fair amount of algebra to prove that this algorithm produces a correct result.

\begin{algorithm}
    \caption{\code{QB3} : a \code{QBDecomposer} that's pass-efficient and partially adaptive (based on \cite[Algorithm 4]{YGL:2018}) }\label{alg:qb3}
    \begin{algorithmic}[1]
        \State \textbf{function} $\code{QB3}(\mtx{A}, k, \epsilon)$\vspace{0.5pt} 
        \Indent
            \Statex \quad Inputs:
            \Statex \begin{quote}
                $\mtx{A}$ is an $m \times n$ matrix and $k \ll \min\{m, n\}$ is a positive integer. \\
                $\epsilon$ is a target for the relative error
                $\|\mtx{A} - \mtx{Q}\mtx{B}\|_{\mathrm{F}} / \|\mtx{A}\|_{\mathrm{F}}$.
                This parameter is used as a termination criterion upon reaching the desired accuracy. 
            \end{quote}\vspace{4pt}
            \Statex \quad Output:
            \Statex \begin{quote}
                $\mtx{Q}$ an $m \times d$ matrix combined of successively-computed orthonormal bases $\mtx{Q}_i$ and $\mtx{B} = \mtx{Q}^{\trans}\mtx{A}$ is $d \times n$; we can be certain that $d \leq \min\{k, \rank(\mtx{A})\}$.
                The matrix $\mtx{Q}\mtx{B}$ is a low-rank approximation of~$\mtx{A}$.
            \end{quote}\vspace{4pt}
            \Statex \quad Abstract subroutines:
            \Statex \begin{quote}
                 \code{TallSketchOpGen}
            \end{quote}\vspace{4pt}
            \Statex \quad Tuning parameters:
            \Statex \begin{quote}
                $\mathrm{block{\_}size}$ is a positive integer; at every iteration (except possibly for the last), we add $\mathrm{block{\_}size}$ columns to $\mtx{Q}$.
            \end{quote}\vspace{4pt}
            \setstretch{1.1}
            \State $\mtx{Q} = [\ ] \in \R^{m \times 0}$ \codecomment{It would be preferable to preallocate.}
            \State $\mtx{B} = [\ ] \in \R^{0 \times n}$
            \State $\mathrm{squared{\_}error} = \|\mtx{A}\|_{\text{F}}^2$
            \State $\mtx{S} = \code{TallSketchOpGen}(\mtx{A}, k)$
            \State $\mtx{G} = \mtx{A}\mtx{S},~ \mtx{H} = \mtx{A}^{\trans}\mtx{G}$ \codecomment{Can be done in one pass over $\mtx{A}$}
            \State $\mathrm{max{\_}blocks} = \lceil k / \mathrm{block{\_}size} \rceil$
            \State $i = 0$
            \While{$i < \mathrm{max{\_}blocks}$}
                \State $b_{\text{start}} = i \cdot \mathrm{block{\_}size} +1$
                \State $b_{\text{end}} = \min\{ (i+1) \cdot \mathrm{block{\_}size},~ k\}$
                \State $\mtx{S}_i = \mtx{S}[:\ ,\ b_{\text{start}}:b_{\text{end}}]$
                \State $\mtx{Y}_i = \mtx{G}[:\ ,\ b_{\text{start}}:b_{\text{end}}] - \mtx{Q} (\mtx{B} \mtx{S}_i)$
                \State $\mtx{Q}_i, \mtx{R}_{i} = \code{qr}(\mtx{Y}_i)$ \codecomment{the next three lines are for numerical stability}
                \State $\mtx{Q}_i = \mtx{Q}_i - \mtx{Q}(\mtx{Q}^{\trans}\mtx{Q}_i)$
                \State $\mtx{Q}_i, \mtx{\hat{R}}_{i} = \code{qr}(\mtx{Q}_i)$
                \State $\mtx{R}_i = \mtx{\hat{R}}_i \mtx{R}_i$
                \State $\mtx{B}_i = \left(\mtx{H}[:\ ,\ b_{\text{start}}:b_{\text{end}}]\right)^{\trans} - (\mtx{Y}_i\mtx{Q})\mtx{B} - \left(\mtx{B}\mtx{S}_i\right)^{\trans}\mtx{B}$
                \State $\mtx{B}_i = \left(\mtx{R}_i^{\trans}\right)^{-1}\mtx{B}_i$ \codecomment{in-place triangular solve}
                \State $\mtx{B} = \begin{bmatrix} \mtx{B} \\ \ \mtx{B}_i \end{bmatrix}$
                \State $\mtx{Q} = \begin{bmatrix} \mtx{Q} & \mtx{Q}_i \end{bmatrix}$
                \State $\mathrm{squared{\_}error}  = \mathrm{squared{\_}error} - \|\mtx{B}_i\|_{\text{F}}^2$ \codecomment{compute by a stable method}
                \State $i = i + 1$
                \If{$\mathrm{squared{\_}error} \leq \epsilon^2$}
                    \State \code{break}
                \EndIf
            \EndWhile
            \State \textbf{return} $\mtx{Q}, \mtx{B}$
        \EndIndent
    \end{algorithmic}
\end{algorithm}

\FloatBarrier


\subsection{ID and subset selection}\label{subapp:ID_and_CSS}

As we indicated in \cref{subsubsec:TSID_CUR_driver,subsec:column_pivoted}, the collective design space of algorithms for ID, subset selection, and CUR is very large.
This appendix presents one randomized algorithm for one-sided ID (\cref{alg:osid1}) and an analogous randomized algorithm for subset selection (\cref{alg:rocs1}).
These algorithms are implemented in our Python prototype.
The Python prototype has two more randomized algorithms which are not reproduced here (one for one-sided and one for two-sided ID).

We need two deterministic functions in order to state these algorithms.
The first deterministic function -- called as $\mtx{Q},\mtx{R},J = \code{qrcp}(\mtx{F},k)$ -- returns data for an economic QR decomposition with column pivoting, where the decomposition is restricted to rank $k$ and may be incomplete.
The second deterministic function (\cref{alg:osid_qrcp}, below) is the canonical way to use QRCP for one-sided ID.
It produces a column ID when the final argument ``$\text{axis}$'' is set to one; otherwise, it produces a row ID. 
When used for column ID, it's typical for $\mtx{Y} \in \R^{\ell \times w}$ to be (very) wide and for $k$ to be only slightly smaller than $\ell$ (say, $\ell/2 \leq k \leq \ell$).

\begin{algorithm}[H]
    \setstretch{1.0}
    \caption{deterministic one-sided ID based on QRCP}\label{alg:osid_qrcp}
    \begin{algorithmic}[1]
        \State \textbf{function} $\code{osid{\_}qrcp}(\mtx{Y}, k, \text{axis})$\vspace{0.5pt} 
        \Indent
            \Statex \quad Inputs:
            \Statex \begin{quote}
                $\mtx{Y}$ is an $\ell \times w$ matrix, typically a sketch of some larger matrix. \\
                $k$ is an integer, typically close to $\min\{\ell, w\}$. \\
                $\text{axis}$ is an integer, equals $1$ for row ID and $2$ for column~ID. 
            \end{quote}\vspace{2pt}
            \Statex \quad Outputs:
            \Statex \begin{quote}
                When $\text{axis} = 1$:
                \begin{quote}
                    $\mtx{Z}$ is $\ell \times k$ and $I$ is a length-$k$ index vector.\\
                    Together, they satisfy $\mtx{Y}[I, :] = (\mtx{Z}\mtx{Y}[I, :])[I, :]$.
                \end{quote}
                When $\text{axis} = 2$: 
                \begin{quote}
                    $\mtx{X}$ is $k \times w$ and $J$ is a length-$k$ index vector.\\
                    Together, they satisfy $\mtx{Y}[\fslice,J] = (\mtx{Y}[\fslice,J]\mtx{X})[\fslice,J]$.
                \end{quote}
            \end{quote}\vspace{2pt}
            \Statex \quad Abstract subroutines:
            \Statex \begin{quote}
                \code{qrcp}
            \end{quote}\vspace{4pt}
            \setstretch{1.1}
            \If{axis == 2} 
            \State $(\ell, w) = $ the number of (rows, columns) in $\mtx{Y}$ 
            \State \textbf{assert} $k \leq \min\{\ell, w\}$
            \State $\mtx{Q}, \mtx{R}, J = \code{qrcp}(\mtx{Y}, k)$
            \State $\mtx{T} = \left(\mtx{R}[\lslice{k},\ \lslice{k}]\right)^{-1}\mtx{R}[\lslice{k}, \tslice{k+1}]$ \codecomment{use \code{trsm} from \BLASlev{3}}
            \State $\mtx{X} = \code{zeros}(k, w)$
            \State $\mtx{X}[:, J] = [\mtx{I}_{k \times k}, \mtx{T}]$
            \State $J = J[\lslice{k}]$
            \State \textbf{return} $\mtx{X}, J$
            \Else
            \State $\mtx{X}, I = \code{osid{\_}qrcp}(\mtx{Y}^{\trans}, k, \text{axis}=1)$
            \State $\mtx{Z} = \mtx{X}^{\trans}$
            \State \textbf{return} $\mtx{Z}, I$
            \EndIf
        \EndIndent 
    \end{algorithmic}
\end{algorithm}

The one-sided ID interface is
\[
    \mtx{M}, P = \code{OneSidedID}(\mtx{A}, k, s, \text{axis}).
\]
The output value $\mtx{M}$ is the interpolation matrix and $P$ is the length-$k$ vector of skeleton indices.
When $\text{axis} = 1$ we are considering a row ID and so obtain the approximation $\Aa = \mtx{M}\Ao[P, :]$ to $\Ao$.
When $\text{axis} = 2$, we are considering the low-rank column ID $\Aa = \Ao[:,P]\mtx{M}$.
Implementations of this interface perform internal calculations with sketches of rank $k + s$.

\begin{algorithm}[H]
    \setstretch{1.0}
    \caption{\code{OSID1} : implements \code{OneSidedID} by re-purposing an ID of a sketch. Besides the original source \cite[\S 5.1]{VM:2016:CUR}, more information on this algorithm can be found in \cite[\S 10.4]{Martinsson:2018_ish} and \cite[\S 13.4]{MT:2020}.}\label{alg:osid1}
    \begin{algorithmic}[1]
        \State \textbf{function} $\code{OSID1}(\mtx{A}, k, \text{axis})$\vspace{0.5pt} 
        \Indent
            \Statex \quad Inputs:
            \Statex \begin{quote}
                $\mtx{A}$ is an $m \times n$ matrix and $k \ll \min\{m, n\}$ is a positive integer. \\
                $\text{axis}$ is an integer, equal to $1$ for row ID or $2$ for column ID.
            \end{quote}\vspace{4pt}
            \Statex \quad Output:
            \Statex \begin{quote}
                A matrix $\mtx{Z}$ and vector $I$ satisfying $\mtx{Y}[I, :] = (\mtx{ZY}[I, :])[I, :]$ \\
                \textit{or} \\
                a matrix $\mtx{X}$ and vector $J$ satisfying $\mtx{Y}[:, J] = (\mtx{Y}[:, J] \mtx{X})[:, J]$.
            \end{quote}\vspace{4pt}
            \Statex \quad Abstract subroutines:
            \Statex \begin{quote}
                \code{TallSketchOpGen} and \code{osid{\_}qrcp}
            \end{quote}\vspace{4pt}
            \Statex \quad Tuning parameters:
            \Statex \begin{quote}
                $s$ is a nonnegative integer. The algorithm internally works with a sketch of rank $k + s$.
            \end{quote}\vspace{4pt}
            \setstretch{1.1}
            \If{axis == 1} \codecomment{row ID}
            \State $\mtx{S} = \code{TallSketchOpGen}(\mtx{A}, k + s)$
            \State $\mtx{Y} = \mtx{A}\mtx{S}$
            \State $\mtx{Z}, I = \code{osid{\_}qrcp}(\mtx{Y}, k, \text{axis}=0)$
            \State \textbf{return} $\mtx{Z}, I$
            \Else
            \State $\mtx{S} = \code{TallSketchOpGen}(\mtx{A}^{\trans}, k + s)^{\trans}$
            \State $\mtx{Y} = \mtx{S}\mtx{A}$
            \State $\mtx{X}, J = \code{osid{\_}qrcp}(\mtx{Y}, k, \text{axis}=1)$
            \State \textbf{return} $\mtx{X}, J$
            \EndIf
        \EndIndent 
    \end{algorithmic}
\end{algorithm}

Consider the following interface for (randomized) row and column subset selection algorithms
\[
    P = \code{RowOrColSelection}(\mtx{A},k,s,\text{axis}).
\]
The index vector $P$ and oversampling parameter is understood in the same way as the \code{OneSidedID} interface.
That is, $P$ is a partial permutation of the row index set $\idxs{m}$ (when $\text{axis} = 1$) or the column index set $\idxs{n}$ (when $\text{axis} = 2$).
Implementations are supposed to perform internal calculations with sketches of rank $k+s$.

\begin{algorithm}[H]
    \setstretch{1.0}
    \caption{\code{ROCS1} : implements \code{RowOrColSelection} by QRCP on a sketch}\label{alg:rocs1}
    \begin{algorithmic}[1]
        \State \textbf{function} $\code{ROCS1}(\mtx{A}, k, s, \text{axis})$\vspace{0.5pt} 
        \Indent
            \Statex \quad Inputs:
            \Statex \begin{quote}
                $\mtx{A}$ is an $m \times n$ matrix and $k \ll \min\{m, n\}$ is a positive integer. \\
                 $\text{axis}$ is an integer, equal to $1$ for row selection or $2$ for column selection.
            \end{quote}\vspace{4pt}
            \Statex \quad Output:
            \Statex \begin{quote}
                $I$: a row selection vector of length $k$ \\
                or \\
                $J$: a column selection vector of length $k$.
            \end{quote}\vspace{4pt}
            \Statex \quad Abstract subroutines:
            \Statex \begin{quote}
                \code{TallSketchOpGen}
            \end{quote}\vspace{4pt}
            \Statex \quad Tuning parameters:
            \Statex \begin{quote}
                $s$ is a nonnegative integer. The algorithm internally works with a sketch of rank $k + s$.
            \end{quote}\vspace{4pt}
            \setstretch{1.1}
            \If{axis == 1}
            \State $\mtx{S} = \code{TallSketchOpGen}(\mtx{A}, k + s)$
            \State $\mtx{Y} = \mtx{A}\mtx{S}$
            \State $\mtx{Q},\mtx{R},I = \code{qrcp}(\mtx{Y}^{\trans})$
            \State \textbf{return} $I[:k]$
            \Else
            \State $\mtx{S} = \code{TallSketchOpGen}(\mtx{A}^{\trans}, k + s)$
            \State $\mtx{Y} = \mtx{S}\mtx{A}$
            \State $\mtx{Q}, \mtx{R}, J = \code{qrcp}(\mtx{Y})$
            \State \textbf{return} $J[:k]$
            \EndIf
        \EndIndent 
    \end{algorithmic}
\end{algorithm}

\chapter[Correctness of Preconditioned Cholesky QRCP]{Correctness of Preconditioned \\ Cholesky QRCP}\label{app:cholqrcp}

In this appendix we prove \cref{prop:chol_exactness}.
Since this would involve a fair amount of bookkeeping if we used the notation of \cref{new_notation_QRRQR}, we begin with a more detailed statement of the algorithm.

Let $\mtx{A}$ be $m \times n$ and $\mtx{S}$ be $d \times m$ with $n \leq d \ll m$.
\begin{enumerate}
    \item Compute the sketch $\mtx{A}^{\mathrm{sk}} = \mtx{S}\mtx{A}$
    \item Decompose $[\mtx{Q}^{\mathrm{sk}},\mtx{R}^{\mathrm{sk}}, J] = \code{qrcp}(\mtx{A}^{\mathrm{sk}})$
    \begin{enumerate}
        \item $J$ is a permutation vector for the index set $\idxs{n}$.
        \item Abbreviating $\mtx{A}^{\mathrm{sk}}_J = \mtx{A}^{\mathrm{sk}}[\fslice,J]$, we have $\mtx{A}^{\mathrm{sk}}_J = \mtx{Q}^{\mathrm{sk}}\mtx{R}^{\mathrm{sk}}$. \label{a_sk_j_ref}
        \item Let $k = \rank(\mtx{A}^{\mathrm{sk}})$.
        \item $\mtx{Q}^{\mathrm{sk}}$ is $m \times k$ and column-orthonormal.
        \item $\mtx{R}^{\mathrm{sk}} = [\mtx{R}^{\mathrm{sk}}_1,~\mtx{R}^{\mathrm{sk}}_2]$ is $k \times n$ upper-triangular. \label{r_sk_def}
        \item $\mtx{R}^{\mathrm{sk}}_1$ is $k \times k$ and nonsingular.
    \end{enumerate}
    \item Abbreviate $\mtx{A}_J = \mtx{A}[\fslice{},J]$ and explicitly from $\mtx{A}^{\mathrm{pre}} = \mtx{A}_J[\fslice{},\lslice{k}] (\mtx{R}^{\mathrm{sk}}_{1})^{-1}$. \label{a_pre_def}
    \item Compute an unpivoted QR decomposition $\mtx{A}^{\mathrm{pre}} = \mtx{Q}\mtx{R}^{\mathrm{pre}}$. \label{a_pre_decomp}
    \begin{enumerate}
        \item If $\rank(\mtx{A}) = k$ then $\mtx{Q}$ is an orthonormal basis for the range of $\mtx{A}$.
        \item For the purposes of this appendix, it does not matter what algorithm we use to compute this decomposition. We assume the decomposition is~exact.
    \end{enumerate}
    \item Explicitly form $\mtx{R} = \mtx{R}^{\mathrm{pre}}\mtx{R}^{\mathrm{sk}}$\label{r_def}
\end{enumerate}

The goal of this proof is to show that the equality $\mtx{A}[:, J] = \mtx{QR}$ holds under the assumption that $\rank(\mtx{S}\mtx{A}) = \rank(\mtx{A})$.
Let us first establish some useful identities. 
By steps \ref{a_pre_def} and \ref{a_pre_decomp} of the algorithm description above, we know that
\[
    \mtx{R}^{\mathrm{pre}} = \mtx{Q}^{ \trans}\mtx{A}_{J}[:,  \fslice{k}](\mtx{R}^{\mathrm{sk}}_{1})^{-1}.
\]
Combining this with the characterization of $\mtx{R}$ from Steps \ref{r_sk_def} and \ref{r_def}, we have
\begin{equation*}
\mtx{R} = \mtx{Q}^{ \trans}\mtx{A}_{J}[:,  \fslice{k}](\mtx{R}^{\mathrm{sk}}_{1})^{-1}[\mtx{R}^{\mathrm{sk}}_{1},~ \mtx{R}^{\mathrm{sk}}_{2}].
\end{equation*}
We may further expand this expression as such:
\begin{equation*}
\mtx{R} = \mtx{Q}^{ \trans}\mtx{A}_{J}[:,  \fslice{k}][\mtx{I}_{k \times k},~ (\mtx{R}^{\mathrm{sk}}_{1})^{-1}\mtx{R}^{\mathrm{sk}}_{2}].    
\end{equation*}
Since $\mtx{Q}$ is an orthonormal basis for the range of $\mtx{A}$ and, consequently, $\mtx{A}_{J}$, we have~that
\begin{equation}\label{eq:key_for_cholqrcp_proof}
\mtx{Q}\mtx{R} = \mtx{A}_{J}[:,  \fslice{k}][\mtx{I}_{k \times k},~ (\mtx{R}^{\mathrm{sk}}_{1})^{-1}\mtx{R}^{\mathrm{sk}}_{2}]. 
\end{equation}
We use \eqref{eq:key_for_cholqrcp_proof} to establish the claim by a columnwise argument.
That is, we show that $\mtx{QR}[:, \ell] = \mtx{A}_{J}[:, \ell]$ for all $1 \leq \ell \leq n$. 

First, consider the case when $\ell \leq k$. 
Let $\vct{\delta}^{n}_{\ell}$ be the $\ell^{\text{th}}$ standard basis vector in $\R^{n}$. 
Then, consider the following series of identities:
\begin{align*}
\mtx{QR}[:, \ell] & = \mtx{QR}\vct{\delta}^{n}_{\ell} \\
                  & = \mtx{A}_{J}[:,  \fslice{k}][\mtx{I}_{k \times k},~ (\mtx{R}^{\mathrm{sk}}_{1})^{-1}\mtx{R}^{\mathrm{sk}}_{2}]\vct{\delta}^{n}_{\ell} \\
                  & =  \mtx{A}_{J}[:,  \fslice{k}]\vct{\delta}^{k}_{\ell}  = \mtx{A}_{J}[:, \ell], 
\end{align*}
hence the desired statement holds for $\ell \leq k$.

It remains to show that $\mtx{QR}[:, \ell] = \mtx{A}_{J}[:, \ell]$ for $\ell > k$.
Note that 
\begin{align*}
    \mtx{Q}\mtx{R}[:, \ell]  & = \mtx{A}_{J}[:,  \fslice{k}][\mtx{I}_{k \times k},~ (\mtx{R}^{\mathrm{sk}}_{1})^{-1}\mtx{R}^{\mathrm{sk}}_{2}]\vct{\delta}^{n}_{\ell} \\
                       & = \mtx{A}_{J}[:,  \fslice{k}]((\mtx{R}^{\mathrm{sk}}_{1})^{-1}\mtx{R}^{\mathrm{sk}}_{2})[:, \ell - k].
\end{align*}
Let $\vct{\gamma} = ((\mtx{R}^{\mathrm{sk}}_{1})^{-1}\mtx{R}^{\mathrm{sk}}_{2})[:, \ell - k]$.
Therefore, in order to obtain the desired identity for $\ell > k$, we will need to show that 
\begin{equation*}
\mtx{A}_{J}[:, k]\vct{\gamma} = \mtx{A}_{J}[:, \ell].
\end{equation*}

\begin{proposition} \label{prop: similarity}
    If $\mtx{A}^{\mathrm{sk}}_{J}[:, \ell] = \mtx{A}^{\mathrm{sk}}_{J}[:,  \fslice{k}]\vct{u}$ for some $\vct{u} \in \R^{k}$, then \\ $\mtx{A}_{J}[:, \ell] = \mtx{A}_{J}[:,  \fslice{k}]\vct{u}$.
\end{proposition}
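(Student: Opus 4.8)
\textbf{Proof plan for Proposition~\ref{prop: similarity}.}
The plan is to exploit the fact that sketching by $\mtx{S}$ preserves rank, so the sketched columns satisfy the same linear dependencies as the original columns. Concretely, I would argue that the column space of $\mtx{A}_J$ and the column space of $\mtx{A}^{\mathrm{sk}}_J = \mtx{S}\mtx{A}_J$ have the same dimension $k = \rank(\mtx{A}) = \rank(\mtx{S}\mtx{A})$, and that the map $\mtx{x} \mapsto \mtx{S}\mtx{x}$ is injective on $\range(\mtx{A}_J) = \range(\mtx{A})$. Injectivity on the range follows because $\range(\mtx{S}\mtx{A}) = \mtx{S}(\range(\mtx{A}))$ always holds, and equality of dimensions $\dim \range(\mtx{A}) = \dim \mtx{S}(\range(\mtx{A}))$ forces the restriction of $\mtx{S}$ to $\range(\mtx{A})$ to be an isomorphism onto its image.

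With that injectivity in hand, the argument is short. Suppose $\mtx{A}^{\mathrm{sk}}_J[:,\ell] = \mtx{A}^{\mathrm{sk}}_J[:,\fslice{k}]\vct{u}$ for some $\vct{u}\in\R^k$. Set $\vct{w} = \mtx{A}_J[:,\ell] - \mtx{A}_J[:,\fslice{k}]\vct{u}$. Both $\mtx{A}_J[:,\ell]$ and $\mtx{A}_J[:,\fslice{k}]\vct{u}$ lie in $\range(\mtx{A}_J) = \range(\mtx{A})$, so $\vct{w}\in\range(\mtx{A})$. Applying $\mtx{S}$ and using linearity,
\[
    \mtx{S}\vct{w} = \mtx{S}\mtx{A}_J[:,\ell] - \mtx{S}\mtx{A}_J[:,\fslice{k}]\vct{u} = \mtx{A}^{\mathrm{sk}}_J[:,\ell] - \mtx{A}^{\mathrm{sk}}_J[:,\fslice{k}]\vct{u} = \vct{0}.
\]
Since $\mtx{S}$ is injective on $\range(\mtx{A})$ and $\vct{w}$ lies there, we conclude $\vct{w} = \vct{0}$, which is exactly $\mtx{A}_J[:,\ell] = \mtx{A}_J[:,\fslice{k}]\vct{u}$.

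The one point requiring a little care — and the main obstacle, such as it is — is justifying that $\mtx{S}$ restricted to $\range(\mtx{A})$ is injective rather than merely rank-preserving in the abstract. I would make this rigorous by taking a matrix $\mtx{U}$ whose columns are an orthonormal basis for $\range(\mtx{A})$, noting $\range(\mtx{S}\mtx{A}) = \range(\mtx{S}\mtx{U})$ and hence $\rank(\mtx{S}\mtx{U}) = \rank(\mtx{S}\mtx{A}) = \rank(\mtx{A}) = \rank(\mtx{U})$, so $\mtx{S}\mtx{U}$ is full column rank; then any $\vct{w}\in\range(\mtx{A})$ can be written $\vct{w} = \mtx{U}\vct{z}$, and $\mtx{S}\vct{w} = \mtx{0}$ gives $\mtx{S}\mtx{U}\vct{z} = \vct{0}$, forcing $\vct{z}=\vct{0}$ and $\vct{w}=\vct{0}$. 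Everything else is bookkeeping. Finally, I would close out the proof of \cref{prop:chol_exactness} by applying Proposition~\ref{prop: similarity} with $\vct{u} = \vct{\gamma} = ((\mtx{R}^{\mathrm{sk}}_1)^{-1}\mtx{R}^{\mathrm{sk}}_2)[:,\ell-k]$: the hypothesis $\mtx{A}^{\mathrm{sk}}_J[:,\ell] = \mtx{A}^{\mathrm{sk}}_J[:,\fslice{k}]\vct{\gamma}$ is immediate from $\mtx{A}^{\mathrm{sk}}_J = \mtx{Q}^{\mathrm{sk}}[\mtx{R}^{\mathrm{sk}}_1,\mtx{R}^{\mathrm{sk}}_2]$ (see step~\ref{a_sk_j_ref} and \ref{r_sk_def}), which yields $\mtx{A}^{\mathrm{sk}}_J[:,\fslice{k}] = \mtx{Q}^{\mathrm{sk}}\mtx{R}^{\mathrm{sk}}_1$ and $\mtx{A}^{\mathrm{sk}}_J[:,\ell] = \mtx{Q}^{\mathrm{sk}}\mtx{R}^{\mathrm{sk}}_2[:,\ell-k] = \mtx{Q}^{\mathrm{sk}}\mtx{R}^{\mathrm{sk}}_1\vct{\gamma}$. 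Combined with \eqref{eq:key_for_cholqrcp_proof} and the $\ell\le k$ case already handled, this gives $\mtx{Q}\mtx{R}[:,\ell] = \mtx{A}_J[:,\ell]$ for every $\ell$, completing the proof.
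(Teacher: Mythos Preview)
Your proof is correct and rests on the same core observation as the paper's: the hypothesis $\rank(\mtx{S}\mtx{A}) = \rank(\mtx{A})$ forces $\mtx{S}$ to be injective on $\range(\mtx{A})$, so no new linear dependencies among columns can appear after sketching. The paper packages this slightly differently---arguing by contradiction via a kernel-dimension comparison on the augmented matrix $[\mtx{X},\vct{y}] = [\mtx{A}_J[\fslice,\lslice{k}],\,\mtx{A}_J[\fslice,\ell]]$ rather than invoking injectivity on $\range(\mtx{A})$ directly---but the substance is the same, and your direct formulation is arguably cleaner.
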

\begin{proof}
    To simplify notation, define the $m \times k$ matrix $\mtx{X} = \mtx{A}_{J}[:,  \fslice{k}]$ and the $m$-vector $\vct{y} = \mtx{A}_{J}[:, \ell]$.
    
    Suppose to the contrary that $\vct{y} \neq \mtx{X}\vct{u}$ and $\mtx{S}\vct{y} = \mtx{S}\mtx{X}\vct{u}$.
    Then,  $\mtx{S}\mtx{X}\vct{u} - \mtx{S}\vct{y} = 0$.
    Define $U = \mathrm{ker}(\mtx{S}[\mtx{X},~ \vct{y}])$ and $V = \mathrm{ker}([\mtx{X},~ \vct{y}])$. Clearly, $U$ contains $V$. Additionally, if $U$ contains a nonzero vector that is not in $V$, then $\mathrm{dim}(U) > \mathrm{dim}(V)$. This would further imply that $\rank(\mtx{S}[\mtx{X},~ \vct{y}]) < \rank([\mtx{X},~ \vct{y}])$.
    
    If $\mtx{S}\mtx{X}\vct{u} - \mtx{S}\vct{y} = 0$, then $(\vct{u}, -1)$ is a nonzero vector in $U$ that is not in $V$. However, by our assumption, the sketch does not drop rank. Consequently, no such vector $(\vct{u}, -1)$ can exist, and we must have $\vct{y} = \mtx{X}\vct{u}$.
\end{proof}

We now prove that $\mtx{A}^{\mathrm{sk}}_{J}[:,  \fslice{k}]\vct{\gamma} = \mtx{A}^{\mathrm{sk}}_{J}[:, \ell]$.
To do this, start by noting that $\mtx{A}^{\mathrm{sk}}_{J}[:,  \fslice{k}] = \mtx{Q}^{\mathrm{sk}}\mtx{R}^{\mathrm{sk}}_{1}$.
Plugging in the definition of $\vct{\gamma}$, we have
\begin{equation*}
\mtx{A}^{\mathrm{sk}}_{J}[:,  \fslice{k}]\vct{\gamma} =  \mtx{Q}^{\mathrm{sk}}\mtx{R}^{\mathrm{sk}}_{1}(\mtx{R}^{\mathrm{sk}}_{1})^{-1} (\mtx{R}^{\mathrm{sk}}_{2})[:, \ell - k] = \mtx{Q}^{\mathrm{sk}}(\mtx{R}^{\mathrm{sk}}_{2})[:, \ell - k] .
\end{equation*}
The next step is to use the simple observation that $\mtx{R}^{\mathrm{sk}}_{2}[:, \ell - k] = \mtx{R}^{\mathrm{sk}}[:,\ell]$ to find 
\begin{equation*}
\mtx{A}^{\mathrm{sk}}_{J}[:,  \fslice{k}]\vct{\gamma} = (\mtx{Q}^{\mathrm{sk}}\mtx{R}^{\mathrm{sk}})[:, \ell] = \mtx{A}^{\mathrm{sk}}_{J}[:, \ell].
\end{equation*}
Combining the above results and \cref{prop: similarity} proves \cref{prop:chol_exactness}.

\chapter[Bootstrap Methods for Error Estimation]{Bootstrap Methods
\\ for Error Estimation}
\label{app:bootstrap}

\minitoc
\bigskip


Whenever a randomized algorithm produces a solution, a question immediately arises: Is the solution sufficiently accurate?
In many situations, it is possible to estimate numerically the error of the solution using the available problem data --- a process that is often referred to as \emph{(a posteriori) error estimation}.%
\footnote{This should be contrasted with \emph{(a priori) error bounds} often used in theoretical development of \RandNLA{} algorithms, in which one bounds rather than estimates the error, and does so in a worst-case way that does not depend on the problem data.}
In addition to resolving uncertainty about the quality of a solution, another key benefit of error estimation is that it enables computations to be done more adaptively. 
For instance, error estimates can be used to determine if additional iterations should be performed, or if tuning parameters should be modified.
In this way, error estimates can help to incrementally refine a rough initial solution so that ``just enough'' work is done to reach a desired level of accuracy.
\\

%
%

In this appendix, we provide a brief overview of \textit{bootstrap methods} for error estimation in \RandNLA{}.
Up to now, these tools (which are common in statistics and statistical data analysis) have been designed for a handful of sketch-and-solve type algorithms, and the development of bootstrap methods for a wider range of randomized algorithms is an open direction of research.
Our main purpose in writing this appendix is to record the consideration we have given to bootstrap methods.
Our secondary purpose is to provide a starting point for non-experts to survey this literature as it evolves.

\section{Bootstrap methods in a nutshell}

Bootstrap methods have been studied extensively in the statistics literature for more than four decades, and they comprise a very general framework for quantifying uncertainty \cites{Efron:1994:Bootstrap_book,Shao:2012:jackknife_bootstrap}.
One of the most common uses of these methods in statistics is to assess the accuracy of parameter estimates.
This use-case provides the connection between bootstrap methods and error estimation in \RandNLA{}.
Indeed, an exact solution to a linear algebra problem can be viewed as an ``unknown parameter,'' and a randomized algorithm can be viewed as providing an ``estimate'' of that parameter.
Taking the analogy a step further, a random sketch of a matrix can also be viewed as a ``dataset'' from which the estimate of the ``population'' quantity is computed. Likewise, when bootstrap methods are applied in \RandNLA{}, the rows or columns of a sketched matrix often play the role of ``data vectors''.
 
We now formulate the task of error estimation in a way that is convenient for discussion of bootstrap methods.
First, suppose the existence of some fixed but unknown ``true parameter'' $\theta \in \R$.
Suppose we estimate this parameter by a value $\hat\theta$ depending on random samples from some probability distribution.
The error of $\hat\theta$ is defined as $\hat{\epsilon} = |\hat\theta-\theta|$, which we emphasize is both random and unknown.
From this standpoint, it is natural to seek the tightest upper bound on $\hat{\epsilon}$ that holds with a specified probability, say $1-\alpha$.
This ideal bound is known as the $(1-\alpha)$-quantile of $\hat{\epsilon}$, and is defined more formally as
\[
    q_{1-\alpha}=\inf\{t\in[0,\infty) \, | \, \mathbb{P}(\hat{\epsilon}\leq t)\geq 1-\alpha\}.
\]
An error estimation problem is considered solved if it is possible to construct a quantile estimate $\hat q_{1-\alpha}$ such that the inequality $\hat{\epsilon}\leq \hat q_{1-\alpha}$ holds with probability that is close to $1-\alpha$. 

The bootstrap approach to estimating $q_{1-\alpha}$ is based on imagining a scenario where it is possible to generate many independent samples $\doublehat{\epsilon}_1,\dots,\doublehat{\epsilon}_N$ of the random variable $\hat{\epsilon}$.
Of course, this is not possible in practice, but if it were, then an estimate of $q_{1-\alpha}$ could be easily obtained using the empirical $(1-\alpha)$-quantile of the samples $\doublehat{\epsilon}_1,\dots,\doublehat{\epsilon}_N$. 
The key idea that bootstrap methods use to circumvent the difficulty is to generate ``approximate samples'' of $\hat{\epsilon}$, which \emph{can} be done in practice. 

To illustrate how approximate samples of $\hat{\epsilon}$ can be constructed, consider a generic situation where the estimate $\hat\theta$ is computed as a function of a dataset $X_1,\dots,X_n$.
That is, suppose $\hat\theta=f(X_1,\dots,X_n)$ for some function $f$.
Then, a \textit{bootstrap sample} of $\hat{\epsilon}$, denoted $\doublehat{\epsilon}$, is computed as follows: 
\begin{itemize}
\item Sample $n$ points $\{\doublehat{X}_i\}_{i=1}^n$ with replacement from the original dataset $\{X_i\}_{i=1}^n$.
\item Compute $\doublehat{\theta} := f(\doublehat{X}_1,\dots,\doublehat{X}_n)$
\item Compute $\doublehat{\epsilon} :=|\doublehat{\theta} - \hat\theta|$.
\end{itemize}
By performing $N$ independent iterations of this process, a collection of bootstrap samples $\doublehat{\epsilon}_1,\dots,\doublehat{\epsilon}_N$ can be generated.
Then, the desired quantile estimate $\hat q_{1-\alpha}$ can be computed as the smallest number $t\geq 0$ for which the inequality
\[
\frac{1}{N}\sum_{i=1}^N \mathbb{I}\{\doublehat{\epsilon}_i \leq t\}\geq 1-\alpha
\]
is satisfied, where $\mathbb{I}\{\cdot\}$ refers to the $\{0,1\}$-valued indicator function.
This quantity is also known as the empirical $(1-\alpha)$-quantile of $\doublehat{\epsilon}_1,\dots,\doublehat{\epsilon}_n$.
We will sometimes denote it by $\text{quantile}[\doublehat{\epsilon}_1,\dots,\doublehat{\epsilon}_n; 1-\alpha]$.

To provide some intuition for the bootstrap, the random variable $\doublehat\theta$ can be viewed as a ``perturbed version'' of $\hat\theta$, where the perturbing mechanism is designed so that the deviations of $\doublehat\theta$ around $\hat\theta$ are statistically similar to the deviations of $\hat\theta$ around $\theta$~\cite{Efron:1994:Bootstrap_book}.
Equivalently, this means that the histogram of $\doublehat{\epsilon}_1,\dots,\doublehat{\epsilon}_N$ will serve as a good approximation to the distribution of the actual random error variable $\hat{\epsilon}$.
Furthermore, it turns out that this approximation is asymptotically valid (i.e., $n\rightarrow \infty$) and supported by quantitative guarantees in a broad range of situations~\cite{Shao:2012:jackknife_bootstrap}.

\section{Sketch-and-solve least squares}\label{subapp:bootstrap:least_squares}

There is a direct analogy between the discussion above and the setting of sketch-and-solve algorithms for least squares. First, the ``true parameter'' $\theta$ is the exact solution $\vct{x}_{\star} = \textup{argmin}_{\vct{x}\in{\R}^n}\|\mtx{A}\vct{x}-\vct{b}\|_2^2$.
Second, the dataset $X_1,\dots,X_n$ corresponds to the sketches $[\hat{\mtx{A}},\hat{ \vct{b}}]=\mtx{S}[\mtx{A},\vct{b}]$.
Third, the estimate $\hat{\theta}$ corresponds to the sketch-and-solve solution  $\hat{\vct{x}} = \textup{argmin}_{\vct{x}\in{\R}^n}\|\hat{\mtx{A}} \vct{x}-\hat{\vct{b}}\|_2^2$.
Fourth, the error variable can be defined as $\hat{\epsilon} = \rho(\hat{\vct{x}}, \vct{x}_{\star})$, for a preferred metric $\rho$, such as that induced by the $\ell_2$ or $\ell_{\infty}$ norms.

Once these correspondences are recognized, the previous bootstrap sampling scheme can be applied.
For further background, as well as extensions to error estimation for iterative randomized algorithms for least squares, we refer to~\cite{LWM:2018:lstsq_err_est}.\\

%

\begin{method}[!ht]
{
	\caption{(Bootstrap error estimation for sketch-and-solve least squares).}\label{alg:bootstrap:ols}
	\normalfont
	\vspace{0.1cm}
	{\bf Input:} A positive integer $B$, the sketches $\hat{\mtx{A}}\in\R^{d\times n}$, $\hat{\vct{b}} \in \R^d$, and $\hat{\vct{x}} \in \R^n$.\\[0.2cm]
	{\bf For } $\ell \in \idxs{B}$\; {\bf do in parallel} 
	\begin{enumerate}
			\item Draw a vector $I:=(i_1,\dots,i_d)$ by sampling $d$ numbers with replacement from $\idxs{d}$.
			\item Form the matrix $\doublehat{\mtx{A}}:=\hat{\mtx{A}}[I,:]$, and vector $\doublehat{\vct{b}}:=\hat{\vct{b}}[I]$. 
			\item Compute the following vector and scalar,
			\begin{equation}\label{eqn:tildexalg}
			\doublehat{\vct{x}} := \argmin_{\vct{x}\in\R^n}\|\doublehat{\mtx{A}} \vct{x}-\doublehat{\vct{b}}\|_2 \text{ \ \  \ \ \ and \ \ \ \ \  } \doublehat{\epsilon}_{\ell} :=\|\doublehat{\vct{x}}-\hat{\vct{x}}\|.
			\end{equation}
	\end{enumerate}
	{\bf Return:} The estimate $\text{quantile}[\doublehat{\epsilon}_1,\dots,\doublehat{\epsilon}_B; 1-\alpha]$ for the $(1-\alpha)$-quantile of $\|\hat{\vct{x}}-\vct{x}_{\star}\|$.
}
\end{method}
To briefly comment on some of the computational characteristics of this method, it should be emphasized that the for loop can be implemented in an embarrassingly parallel manner, which is typical of most bootstrap methods. Second, the method only relies on access to sketched quantities, and hence does not require any access to the full matrix $\mtx{A}$. Likewise, the computational cost of the method is independent of the number of rows of $\mtx{A}$.

\section{Sketch-and-solve one-sided SVD}\label{subapp:bootstrap:svd}

We call the problem of computing the singular values and right singular vectors of a matrix a ``one-sided SVD.''
We further use the term ``sketch-and-solve one-sided SVD'' for an algorithm that approximates the top $k$ singular values and singular vectors of $\mtx{A}$ by those of a sketch $\hat{\mtx{A}} = \mtx{S}\mtx{A}$.
Here we consider estimating the error incurred by such an algorithm.
As matters of notation, we let $\{(\sigma_j,\vct{v}_j)\}_{j=1}^k$ denote the top $k$ singular values and right singular vectors of $\mtx{A}$ and $\{(\hat{\sigma}_j,\hat{\vct{v}}_j)\}_{j=1}^k$ the corresponding quantities for $\hat{\mtx{A}}$.
We suppose that error is measured uniformly over $j\in\idxs{k}$, which leads us to consider error variables of the form 
\[
\e_{_{\Sigma}}:=\max_{j\in\idxs{k}}|\hat{\sigma}_j-\sigma_j|\quad\text{and}\quad\e_{_{V}}:=\max_{j\in\idxs{k}}\rho(\hat{\vct{v}}_j,\vct{v}_j).
\]
The following bootstrap method, developed in~\cite{LEM:2020:svd_err_est}, provides estimates for the $(1-\alpha)$-quantiles of $\e_{_{\Sigma}}$ and $\e_{_{V}}$.

\begin{method}[!ht]
{
	\caption{(Bootstrap error estimation for sketch-and-solve SVD).}\label{alg:bootstrap:svn}
	\normalfont
	\vspace{0.1cm}
	\noindent {\bf Input}: The sketch $\hat{\mtx{A}}\in\R^{d\times n}$ and its top $k$ singular values and right singular vectors $(\hat{\sigma}_1,{\hat{\vct{v}}}_1),\dots,(\hat{\sigma}_k,{\hat{\vct{v}}}_k)$, a number of samples $B$, a parameter $\alpha\in(0,1)$. \\[-0.1cm]
	
	$\bullet $ {\bf For } $\ell \in \idxs{B}$\; {\bf do \ in \ parallel}
	\begin{enumerate}
		\item Form $\doublehat{\mtx{A}} \in {\R}^{d\times n}$ by sampling $d$ rows from $\hat{\mtx{A}}$ with replacement. 
		\item Compute the top $k$ singular values and right singular vectors of $\doublehat{\mtx{A}}$, denoted as $ \doublehat{\sigma}_1,\dots,\doublehat{\sigma}_k$ and ${\doublehat{\vct{v}}}_1,\dots,{\doublehat{\vct{v}}}_k$. Then, compute the bootstrap samples
		%
		\begin{align}
		& \doublehat\e_{_{\mtx{\Sigma},\ell}}:=\max_{j\in\idxs{k}} |\doublehat{\sigma}_j - \hat{\sigma}_j|\label{eqn:Sigmasamples}\\[0.2cm]
		& \doublehat\e_{_{\mtx{V},\ell}}:=\max_{j\in\idxs{k}} \rho( {\doublehat{\vct{v}}}_j,{\hat{\vct{v}}}_j).\label{eqn:Vsamples}
		\end{align}
	\end{enumerate}
	{\bf Return:} The estimates $\text{quantile}[\doublehat\e_{_{\mtx{\Sigma},1}},\dots,\doublehat \e_{_{\mtx{\Sigma},B}};1-\alpha]$ and $\text{quantile}[\doublehat\e_{_{\mtx{V},1}},\dots,\doublehat \e_{_{\mtx{V},B}};1-\alpha]$ for the $(1-\alpha)$-quantiles of $\e_{_{\Sigma}}$ and  $\e_{_{V}}$.
}
\end{method}
Although this method is only presented with regard to singular values and right singular vectors, it is also possible to apply a variant of it to estimate the errors of approximate left singular vectors. However, a few extra technical details are involved, which may be found in~\cite{LEM:2020:svd_err_est}.

Another technique to estimate error in the setting of sketch-and-solve one-sided SVD is through the spectral norm $\|\hat{\mtx{A}}^{\trans}\hat{\mtx{A}}-\mtx{A}^{\trans}\mtx{A}\|_2$.
Due to the Weyl and Davis-Kahan inequalities, an upper bound on $\|\hat{\mtx{A}}^{\trans}\hat{\mtx{A}}-\mtx{A}^{\trans}\mtx{A}\|_2$ directly implies upper bounds on the errors of all the sketched singular values $\hat{\sigma}_1,\dots,\hat{\sigma}_n$ and sketched right singular vectors $\hat{\vct{v}}_1,\dots,\hat{\vct{v}}_n$.
Furthermore, the quantiles of the error variable $\|\hat{\mtx{A}}^{\trans}\hat{\mtx{A}}-\mtx{A}^{\trans}\mtx{A}\|_2$ can be estimated via the bootstrap, as shown in~\cite{LEM:2019:opnorm_err_est}.

\clearpage
\fancyhead{}
\fancyhead[LE, RO]{\slshape Bibliography}

\ifUseBIBLATEX
    \printbibliography[]
\else
    \small
    \bibliographystyle{alpha}
    \bibliography{references/refs}
\fi
\addcontentsline{toc}{chapter}{Bibliography}
\end{document}